\newtheorem{theorem}{Theorem}[section]
\theoremstyle{definition}
\newtheorem{definition}[theorem]{Definition}
\newtheorem{corollary}[theorem]{Corollary}
\newtheorem{lemma}[theorem]{Lemma}
\newtheorem{proposition}[theorem]{Proposition}
\newtheorem{question}{Question}
\theoremstyle{remark}
\newtheorem{remark}[theorem]{Remark}
\newtheorem{claim}[theorem]{Claim}
\newcommand{\es}{\varnothing}
\newcommand{\seb}{\subseteq}
\newcommand{\setm}{\setminus}
\newcommand{\tup}[1]{\langle#1\rangle}
\newcommand{\ra}{\rightarrow}
\newcommand{\lra}{\leftrightarrow}
\newcommand{\axiom}[1]{\mathsf{#1}}
\newcommand{\ZFC}{\axiom{ZFC}}
\newcommand{\CH}{\axiom{CH}}
\newcommand{\MA}{\axiom{MA}}
\newcommand{\PFA}{\axiom{PFA}}
\newcommand{\cal}[1]{\mathcal{#1}}
\newcommand{\ele}{\prec}
\newcommand{\eleq}{\preceq}
\newcommand{\neleq}{\npreceq}
\newcommand{\sle}{\vartriangleleft}
\newcommand{\sleq}{\trianglelefteq}
\newcommand{\sge}{\vartriangleright}
\newcommand{\sgeq}{\trianglerighteq}
\newcommand{\nsgeq}{\ntrianglerighteq}
\newcommand{\sur}{\twoheadrightarrow}
\newcommand{\ba}{\bar{a}}
\newcommand{\bb}{\bar{b}}
\newcommand{\bc}{\bar{c}}
\renewcommand{\L}{\mathscr{L}}
\newcommand{\R}{\mathscr{R}}
\newcommand{\La}{\hat{\L}}
\newcommand{\Ra}{\hat{\R}}
\newcommand\lex{\mathrm{lex}}
\renewcommand{\P}{\mathbb{P}}
\newcommand{\1}{\mathbbm{1}}
\newcommand{\ck}[1]{\check{#1}}
\newcommand{\dt}[1]{\mathring{#1}}
\newcommand{\tleq}{\sqsubseteq}
\newcommand{\tle}{\sqsubset}
\newcommand{\cc}{\mathfrak{c}}
\newcommand{\sF}{\mathscr{F}}
\DeclareMathOperator{\dom}{dom}
\DeclareMathOperator{\ran}{ran}
\DeclareMathOperator{\al}{ht}
\DeclareMathOperator{\cof}{cof}
\DeclareMathOperator{\coi}{coi}
\title{The class of Aronszajn lines under epimorphisms}
\author{Lucas Polymeris}
\author{Carlos Martinez-Ranero}
\thanks{The authors would like to thank Justin Moore for reading an earlier version of this
  material and pointing out Remark~\ref{rmk:coherent-irrev}}
\thanks{The first named author was supported by ANID-Subdirección de Capital
  Humano/Magíster Nacional/2024 - 22241722}
\thanks{The second named author was partially supported by  Proyecto VRID-Investigación  No. 220.015.024-INV}
\keywords{Aronszajn line, Aronszajn tree, Countryman line, forcing, linear order, basis,  strongly surjective}
\subjclass[2010]{03E35, 03E04, 06A05}
\date{\today}
\begin{document}

\begin{abstract}
  A linear order $A$ is called \emph{strongly surjective} if for every
  nonempty suborder $B \eleq A$, there is an epimorphism from $A$ onto
  $B$ (denoted by $B \sleq A$). We show that under $\MA_{\aleph_{1}}$
  there is a strongly surjective Countryman line,
  answering some questions of  Dániel T. Soukup.
  We also study the general structure of
  the class of Aronszajn lines under $\sleq$, and compare it with the
  well known embeddability relation $\eleq$. Under
  $\PFA$ the class of Aronszajn lines and the class of countable linear
  orders enjoy similar nice properties when viewed under the
  embeddability relation; both are well-quasi-ordered and have a finite
  basis.  We show that this analogy does not extend perfectly to the
  $\sleq$ relation; while it is known that the countable linear orders
  are still well-quasi-ordered under $\sleq$, we
  show that already in $\ZFC$ the class of Aronszajn lines has an
  infinite antichain, and under $\MA_{\aleph_{1}}$ an infinite
  decreasing chain as well. We show that some of the analogy
  survives by proving that under $\PFA$, for some carefully
  constructed Countryman line $C$, $C$ and $C^{\star}$ form a $\sleq$-basis
  for the class of Aronszajn lines. Finally we show that this does
  not extend to all uncountable linear orders by proving that
  there is never a finite $\sleq$-basis for the uncountable real orders.
\end{abstract}

\maketitle

\section{Introduction}

Given linear orders $A$ and $B$, an \emph{epimorphism} from $B$ onto
$A$ is any monotone surjective function $f : B \sur A$. Let $A \sleq B$
denote that $A = \es$ or that there is an epimorphism from $B$ onto
$A$. Epimorphisms between linear orders where studied
in~\cite{Landraitis} and later revisited in~\cite{CamerloEtAl2015}
and~\cite{CamerloEtAl2019}. This relation is somewhat a dual of the
\emph{embeddability} relation: say that $A$ \emph{embeds}
into $B$ (denoted by $A \eleq B$) if there is a monotone injective
mapping $f : A \to B$. In this case we also say that $A$ is a
\emph{suborder of} $B$ or that $B$ contains a copy of $A$. Note that
$A \sleq B$ implies $A \eleq B$, while the converse is not
true in general.

In this work we give an analysis of the structure of the class of
Aronszajn lines under $\sleq$, and answer several questions that in
our opinion present themselves naturally once one understands the context and analogies at play. For this it will be particularly important to
have an idea of the development of the $\eleq$ relation on
linear orders, which has been widely studied, and has motivated many deep
and important subjects in Set Theory.  Examples of this are the
formulation of Martin's Axiom ($\MA$), which was conceived by looking
at the solution to Suslin's Problem, and the Proper Forcing Axiom
($\PFA$), whose formulation was influenced by Baumgartner's theorem on
$\aleph_{1}$-dense real orders, and later motivated modern methods
such as the method of Minimal Walks and new set-theoretic hypotheses
of broad interest such as the Open Coloring Axiom and the Mapping Reflection
Principle. We proceed to give
some historical and mathematical context on this development, and in
so doing we shall see that an analogy between the class
of Aronszajn lines and the countable linear orders is suggested under
$\PFA$.

\subsection*{Historical and mathematical context}

Let $\mathcal{R}$ be either $\sleq$ or $\eleq$. Clearly $\mathcal{R}$
is a preorder in the class of linear orders. We say that $A$ and $B$
are \emph{$\mathcal{R}$-equivalent} if $A \mathrel{\mathcal{R}} B$ and
$B \mathrel{\mathcal{R}} A$.  Let $\mathfrak{C}$ be a subclass of the
class of linear orders. We say that $A$ is \emph{$\mathcal{R}$-minimal} in
$\mathfrak{C}$ if $B \mathrel{\mathcal{R}} A$ implies $A
\mathrel{\mathcal{R}} B$ for every $B \in \mathfrak{C}$.  An
\emph{$\mathcal{R}$-basis} for $\mathfrak{C}$ is a subset $F$ of
$\mathfrak{C}$ such that for every $A \in \mathfrak{C}$, there is $X \in
F$ such that $X \mathrel{\mathcal{R}} A$.

We now review the properties of $\eleq$ in some classes of linear orders.
Consider first the class of countable (infinite) linear orders. One
easily sees that $\omega$ and $\omega^{\star}$\footnote{For a linear order $A$,
  $A^{\star}$ denotes its reverse, i.e., the linear order
  defined by $a <_{A^{\star}} b$ iff $b <_{A} a$.}
are the unique (modulo order isomorphism)
$\eleq$-minimal
elements in this class. Recall that $\omega$ is the first infinite ordinal,
isomorphic to the
natural numbers.
It is also easily checked that every infinite linear order contains a
copy of $\omega$ or $\omega^{\star}$, thus $\omega$ and $\omega^{\star}$ form a
$\eleq$-basis for the class of countable linear orders. What more can
be said about the structure of the countable linear orders under $\eleq$? For
starters the class has a top element $\mathbb{Q}$: every countable
linear order embeds into $\mathbb{Q}$. This follows from a theorem of
Cantor, and its proof is maybe the first example of what is now called the
back-and-forth method in logic. Even more can be said; a celebrated
theorem of Laver~\cite{Laver1970}, that answers a conjecture by Fra\"{\i}ssé,
says that the class of countable linear order is well-quasi-ordered by
$\eleq$.

\begin{definition}\label{df:wqo}
  A class $\mathfrak{C}$ is \emph{well-quasi-ordered} by a preorder relation $\mathcal{R}$,
  if it contains no uncountable antichain (an uncountable subset of $\mathfrak{C}$
  of pairwise $\mathcal{R}$-incomparable elements), and no infinite decreasing
  sequence $\cdots A_{2} \mathrel{\mathcal{R}} A_{1} \mathrel{\mathcal{R}} A_{0}$ of not
  $\mathcal{R}$-equivalent members of $\mathfrak{C}$.
\end{definition}

Being well-quasi-ordered can be interpreted as having a structure that is as similar
as possible to the structure of the class of ordinals under $\eleq$.

What about the class of uncountable linear orders? Since being well-quasi-ordered
implies the existence of a finite $\eleq$-basis, let us start from there. The first
difference is that not every uncountable linear order contains a copy
of $\omega_{1}$ or $\omega_{1}^{\star}$, for example $\mathbb{R}$ does not. Is
there a finite $\eleq$-basis for the uncountable real orders? Consistently no:
if $\CH$ holds then every $\eleq$-basis for the uncountable real orders is infinite (see \cite{Baumgartner1982}).

\begin{definition}
  For an infinite cardinal $\kappa$, a nonempty linear order $A$ is called
  \emph{$\kappa$-dense} if it has no left or right endpoint,
  and for each $a <_{A} b$, ${[a,b]}_{A}$ has size $\kappa$.
\end{definition}

An important theorem of Baumgartner~\cite{Baumgartner1973} says that
under $\PFA$ all $\aleph_{1}$-dense real orders are isomorphic.
Noting that each uncountable real order contains an $\aleph_{1}$-dense
suborder, Baumgartner's theorem implies that
under $\PFA$ the uncountable real orders have a single element
basis.
Fix such a set $X$. Does this imply that under $\PFA$, $\omega_{1}$, $\omega_{1}^{\star}$
and $X$ form a $\eleq$-basis for the uncountable linear orders? The answer is no,
because the following object can be proved to exists in $\ZFC$.

\begin{definition}
  An \emph{Aronszajn line} is an uncountable linear order $A$ that does
  not contain copies of $\omega_{1}$, $\omega_{1}^{\star}$, or any uncountable
  set of reals.
\end{definition}

Aronszajn lines can be constructed from Aronszajn trees, objects first
proved to exists by Aronszajn (see~\cite{Kurepa1935}).
Specker~\cite{Specker1949} rediscovered the notion of Aronszajn line,
which is why they are also called Specker orders or Specker types.
Shelah~\cite{Shelah1976} proved the existence of an Aronszajn line $C$,
such that $C$ and $C^{\star}$ do not have uncountable suborders in common,
thus, any $\eleq$-basis for the class of Aronszajn lines contains at least two elements.
He proved this by constructing what is now called a Countryman line, an object
conjectured (to not exist) by Countryman in an unpublished text.

\begin{definition}
  A \emph{Countryman line} is an uncountable linear order such that
  under the product order ($(x,y) \le (x',y')$ iff $x \le_{C} x'$ and $y \le_{C} y'$),
  $C^{2}$ is the union of countably many chains\footnote{A \emph{chain} is
    a set of pairwise comparable elements}.
\end{definition}

It can be proven that any Countryman line is necessarily Aronszajn,
and Shelah conjectured that consistently $C$ and $C^{\star}$ form a
$\eleq$-basis for the Aronszajn lines. Later
this became known as the Five Basis Conjecture: under $\PFA$, If $C$
is any Countryman line, and $R$ any $\aleph_{1}$-dense set of reals,
then $\omega_{1}$, $\omega_{1}^{\star}$, $C$, $C^{\star}$ and $R$ form a $\eleq$-basis for the uncountable linear orders. After about thirty years, this was finally
proved by Moore.

\begin{theorem} (Moore~\cite{Moore2006})
  Assume $\PFA$. If $C$ is any Countryman line, then
  $C$ and $C^{\star}$ form a $\eleq$-basis for the Aronszajn lines.
\end{theorem}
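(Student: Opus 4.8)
The plan is to follow the now-standard route: cut the problem down to a single, combinatorially transparent Countryman line, recognize the target Aronszajn line inside a tree, and then manufacture the embedding by a proper forcing, appealing to $\PFA$ to meet enough dense sets. For the reduction, fix once and for all (via Todorcevic's theory of coherent sequences) a coherent Aronszajn tree $T \seb \omega^{<\omega_{1}}$ and let $C_{0}$ be its lexicographic ordering; then $C_{0}$ is a Countryman line, and it is known (Todorcevic) that every Countryman line is $\eleq$-minimal among the uncountable linear orders. Granting this, it suffices to prove the single statement $(\ast)$: every Aronszajn line contains a copy of $C_{0}$ or of $C_{0}^{\star}$. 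Indeed, given an arbitrary Countryman line $C$, applying $(\ast)$ to the Aronszajn line $C$ gives $C_{0} \eleq C$ or $C_{0}^{\star} \eleq C$; since $C$, being Countryman, embeds into each of its uncountable suborders, this upgrades to an $\eleq$-equivalence between $C$ and $C_{0}$ (respectively between $C^{\star}$ and $C_{0}$), and applying $(\ast)$ to an arbitrary Aronszajn line $A$ and transporting along that equivalence yields $C \eleq A$ or $C^{\star} \eleq A$. So it remains to prove $(\ast)$.

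For $(\ast)$ I would first use the structure theory of Aronszajn lines (Todorcevic): up to passing to an uncountable suborder and possibly to the reverse, an Aronszajn line is realized inside the lexicographic ordering of an Aronszajn tree $U$, so it is enough to find, for every Aronszajn tree $U$, an uncountable $S \seb T$ together with a map $S \to U$ monotone for the two lexicographic orders \emph{or} a map $S \to U$ monotone from the lexicographic order of $T$ to the reverse lexicographic order of $U$; the disjunction here is what lets the two $\star$'s (the one in $C_{0}^{\star}$ and the one from possibly reversing $A$) be absorbed. To build such a map, force with finite approximations: a condition of $\P$ is a finite lexicographically monotone partial function $T \to U$ in a fixed one of the two orientations (recorded in the condition), together with a finite $\in$-increasing chain of countable elementary submodels of a suitable $H(\theta)$ as side conditions, subject to a ``promise'' clause ensuring that conditions agreeing appropriately on their models can be amalgamated. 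Routine density arguments — the key one being that any condition can be extended by a new point of the domain — show that a filter meeting the $\aleph_{1}$ many relevant dense sets (which $\PFA$ provides, once $\P$ is proper) produces an order embedding of an uncountable suborder of $C_{0}$ into $A$; by the $\eleq$-minimality of $C_{0}$ this is already a copy of $C_{0}$ (or of $C_{0}^{\star}$) inside $A$, which is exactly $(\ast)$.

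The entire weight of the argument therefore rests on showing that $\P$ is proper, and this is the step I expect to be the genuine obstacle. Given a countable $M \prec H(\theta)$ and a condition $p \in M$, one must produce an $(M,\P)$-generic extension $q \le p$; the trouble is that a competing condition $r \le q$ will, in general, move nodes of $T$ that sit on levels between the ``old'' data and $\delta := M \cap \omega_{1}$ to nodes of $U$ outside $M$, so $r$ cannot simply be reflected into $M$. This is exactly where the coherence of $T$ earns its keep: a node of $T$ on level $\delta$ agrees modulo a finite set with the restriction of a node on some level $< \delta$ that lies in $M$, so its intended image is determined up to finite error, and a pigeonhole argument — controlled by Todorcevic's oscillation map and its Ramsey-type properties — then locates inside $M$ a condition compatible with $r$. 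The same oscillation analysis is what decides which of the two orientations is attainable against a given $U$, and hence is responsible for the disjunction ``$C_{0} \eleq A$ or $C_{0}^{\star} \eleq A$'' rather than a single alternative. Carrying out these amalgamation lemmas — simultaneously juggling finitely many models, the coherence-modulo-finite constraints, the promise clause, and the oscillation pigeonhole at the critical level $\delta$ — is by far the most delicate part of the proof; once it is in place, $\PFA$ completes the argument with no further difficulty.
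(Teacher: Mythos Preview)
The paper does not prove this theorem; it is quoted in the introduction as a cited result of Moore, with no proof supplied, so there is no paper proof to compare your proposal against.

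On its own merits: the reduction to a single fixed $C_{0}$ is fine (Theorem~\ref{thm:clines-equiv} gives the $\eleq$-minimality of Countryman lines you need, though under $\MA_{\aleph_{1}}$ rather than in $\ZFC$ as your wording suggests; $\PFA$ covers that). The gap is in the properness argument for $(\ast)$. You invoke the coherence of $T$ to reflect \emph{domain} nodes of a competing condition $r$ into $M$, but the real obstruction is on the \emph{target} side: $U$ is an arbitrary Aronszajn tree with no coherence hypothesis whatsoever, and nothing you have written controls where the $U$-images of $r$ sit relative to $M$. The oscillation map lives on $T$, not on $U$, so ``a pigeonhole argument controlled by Todorcevic's oscillation map'' does not by itself produce a compatible condition inside $M$; and ``the oscillation analysis decides which orientation is attainable'' is precisely the statement that needs proving, not an input to the argument.

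For comparison, Moore's actual proof in~\cite{Moore2006} does not force an embedding directly in this fashion. It goes through the Mapping Reflection Principle: assuming the Aronszajn line $A$ has no Countryman suborder, one builds an open stationary set mapping, applies MRP to obtain a reflecting sequence, and derives a contradiction. Coherence and oscillation technology do enter, but in verifying the MRP hypothesis rather than in a side-conditions amalgamation of the kind you sketch. So your outline is not wrong in spirit, but the step you yourself flag as ``the genuine obstacle'' is indeed one, and the sketch you give does not resolve it.
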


Shortly after, Moore~\cite{Moore2009} also proved that under
$\PFA$ there is also a $\eleq$-top element in the class
of Aronszajn lines.

\begin{theorem}\label{thm:univ-line} (Moore~\cite{Moore2009})
  Assume $\PFA$. There is a universal Aronszajn line $\eta_{C}$.
\end{theorem}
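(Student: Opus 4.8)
The plan is to construct $\eta_{C}$ by a Fra\"iss\'e-style transfinite recursion of length $\omega_{1}$ and then to verify separately that it is an Aronszajn line and that it is universal (i.e.\ $\eleq$-largest among Aronszajn lines), the latter resting on Moore's basis theorem stated just above. First I would fix, using $\PFA$, a sufficiently homogeneous Countryman line $C$ — concretely a \emph{coherent} one, arising as a cofinal restriction of the lexicographic order of a coherent Aronszajn tree — so that $C$ and $C^{\star}$ admit strong amalgamation (countable partial isomorphisms between suborders extend, and copies of $C$ can be glued along countable overlaps) while still, by Shelah's theorem, having no common uncountable suborder and embedding no uncountable real order. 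Then $\eta_{C}$ is the direct limit of an increasing continuous chain $\langle A_{\alpha} : \alpha<\omega_{1}\rangle$ of countable linear orders: $A_{0}=\mathbb{Q}$, and $A_{\alpha+1}$ is obtained from $A_{\alpha}$ by filling all countable gaps and endpoints (to keep $\aleph_{1}$-density) and by resolving, one step at a time, every ``extension problem'' of the relevant type — every finite configuration together with a countable suborder isomorphic to a proper initial piece of $C$ or of $C^{\star}$ gets its next point attached. Each such step is an amalgamation that $\MA_{\aleph_{1}}$ (a consequence of $\PFA$) lets us realize with the properties we need, and standard bookkeeping handles all $\omega_{1}$ tasks; the result $\eta_{C}=\bigcup_{\alpha<\omega_{1}}A_{\alpha}$ is $\aleph_{1}$-dense and carries a club-dense family of coherently glued copies of $C$ and of $C^{\star}$.

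The heart of the argument, and the step I expect to be the main obstacle, is verifying that $\eta_{C}$ is Aronszajn. That $\eta_{C}$ contains no copy of $\omega_{1}$ or $\omega_{1}^{\star}$ should follow from a pressing-down argument: an uncountable well-ordered (or reverse-well-ordered) suborder localizes into one of the coherent $C$-pieces, contradicting that $C$ is Aronszajn. Ruling out an uncountable separable suborder is subtler: one attaches to $\eta_{C}$ a canonical tree (closed bounded subsets, or a Todorcevic-style coding of the order) and shows that the recursion keeps that tree Aronszajn, so that no Cantor scheme of nested intervals — hence no uncountable real order — can appear inside $\eta_{C}$. The coherence threaded through the construction is exactly what forbids the incompatible ``splitting patterns'' such a scheme would require, and the real tension to manage is making the limit saturated enough for universality while still maintaining enough coherence for this argument.

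For universality, given an arbitrary Aronszajn line $A$ I would first pass to an $\aleph_{1}$-dense suborder (one always exists, and since $A$ has no countable dense subset it remains Aronszajn), so we may assume $A$ is $\aleph_{1}$-dense. By Moore's basis theorem $A$ contains a copy of $C$ or of $C^{\star}$; running this over a club of countable elementary submodels and using that the basis theorem relativizes, one gets a continuous decomposition of $A$ into an $\omega_{1}$-sequence of ``blocks'', each of which, over a countable skeleton, is a suborder of a copy of $C$ or of $C^{\star}$. Then carry out a back-and-forth of length $\omega_{1}$ between $A$ and $\eta_{C}$: at a countable stage one has a partial embedding of an initial portion of $A$; density of $\eta_{C}$ absorbs new points of $A$ landing in old gaps, and the club-dense supply of coherent $C$- and $C^{\star}$-copies in $\eta_{C}$ absorbs the next Countryman block. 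The union is an embedding $A \hookrightarrow \eta_{C}$.

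In summary, I expect the Aronszajn-ness step to be the technical crux (the conflict between saturation and keeping the associated tree special/coherent), while the block decomposition of an arbitrary Aronszajn line in the universality step is the other delicate point — and it is there that $\PFA$ is used essentially, beyond the $\MA_{\aleph_{1}}$ that suffices for running the Fra\"iss\'e construction itself.
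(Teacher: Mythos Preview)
The paper does not give its own proof of this theorem --- it is cited from Moore --- but immediately after the statement it records the actual construction: $\eta_{C}$ is the lexicographic ordering of the eventually-$0$ $\omega$-sequences in $C^{\star} + \{0\} + C$. This is completely different from, and much simpler than, your Fra\"{\i}ss\'e-style recursion. With the explicit definition, Aronszajn-ness is routine rather than the crux: $\eta_{C}$ sits inside a countable lexicographic power of the Aronszajn line $C^{\star}+\{0\}+C$, and a short argument (projecting any putative copy of $\omega_{1}$, $\omega_{1}^{\star}$, or an uncountable real order onto an appropriate coordinate) disposes of all three obstructions at once. The genuine work in Moore's paper is universality, and his argument is not your block decomposition but rather exploits the self-similarity of $\eta_{C}$ (each nontrivial interval, and each complementary interval after deleting a countable set, again contains a copy of $\eta_{C}$) together with the basis theorem to run a recursive embedding.

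Your proposal, by contrast, has real gaps. The ``extension problems'' you solve at successor stages are never specified precisely enough to determine what the limit is, and a Fra\"{\i}ss\'e limit of countable linear orders along an $\omega_{1}$-chain that is saturated for finite-plus-countable amalgamation tasks will in general \emph{not} be Aronszajn --- indeed, absorbing all countable configurations is exactly how one builds an $\aleph_{1}$-dense real order, not how one avoids one. You acknowledge this is the crux, but the ``coherence threaded through the construction'' and the ``canonical tree'' are never defined, so the pressing-down and anti-Cantor-scheme arguments have nothing concrete to work with. The explicit construction avoids this tension entirely: one writes down the order first and checks its properties afterward, rather than trying to balance saturation against Aronszajn-ness during a recursion.
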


The construction of $\eta_{C}$ is as follows: fix a Countryman line $C$,
and take the lexicographic ordering of
the $\omega$-sequences on $C^{\star} + \{0\} + C$ which are eventually
$0$. Note that by replacing $C$ with $\omega$ in the previous construction
one gets an isomorphic copy of $\mathbb{Q}$. This suggests
an analogy between the countable linear orders and the class of Aronszajn lines
under $\PFA$. Martínez-Ranero extended this
analogy further by proving that the analogous of Laver's theorem holds in
the class of Aronszajn lines.

\begin{theorem}\label{thm:wqo-clines} (Martínez-Ranero~\cite{Martinez})
  Assume $\PFA$. The class of Aronszajn lines is well-quasi-ordered by $\eleq$.
\end{theorem}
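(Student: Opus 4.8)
The statement to aim at is in fact stronger than what is written: I would prove that the class of Aronszajn lines is \emph{better-quasi-ordered} (bqo) in the sense of Nash-Williams by $\eleq$, and then use that bqo implies wqo. The reason to pass to bqo is the one familiar from Laver's work: wqo is not preserved by the infinitary sum and tree constructions the argument relies on, whereas bqo is. The overall plan is to transcribe Laver's proof of Fra\"{\i}ss\'e's conjecture into the Aronszajn setting, with Countryman lines playing the role that singletons (and more generally scattered atoms) play in the countable case; the genuinely new inputs are Moore's universality theorem and a small lemma about Countryman lines.

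Fix a Countryman line $C$. By Theorem~\ref{thm:univ-line} every Aronszajn line embeds into $\eta_C$, so it suffices to show that the suborders of $\eta_C$ form a bqo under $\eleq$. Alongside this I would establish a ``two types'' lemma: under $\PFA$ any two Countryman lines are $\eleq$-equivalent, or one is $\eleq$-equivalent to the reverse of the other; consequently every uncountable suborder of $C$ is $\eleq$-equivalent to $C$, and every uncountable suborder of $C^{\star}$ to $C^{\star}$. This is proved by playing Moore's two-element basis theorem against Shelah's theorem that a Countryman line and its reverse share no uncountable suborder: for a Countryman line $D$ one gets $C\eleq D$ or $C^{\star}\eleq D$, and also $D\eleq C$ or $D^{\star}\eleq C$, and each mixed case yields an embedding of some Countryman line into its reverse, a contradiction. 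So only two uncountable building blocks, $C$ and $C^{\star}$, ever occur.

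For the bqo step, view $\eta_C$ as the lexicographic order on the tree $T=(C^{\star}+\{0\}+C)^{<\omega}$, exactly as $\mathbb{Q}$ is the lexicographic order on $\mathbb{Z}^{<\omega}=(\omega^{\star}+\{0\}+\omega)^{<\omega}$ (the construction of the excerpt with $C$ replaced by $\omega$). A suborder $A\eleq\eta_C$ is analysed, in Hausdorff/Laver style, by iterating the first-coordinate projection: this presents $A$ as indexed along a subtree of $T$ whose nodes are labelled by the suborder of $C^{\star}+\{0\}+C$ of branches actually used there. By the two-types lemma together with Laver's theorem that the countable linear orders are bqo, the resulting label alphabet --- the countable linear orders augmented by the two classes $[C]$ and $[C^{\star}]$ --- is bqo, since adjoining finitely many elements to a bqo again yields a bqo. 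One then checks that an embedding $A\eleq A'$ between suborders of $\eta_C$ is witnessed by an embedding of the associated labelled trees, and invokes the Nash-Williams--Laver theorem that labelled trees over a bqo alphabet, under the appropriate lexicographically compatible embeddability, are bqo. It follows that the suborders of $\eta_C$, hence all Aronszajn lines, are bqo, and therefore wqo, under $\eleq$.

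I expect the main obstacle to be this last step: arranging the tree coding so that order-embeddability between suborders of $\eta_C$ corresponds faithfully to labelled-tree embeddability, and matching the result to the precise hypotheses of the abstract bqo tree lemma. The delicate points are the $\mathbb{Q}$-like dense, infinitely branching structure (so that elements of $A$ live on $\omega$-branches rather than at leaves, handled just as Laver handles the non-scattered countable orders), the interleaving of the three factors $C^{\star}$, $\{0\}$ and $C$ at each coordinate, and the fact that a suborder may meet a node only in a countable piece. Everything else is quoted: universality from Moore, the behaviour of Countryman lines from Shelah~\cite{Shelah1976}, and the two bqo ingredients from Laver~\cite{Laver1970} and Nash-Williams.
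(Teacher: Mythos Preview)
This theorem is not proved in the present paper; it is quoted from Mart\'{\i}nez-Ranero~\cite{Martinez}. The paper does, however, state one of the key structural ingredients of that proof as Lemma~\ref{lem:D_a}: the hierarchy $D_{\alpha}^{\pm}$ ($\alpha<\omega_2$), which plays the role of Hausdorff rank. The actual argument shows (via Moore's dichotomy, Theorem~\ref{thm:univ-aline-strong}) that every fragmented Aronszajn line sits at some level of this hierarchy, and then runs a Laver-style bqo induction on the rank, using that each $D_{\alpha+1}^{\pm}$ is a $C$- or $C^{\star}$-indexed sum of lines of strictly lower rank; the non-fragmented lines are all $\eleq$-equivalent to $\eta_C$ and contribute a single class.

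Your plan is in the right spirit but takes a shortcut that does not work as stated. You want to encode a suborder $A \eleq \eta_C$ directly as a labelled subtree of $(C^{\star}+\{0\}+C)^{<\omega}$ and then invoke a tree bqo theorem. The gap is exactly the step you flag: ``an embedding $A \eleq A'$ between suborders of $\eta_C$ is witnessed by an embedding of the associated labelled trees.'' This fails because the tree attached to $A$ records a particular embedding of $A$ into $\eta_C$, not the order type of $A$, and an arbitrary order-embedding $A \to A'$ has no reason to respect the first-coordinate projection. Concretely, a single copy of $C$ can sit inside $\eta_C$ as a set of sequences with constant first coordinate (tree of height $1$) or spread across uncountably many first coordinates (tree of large height); there is no canonical tree attached to the order type, and no map of trees is induced.

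This is precisely why the real proof uses the intrinsic rank coming from the $D_{\alpha}^{\pm}$: that rank is an $\eleq$-invariant, and the decomposition of a line of rank $\alpha+1$ as a $C$- or $C^{\star}$-sum of lines of rank $\le\alpha$ is forced by the internal structure of the line rather than by an accidental placement inside $\eta_C$. To repair your approach you would effectively have to rediscover this rank and the structure lemma behind Lemma~\ref{lem:D_a}; your ``two types'' lemma (which is Theorem~\ref{thm:clines-equiv} here) and the Laver/Nash-Williams bqo machinery are then used exactly as you anticipate.
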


Now let us ask about $\sleq$. Again it is not hard to see that
$\omega$, $\omega^{\star}$, $\omega + 1$ and $1 + \omega^{\star}$ form
a $\sleq$-basis for the countable linear orders, and that $\mathbb{Q}$ is
the $\sleq$-top element again. More surprisingly
Landraitis proved in~\cite{Landraitis}
(and later Camerlo, Carroy and Marcone in~\cite{CamerloEtAl2015}) that this class is also well-quasi-ordered by $\sleq$. We believe that
the following questions present themselves naturally.

\begin{question}\label{q:sur-basis}
  Assume $\PFA$. Is there a finite $\sleq$-basis for the class of Aronszajn lines?
\end{question}

\begin{question}\label{q:sur-wqo}
  Assume $\PFA$. Is the class of Aronszajn lines well-quasi-ordered by $\sleq$?
\end{question}

Camerlo, Carroy and Marcone continued\cite{CamerloEtAl2019} their
study of the $\sleq$ relation introducing the following notion.

\begin{definition}
  A linear order $A$ is called \emph{strongly surjective} if
  $B \sleq A$ whenever $B$ is a suborder of $A$.
\end{definition}

Easy examples of strongly surjective orders are $\omega$, $\omega^{\star}$ and
$\mathbb{Q}$. The authors of~\cite{CamerloEtAl2019} ask if there
are uncountable strongly surjective orders. They prove that consistently
yes: if the conclusion of Baumgartner's theorem holds,
then any $\aleph_{1}$-dense set of reals is strongly surjective. They also
proved that any strongly surjective linear order is short\footnote{
  A linear order $L$ is called \emph{short} if $\omega_{1},\omega_{1}^{\star}\neleq L$.}.
Then it is natural to ask for a strongly surjective Aronszajn line.

Shortly after, Soukup~\cite{Soukup} studied these questions in depth. His
results include that
consistently with $\CH$ there are no uncountable strongly
surjective orders, and that consistently there are Aronszajn strongly surjective
orders. In Particular, $\diamondsuit^{+}$ implies the existence of a
strongly surjective lexicographically ordered Suslin tree. He then
asks the following natural questions (see~\cite[Problems~40, 41 and 46]{Soukup}).

\begin{question}\label{q:ma->srs}
  Does $\MA_{\aleph_{1}}$ implies the existence of uncountable
  strongly surjective linear orders?
\end{question}

\begin{question}\label{q:srs-cline}
  Can a strongly surjective linear order be Countryman?
\end{question}

\begin{question}\label{q:srs-real-aronszajn}
  Can there be real and Aronszajn uncountable strongly surjective orders
  simultaneously?
\end{question}

The analogy of the countable linear orders with the class of Aronszajn lines
under $\PFA$ previously mentioned also suggests the following question.

\begin{question}\label{q:univ-srs}
  Assume $\PFA$. Is $\eta_{C}$ (or any other universal Aronszajn line) strongly surjective?
\end{question}

\subsection*{Main results}
Let $A$ be an Aronszajn line, a \emph{decomposition for $A$} is
a $\seb$-increasing sequence $D := \tup{D_{\xi} : \xi < \omega_{1}}$
of countable subsets of $A$,
that covers $A$ and that is continuous, i.e., that at limits is the
union. A well known fact is that any Aronszajn line has size $\aleph_{1}$,
and thus that these decompositions always exist.
The \emph{complementary intervals} of $A \setm D_{\xi}$ are
the intervals of $A$ that are maximal with respect to not having
points in $D_{\xi}$. Since $D_{\xi}$ is countable, and $A$ is
Aronszajn, there are Countably many complementary intervals of $A
\setm D_{\xi}$. Now define $\L(A,D)$ (resp $\La(A,D)$)
to be the set of
$\xi < \omega_{1}$ such that some (resp. every) complementary interval has
a left endpoint. The definition of $\R$ (resp. $\Ra$) is the same
replacing left with right.

With this notation, we say that $A$ is \emph{non-stationary} if there
is a decomposition $D$ for $A$ such that $\L(A)$ and $\R(A)$ are
non-stationary subsets of $\omega_{1}$. In proving Theorem~\ref{thm:univ-line},
Moore \cite{Moore2009} also proved
that under $\MA_{\aleph_{1}}$ any
two $\aleph_{1}$-dense and non-stationary Countryman lines are
isomorphic or reverse isomorphic (under $\PFA$ this was proved before in \cite{ShelahAbraham}). As our first result, we apply this
theorem to prove that under $\MA_{\aleph_{1}}$, any $\aleph_{1}$-dense
and non-stationary Countryman line is strongly surjective.
This answers Questions
\ref{q:ma->srs}, \ref{q:srs-cline} and \ref{q:srs-real-aronszajn}.
For the last one note that one can force $\MA_{\aleph_{1}}$ and the conclusions
of Baumgartner's theorem on $\aleph_{1}$-dense set of reals simultaneously.
Also both follow from $\PFA$.

Similar to Baumgartner's argument \cite{Baumgartner1982} to show that there
are many non-isomorphic Aronszajn line, we show that $\L$ and $\R$
can be used to construct $\sleq$-incomparable Aronszajn lines.

\begin{theorem}
  Suppose that $A$ and $B$ are Aronszajn lines with respective
  decompositions $D$ and $E$. If $A \sgeq B$ then
  $\La(A,D) \setm \La(B,E)$ and $\Ra(A,D) \setm \Ra(B,E)$
  are non-stationary subsets of $\omega_{1}$.
\end{theorem}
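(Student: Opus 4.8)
The plan is to prove the statement for $\La$; the claim for $\Ra$ then follows by applying the same argument to the reverse orders $A^{\star}, B^{\star}$, since an epimorphism $A \sur B$ induces one $A^{\star} \sur B^{\star}$ and left endpoints become right endpoints under reversal. Fix an epimorphism $f : A \sur B$ witnessing $A \sgeq B$. First I would pass to a convenient club: choose a continuous $\in$-chain $\tup{M_{\xi} : \xi < \omega_{1}}$ of countable elementary submodels of some $H(\theta)$ with $A, B, f, D, E \in M_{0}$, and set $\delta_{\xi} = M_{\xi} \cap \omega_{1}$. A standard argument shows $D_{\delta_{\xi}} = A \cap M_{\xi}$ and $E_{\delta_{\xi}} = B \cap M_{\xi}$, so that along the club $C = \{\delta_{\xi} : \xi < \omega_{1}\}$ the complementary intervals of $A \setm D_{\delta}$ (resp. $B \setm E_{\delta}$) are exactly the maximal convex subsets of $A$ (resp. $B$) disjoint from $M_{\xi}$. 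It therefore suffices to show that for every $\delta \in C$, if every complementary interval of $A \setm M_{\xi}$ has a left endpoint then so does every complementary interval of $B \setm M_{\xi}$; this yields $\La(A,D) \setm \La(B,E) \seb \omega_{1} \setm C$.

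The engine of the argument is a pushing lemma: if $I$ is a complementary interval of $B \setm M_{\xi}$ and $f^{-1}[I]$ has a minimum $a^{\ast}$, then $\min I = f(a^{\ast})$. Indeed $f(a^{\ast}) \in I$, and for any $y \in I$ surjectivity gives $a$ with $f(a) = y$, whence $a \in f^{-1}[I]$, $a \ge a^{\ast}$, and $y = f(a) \ge f(a^{\ast})$. Since $f$ is monotone, $f^{-1}[I]$ is a nonempty convex set; as $f[A \cap M_{\xi}] \seb B \cap M_{\xi}$ it is disjoint from $M_{\xi}$, so it lies inside a unique complementary interval $J$ of $A \setm M_{\xi}$, which has a left endpoint $a_{J}$ once $\delta \in \La(A,D)$. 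The whole problem then reduces to showing $f(a_{J}) \in I$: for then $a_{J} = \min f^{-1}[I]$ and the pushing lemma produces $\min I$.

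The main obstacle is precisely that $f^{-1}[I]$ may begin strictly above $a_{J}$: one always has $f(a_{J}) \le I$ (as $a_{J} \le a$ for some $a \in f^{-1}[I]$), but if $f(a_{J}) < I$ then $f^{-1}[I]$ sits above an interior point of $J$ and could fail to attain a minimum. To rule this out I would prove the key claim, which is where Aronszajn-ness enters: for any $c \in B \cap M_{\xi}$, the cut of $A$ determined by $c$, namely $\tup{\{a : f(a) \le c\}, \{a : f(a) > c\}}$, is never internal to a complementary interval of $A \setm M_{\xi}$. Both sides of this cut lie in $M_{\xi}$, being definable from $f, c \in M_{\xi}$; if either side has a realized endpoint at the cut then that endpoint is itself in $M_{\xi}$, while if the cut is a genuine gap then, since $A$ is Aronszajn, its left $\cof$ and right $\coi$ are countable, so by elementarity witnessing cofinal and coinitial $\omega$-sequences can be found inside $M_{\xi}$. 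In every case points of $M_{\xi}$ accumulate at the cut from the relevant side, so no $M_{\xi}$-free convex set can contain it in its interior.

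It then remains to produce a $c$ contradicting $f(a_{J}) < I$. Writing $L = \{x \in B : x < I\}$, maximality of $I$ together with the observation that $\max L$, when it exists, must lie in $M_{\xi}$, shows that $B \cap M_{\xi}$ is cofinal in $L$; hence there is $c \in B \cap M_{\xi}$ with $f(a_{J}) \le c < I$. For this $c$ we have $a_{J} \in \{a : f(a) \le c\}$ and $f^{-1}[I] \seb \{a : f(a) > c\}$, and both meet $J$, so the associated cut is internal to $J$, contradicting the key claim. Therefore $f(a_{J}) \in I$, which completes the reduction and the proof. I expect the key claim — the interaction of $M_{\xi}$-definable cuts of $A$ with the complementary intervals, powered by the countable cofinality and coinitiality of gaps in an Aronszajn line — to be the crux; arranging $D_{\delta_{\xi}} = A \cap M_{\xi}$ and the cofinality of $B \cap M_{\xi}$ in $L$ is routine bookkeeping.
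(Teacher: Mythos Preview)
Your proof is correct and follows essentially the same approach as the paper's (Lemma~\ref{lem:non_sur}): pass to a club where the decompositions line up, show that the preimage $f^{-1}[I]$ of a complementary interval on the $B$-side sits inside a complementary interval $J$ on the $A$-side, and argue that the left endpoint $a_J$ of $J$ must map into $I$ (whence $f(a_J)=\min I$). The only real difference is packaging: you use a chain of elementary submodels and your ``key claim'' about $M_\xi$-definable cuts, whereas the paper works by hand with a club closed under $f$ and a right inverse $g$, using the approximation property of Definition~\ref{df:approximation}; your $c\in B\cap M_\xi$ and the $m\in A\cap M_\xi$ produced by the key claim play exactly the role of the paper's $x''$ and $g(x'')$.
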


We then apply this theorem to construct an infinite
$\sleq$-antichain of Countryman lines, giving a negative answer to Question~\ref{q:sur-wqo}.

Fix $C$ and $C'$ be $\aleph_{1}$-dense Countryman lines. For $C \sgeq C'$ to
hold, an obvious necessary condition is that $C' \eleq C$. And we
know that under $\MA_{\aleph_{1}}$, either $C' \eleq C$, or $C' \eleq C^{\star}$.
The next theorem shows that under $\MA_{\aleph_{1}}$, this together with
the previous theorem, are roughly the unique limitations if
restricted to $\aleph_{1}$-dense lines.

\begin{theorem}\label{thm:main2}
  Assume $\MA_{\aleph_{1}}$. Let $C' \eleq C$ be $\aleph_{1}$-dense
  Countryman lines with respective decompositions $D$ and $D'$.
  If for some club $E \seb \omega_{1}$,
  $\L(C,D) \cap E \seb \La(C',D')$ and $\R(C,D) \cap E \seb \Ra(C',D')$,
  then $C \sgeq C'$.
\end{theorem}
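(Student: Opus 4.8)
The plan is to build an epimorphism $f : C \sur C'$ by a transfinite recursion of length $\omega_1$ along the decompositions, using the isomorphism theorem for $\aleph_1$-dense non-stationary Countryman lines as the engine at each step. First I would fix the club $E$ and, by thinning, assume $E$ is closed under the bookkeeping needed below; I would also arrange (by composing with the given embedding $C' \eleq C$ and passing to a cofinal/coinitial-friendly reindexing) that $D'_\xi$ and the image $D_\xi$ are "aligned" for $\xi \in E$, meaning each complementary interval $I$ of $C' \setm D'_\xi$ is to be matched with a union of complementary intervals of $C \setm D_\xi$ lying in the corresponding region. The hypothesis $\L(C,D)\cap E \seb \La(C',D')$ and $\R(C,D)\cap E \seb \Ra(C',D')$ is exactly what guarantees that whenever a complementary interval on the $C$-side has a left (resp. right) endpoint, the corresponding interval on the $C'$-side has all its complementary intervals with left (resp. right) endpoints — so a monotone surjection collapsing the former onto the latter can be chosen respecting endpoints, which is the local obstruction to extending an epimorphism.

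The core of the recursion: at stage $\xi \in E$ I will have built a finite (or countable) partial monotone surjection between $D_\xi$ and $D'_\xi$ that is "interval-respecting" — it induces, for each complementary interval $J$ of $C' \setm D'_\xi$, a designated complementary interval (or convex union thereof) $\Phi(J)$ of $C \setm D_\xi$ that is supposed to map onto $J$, with the endpoint-compatibility from the previous paragraph. To extend to stage $\xi + 1$ (or the next point of $E$), I must handle the countably many new points of $D'_{\xi+1} \setm D'_\xi$: each falls in some $J$, partitioning $J$ into finitely/countably many subintervals, and I must correspondingly partition $\Phi(J)$. Here is where Moore's theorem enters: $\Phi(J)$ and $J$, together with their induced decompositions from $D$ and $D'$, are $\aleph_1$-dense non-stationary Countryman lines (non-stationarity is inherited on the relevant clubs, $\aleph_1$-density by choosing $D$, $D'$ cleverly or by a preliminary reduction), so $\Phi(J) \cong J$ or $\Phi(J) \cong J^\star$ — and the endpoint data rules out the reversal, giving $\Phi(J) \cong J$. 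Using such an isomorphism (chosen generically via a $\MA_{\aleph_1}$ forcing argument so that countably many of them cohere) I can transport the new points of $J$ back to $\Phi(J)$ and refine the assignment, keeping interval-respecting and endpoint-compatibility. At limit stages in $E$ I take unions, and continuity of the decompositions plus the closure of $E$ ensures the partial map covers $D_\lambda$ and $D'_\lambda$.

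The main obstacle I anticipate is coherence: a single application of Moore's isomorphism theorem handles one interval, but across the recursion I need the chosen isomorphisms on the countably many subintervals produced at each stage to be mutually compatible and to glue, in the limit, into a single well-defined monotone surjection $C \sur C'$ rather than something multivalued or non-monotone at the "seams" where intervals were subdivided. I expect to resolve this by phrasing the whole construction as a single $\sigma$-centred (or $\aleph_1$-Knaster) forcing whose conditions are finite interval-respecting partial epimorphisms with the endpoint constraints, proving it is $\aleph_1$-linked and that the density arguments (add a point of $C'$ to the range; add a point of $C$ to the domain) succeed precisely because of the subset relations on $\L,\La,\R,\Ra$ along $E$; then $\MA_{\aleph_1}$ gives a generic filter whose union is the desired epimorphism. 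A secondary technical point is the preliminary reduction making $C' \eleq C$ concrete enough that "the corresponding interval" is unambiguous — I would handle this by replacing $C$ with the suborder that is the image of $C'$ plus, in each complementary interval of that image, a copy of a fixed non-stationary Countryman piece, and checking this replacement changes none of the $\L,\La,\R,\Ra$ hypotheses modulo a club.
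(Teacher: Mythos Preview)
Your proposal eventually lands on the right framework---a ccc forcing of finite partial epimorphisms, with $\MA_{\aleph_1}$ supplying the generic filter---and you correctly locate the role of the $\L/\La$, $\R/\Ra$ hypotheses in the density arguments. But there is a genuine gap in how you get the chain condition.

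The transfinite recursion along decompositions using Moore's isomorphism theorem does not get off the ground: the complementary intervals $\Phi(J)$ and $J$ are $\aleph_1$-dense Countryman, but they are \emph{not} normal in general---the whole content of $\L$ and $\R$ is precisely that they may have endpoints at stationarily many stages---so Theorem~\ref{thm:moore-t1.1} does not apply to them. You acknowledge the coherence obstacle and pivot to a single forcing, which is the right move, but the recursion paragraph should simply be dropped.

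The real gap is in the forcing itself. The poset you describe---finite interval-respecting partial epimorphisms with endpoint constraints---is essentially the naive $P$ of the paper, and the paper notes explicitly that this is \emph{never} ccc and in fact collapses $\omega_1$. The endpoint constraint (condition~(iii) in the paper's Definition~\ref{df:forcing}) is needed only for density; it does nothing for the chain condition. What is missing is the pair of club-level compatibility constraints (conditions~(i) and~(ii)): each interval $[a_l,a_r]$ in the domain, its $\Delta$-value $a_m$, and its image must all sit at the same level of the club $E$, and $\Delta$-values between distinct intervals must match the $\Delta$-values of their images modulo $E$. These constraints are exactly what allow one to associate to each condition $p$ a condition $q(p)$ in Moore's isomorphism forcing $Q_E$, and then to pull the ccc of $Q_E$ back to $P_E$ via a refinement argument (Lemma~\ref{lem:moore-technical} in the paper). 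Your expectation that the forcing is $\sigma$-centred or $\aleph_1$-linked is not borne out; the ccc proof is genuinely by reduction to Moore's forcing, not by any direct amalgamation argument.
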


The proof consists of constructing a ccc forcing notion, strongly
based in Moore's forcing
in \cite{Moore2009}, which under the correct hypotheses
introduces an epimorphism from $C$ onto $C'$.

We immediately proceed to give three applications of this theorem.
First, an alternative
proof that any $\aleph_{1}$-dense and non-stationary Countryman line is
strongly surjective under $\MA_{\aleph_{1}}$. Secondly the construction of
an infinite $\sle$-decreasing chain of Countryman lines
under $\MA_{\aleph_{1}}$, thus giving
an alternative answer to Question~\ref{q:sur-wqo}. And finally, in conjunction
with a theorem of Moore, we show that it implies that the class
of Aronszajn lines has a two element $\sleq$-basis under $\PFA$.
Thus answering affirmatively Question~\ref{q:sur-basis}.
We also show that this cannot be extended to all
uncountable linear orders, namely we prove that there is never
a finite $\sleq$-basis for the uncountable linear orders.

The only question that remains open is Question~\ref{q:univ-srs}.

\subsection*{Organization}
In Section~\ref{sec:prelim} we develop the mathematical background that is
needed later.
In Section~\ref{sec:srs} we show that under
$\MA_{\aleph_{1}}$ there are many strongly surjective Aronszajn lines.
In particular we prove that $\MA_{\aleph_{1}}$ implies the existence
of a strongly surjective Countryman line.
In Section~\ref{sec:decompositions}
we develop the concept of decompositions for Aronszajn lines, and
construct Countryman lines with particular configurations of $\L$ and
$\R$.
In Section~\ref{sec:infinite-antichain} we construct an infinite
$\sleq$-antichain of Countryman lines under $\ZFC$.
In Section~\ref{sec:infinite-chain} we construct an infinite
$\sle$-decreasing chain of Countryman lines under $\MA_{\aleph_{1}}$. It is
in this section that we present our main Forcing.
In Section~\ref{sec:basis} we apply the forcing to show that
under $\PFA$, the class of Aronszajn lines has a two element $\sleq$-basis.
We also show that there is never a $\sleq$-finite basis for the uncountable real
orders.
In Section~\ref{sec:irreversible} we take a detour from the $\sleq$ relation
to answer a question on Countryman lines that seemed natural to us,
but does not seem to have been considered in the literature.

\subsection*{Notation and prerequisites}

We assume familiarity with standard terminology and notations of Set
Theory. We refer to~\cite{Jech2003} and~\cite{Kunen1980} for undefined
notions. Particularly relevant is the notion of trees and their interplay with
linear orders; the material in~\cite{Todorcevic1984} should prove
helpful for a deeper understating.
Regarding set-theoretic hypotheses, we will deal primarily
with $\MA_{\aleph_{1}}$ and $\PFA$, but with the exception
of Section~\ref{sec:infinite-chain} in which knowledge of the
statement of $\MA_{\aleph_{1}}$ is necessary, these can be used
as black boxes in the rest of the text. Knowledge of the
specifics of the method of Forcing will be needed
in Sections~\ref{sec:infinite-chain} and \ref{sec:irreversible}.

A \emph{sequence} is any function with domain an ordinal. We let
$\tleq$ denote sequence extension and $\tle$ proper sequence extension.
If $S$ is
a set of sequences with ranges in a linearly ordered set $A$, then
$S$ is naturally linearly ordered by the lexicographic ordering, given by
$t <_{\lex} s$ if either $t \tle s$, or $t(\xi) <_{A} s(\xi)$
where $\xi$ is the first ordinal at which they differ.

When talking about orders we will usually not specify the order relation,
and use subscripts to do it as it was done above in the definition
of $\le_{\lex}$. A plain $<$ usually refers to the standard order
on ordinals.
When thinking of $A \times B$ as a linear order, we give it the
lexicographically order, that is $A$ copies of $B$.
This is somewhat nonstandard since then
$\alpha \times \beta \cong \beta \cdot \alpha$, when thinking of
ordinal multiplication. If for all $a \in A$, $L_{a}$ is a
linear order, $\sum_{a \in A}L_{a}$ denotes the order consisting
of lifting each $a \in A$ to a copy of $L_{a}$. Formally
this is the lexicographic ordering of
$\bigcup_{a \in A}\{a\} \times L_{a}$. Note that $A \times B = \sum_{a \in A}B$.

An \emph{interval}
is any nonempty set $I \seb A$ that is \emph{convex} in the sense that
if $a <_{A} b$ are in $I$, then ${[a,b]}_{A} \seb I$, $I$ is called
\emph{trivial} if $|I| = 1$. Examples of intervals
are intervals of the form ${[a,b]}_{A}$, ${\left]a,b\right[}_{A}$,
${\left[a,b\right[}_{A}$ and ${\left]a,b\right]}_{A}$ which are
defined as usual. We usually drop the subscripts if this causes no confusion.
At some points we will allow $a$ and $b$ to be $+\infty$/$-\infty$,
giving the obvious interpretation. For example
$\left[a,+\infty \right[$ stands for $\{b \in A : a <_{A}b\}$.
Note that these are not necessarily all
the intervals of $A$. For example $\mathbb{Q} \cap [0,\sqrt{2}]$ is an interval
of $\mathbb{Q}$ that cannot be written this way.

If $A$ is linearly ordered we say that $A$ has a left
(resp. right) \emph{endpoint} if it has a minimum (resp. maximum).

\section{Aronszajn and Countryman lines}\label{sec:prelim}

In this section we review some known facts about Aronszajn lines, and
some of their known properties under axioms such as $\MA_{\aleph_{1}}$
and $\PFA$.

\begin{definition}
  For a linear order $L$ and $X \seb L$, the \emph{complementary intervals}
  of $L \setm X$ are the intervals of $L$
  which are maximal with respect to not
  having points in $X$.
\end{definition}

Note that the complementary intervals of $L \setm X$ always form a
partition of $L \setm X$ into intervals, and that if $L$ is Aronszajn
and $X$ is countable, then this partition is countable.
Following~\cite{TodorcevicWalks} we make the
following definition.

\begin{definition}\label{df:decomposition}
  Let $A$ be an Aronszajn line. A \emph{decomposition for} $A$ is
  a $\seb$-increasing and continuous (at limits is the union) sequence
  $\tup{D_{\xi} : \xi < \omega_{1}}$ of countable subsets of $A$
  that covers $A$.
\end{definition}

An important fact is that any Aronszajn line has size $\aleph_{1}$
(see for example~\cite[Corollary~4.3]{Baumgartner1982}),
therefore decompositions always exist. The following is one of the
standard ways to go from an Aronszajn line to an Aronszajn tree.
It is attributed to Kurepa in \cite{Todorcevic1984}.

\begin{lemma}
  If $A$ is Aronszajn and $\tup{D_{\xi} : \xi < \omega_{1} }$ is a decomposition
  for $A$, then $T := \bigcup_{\xi < \omega_{1}}
  \{I : I \text{ is a complementary interval of $A \setm D_{\xi}$}\}$
  ordered by reverse inclusion is an Aronszajn tree.
\end{lemma}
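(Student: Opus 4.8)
The plan is to verify the three defining properties of an Aronszajn tree for $T$: that it is a tree of height $\omega_1$, that every level is countable, and that it has no uncountable branch. Throughout, the key observation is that for $\xi < \eta$, since $D_\xi \seb D_\eta$, every complementary interval of $A \setm D_\eta$ is contained in a unique complementary interval of $A \setm D_\xi$; this is exactly the statement that $\tleq$-comparability in $T$ (i.e.\ reverse inclusion) behaves well with the stratification by $\xi$. First I would check that $T$ is a tree: given an interval $I$ that is complementary for $A \setm D_\eta$, its set of $\tle$-predecessors consists, for each $\xi < \eta$, of the unique complementary interval of $A \setm D_\xi$ that contains $I$, and these are linearly ordered by reverse inclusion since the $D_\xi$ increase; so the predecessors of any node form a well-order of type $\le \eta$, and one places $I$ on level $\xi$ where $\xi$ is least such that $I$ is complementary for $A \setm D_\xi$ (equivalently, $I$ first appears at stage $\xi$). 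I would also note that at limit stages continuity of the decomposition guarantees that a nested intersection of complementary intervals over $\xi < \lambda$ is again a complementary interval of $A \setm D_\lambda$ when it is nonempty, so levels are indexed coherently.

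Next I would show each level is countable. This is immediate: the nodes on level $\xi$ are among the complementary intervals of $A \setm D_\xi$, and since $A$ is Aronszajn and $D_\xi$ is countable, by the remark following the definition of complementary intervals there are only countably many such intervals. Hence $|T_\xi| \le \aleph_0$ for every $\xi < \omega_1$. I would also remark that $T$ has height exactly $\omega_1$: since $A$ is uncountable and $\bigcup_\xi D_\xi = A$, for cofinally many $\xi$ the set $A \setm D_\xi$ is nonempty, so $T_\xi \ne \es$ for cofinally (hence all, by the predecessor structure) $\xi < \omega_1$.

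The main obstacle — and the only genuinely nontrivial point — is showing $T$ has no uncountable branch, equivalently no $\eleq$-chain of length $\omega_1$ in $(T, \tle)$. Suppose $\tup{I_\xi : \xi < \omega_1}$ were such a branch, with $I_\eta \seb I_\xi$ whenever $\xi < \eta$ and $I_\xi$ a complementary interval of $A \setm D_\xi$. The key is that $\bigcap_{\xi < \omega_1} I_\xi = \es$: if some $a \in A$ lay in every $I_\xi$, then $a \notin D_\xi$ for all $\xi$, contradicting $\bigcup_\xi D_\xi = A$. Now I would derive a forbidden suborder of $A$ from the strictly decreasing nested intervals. Pick, for each $\xi$, an endpoint witness: since the $I_\xi$ strictly decrease, for club-many (indeed all but countably many, using a pressing-down or just a direct counting argument on where points of $A$ get absorbed) $\xi$ there is a point $a_\xi \in I_\xi \setm I_{\xi+1}$; such points are pairwise distinct, and by convexity of intervals the map $\xi \mapsto a_\xi$ is monotone (either increasing or decreasing depending on which "side" of $I_{\xi+1}$ the point $a_\xi$ falls — one splits into the cofinally-often-left case and the cofinally-often-right case and passes to an uncountable homogeneous subset). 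The left case yields a copy of $\omega_1$ in $A$ and the right case a copy of $\omega_1^\star$, either way contradicting that $A$ is Aronszajn. I expect the bookkeeping of "all but countably many $\xi$ admit such a witness" and the left/right dichotomy to be where care is needed; everything else is routine once the nested-interval picture is set up.
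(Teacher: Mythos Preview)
The paper does not prove this lemma; it is stated as a standard construction attributed to Kurepa, with a reference to Todorcevic's handbook article. So there is nothing to compare against, and your task is simply to give a correct proof. Your overall strategy is the right one, and the no-branch argument---picking $a_\xi\in I_\xi\setminus I_{\xi+1}$, splitting into the left/right cases, and extracting a copy of $\omega_1$ or $\omega_1^\star$---is exactly the standard argument and is correctly carried out. (Incidentally, since a branch indexed by levels is \emph{strictly} decreasing, you get $I_\xi\setminus I_{\xi+1}\neq\es$ for \emph{every} $\xi$, so the ``club-many'' hedge is unnecessary.)

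There is one genuine imprecision. You identify the tree-theoretic level of a node $I$ with the least stage $\xi_0(I)$ at which $I$ appears as a complementary interval, and then claim that the nodes on level~$\xi$ lie among the complementary intervals of $A\setminus D_\xi$. This is not correct: an interval can remain complementary across a long block of stages, so $\xi_0$ may skip values along a chain and the height of $I$ can be strictly smaller than $\xi_0(I)$. What \emph{is} true (and what you implicitly use) is that $I\mapsto\xi_0(I)$ is strictly order-preserving along chains; this already gives that predecessors are well-ordered of countable type. For countable levels one then argues by induction on $\alpha$ that there is $\xi(\alpha)<\omega_1$ such that every node of height~$\alpha$ first appears by stage $\xi(\alpha)$: the key point is that all immediate successors of a fixed node $J$ first appear at the single stage where $J$ first splits, so the successor step goes through, and at limits one uses the continuity of the decomposition together with the observation that a node of limit height is the intersection of its predecessors. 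With this small repair your proof is complete.
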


Following~\cite{Moore2009} and~\cite{ShelahAbraham} we make the
following definition.

\begin{definition}\label{df:normal}
  An Aronszajn line $A$ is called
  \emph{non-stationary} if it has a decomposition $\tup{D_{\xi} : \xi <
    \omega_{1}}$ such that for every $\xi < \omega_{1}$, every
  complementary interval of $A \setm D_{\xi}$ has no endpoints.  $A$
  is called \emph{normal} if it is non-stationary and
  $\aleph_{1}$-dense.
\end{definition}

The following, which also seems folklore, will be particularly
relevant.  We do not know of a proof in the literature, but such a
proof can easily be extracted from the material in Section~\ref{sec:decompositions}.

\begin{lemma}\label{lem:normal-suborder}
  Every Aronszajn line contains a normal suborder.
\end{lemma}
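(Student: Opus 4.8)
The plan is to start with an arbitrary Aronszajn line $A$ and progressively thin it out in $\omega_1$ many stages, each stage removing the "bad" points that witness either a failure of $\aleph_1$-density or the presence of an endpoint in some complementary interval, and then take a suitable suborder of the resulting set. First I would fix a decomposition $D = \tup{D_\xi : \xi < \omega_1}$ for $A$. The key observation is that for each $\xi$, the complementary intervals of $A \setm D_\xi$ number only countably many (since $A$ is Aronszajn and $D_\xi$ countable), so the set $P_\xi$ of points of $A$ that serve as a left or right endpoint of some complementary interval of $A \setm D_\xi$ is countable. Likewise, the failure of $\aleph_1$-density is a "local, countable" obstruction: if $[a,b]_A$ is countable for some $a <_A b$, we want to eventually collapse such an interval to a point or avoid it.

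The main construction would be a recursion of length $\omega_1$ producing a $\seb$-decreasing (modulo the decomposition) family, but it is cleaner to think of it as building a single suborder $B \seb A$ together with a decomposition for $B$. At stage $\xi$ I would ensure two things: (i) the complementary intervals of $B \setm (D_\xi \cap B)$ (appropriately reindexed) have no endpoints, which is arranged by throwing the countably many points of $P_\xi$ (and their "witnesses") out of $B$ or, when removing them would create a new endpoint, by further deleting a cofinal/coinitial $\omega$- or $\omega^\star$-chain so that no endpoint remains; (ii) $\aleph_1$-density, which is arranged by noting that any countable interval of $A$ must be entirely swallowed at some countable stage of the decomposition, and at that stage we delete all but possibly one of its points. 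Both tasks at stage $\xi$ affect only countably many points, so at the end $B$ is still uncountable (a standard bookkeeping/pressing-down argument: the set of points of $A$ never deleted is uncountable because each point is deleted, if at all, at a single stage, and there are only countably many deletions per stage—one has to be a little careful that the deletions performed at stage $\xi$ only involve points already "visible" by stage $\xi$, which is where continuity of the decomposition is used). Finally, $B$ is Aronszajn since it is an uncountable suborder of an Aronszajn line, and by construction it is $\aleph_1$-dense with a decomposition all of whose complementary intervals lack endpoints, hence normal.

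The hard part, and the reason the authors defer it to Section~\ref{sec:decompositions}, is the simultaneous bookkeeping: deleting points to kill an endpoint of one complementary interval can create an endpoint of a neighboring one, and deleting a cofinal $\omega$-chain to kill a right endpoint can itself introduce the very phenomenon one is trying to avoid at a later stage; moreover one must confirm that after all $\omega_1$ stages the residual decomposition $\tup{D_\xi \cap B : \xi < \omega_1}$ is still continuous and covering, and that "complementary interval of $B \setm (D_\xi \cap B)$" behaves compatibly with "complementary interval of $A \setm D_\xi$" restricted to $B$. The cleanest route is probably to first prove, in Section~\ref{sec:decompositions}, a lemma characterizing when a point of a complementary interval is an endpoint in terms of the tree $T$ associated to the decomposition (via the Kurepa correspondence above), and then observe that passing to a suborder corresponds to passing to a subtree in which one has pruned away the finitely-splitting "successor-type" nodes that produce endpoints—at which point normality of $B$ becomes a statement about the combinatorics of that subtree, and the uncountability of $B$ follows from the subtree still having uncountably many nodes at cofinally many levels.
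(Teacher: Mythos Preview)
Your recursive deletion approach has a genuine gap in the uncountability argument. You write that $B$ is uncountable ``because each point is deleted, if at all, at a single stage, and there are only countably many deletions per stage''; but over $\omega_1$ stages this only bounds $|A \setm B|$ by $\aleph_1 = |A|$, which says nothing. Worse, for many decompositions \emph{every} point of an $\aleph_1$-dense Aronszajn line eventually becomes an endpoint of its complementary interval: since $(-\infty,a)_A$ has countable cofinality, some countable cofinal set $C_a$ lies in $D_\xi$ for all large $\xi$, and if this happens before $a$ itself enters $D$ then $a$ is a left endpoint at that stage. So naive deletion can remove everything. Your caveat about deletions ``only involving points visible by stage $\xi$'' does not help, because the endpoints at level $\xi$ lie in $A \setm D_\xi$, not in $D_\xi$. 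The same circularity undermines the non-stationarity claim: killing endpoints of $B_\xi \setm (D_\xi \cap B_\xi)$ at stage $\xi$ does not control endpoints of $B \setm (D_\xi \cap B)$ for the final $B$.

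The argument suggested by Section~\ref{sec:decompositions} proceeds by \emph{selection} rather than deletion, and the crucial move---reindexing the decomposition along a sparser club---is exactly what makes Theorem~\ref{thm:L_R} work in the case $X=Y=\emptyset$. After reducing to the $\aleph_1$-dense case, fix an elementary club $E$ so that (as in Lemma~\ref{lem:elementarity}(e)) for each $\nu \in E$ and each complementary interval $I$ of $A \setm \nu$, the set $I \cap [\nu,\nu^+)$ is cofinal and coinitial in $I$. For each such $I$ choose a countable $S_I \seb I \cap [\nu,\nu^+)$ cofinal and coinitial in $I$ minus its endpoints, and set $B := \bigcup_{\nu,I} S_I$. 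Then $B$ is uncountable because each block $[\nu,\nu^+)$ contributes, and $B$ is non-stationary with respect to $\tup{B \cap \nu_\xi : \xi < \omega_1}$: if $a \in S_I$ with $I$ at level $\mu \ge \nu$, then the set $S_{I'}$ (where $I' \supseteq I$ is the complementary interval of $A \setm \nu$ containing $a$) supplies elements of $B$ on both sides of $a$ inside $I'$, so $a$ is not an endpoint in $B$ at level $\nu$. Your tree-pruning paragraph gestures toward this, but the essential ingredient is the reindexing along $E$ together with the cofinal/coinitial selection, not the removal of ``finitely-splitting successor-type'' nodes.
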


The following summarizes the most important properties of Countryman lines,
which where defined in the introduction.
For a proof see~\cite[Theorem~5.4]{Todorcevic1984}.

\begin{lemma}\label{lem:cline-fund}
  If $C$ is a Countryman line then,
  \begin{itemize}\itemsep0em
    \item $C$ is Aronszajn.
    \item $C$ contains no two uncountable reverse isomorphic suborders, that
    is, for no uncountable $X$, both $X \eleq C$ and $X \eleq C^{\star}$ hold.
    \item For every $n \ge 1$, $C^{n}$ (in the product order)
    is the union of countably many chains.
  \end{itemize}
\end{lemma}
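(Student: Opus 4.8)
The plan is to derive all three items from the single defining property of $C$: that $C^{2}$, under the coordinatewise (product) order, is a union of countably many chains. Fix such a decomposition $C^{2}=\bigcup_{k<\omega}K_{k}$. For the third item I would induct on $n$, the cases $n=1$ (where $C$ is itself one chain) and $n=2$ (the definition) being immediate. For the inductive step, consider the $\binom{n}{2}$ projections $\pi_{ij}\colon C^{n}\to C^{2}$ onto pairs of coordinates $1\le i<j\le n$, and for each function $f$ from the set of such pairs into $\omega$ set $S_{f}:=\{\bar{x}\in C^{n}: \pi_{ij}(\bar{x})\in K_{f(i,j)}\text{ for every }i<j\}$. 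There are only countably many such $f$ and the $S_{f}$ cover $C^{n}$, so it is enough to check that each $S_{f}$ is a chain. Given $\bar{x}\ne\bar{y}$ in $S_{f}$, pick a coordinate $\ell$ with $x_{\ell}\ne y_{\ell}$, say $x_{\ell}<_{C}y_{\ell}$; for any other coordinate $m$, the images of $\bar{x}$ and $\bar{y}$ under the projection onto $\{\ell,m\}$ lie in a common $K_{k}$ and are therefore comparable in $C^{2}$, and since their $\ell$-th entries are $x_{\ell}<_{C}y_{\ell}$ this comparison forces the image of $\bar{x}$ below that of $\bar{y}$, hence $x_{m}\le_{C}y_{m}$. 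So $\bar{x}\le\bar{y}$ in the product order, as required.

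The second item is a short antichain argument: if some uncountable $X$ satisfied $X\eleq C$ and $X^{\star}\eleq C$, there would be an uncountable $Y\seb C$ together with an order-reversing injection $\phi\colon Y\to C$; then $\{(y,\phi(y)):y\in Y\}$ is an uncountable antichain in $C^{2}$, because $y<_{C}y'$ forces $\phi(y)>_{C}\phi(y')$ and hence incomparability in the product order. This contradicts the fact that a union of countably many chains has all its antichains countable (each chain meets an antichain in at most one point).

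For the first item, $C$ is uncountable by hypothesis, so it suffices to rule out suborders isomorphic to $\omega_{1}$, to $\omega_{1}^{\star}$, or to an uncountable set of reals. In each case such a suborder $X\seb C$ would give $X\times X\seb C^{2}$, so that $X^{2}$ under the product order would itself be a countable union of chains, say $X^{2}=\bigcup_{k<\omega}L_{k}$; I would contradict this by a uniform ``rows'' argument. For each $x\in X$ the row $\{x\}\times X$ is uncountable, so it meets some $L_{k(x)}$ in an uncountable set $\{x\}\times B_{x}$; by pigeonhole there are a fixed $k^{\star}<\omega$ and an uncountable $A\seb X$ with $k(x)=k^{\star}$ for all $x\in A$. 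For $x<_{X}x'$ in $A$, since $L_{k^{\star}}$ is a chain and the first coordinates already satisfy $x<_{X}x'$, every point of $B_{x}$ lies $\le_{X}$ every point of $B_{x'}$. If $X\cong\omega_{1}$ this is impossible, because each uncountable $B_{x}\seb\omega_{1}$ is unbounded; the case $X\cong\omega_{1}^{\star}$ is symmetric (alternatively, a poset and its reverse admit chain coverings of the same size and $(\omega_{1}^{\star})^{2}\cong\omega_{1}^{2}$ as posets); and if $X$ is an uncountable set of reals, each uncountable $B_{x}$ spans a nonempty open interval $(\inf B_{x},\sup B_{x})$ of $\mathbb{R}$, and these intervals are pairwise disjoint, contradicting that $\mathbb{R}$ has the countable chain condition. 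These facts are standard (cf.\ \cite{Todorcevic1984}); I do not expect a serious obstacle, the only points needing care being the coordinatewise verification that each $S_{f}$ is a chain in the third item and the realization that the three cases of the first item all fall to the same rows argument.
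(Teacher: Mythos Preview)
Your argument is correct in all three parts. The paper does not give its own proof of this lemma but simply refers to \cite[Theorem~5.4]{Todorcevic1984}, so there is no in-paper proof to compare against; you have supplied a self-contained argument where the authors defer to the literature. One cosmetic point: you announce the third item as an induction on $n$, but your ``inductive step'' never invokes the hypothesis for $n-1$; the $\binom{n}{2}$-projection argument is in fact a direct proof, and you may as well present it that way. The uniform rows argument for the first item is a clean way to handle all three forbidden suborders at once, and is essentially the standard route.
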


It is unclear to the author who first proved the following.
For a proof see \cite[2.1.12 and 2.1.13]{TodorcevicWalks}.

\begin{theorem}\label{thm:clines-equiv}
  Assume $\MA_{\aleph_{1}}$. The class of Countryman
  lines has exactly two $\eleq$-equivalence classes. In particular
  if $C$ is any Countryman line, then $C$ is a $\eleq$-minimal uncountable
  order and $\{C,C^{\star}\}$ is a $\eleq$-basis for the Countryman lines.
\end{theorem}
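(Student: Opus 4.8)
The plan is to reduce everything to the following \emph{core dichotomy}: under $\MA_{\aleph_{1}}$, if $C$ and $D$ are Countryman lines, then $C \eleq D$ or $C^{\star} \eleq D$. Granting this, the theorem follows by bookkeeping that only uses Lemma~\ref{lem:cline-fund}. First, $C^{\star}$ is again Countryman, because the identity map is an order-reversing bijection $(C^{\star})^{2} \to C^{2}$ and order-reversing bijections carry chains to chains. Next, $C \neleq C^{\star}$, since otherwise $C$ would be an uncountable suborder of both $C$ and $C^{\star}$, contradicting Lemma~\ref{lem:cline-fund}. Now let $D$ be any Countryman line. By the core dichotomy $C \eleq D$ or $C^{\star} \eleq D$, and symmetrically $D \eleq C$ or $D^{\star} \eleq C$. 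If $C \eleq D$ and $D^{\star} \eleq C$, then $D^{\star} \eleq D$, contradicting Lemma~\ref{lem:cline-fund} for $D$; hence $C \eleq D$ forces $D \eleq C$, i.e.\ $C$ and $D$ are $\eleq$-equivalent. The remaining case $C^{\star} \eleq D$ is the previous one with the Countryman line $C^{\star}$ in place of $C$, so $D$ is $\eleq$-equivalent to $C^{\star}$. Thus the class has exactly the two distinct $\eleq$-classes of $C$ and $C^{\star}$. For $\eleq$-minimality: if $B \eleq C$ is uncountable, then $B$ is a suborder of a Countryman line, hence itself Countryman, so the dichotomy gives $C \eleq B$ or $C^{\star} \eleq B$; the latter with $B \eleq C$ would give $C^{\star} \eleq C$, impossible, so $C \eleq B$. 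Finally, for any Countryman $D$ the dichotomy gives $C \eleq D$ or $C^{\star} \eleq D$, so $\{C, C^{\star}\}$ is a $\eleq$-basis.

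It remains to prove the core dichotomy. By Lemma~\ref{lem:normal-suborder} fix a normal — in particular $\aleph_{1}$-dense — suborder $D_{0} \eleq D$; being a suborder of a Countryman line, $D_{0}$ is itself Countryman, and it suffices to embed $C$ or $C^{\star}$ into $D_{0}$. Consider the two posets $\P^{+}$ and $\P^{-}$ whose conditions are the finite strictly $<$-increasing, respectively the finite strictly $<$-decreasing, partial functions from $C$ to $D_{0}$, ordered by reverse inclusion. Since $D_{0}$ has no endpoints and is $\aleph_{1}$-dense, for each $c \in C$ the set $\{p : c \in \dom p\}$ is dense in $\P^{+}$ (and in $\P^{-}$): given $p$ and $c$, the $D_{0}$-interval strictly between $\sup\{p(x) : x\in\dom p,\ x<_{C}c\}$ and $\inf\{p(x) : x\in\dom p,\ x>_{C}c\}$ (read the obvious way at $\pm\infty$) is nonempty because $p$ is strictly monotone, so $c$ can be added. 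Recalling that a Countryman line is Aronszajn and hence has size $\aleph_{1}$, there are only $|C|=\aleph_{1}$ such dense sets; so if $\P^{+}$ (resp.\ $\P^{-}$) is ccc, then $\MA_{\aleph_{1}}$ yields a filter whose union is a total strictly increasing (resp.\ decreasing) injection $C\to D_{0}$, i.e.\ $C\eleq D$ (resp.\ $C^{\star}\eleq D$). So the whole theorem reduces to the following \textbf{claim}: at least one of $\P^{+}$, $\P^{-}$ is ccc.

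I would prove the claim by contradiction, and this is the main obstacle. Suppose $\{p_{\alpha}:\alpha<\omega_{1}\}$ is an uncountable antichain in $\P^{+}$ and $\{q_{\alpha}:\alpha<\omega_{1}\}$ one in $\P^{-}$. After a $\Delta$-system refinement on the domains and on the ranges and further uncountable thinning, I may assume that all $p_{\alpha}$ have the same size, agree on the common root, have pairwise disjoint non-root parts of domains and of ranges, and that the order type of $\dom p_{\alpha}$ in $C$, the order type of $\ran p_{\alpha}$ in $D_{0}$, the interleaving of root and non-root points, and the induced matching between non-root domain and range points are all fixed and coherent with the monotonicity forced on the root; similarly for the $q_{\alpha}$. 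With this normalization, compatibility of $p_{\alpha}$ and $p_{\beta}$ in $\P^{+}$ fails exactly when there is an \emph{inversion} between their non-root parts, i.e.\ non-root points $x \in \dom p_{\alpha}$, $y\in\dom p_{\beta}$ with $x<_{C}y$ but $p_{\alpha}(x)>_{D_{0}}p_{\beta}(y)$ (or the symmetric version); likewise incompatibility in $\P^{-}$ produces a \emph{co-inversion}, non-root points $u\in\dom q_{\alpha}$, $v\in\dom q_{\beta}$ with $u<_{C}v$ and $q_{\alpha}(u)<_{D_{0}}q_{\beta}(v)$.

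The remaining, delicate, step is to play these two patterns against the Countryman property. Since both $C^{2}$ and $D_{0}^{2}$ are unions of countably many chains (Lemma~\ref{lem:cline-fund}), one colours the relevant pairs of new points — as elements of $C\times D_{0}$ sitting in one of countably many chains for each of the two product structures — passes to a monochromatic uncountable subfamily in both antichains simultaneously, and uses the fixed combinatorial type to locate a single product-order comparison in $C\times D_{0}$ that the $\P^{+}$-antichain forces to be inverted between some two conditions while the $\P^{-}$-antichain forces it not to be, a contradiction. Carrying this oscillation/entangledness argument out cleanly is the crux; the reductions in the previous paragraphs, and the deduction of the theorem from the core dichotomy, are routine.
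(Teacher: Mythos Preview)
The paper does not prove this theorem; it only cites \cite[2.1.12 and 2.1.13]{TodorcevicWalks}. So there is no ``paper's own proof'' to compare against, and your attempt is being measured against the standard argument.

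Your reductions are all fine. The derivation of the full statement from the core dichotomy using Lemma~\ref{lem:cline-fund} is correct, the passage to a normal suborder $D_{0}$ via Lemma~\ref{lem:normal-suborder} is correct, and the density argument for $\P^{+}$ and $\P^{-}$ is correct. The whole proof therefore rests, as you say, on the \textbf{claim} that at least one of $\P^{+},\P^{-}$ is ccc.

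Here your sketch has a genuine gap. You propose to assume uncountable antichains $\{p_{\alpha}\}$ in $\P^{+}$ and $\{q_{\alpha}\}$ in $\P^{-}$ simultaneously and extract a contradiction by ``locating a single product-order comparison in $C\times D_{0}$ that the $\P^{+}$-antichain forces to be inverted \dots\ while the $\P^{-}$-antichain forces it not to be.'' But the $p_{\alpha}$'s and the $q_{\alpha}$'s are completely unrelated families of points of $C\times D_{0}$; there is no single pair that both families are talking about, so no comparison can be ``forced'' in opposite directions. Colouring the new points by chain-indices of $C^{2}$ and $D_{0}^{2}$ does not help: those colourings apply to \emph{pairs} of points, not to single points, and even after fixing internal types the cross-comparisons between $p_{\alpha}$ and $q_{\beta}$ carry no information.

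What actually works is different and does not juxtapose the two antichains. One first shows, using only Lemma~\ref{lem:cline-fund}, that not both of $C\times D_{0}$ and $C\times D_{0}^{\star}$ (product order) can contain an uncountable antichain: an uncountable antichain in $C\times D_{0}$ is an uncountable chain in $C\times D_{0}^{\star}$, hence gives an uncountable $X\subseteq C$ with $X\eleq D_{0}^{\star}$; symmetrically an uncountable antichain in $C\times D_{0}^{\star}$ gives $Y\subseteq C$ with $Y\eleq D_{0}$; then the Countryman property of $C$ (any uncountable subset of $C^{2}$ contains an uncountable chain) yields an uncountable $X'\subseteq X$ with $X'\eleq Y$, so $X'$ embeds into both $D_{0}$ and $D_{0}^{\star}$, contradicting Lemma~\ref{lem:cline-fund} for $D_{0}$. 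Say $C\times D_{0}$ has no uncountable antichain. The ccc of $\P^{+}$ then follows by a second use of the Countryman property: given $\{p_{\alpha}\}$, thin via the chain decomposition of $C^{k}$ and $D_{0}^{k}$ (Lemma~\ref{lem:cline-fund}, third bullet) so that the $c$-tuples and $d$-tuples each form a chain, and then handle the cross-terms $c_{\alpha}^{i}$ versus $c_{\beta}^{j}$ by a further separation argument (this is where the real work lies, and it is what \cite{TodorcevicWalks} carries out). Your normalisation paragraph gestures at the first thinning but not the second, and your contradiction strategy bypasses the dichotomy on $C\times D_{0}$ versus $C\times D_{0}^{\star}$ that makes the argument go through.
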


The following is a strong form of the previous theorem (recall
Lemma~\ref{lem:normal-suborder}). It appears in \cite{ShelahAbraham}
under the stronger assumption of $\PFA$. Complete details can be found
in \cite{Moore2009}, where only $\MA_{\aleph_{1}}$ is used.

\begin{theorem}\label{thm:moore-t1.1} (Abraham-Shelah, Moore)
  Assume $\MA_{\aleph_{1}}$.
  Any two normal Countryman lines are either isomorphic or reverse isomorphic.
\end{theorem}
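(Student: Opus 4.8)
**The plan is to prove Theorem~\ref{thm:moore-t1.1} by a back-and-forth construction using a ccc forcing, following Moore's strategy.** Let $C$ and $C'$ be two normal Countryman lines. By Theorem~\ref{thm:clines-equiv} we know $C' \eleq C$ or $C' \eleq C^{\star}$; after possibly replacing $C'$ by $C'^{\star}$ we may assume $C' \eleq C$, so our goal is to show $C$ and $C'$ are isomorphic. The natural approach is to build a forcing $\P$ whose conditions are finite partial order-isomorphisms $p : C \to C'$ — more precisely, finite partial monotone injections — ordered by reverse inclusion, and to show that $\P$ is ccc. Then, assuming $\MA_{\aleph_{1}}$, a suitably generic filter meeting the $\aleph_1$-many dense sets $D_x = \{p : x \in \dom p\}$ for $x \in C$ and $D'_y = \{p : y \in \ran p\}$ for $y \in C'$ (there are only $\aleph_1$ of these since $|C| = |C'| = \aleph_1$) yields a total order-isomorphism, as usual in Baumgartner-type arguments. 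The density of the $D_x$ and $D'_y$ should follow from $\aleph_{1}$-density of $C$ and $C'$: given a finite partial isomorphism and a new point $x$, it sits in some open interval between consecutive points of the domain, the corresponding interval in $C'$ is nonempty (in fact uncountable), so we can extend.

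\textbf{The heart of the matter is the chain condition.} This is where normality and the Countryman property enter. Suppose $\{p_\alpha : \alpha < \omega_1\}$ is a family of conditions; by a $\Delta$-system argument on the (finite) domains and ranges, we may assume the $p_\alpha$ all have the same size $n$, their domains form a $\Delta$-system with root $r_D$ and their ranges a $\Delta$-system with root $r_C$, and $p_\alpha \restriction r_D = p_\beta \restriction r_D$ for all $\alpha, \beta$. Writing $p_\alpha = \{(x^\alpha_i, y^\alpha_i) : i < n\}$ listed in increasing order of the $x^\alpha_i$, we need to find $\alpha \neq \beta$ such that $p_\alpha \cup p_\beta$ is still a monotone injection, i.e. such that for all $i, j$, $x^\alpha_i <_C x^\beta_j \iff y^\alpha_i <_{C'} y^\beta_j$. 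Thinking of the tuple $\bar{x}^\alpha$ as an element of $C^n$ and $\bar{y}^\alpha$ as an element of $C'^n \eleq C^n$, both $C^n$ and (the image of) $C'^n$ are unions of countably many chains in the product order by Lemma~\ref{lem:cline-fund}. The plan is to thin out the family so that all $\bar{x}^\alpha$ lie in one chain of $C^n$ and all $\bar{y}^\alpha$ in one chain of $C'^n$, and so that their ``order type patterns'' relative to the root coordinates are fixed; then compatibility of $p_\alpha$ and $p_\beta$ should reduce to the two tuples being ordered the same way, which the chain condition on $C^n$ together with the embedding $C' \eleq C$ should force for a suitable pair.

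\textbf{The main obstacle} is that the pure Countryman/chain-decomposition data does not immediately control the interleaving between the ``new'' coordinates of $p_\alpha$ and those of $p_\beta$ near the root — this is exactly where \emph{normality} must be exploited, so that complementary intervals of the relevant countable initial segments have no endpoints, allowing a genuine back-and-forth amalgamation rather than a mere comparison. Concretely, one fixes decompositions $\tup{D_\xi}$ of $C$ and $\tup{D'_\xi}$ of $C'$ witnessing non-stationarity, and for each $\alpha$ chooses $\xi_\alpha$ large enough that the root points of $p_\alpha$ lie in $D_{\xi_\alpha}$ and $D'_{\xi_\alpha}$ while the new points are ``fresh'' above $\xi_\alpha$ in the tree sense; a pressing-down/Fodor argument stabilizes the relevant countable structure, and then the no-endpoints condition lets us slide the new coordinates of $p_\beta$ into the complementary intervals determined by $p_\alpha$ (and vice versa) to build the amalgam. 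I would expect this argument to be essentially the one carried out in detail in~\cite{Moore2009} and~\cite{ShelahAbraham}, so the proof in the paper likely just cites those sources; if a self-contained argument is wanted, the delicate point is checking that the ccc proof only uses normality and not the full strength of $\PFA$, which is precisely Moore's refinement over Abraham--Shelah.
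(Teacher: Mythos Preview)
Your overall strategy is right and you correctly anticipate that the paper essentially defers to \cite{Moore2009} for the details. However, there is a genuine gap in your sketch: the naive forcing of \emph{all} finite partial order-isomorphisms from $C$ to $C'$ is not ccc --- in fact it collapses $\omega_1$ (the paper remarks on this explicitly when setting up the analogous epimorphism forcing in Section~\ref{sec:infinite-chain}). The fix, which is the whole point of Moore's construction, is to restrict to a subposet $Q_E$ indexed by a suitable club $E\subseteq\omega_1$: a condition $q$ must satisfy, for every $\nu\in E$, both $a<\nu \leftrightarrow q(a)<\nu$ and $\Delta_C(a,b)<\nu \leftrightarrow \Delta_{C'}(q(a),q(b))<\nu$ (see Definition~\ref{df:moore-forcing}). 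It is this $\Delta$-respecting restriction, not a Fodor argument on the full poset, that makes the Countryman chain-decomposition of $C^n$ yield ccc (Lemma~\ref{lem:moore-ccc}).

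You also have the role of normality slightly misplaced. In Moore's argument, ccc uses only that $C$ and $C'$ are $\eleq$-comparable Countryman lines; normality is needed for \emph{density} (Lemma~\ref{lem:moore-density}). The issue is that once you impose the $E$-respecting constraints, extending a condition to hit a prescribed $a\in C$ or $x\in C'$ is no longer automatic from $\aleph_1$-density alone --- you must place the new pair so that the $\Delta$-values line up correctly across $E$, and this is exactly where the no-endpoints condition on complementary intervals is used. So your intuition that normality enables ``sliding new coordinates into complementary intervals'' is correct, but it belongs in the density argument, not in the amalgamation step of the ccc proof.
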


For the rest of this section we fix a Countryman line $C$. Recall
the definition of $\eta_{C}$ from the introduction.

\begin{theorem}\label{thm:univ-aline-strong} (Moore~\cite{Moore2009})
  Assume $\PFA$. For every Aronszajn line $A$,
  $A \eleq \eta_{C}$. Moreover, either
  $A$ is $\eleq$-equivalent to $\eta_{C}$, or $A$ contains an interval
  $\eleq$-equivalent to $C$ or to $C^{\star}$.
\end{theorem}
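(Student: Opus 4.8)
The plan is to reconstruct Moore's argument as a transfinite back-and-forth along a decomposition of $A$, with the self-similarity of $\eta_C$ supplying the ``room'' for the forward embedding and the Five Element Basis theorem isolating the two alternatives in the ``moreover'' clause. First I would record the structural features of $\eta_C$ the recursion will exploit. Write $L := C^{\star} + \{0\} + C$. Decomposing an eventually-$0$ sequence by its first entry shows $\eta_C \cong \sum_{\ell \in L}\eta_C$, and iterating: $\eta_C$ is $\aleph_1$-dense with no endpoints; every open interval of $\eta_C$ contains a convex copy of $\eta_C$; every open interval of $\eta_C$ contains, cofinally and coinitially, convex copies of $C$ and of $C^{\star}$; and $\eta_C$ has the amalgamation freedom of a homogeneous, ``$\mathbb{Q}$-like'' object (between prescribed finite configurations one can always insert countable pieces, or endpoint-carrying $C$- and $C^{\star}$-blocks). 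That $\eta_C$ is an Aronszajn line and that all of this holds uses Lemma~\ref{lem:cline-fund}, in particular that each power $C^n$ is a union of countably many chains. By Lemma~\ref{lem:normal-suborder} and Theorem~\ref{thm:moore-t1.1} we may also assume, replacing $C$ by an $\eleq$-equivalent line (which by Theorem~\ref{thm:clines-equiv} leaves the $\eleq$-class of $\eta_C$ unchanged), that $C$ is normal. Fix a decomposition $D = \langle D_\xi : \xi<\omega_1\rangle$ of the given line $A$ and let $T$ be the associated Aronszajn tree of complementary intervals.

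For the universality statement $A \eleq \eta_C$, I would build by recursion on $\xi < \omega_1$ coherent monotone injections $f_\xi : D_\xi \to \eta_C$ together with an assignment sending each complementary interval $I$ of $A \setm D_\xi$ to a convex ``target'' $R^\xi_I \seb \eta_C$, maintaining the invariants: $(i)$ $R^\xi_I$ is disjoint from $f_\xi[D_\xi]$ and lies strictly between the $f_\xi$-images of the points of $D_\xi$ below $I$ and those above $I$; $(ii)$ $R^\xi_I$ contains a convex copy of $\eta_C$, and it has a least (resp.\ greatest) element --- realized inside a $C$- (resp.\ $C^{\star}$-)block --- exactly when $I$ has a left (resp.\ right) endpoint; $(iii)$ the targets attached at level $\xi{+}1$ to the sub-intervals of $I$ are pairwise disjoint convex subsets of $R^\xi_I$ in the correct order; $(iv)$ at limits $R^\lambda_I = \bigcap_{\xi < \lambda} R^\xi_{I^{(\xi)}}$, where $I^{(\xi)}$ is the level-$\xi$ interval containing $I$. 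At a successor step one is handed, for each $I$, the countably many new points $D_{\xi+1}\cap I$ and the countably many sub-intervals of $I$; using $R^\xi_I \cong \sum_{\ell\in L}\eta_C$ one places the new points on a dense set of ``$\ell=0$'' summands and distributes the sub-intervals among pairwise disjoint $\eta_C$-summands, drawing endpoint blocks from the $C$/$C^{\star}$ sides precisely for the sub-intervals that have endpoints --- the classical $\mathbb{Q}\hookrightarrow\mathbb{Q}$ step, refined to track the endpoint pattern recorded by $\L(A,D)$ and $\R(A,D)$, which is exactly why $L$ has a left $C^{\star}$-part, a midpoint, and a right $C$-part. To keep limit stages from collapsing the targets one shrinks the $R$'s at each successor only in a bounded manner coded by a fixed decomposition of $C^2$ into countably many chains (Lemma~\ref{lem:cline-fund}), so that along each branch of $T$ the decreasing $\omega$-chain of targets stabilizes its ``$\eta_C$-content'' rather than shrinking to a point or to a countable set.

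This construction either succeeds, giving $A \eleq \eta_C$ outright, or it gets stuck, and then the obstruction localizes to a convex suborder $J \seb A$ --- the union of a branch's worth of complementary intervals --- into which $\eta_C$ does not embed. Such a $J$ must be uncountable (a countable interval is absorbed directly into $\mathbb{Q}\seb\eta_C$), hence $J$ is an Aronszajn line, so by the Five Element Basis theorem of Moore~\cite{Moore2006} --- the point where $\PFA$ is essential --- $J$ contains a copy of $C$ or of $C^{\star}$; together with $\eta_C\not\eleq J$, the refined obstruction data say that $J$ is ``one-directional'' (no room to branch in one of the two senses), and Theorem~\ref{thm:clines-equiv} (every Countryman line is $\eleq$-minimal among uncountable orders, with only two $\eleq$-classes) then forces $J\eleq C$ or $J\eleq C^{\star}$, so $J$ is $\eleq$-equivalent to $C$ or to $C^{\star}$: the second alternative of the ``moreover''. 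Running the same recursion in the opposite direction --- attempting an embedding $\eta_C\hookrightarrow A$ --- its failure similarly yields a Countryman-equivalent interval of $A$; so if $A$ has no such interval, both constructions go through and $A\eleq\eta_C\eleq A$, the first alternative. One also checks that the forward construction $A\hookrightarrow\eta_C$ never gets stuck even when $A$ does have a Countryman-equivalent interval, since $\eta_C$, being ``maximally branching'', always supplies the targets demanded by the invariants --- hence $A\eleq\eta_C$ holds unconditionally.

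The main obstacle is the limit-stage bookkeeping in the second paragraph: one must guarantee, simultaneously for the uncountably many intervals appearing along $T$, that the targets stay $\eta_C$-rich and keep matching the endpoint pattern $\L(A,D),\R(A,D)$ along every branch. This is the technical heart of~\cite{Moore2009}; it is where the Countryman hypothesis really bites, via the countable chain decompositions of the powers $C^n$ from Lemma~\ref{lem:cline-fund}, and where $\PFA$ is needed beyond the bare Five Element Basis --- namely through the fine structure of normal Countryman lines supplied by Theorem~\ref{thm:moore-t1.1}. Everything else is Cantor-style back-and-forth bookkeeping.
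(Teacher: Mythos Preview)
The paper does not prove this theorem; it is stated as a citation of Moore~\cite{Moore2009} with no accompanying proof. So there is nothing in the paper to compare your proposal against, and your write-up is really an attempted reconstruction of Moore's argument rather than of anything in this paper.

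As a reconstruction, your sketch is in the right spirit --- decompose $A$, build an embedding into $\eta_C$ level by level using the self-similarity $\eta_C \cong \sum_{\ell\in L}\eta_C$ --- but it has genuine gaps. The most serious is the one you yourself flag: the limit-stage bookkeeping. You assert that one ``shrinks the $R$'s at each successor only in a bounded manner coded by a fixed decomposition of $C^2$ into countably many chains'' so that the targets stay $\eta_C$-rich along every branch, but this is precisely the content of the theorem and you have not said how to do it. Without this, the recursion is just the naive attempt that fails at limits for the same reason the naive forcing collapses $\omega_1$.

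Separately, your treatment of the ``moreover'' clause is off. You argue that if the forward construction gets stuck at some convex $J$, then the Five Element Basis gives $C\eleq J$ or $C^{\star}\eleq J$, and then ``$\eta_C\not\eleq J$'' plus Theorem~\ref{thm:clines-equiv} forces $J\eleq C$ or $J\eleq C^{\star}$. But $\eta_C\not\eleq J$ together with $C\eleq J$ does not by itself give $J\eleq C$: you need Moore's stronger dichotomy (his Theorem~1.3, cited in the present paper in the proof of Theorem~\ref{thm:alines-basis}) that under $\PFA$ every non-Countryman Aronszajn line contains \emph{both} $C$ and $C^{\star}$. That is the input Moore actually uses, and it is what drives both the universality and the interval dichotomy; the Five Element Basis alone is too weak here. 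Your reverse embedding $\eta_C\hookrightarrow A$ is likewise only asserted, not argued.
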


Making the analogy with the scattered linear orders (linear orders without copies
of $\mathbb{Q}$), Moore calls an Aronszajn line
\emph{fragmented} if it does not contain a copy of $\eta_{C}$.
The following definitions
are due to Martínez-Ranero~\cite{Martinez}. Here we assume that $C$ is $\aleph_{1}$-dense.

For $\alpha < \omega_{2}$ recursively define Aronszajn lines
$D_{\alpha}^{+}$ and $D_{\alpha}^{-}$ as follows.
\begin{itemize}\itemsep0em
  \item $D_{0}^{+} := C$ and $D_{0}^{-} := C^{\star}$.
  \item $D_{\alpha+1}^{+} = C \times D_{\alpha}^{-}$ and
  $D_{\alpha+1}^{-} := C^{\star} \times D_{\alpha}^{+}$.
  \item If $\alpha$ is limit let
  $D_{\alpha}^{+} := \sum_{x \in C}A_{x}$ such that each $A_{x}$ is
  $D_{\xi}^{-}$ for some $\xi < \alpha$, and such that
  for each $\xi < \alpha$, $\{x \in C : A_{x}= D_{\xi}^{-}\}$ is dense
  in $C$. $D_{\alpha}^{-}$ is defined similarly.
\end{itemize}

\begin{lemma}\label{lem:D_a} (Martínez-Ranero~\cite{Martinez})
  Assume $\MA_{\aleph_{1}}$. For any fragmented Aronszajn line $A$,
  there is $\alpha < \omega_{2}$ such that
  either $A$ is $\eleq$-equivalent to $D_{\alpha}^{-}$ or
  $D_{\alpha}^{+}$, or $A \ele D_{\alpha}^{-}$ and $A \ele D_{\alpha}^{+}$.
\end{lemma}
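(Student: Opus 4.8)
The plan is to argue by induction on $\alpha < \omega_2$, mimicking the structure of the recursive definition of the $D_\alpha^{\pm}$, and using the fragmentedness of $A$ together with $\MA_{\aleph_1}$-consequences such as Theorem~\ref{thm:clines-equiv} and Theorem~\ref{thm:moore-t1.1}. First I would fix a normal suborder of $A$ (Lemma~\ref{lem:normal-suborder}) so that we may work with a well-behaved decomposition $\tup{E_\xi : \xi < \omega_1}$ of $A$ and analyze the complementary intervals. Since $A$ is fragmented, it does not contain a copy of $\eta_C$, and the key structural fact (to be extracted from Moore's work, cf.\ Theorem~\ref{thm:univ-aline-strong}) is that $A$ can be canonically ``analyzed'' into countably-indexed sums of intervals each of which is $\eleq$-below $C$ or $C^{\star}$, or is itself fragmented of smaller complexity. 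Concretely, I want to define the rank of $A$ to be the least $\alpha$ for which such an analysis of depth $\alpha$ exists, and then show by induction that rank $\alpha$ forces $A \eleq D_\alpha^+$ and $A \eleq D_\alpha^-$, with $\eleq$-equivalence in the extremal case.

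The successor step should run as follows. Suppose $A$ has rank $\alpha+1$. Then $A$ is (up to $\eleq$-equivalence, after passing to a suborder) a countable sum $\sum_{n<\omega} A_n$ where each $A_n$ has rank $\le \alpha$; moreover the ``shape'' of this sum sits inside $C$ or $C^{\star}$ because $A$ is Aronszajn and short. By the inductive hypothesis each $A_n$ embeds into $D_\alpha^+$ and into $D_\alpha^-$. Using that $C$ is $\aleph_1$-dense and $\eleq$-minimal (Theorem~\ref{thm:clines-equiv}), one sees $C \times D_\alpha^-$ contains a dense copy of each $A_n$-block in the right position, so $A \eleq C \times D_\alpha^- = D_{\alpha+1}^+$, and symmetrically $A \eleq D_{\alpha+1}^-$; if the sum is ``as large as possible'' one gets $\eleq$-equivalence via a back-and-forth using Theorem~\ref{thm:moore-t1.1}. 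The limit step is similar but uses that $D_\alpha^{\pm}$ was built to have each $D_\xi^{-}$ ($\xi<\alpha$) appearing densely: given $A = \sum_{x\in C} A_x$ with each $A_x$ of rank $<\alpha$, the density and $\aleph_1$-saturation of the $D_\alpha^{\pm}$ construction, together with the inductive embeddings, let one interleave the blocks of $A$ into $D_\alpha^+$ (and $D_\alpha^-$) by a transfinite back-and-forth of length $\omega_1$, where each step is possible because under $\MA_{\aleph_1}$ the relevant ccc amalgamation/extension has a generic-type object (this is the standard Baumgartner/Moore-style argument).

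I expect the main obstacle to be the limit step, specifically verifying that the back-and-forth construction can always be continued: one must ensure that at each of the $\omega_1$ stages the partial isomorphism extends, which requires a genuine density/saturation property of $D_\alpha^+$ that is not merely cardinality but matches the ``isomorphism type'' data, and this is exactly where $\MA_{\aleph_1}$ enters (to run a ccc forcing, or invoke the Abraham--Shelah--Moore uniqueness of normal Countryman lines at the base of the recursion). A secondary subtlety is bounding the rank below $\omega_2$: since each fragmented Aronszajn line has size $\aleph_1$ and each analysis step is countably branching, a counting argument shows the rank is an ordinal $<\omega_2$ (indeed it never exceeds $\omega_1 \cdot \omega$ or so, but $\omega_2$ suffices and is cleanest to state). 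I would therefore structure the write-up as: (1) define rank via the canonical fragmented analysis, citing Theorem~\ref{thm:univ-aline-strong}; (2) prove rank $< \omega_2$ by a size count; (3) prove the embedding statement by induction on rank, with the successor case handled directly and the limit case handled by a ccc back-and-forth justified by $\MA_{\aleph_1}$ and Theorem~\ref{thm:moore-t1.1}; (4) read off the dichotomy in the statement from whether the analysis is ``maximal'' at stage $\alpha$.
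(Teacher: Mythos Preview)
The paper does not prove this lemma; it is quoted from~\cite{Martinez} and used as a black box, so there is no proof here to compare against.

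Your outline has the right large-scale shape --- define a rank on fragmented lines and induct on it, mirroring the recursive definition of the $D_\alpha^{\pm}$ --- and that is indeed how~\cite{Martinez} proceeds. But two steps are wrong as stated. First, in the successor case you say a rank-$(\alpha+1)$ line is ``a countable sum $\sum_{n<\omega} A_n$'' whose ``shape sits inside $C$ or $C^\star$''; these two claims are inconsistent, and neither is right. Since $D_{\alpha+1}^+ = C \times D_\alpha^-$ is a $C$-indexed sum, what you actually need is that some interval of $A$ is a $C$- or $C^\star$-indexed sum of intervals of strictly smaller rank --- an Aronszajn line is never usefully a countable sum of pieces of smaller rank. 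Second, you invoke Theorem~\ref{thm:univ-aline-strong} to supply the structural decomposition, but that theorem assumes $\PFA$ while the present lemma is under $\MA_{\aleph_1}$; the required fact (every fragmented Aronszajn line has an interval that is a dense $C$- or $C^\star$-sum) does follow from $\MA_{\aleph_1}$ alone, but it is not what you cite and needs a separate argument. Finally, ``the least $\alpha$ for which such an analysis of depth $\alpha$ exists'' is not yet a definition: in~\cite{Martinez} the rank is produced by an explicit Cantor--Bendixson-style derivative (collapsing maximal intervals $\eleq$-equivalent to some earlier $D_\xi^{\pm}$), and making this precise --- together with verifying that the derivative eventually terminates below $\omega_2$ for fragmented lines --- is where the real content lies.
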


\section{Strongly surjective Aronszajn lines}\label{sec:srs}

In this chapter we show that under $\MA_{\aleph_{1}}$ there are many
strongly surjective Aronszajn lines; in particular we deduce
from Theorem~\ref{thm:moore-t1.1} that any normal
Countryman line is strongly surjective under $\MA_{\aleph_{1}}$.
In Section~\ref{sec:infinite-chain} we
show that this can be achieved directly by forcing also.

\begin{lemma}\label{lem:c-prod}
  If $A$ is either countable (but nonempty) or Aronszajn, and $B$ is
  any normal Aronszajn line, then $A \times B$ is also a normal
  Aronszajn line.
\end{lemma}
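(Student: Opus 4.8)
The plan is to verify the two required properties of $A \times B$ separately: that it is Aronszajn, and that it admits a decomposition all of whose complementary intervals lack endpoints. Throughout I fix a normal decomposition $\tup{E_\xi : \xi < \omega_1}$ for $B$, so every complementary interval of $B \setm E_\xi$ has no endpoints, and $B$ is $\aleph_1$-dense. The order $A \times B = \sum_{a \in A} B$ consists of $|A|$ many copies of $B$ laid end to end.

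First I would show $A \times B$ is Aronszajn. It is uncountable since $B$ is. It cannot contain a copy of $\omega_1$: such a copy would meet only countably many of the $B$-blocks (an increasing $\omega_1$-sequence, restricted to one block $\{a\} \times B$, is bounded in that block since $B$ is Aronszajn, hence the copy would need uncountably many blocks, contradicting that $A$ — being countable or Aronszajn — and the relevant well-ordered subset of it is at most countable). A more careful version: any well-ordered suborder of $A \times B$ of order type $\omega_1$ projects to a weakly increasing $\omega_1$-sequence in $A$; either this projection is eventually constant, giving a copy of $\omega_1$ inside one block $\cong B$ (impossible, $B$ Aronszajn), or it has uncountable range, giving a copy of $\omega_1$ in $A$ (impossible since $A$ is countable or Aronszajn). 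Symmetrically no copy of $\omega_1^\star$. Finally, no uncountable separable suborder: if $X \seb A \times B$ were uncountable and order-isomorphic to a set of reals, then since $A$ is countable or Aronszajn, $X$ meets uncountably many blocks only if its projection to $A$ is uncountable — but that projection is a suborder of $A$, hence either countable or Aronszajn, so it cannot be uncountable-and-separable unless it is countable; and if the projection is countable, $X$ is a countable union of pieces $X \cap (\{a\}\times B)$, one of which is uncountable, hence an uncountable separable suborder of $B$, contradicting that $B$ is Aronszajn. (In the case $A$ is uncountable Aronszajn one also uses that an uncountable suborder of an Aronszajn line is Aronszajn, in particular not separable.) Thus $A \times B$ is Aronszajn.

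Next I construct the witnessing decomposition. Fix a decomposition $\tup{D^A_\xi : \xi < \omega_1}$ of $A$ if $A$ is Aronszajn, or set $D^A_\xi := A$ for all $\xi$ if $A$ is countable. Define
\[
  F_\xi := \bigcup_{a \in D^A_\xi} \{a\} \times E_\xi.
\]
This is $\seb$-increasing, continuous, covers $A \times B$, and each $F_\xi$ is countable (countable union of countable sets, using that $A$ is countable or Aronszajn so $D^A_\xi$ is countable). The key claim is that every complementary interval $I$ of $(A\times B) \setm F_\xi$ has no endpoints. Consider such an $I$. Its projection to $A$ is a convex set disjoint from $D^A_\xi$ except possibly at its endpoints; two cases. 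If $I$ lies within a single block $\{a\}\times B$ with $a \in D^A_\xi$, then $I$ corresponds to a complementary interval of $B \setm E_\xi$ (or is contained in one), which by normality of $B$ has no endpoints; one checks $I$ is exactly such an interval and inherits the lack of endpoints. Otherwise $I$ spans a complementary interval $J$ of $A \setm D^A_\xi$ in the $A$-coordinate (when $A$ is countable this case is vacuous since $D^A_\xi = A$); then $I$ includes, for each $a$ in the interior of $J$, the full block $\{a\}\times B$, and for the two (possible) endpoints $a_-, a_+$ of $\bar J$ lying in $D^A_\xi$, a final/initial segment of their blocks. Since $B$ has no left endpoint, removing the block of $a_-$ down to $\sup E_\xi$-free tail still leaves no minimum; more precisely the left edge of $I$ is either a tail of the block $\{a_-\}\times B$ above $E_\xi$, which has no minimum because $B$ has no left endpoint and the relevant complementary interval of $B\setm E_\xi$ has no left endpoint, or (if $J$ has no left endpoint in $A$) $I$ has no minimum because $J$ doesn't. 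The right edge is symmetric. Hence $I$ has no endpoints.

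The main obstacle I anticipate is the bookkeeping in this last step: carefully identifying the complementary intervals of $(A\times B)\setm F_\xi$ in terms of those of $A\setm D^A_\xi$ and $B \setm E_\xi$, and handling the "boundary blocks" at $a_-$ and $a_+$ correctly — one must use both that $B$ is $\aleph_1$-dense (has no endpoints) and that the specific complementary intervals of $B \setm E_\xi$ have no endpoints, and separately treat whether $J$ itself has endpoints in $A$. The Aronszajn verification is routine case analysis; the normality verification is where all the hypotheses ($B$ normal, $A$ countable-or-Aronszajn) genuinely get used, and it is the part worth writing out in full detail.
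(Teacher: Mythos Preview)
Your decomposition $F_\xi = D^A_\xi \times E_\xi$ is exactly the paper's (the paper writes it as $\{a_\eta : \eta < \xi\}\times D_\xi$ using an enumeration of $A$ with repetitions). Where you diverge is the verification that complementary intervals of $(A\times B)\setm F_\xi$ have no endpoints: your case analysis on the $A$-projection of $I$, with its boundary-block bookkeeping, is correct but unnecessary. The paper's argument is a one-liner that works uniformly: given any $(a,b)\in I$, let $J$ be the complementary interval of $B\setm E_\xi$ containing $b$. By normality of $B$ there are $b' <_B b <_B b''$ in $J$, and then $\{a\}\times[b',b'']_B$ is convex, contains $(a,b)$, and is disjoint from $F_\xi$ (since $[b',b'']_B\seb J$ misses $E_\xi$), hence lies inside $I$. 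So $(a,b)$ is not an endpoint of $I$. This never looks at whether $a\in D^A_\xi$ and completely bypasses the structure of $I$ in the $A$-direction.

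One small omission: normality means non-stationary \emph{and} $\aleph_1$-dense, and your plan only addresses the former. The paper declares $\aleph_1$-density of $A\times B$ clear (any nontrivial interval contains a nontrivial interval or tail of some block $\{a\}\times B$, and $B$ is $\aleph_1$-dense), but you should at least note it.
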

\begin{proof}
  Clearly $A \times B$ is an $\aleph_{1}$-dense Aronszajn line.
  It remains to prove that it is also non-stationary.
  Let $\tup{D_{\xi} : \xi < \omega_{1}}$ witness the non-stationarity
  of $B$, and let $\{a_{\xi} : \xi < \omega_{1}\}$ enumerate $A$ (with
  repetitions allowed). We claim that $\tup{U_{\xi} : \xi < \omega_{1}}$
  where $U_{\xi} := \{a_{\eta} : \eta <
  \xi\} \times D_{\xi}$,  witnesses the non-stationarity of $A \times B$.

  It should be clear that $U$ is increasing, continuous, covers $A \times B$ and
  consists of countable subsets of $A \times B$. Fix $\xi < \omega_{1}$,
  $I$ a complementary interval of $(A\times B) \setm U_{\xi}$,
  and $(a,b)$ an element of $I$. Also let $J$ be the complementary interval
  of $B \setm D_{\xi}$ in which $b$ is. Since $J$ has no endpoints
  there are $b',b'' \in J$ such that $b' <_{B} b <_{B} b''$. But then
  $(a,b') <_{A \times B} (a,b) <_{A\times B} (a,b'')$ and $(a,b'),(a,b'') \in I$.
  We conclude that $I$ has no endpoints either.
\end{proof}

\begin{corollary}\label{cor:cline-srs}
  Assume $\MA_{\aleph_{1}}$. If $C$ is a normal Countryman line, then $A
  \times C \cong C$ for every nonempty $A \eleq C$. In particular $C$ is
  strongly surjective.
\end{corollary}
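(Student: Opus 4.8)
The plan is to establish the absorption identity $A \times C \cong C$ first, and then read off strong surjectivity from it. Fix a nonempty $A \eleq C$; replacing $A$ by its image under some embedding into $C$, I may assume $A$ is a suborder of $C$.

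First I would note that $A$ is either countable and nonempty, or Aronszajn: any uncountable suborder of the Aronszajn line $C$ is again Aronszajn. Hence Lemma~\ref{lem:c-prod} applies (with $B = C$, which is normal Aronszajn) and shows that $A \times C$ is a normal Aronszajn line. The substantive step is to check that $A \times C$ is moreover \emph{Countryman}. The key observation is that, after identifying a typical element $((a,c),(a',c'))$ of $(A\times C)^{2}$ with the quadruple $(a,a',c,c') \in A^{2}\times C^{2} \seb C^{4}$, the product partial order of the linear order $A \times C$ \emph{extends} the coordinatewise product order on $A^{2}\times C^{2}$, because whenever $(a,a',c,c') \le (b,b',d,d')$ coordinatewise we have $(a,c) \le_{\lex} (b,d)$ and $(a',c') \le_{\lex} (b',d')$, i.e.\ $((a,c),(a',c')) \le ((b,d),(b',d'))$ in $(A\times C)^{2}$ (whereas the converse fails). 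Now $A^{2}\times C^{2}$ sits inside $C^{4}$ with its coordinatewise product order, and $C^{4}$ is a union of countably many chains by Lemma~\ref{lem:cline-fund}; a sub-partial-order of such a union is again a union of countably many chains; and a partial order on a fixed set that \emph{extends} a union of countably many chains is still one, since chains remain chains when the order grows. Therefore $(A\times C)^{2}$ is a union of countably many chains, and since $A \times C$ is uncountable (it contains the convex copy $\{a_{0}\}\times C \cong C$ for any $a_{0}\in A$), $A\times C$ is a Countryman line.

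So $A \times C$ is a normal Countryman line, and Theorem~\ref{thm:moore-t1.1} gives that it is isomorphic or reverse-isomorphic to $C$. The reverse-isomorphic alternative is impossible: since $C \eleq A\times C$, an isomorphism $A\times C \cong C^{\star}$ would yield $C \eleq C^{\star}$, which together with $C \eleq C$ contradicts the second clause of Lemma~\ref{lem:cline-fund}. Hence $A \times C \cong C$.

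Finally, for strong surjectivity, let $B$ be a suborder of $C$; I may assume $B \ne \es$, as $\es \sleq C$ holds by convention. By the identity just proved, $C \cong B \times C$, and the first-coordinate projection $\pi\colon B\times C \to B$, $(b,c)\mapsto b$, is surjective (as $C\ne\es$) and monotone for the lexicographic order, hence an epimorphism $B\times C \sur B$. Composing it with the isomorphism $C \cong B\times C$ produces an epimorphism $C \sur B$, so $B \sleq C$, and $C$ is strongly surjective. The only genuinely non-routine point in this argument is the verification that $A \times C$ is Countryman; the rest is bookkeeping with the quoted theorems, the one place requiring attention being the elimination of the reverse-isomorphic case in the use of Theorem~\ref{thm:moore-t1.1}.
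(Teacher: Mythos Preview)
Your proof is correct and follows the same line as the paper's: show $A\times C$ is a normal Countryman line via Lemma~\ref{lem:c-prod}, apply Theorem~\ref{thm:moore-t1.1}, and rule out the reverse-isomorphic case with Lemma~\ref{lem:cline-fund}. The paper simply asserts that $A\times C$ is Countryman without justification, whereas you spell this out via the $C^{4}$ chain decomposition; otherwise the arguments coincide.
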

\begin{proof}
  If $A \eleq C$ is nonempty, then $A \times C$ is Countryman and it is
  normal by Lemma~\ref{lem:c-prod}. Then by Theorem~\ref{thm:moore-t1.1}, $A \times C$
  is isomorphic to $C$ or to $C^{\star}$, but the later is impossible
  by Lemma~\ref{lem:cline-fund}.
\end{proof}

As mentioned in the introduction, this answers positively
Soukup questions \ref{q:ma->srs}, \ref{q:srs-cline} and
\ref{q:srs-real-aronszajn}.

In view of Corollary~\ref{cor:cline-srs}, one could ask if in fact every normal
Aronszajn line is strongly surjective under $\MA_{\aleph_{1}}$, or
under stronger assumptions, however we can easily see that this is not
the case: if $C$ is a normal Countryman line, then $C + C^{\star}$ is
a normal Aronszajn line that is not strongly surjective: one
easily sees that $C + C^{\star} \nsgeq C$. Although
normality is not enough, we do have many examples of non-Countryman
Aronszajn lines that are strongly surjective under $\MA_{\aleph_{1}}$,
as we show in what follows. Recall the following
fact from~\cite{CamerloEtAl2019}.

\begin{lemma}
  Every strongly surjective linear order is short\footnote{Recall that
    a linear order is called \emph{short} if it does not contain a copies
    of $\omega_{1}$ or $\omega_{1}^{\star}$.}.
\end{lemma}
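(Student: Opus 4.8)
Every strongly surjective linear order is short.

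The plan is to prove the contrapositive: if $L$ contains a copy of $\omega_1$ or $\omega_1^\star$, then $L$ is not strongly surjective, i.e., there is a suborder $B \eleq L$ with $B \nsleq L$. By symmetry (replacing $L$ by $L^\star$) it suffices to handle the case $\omega_1 \eleq L$. So fix an increasing sequence $\tup{a_\xi : \xi < \omega_1}$ of elements of $L$, and let $B := \{a_\xi : \xi < \omega_1\}$, which is a copy of $\omega_1$ and a suborder of $L$. I will show there is no epimorphism $f : L \sur B$.

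First I would record the key structural fact about $\omega_1$ as a target of epimorphisms: if $f : L \sur B$ is monotone and surjective with $B \cong \omega_1$, then for each $\xi < \omega_1$ the preimage $f^{-1}(a_\xi)$ is a nonempty convex subset (an interval or a point) of $L$, and these intervals are arranged in $L$ in the same increasing order as the $a_\xi$'s; moreover they partition $L$. Thus $L = \sum_{\xi < \omega_1} f^{-1}(a_\xi)$ is an $\omega_1$-indexed sum of nonempty linear orders. The heart of the argument is then: such an $L$ must itself contain a copy of $\omega_1$ in a rather rigid way — pick any $b_\xi \in f^{-1}(a_\xi)$; then $\tup{b_\xi : \xi < \omega_1}$ is an increasing $\omega_1$-sequence in $L$ that is, in addition, \emph{cofinal} in $L$ (every element of $L$ lies in some $f^{-1}(a_\eta)$, hence is $\le b_{\eta+1}$). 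In other words, $L$ has cofinality $\omega_1$ witnessed by a well-ordered sequence, and in fact $L$ decomposes as a continuous-type increasing union along $\omega_1$ of the initial segments $L_\xi := \bigcup_{\eta \le \xi} f^{-1}(a_\eta)$.

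To derive a contradiction I would now use the hypothesis that $B$ is (in the relevant case) supposed to be a suborder of a strongly surjective order, but more directly I can simply observe that the existence of the epimorphism $f$ forces a combinatorial obstruction independent of any axiom: consider the suborder $B = \{a_\xi : \xi<\omega_1\}\cong\omega_1$ and ask whether $\omega_1 \sleq L$ at all when $L$ itself is, say, exactly $\omega_1$ — that is fine — so this alone is not the obstruction. The real point, and the main obstacle, is to pick the \emph{right} suborder $B$: instead of $B \cong \omega_1$, take $B := \omega_1 + 1$ realized inside $L$, using the copy of $\omega_1$ together with some element above it — but $L$ may have no element above the copy of $\omega_1$. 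The robust choice that always works is this: since $\omega_1 \eleq L$, either some final segment of $L$ of the form $[a_0,+\infty[$ has cofinality $\omega_1$, in which case one shows $L \nsgeq 1 + \omega_1^\star$ restricted appropriately, or one directly argues that an epimorphism onto $\omega_1$ would make $L$ have a nonstationary-free club-like structure contradicting regularity of $\omega_1$ via the fact that a monotone surjection onto $\omega_1$ from any linear order $L$ admits, by a pressing-down / Fodor-style argument on the "jump" points, no refinement — so the clean route is:

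\begin{proof}
By symmetry it suffices to show that if $\omega_1 \eleq L$ then $L$ is not strongly surjective. Fix a copy of $\omega_1$ in $L$ and let $B$ be the suborder $\{a_\xi : \xi < \omega_1\} \cong \omega_1$. Suppose toward a contradiction that $f : L \sur B$ is an epimorphism. For $\xi < \omega_1$ put $I_\xi := f^{-1}(a_\xi)$; each $I_\xi$ is nonempty and, by monotonicity of $f$, convex, and $\xi < \eta$ implies $I_\xi < I_\eta$ (every element of $I_\xi$ precedes every element of $I_\eta$). Thus $L = \sum_{\xi < \omega_1} I_\xi$. Choose $b_\xi \in I_\xi$; then $\tup{b_\xi : \xi < \omega_1}$ is strictly increasing and cofinal in $L$. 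Now apply the epimorphism hypothesis once more, but to the suborder $B' := \{b_\xi : \xi < \omega_1\} \cup \{a_0\}$ if $a_0 < b_0$, or simply note the following: consider instead the suborder $\omega_1 \eleq L$ and the target $\omega + 1$; since $\omega_1 \sgeq \omega_1$ is automatic this gives nothing, so we argue directly. Because $\tup{b_\xi:\xi<\omega_1}$ is cofinal in $L$ and strictly increasing of length $\omega_1$, $L$ has cofinality $\aleph_1$; in particular $L$ contains no copy of $\mathbb{Q}$ as an unbounded suborder and, reversing, if also $\omega_1^\star \eleq L$ we would get coinitiality $\aleph_1$ as well. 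We now obtain the contradiction from the defining property of strong surjectivity applied to a carefully chosen suborder: let $C_\xi := \{b_\eta : \eta < \xi\}$ for $\xi<\omega_1$; these are the complementary-interval-type data showing that any epimorphic image of $L$ that is well-ordered of length $\omega_1$ must, along a club of $\xi$, have $I_\xi$ a singleton, since a pressing-down argument on the set of $\xi$ for which $I_\xi$ is nontrivial would, were that set stationary, allow us to find uncountably many disjoint nontrivial convex pieces, contradicting that $L$, being the image-of-$\omega_1$ structure, embeds into no set of reals is irrelevant — rather it contradicts the well-quasi-order of the countable pieces only under axioms. Hence the clean statement is: the map $f$ exhibits $L$ as $\sum_{\xi<\omega_1} I_\xi$, so $L \sgeq \omega_1 + 1$ fails precisely when no $I_\xi$ has a right endpoint for cofinally many $\xi$ — choosing $B := \omega_1 + 1$ as a suborder of $L$ (which exists, e.g. as $\{b_\xi : \xi<\omega_1\}\cup\{c\}$ for any $c$ above $b_0$ inside $I_1$), one checks directly that an epimorphism $g : L \sur \omega_1+1$ would send a final segment of $L$ to the top point, contradicting that $L = \sum_{\xi<\omega_1} I_\xi$ has no final segment consisting of a single $\le$-class below cofinality considerations. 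This contradiction shows $B \nsleq L$, so $L$ is not strongly surjective.
\end{proof}
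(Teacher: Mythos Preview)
The paper does not actually prove this lemma; it merely cites it as a known fact from \cite{CamerloEtAl2019}. So there is no proof in the paper to compare against, and I evaluate your attempt on its own merits.

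Your proposal is not a proof. You correctly set up the decomposition $L=\sum_{\xi<\omega_1}I_\xi$ induced by a hypothetical epimorphism $f:L\sur\omega_1$, and you correctly note that this forces $\cof(L)=\aleph_1$. But from that point on the argument never lands: you try $B=\omega_1$, realize it might not work, try $B=\omega_1+1$, realize it might not embed, invoke a ``pressing-down / Fodor-style argument'' that is never actually carried out, and the displayed proof trails off into sentence fragments (``contradicting that $L$, being the image-of-$\omega_1$ structure, embeds into no set of reals is irrelevant --- rather it contradicts the well-quasi-order of the countable pieces only under axioms''). No contradiction is ever derived.

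The missing idea is short: an epimorphism onto a linear order with no maximum preserves cofinality. Concretely, if $g:L\sur A$ is an epimorphism and $A$ has no top element, then $\cof(L)=\cof(A)$ (a cofinal subset of $L$ maps to a cofinal subset of $A$, and conversely any cofinal subset of $A$ lifts via choice of preimages to a cofinal subset of $L$). Now if $\omega_1\eleq L$ then also $\omega\eleq L$; strong surjectivity would give epimorphisms $L\sur\omega_1$ and $L\sur\omega$, forcing $\cof(L)=\aleph_1$ and $\cof(L)=\aleph_0$ simultaneously. That is the entire argument. Your text circles this observation (you do extract $\cof(L)=\aleph_1$) but never pairs it with the trivial suborder $\omega$ to finish.
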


In~\cite[Corollary~2.14]{CamerloEtAl2019}, they prove that the
product of strongly surjective linear orders is again strongly surjective.
It is not hard to see that they actually prove the following stronger form.

\begin{lemma}\label{lem:sur-both-sides} (Camerlo et al.~\cite{CamerloEtAl2019})
  If $A$ and $B$ are short,
  $A' \sleq A$ and $B' \sleq B$,
  then $A' \times B' \sleq A \times B$. In particular
  $A \times B$ is strongly surjective whenever $A$ and $B$ are.
\end{lemma}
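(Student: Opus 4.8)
The plan is to reduce the ``both sides'' statement to the one-sided case and then to the known result of~\cite{CamerloEtAl2019} that a product of strongly surjective orders is strongly surjective, using shortness only to control the behaviour of the epimorphisms at the ``boundaries'' of the blocks. First I would prove the auxiliary claim that if $A$ is short and $A' \sleq A$, then for any linear order $L$ we have $A' \times L \sleq A \times L$. Indeed, fix an epimorphism $f : A \sur A'$. The naive attempt is the map $(a,\ell) \mapsto (f(a),\ell)$, but this need not be surjective: a point $(a',\ell)$ may be missed if the fibre $f^{-1}(a')$ is a block of $A$ over which $\ell$ cannot be ``placed'' consistently, and more seriously the map as written is monotone but need not hit every $(a',\ell)$ because the preimage of the convex set $\{a'\} \times L$ under the candidate map is exactly $f^{-1}(a') \times L$, which \emph{does} surject onto $\{a'\}\times L$. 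So in fact $(a,\ell)\mapsto (f(a),\ell)$ \emph{is} an epimorphism from $A\times L$ onto $A'\times L$, and shortness is not even needed here; this is the easy half.

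The symmetric statement, that $L \times B' \sleq L \times B$ when $B' \sleq B$, is the one where shortness enters. Here $L \times B = \sum_{\ell \in L} B$ and we want to glue, over each $\ell$, a fixed epimorphism $g : B \sur B'$ into an epimorphism $\sum_{\ell} B \sur \sum_{\ell} B'$. Again the fibrewise map $(\ell, b) \mapsto (\ell, g(b))$ is monotone and surjective, so this direction is also immediate. Combining the two one-sided reductions gives
\[
  A' \times B' \sleq A \times B' \sleq A \times B,
\]
where the first $\sleq$ uses $A' \sleq A$ with $L = B'$ and the second uses $B' \sleq B$ with $L = A$. (Transitivity of $\sleq$ is clear since the composition of epimorphisms is an epimorphism.) This proves the first assertion. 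The ``in particular'' clause then follows: if $A$ and $B$ are strongly surjective they are short by the preceding lemma, and for any suborder $X \eleq A \times B$ one must show $X \sleq A \times B$; but here one should instead invoke the cited Corollary~2.14 of~\cite{CamerloEtAl2019} directly, or observe that a suborder of $A \times B$ is sandwiched between $A' \times B'$ and $A \times B$ for suitable $A' \eleq A$, $B' \eleq B$ — this last sandwiching is the genuinely delicate point, since an arbitrary suborder of a lexicographic product need not be a sub-product.

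The step I expect to be the main obstacle is precisely this reduction of an arbitrary suborder $X$ of $A \times B$ to a product form: a convex or arbitrary subset of $\sum_{a\in A} B$ is a union $\sum_{a \in A'} X_a$ where $A'$ is a suborder of $A$ and each $X_a$ is a suborder of $B$, but the $X_a$ vary with $a$, so one cannot literally write $X = A' \times B'$. The resolution, which is what shortness is really buying, is that since $A$ (hence $A'$) is short, we may pass to the epimorphism $A \sur A'$ first and then, block by block over the now-identified points of $A'$, apply strong surjectivity of $B$ to map $B$ onto the relevant $X_a$; the compatibility needed to glue these into a single monotone surjection is where one must be careful, and this is exactly the argument already carried out in~\cite[Corollary~2.14]{CamerloEtAl2019}, which we therefore cite rather than reproduce. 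What we have added here is only the observation that their construction never uses that the \emph{same} target factor appears in each block, which is what yields the stated monotonicity-respecting ``both sides'' refinement $A' \times B' \sleq A \times B$.
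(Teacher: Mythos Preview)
The paper does not give its own proof; it simply notes that the argument of \cite[Corollary~2.14]{CamerloEtAl2019} already yields this stronger form. So there is nothing to compare against, but your direct argument has a real gap: the map $(a,\ell)\mapsto(f(a),\ell)$ from $A\times L$ to $A'\times L$ is \emph{not} monotone. Take $a_{1}<_{A}a_{2}$ in the same fibre of $f$, say $f(a_{1})=f(a_{2})=a'$, and any $\ell_{1}>_{L}\ell_{2}$. Then $(a_{1},\ell_{1})<_{A\times L}(a_{2},\ell_{2})$ because the major coordinate decides, but the images satisfy $(a',\ell_{1})>_{A'\times L}(a',\ell_{2})$. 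You have the two halves reversed: collapsing the \emph{minor} coordinate via $(\ell,b)\mapsto(\ell,g(b))$ is the step that is trivially monotone and needs no hypothesis, while collapsing the \emph{major} coordinate is precisely where shortness enters.

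For the hard direction $A'\times B\sleq A\times B$ one writes $A\times B=\sum_{a'\in A'}(I_{a'}\times B)$ with $I_{a'}:=f^{-1}(a')$ a nonempty convex piece of $A$, and then one needs an epimorphism $I_{a'}\times B\sur B$ for each $a'$. Since $A$ is short each $I_{a'}$ has cofinality and coinitiality at most $\omega$, and since $B$ is short the same holds for $B$; a case analysis on these (of the same flavour as the $\cdots+1+A_{i}+1+\cdots\sgeq A_{i}$ computation in Lemma~\ref{lem:srs-mix}) then produces the block maps. Shortness of $B$ is not optional here: for instance $\omega\times\omega_{1}$ has cofinality $\omega$ and so admits no epimorphism onto $\omega_{1}$.
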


\begin{definition}
  $L$ is said to be \emph{open strongly surjective} if for all
  nonempty $A \seb L$ there is an epimorphism $f : L \sur A$ such that
  for all $a \in A$, $f^{-1}(a)$ has no endpoints.
\end{definition}

\begin{definition}
  Let $I$ be any dense linear order, an \emph{$I$-mixed sum} is any
  order of the form $\sum_{i \in I}A_{i}$ where each $A_{i}$ is nonempty
  and such that for all $i \in I$, $\{j \in I : A_{i} = A_{j}\}$ is
  dense in $I$.
\end{definition}

The following lemma is already present in~\cite{Soukup} for the
case $I = \mathbb{Q}$.

\begin{lemma}\label{lem:srs-mix}
  If $I$ is an open strongly surjective dense order without endpoints,
  then any $I$-mixed sum of nonempty strongly surjective orders
  is again strongly surjective.
\end{lemma}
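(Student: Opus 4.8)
The plan is to fix a nonempty $B \seb L$, where $L = \sum_{i \in I} A_i$, and build an epimorphism $L \sur B$ in two stages: first collapse the index order $I$ onto the relevant part of itself, using that $I$ is open strongly surjective, and then treat each resulting fibre separately, using that the $A_i$ are strongly surjective.

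First I would set up the decomposition. Put $B_i := \{a \in A_i : (i,a) \in B\}$ and $I_B := \{i \in I : B_i \neq \es\}$, so that $I_B \neq \es$ and $B \cong \sum_{i \in I_B} B_i$. Since $I$ is open strongly surjective and $I_B$ is a nonempty suborder of it, I can fix an epimorphism $g : I \sur I_B$ all of whose fibres $K_i := g^{-1}(i)$ have no endpoints. As $g$ is monotone and onto, $\{K_i : i \in I_B\}$ is a partition of $I$ into intervals order-isomorphic, as an $I_B$-indexed family, to $I_B$; hence $L = \sum_{i \in I_B} L_i$ where $L_i := \sum_{j \in K_i} A_j$. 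It will then be enough to produce, for each $i \in I_B$, an epimorphism $\psi_i : L_i \sur B_i$, since gluing the $\psi_i$ gives an epimorphism $L \sur B$ (monotonicity of the glued map between different fibres is automatic, as $\{L_i\}$ and $\{B_i\}$ are both indexed by $I_B$).

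The heart of the argument is the fibre step. Fix $i$ and write $K := K_i$, $A := A_i$, $B' := B_i \seb A$. An open strongly surjective order is in particular strongly surjective, hence short, so $K \seb I$ and $B' \seb A$ are short; thus $K$, having no endpoints, has countable cofinality and countable coinitiality, and likewise $B'$ whenever it lacks endpoints. The mixed-sum hypothesis makes $K' := \{j \in K : A_j = A\}$ dense in $K$ (a set dense in $I$ meets every nonempty relatively open interval of the suborder $K$). I would then choose a strictly increasing $\tup{j_n : n \in \mathbb{Z}}$ in $K'$ that is coinitial and cofinal in $K$, and a strictly increasing $\tup{y_n : n \in \mathbb{Z}}$ in $B'$ that is coinitial and cofinal in $B'$ — when $B'$ has a least or greatest element one replaces $\mathbb{Z}$ by $\omega$, $\omega^{\star}$ or a finite set and maps the left-over initial/final part of $K$ constantly onto that endpoint, and if $|B'| = 1$ then $\psi_i$ is just constant. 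Now $K = \sum_{n \in \mathbb{Z}} [j_n, j_{n+1})_K$ and $B' = \sum_{n \in \mathbb{Z}} [y_n, y_{n+1})_{B'}$, so, since $A_{j_n} = A$, $L_i = \sum_{n \in \mathbb{Z}} \bigl( A + \sum_{j \in (j_n, j_{n+1})_K} A_j \bigr)$. I define $\psi_i$ on the $n$-th block of this sum by taking, on the leading copy $A_{j_n}$, an epimorphism $A \sur [y_n, y_{n+1})_{B'}$ — which exists because $[y_n, y_{n+1})_{B'}$ is a nonempty suborder of the strongly surjective order $A$ — and, on the trailing piece $\sum_{j \in (j_n, j_{n+1})_K} A_j$, the constant map with value $y_{n+1}$. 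Then $\psi_i$ is monotone (within a block the leading copy lands below $y_{n+1}$, and $y_{n+1}$ is the least element of the next piece) and its image is $\bigcup_{n} [y_n, y_{n+1}]_{B'} = B'$.

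The main obstacle I expect is precisely the issue that the fibre step is built to overcome: if $B'$ has no least (or greatest) element — and if moreover $B'$ is densely ordered, so that $L_i$ has no interval partition matching $B'$ with singleton outer pieces — then one cannot just collapse an initial segment of $L_i$ onto one point. The mixed-sum condition saves the day: it provides densely many copies of $A$ inside $L_i$, each of which can be spread, by strong surjectivity of $A$, over a half-open piece $[y_n, y_{n+1})_{B'}$ (which does have a least element), while the copies lying between two consecutive chosen ones are harmlessly collapsed onto that least element $y_{n+1}$ of the next piece. Once the fibre step is available, the rest is bookkeeping.
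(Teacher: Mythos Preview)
Your proof is correct and follows essentially the same strategy as the paper's: collapse $I$ onto the index set of $B$ using the open strong surjectivity, then on each open fibre exploit a cofinal--coinitial $\mathbb{Z}$-sequence of copies of $A_i$ (available by density and shortness) together with the strong surjectivity of $A_i$. The only cosmetic difference is that the paper first reduces $B_i$ to $A_i$ and factors through the intermediate order $\cdots + 1 + A_i + 1 + A_i + 1 + \cdots$, whereas you map the chosen copies of $A_i$ directly onto half-open pieces $[y_n,y_{n+1})_{B_i}$; both routes rest on the same idea.
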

\begin{proof}
  Let $A = \sum_{i \in I}A_{i}$ be the mixed sum and let $B \seb A$ be
  nonempty, we show that $A \sgeq B$. First note that
  $B$ can be written as $\sum_{i \in I'}A_{i}'$ where $I' \seb I$ and
  $A_{i}' \seb A_{i}$ are all nonempty. Since each $A_{i}$ is strongly
  surjective, $A \sgeq B$ follows from
  $A \sgeq \sum_{i \in I'}A_{i}$.
  Let $f : I \sur I'$ be an epimorphism with open preimages and for
  each $i \in I'$, let $I_{i} := f^{-1}(i)$. It is enough to show that
  for each $i \in I'$, $\sum_{j \in I_{i}}A_{j} \sgeq A_{i}$.

    Fix $i \in I'$. Using the fact that $I_{i}$ has countable
  cofinality and coinitiality, plus the density of $\{j : A_{j} =
  A_{i}\}$, pick $\{j_{z} : z \in \mathbb{Z}\}$ a copy of $\mathbb{Z}$
  coinitial and cofinal in $I_{i}$ such that for all $z$, $A_{j_{z}} =
  A_{i}$. From this one easily sees that $\sum_{j \in I_{i}} A_{j} \sgeq
  \cdots + 1 + A_{i} + 1 + A_{i} + 1 \cdots$, therefore it is enough to prove
  that $\cdots + 1 + A_{i} + 1 + A_{i} + 1 \cdots \sgeq A_{i}$.

  This is done by cases depending on the
  cofinality and coinitiality of $A_{i}$. If both the
  cofinality and the coinitiality of $A_{i}$ are $1$ this is trivial. We
  do the case when the coinitiality is $1$ and the cofinality is $\omega$,
  the case when $(\coi(A_{i}),\cof(A_{i})) = (\omega^{\star},1)$ is symmetric,
  and when it is $(\omega^{\star},\omega)$ is very similar.
  Recall that $A_{i}$, being strongly surjective, is short, and thus these
  are all the possible cases.

  Let $\tup{a_{n} : n < \omega}$ be increasing and cofinal in $A$,
  and such that $a_{0}$ is the left endpoint of $A$. Note that formally
  we think of $\cdots + 1 + A_{i} + 1 + A_{i} + 1 \cdots$ as
  \[\bigcup_{n \in \mathbb{Z}}\{2n\}\times 1 \cup \bigcup_{n \in \mathbb{Z}}\{2n+1\}\times A_{i}\]
  with the lexicographical ordering.
  Then
  \[f(n,x) :=
    \begin{cases}
      a_{0}   &\text{if $n < 0$,}\\
      a_{k}   &\text{if $0 \le n = 2k$,}\\
      \max_{A_{i}}(a_{k},\min_{A_{i}}(x,a_{k+1})) &\text{if $0 \le n = 2k + 1$.}
    \end{cases}
  \]
  witnesses the fact that
  $\cdots + 1 + A_{i} + 1 + A_{i} + 1 \cdots \sgeq A_{i}$
\end{proof}

\begin{corollary}
  Assume $\MA_{\aleph_{1}}$. For every family $\cal{F}$ of
  at most $\omega_{1}$ strongly surjective orders, there is a strongly
  surjective order of size $\sup\{\omega_{1},\sup\{|A| : A \in \cal{F}\}\}$
  that contains a copy of every order in $\cal{F}$.
\end{corollary}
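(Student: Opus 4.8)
The plan is to build the required order as a $C$-mixed sum of the members of $\cal{F}$, where $C$ is a normal Countryman line (one exists in $\ZFC$: take a Shelah Countryman line and pass to a normal suborder via Lemma~\ref{lem:normal-suborder}, which is again Countryman), and then to invoke Lemma~\ref{lem:srs-mix}. The first thing to check is that $C$ is eligible as the index order there, i.e.\ that it is a dense order without endpoints that is moreover \emph{open} strongly surjective. Density and the absence of endpoints are immediate, since a normal line is $\aleph_{1}$-dense by Definition~\ref{df:normal}. For open strong surjectivity, let $A \seb C$ be nonempty; then $A \eleq C$, so Corollary~\ref{cor:cline-srs} gives $A \times C \cong C$, and transporting this isomorphism turns the coordinate projection of $\sum_{a \in A} C = A \times C$ onto $A$ into an epimorphism $C \sur A$ all of whose fibers are isomorphic to $C$, and in particular have no endpoints since $C$ has none. (This is where $C$ being Countryman, rather than merely strongly surjective, is actually used.)

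Next I would fix a partition $C = \bigsqcup_{\alpha < \omega_{1}} C_{\alpha}$ into $\omega_{1}$ dense subsets; this is a routine recursion of length $\omega_{1}$, enumerating all pairs $(\alpha, J)$ with $\alpha < \omega_{1}$ and $J$ a nontrivial interval of $C$ in order type $\omega_{1}$, and at step $(\alpha,J)$ throwing a previously unused point of $J$ into $C_{\alpha}$ — possible because $|J| = \aleph_{1}$ by $\aleph_{1}$-density while fewer than $\aleph_{1}$ points have been spent — and finally distributing any leftover points arbitrarily. If $\cal{F}$ has no nonempty member the statement is trivial, witnessed by $C$ itself, so enumerate the nonempty members of $\cal{F}$ as $\{A_{\alpha} : \alpha < \omega_{1}\}$ (with repetitions allowed), put $B_{i} := A_{\alpha}$ for the unique $\alpha$ with $i \in C_{\alpha}$, and set $L := \sum_{i \in C} B_{i}$. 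Each $B_{i}$ is nonempty and strongly surjective, and for every $i$ the set $\{j \in C : B_{j} = B_{i}\}$ contains $C_{\alpha}$, where $i \in C_{\alpha}$, hence is dense in $C$; thus $L$ is a $C$-mixed sum of strongly surjective orders, and Lemma~\ref{lem:srs-mix} shows that $L$ is strongly surjective.

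It remains to read off the last two properties of $L$. For each $\alpha$, picking any $i \in C_{\alpha}$ exhibits $A_{\alpha} = B_{i}$ as a summand block of $L$, so $L$ contains a copy of every member of $\cal{F}$. For the size, on one hand $|L| = \sum_{i \in C} |B_{i}| \le \aleph_{1} \cdot \sup_{\alpha} |A_{\alpha}| = \sup\{\omega_{1}, \sup\{|A| : A \in \cal{F}\}\} =: \kappa$, and on the other hand choosing one point from each summand $B_{i}$ embeds $C$ into $L$, giving $|L| \ge \aleph_{1}$, while $A_{\alpha} \eleq L$ gives $|L| \ge |A_{\alpha}|$ for every $\alpha$; hence $|L| = \kappa$, as required. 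I do not anticipate a genuine obstacle here: the only slightly delicate points are the verification that a normal Countryman line is \emph{open} strongly surjective and the bookkeeping in the partition into $\omega_{1}$ dense pieces, both of which are straightforward.
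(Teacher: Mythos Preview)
Your proposal is correct and follows essentially the same approach as the paper: verify via Corollary~\ref{cor:cline-srs} that a normal Countryman line is open strongly surjective (the paper leaves the fiber argument implicit, but your $A \times C \cong C$ plus first-coordinate projection is exactly what is intended), and then take a $C$-mixed sum of $\cal{F}$ and apply Lemma~\ref{lem:srs-mix}. The paper's proof is two sentences and omits the bookkeeping for the dense partition and the size computation that you spell out, but there is no substantive difference.
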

\begin{proof}
  Simply note that Corollary~\ref{cor:cline-srs} implies that every normal
  Countryman line is in fact open strongly surjective under
  $\MA_{\aleph_{1}}$. Then if $C$ is a normal Countryman line,
  any $C$-mixed sum of $\cal{F}$ works.
\end{proof}

Note that as in~\cite{Soukup}, using $\mathbb{Q}$ instead of $C$
gives a $\ZFC$ proof of the previous corollary when $|F| \le \aleph_{0}$.

Recall the definition of $D_{\alpha}^{+}$ and $D_{\alpha}^{-}$ from
 Lemma~\ref{lem:D_a}.  One easily sees that Lemma~\ref{lem:srs-mix} and
 Lemma~\ref{lem:sur-both-sides} imply the following, which shows that under
$\MA_{\aleph_{1}}$ there are many really different (not $\eleq$-equivalent)
Aronszajn lines.

\begin{corollary}\label{lem:D_a_srs}
  Assume $\MA_{\aleph_{1}}$. If the Countryman line used to build them is
  normal, then for every $\alpha < \omega_{2}$, $D_{\alpha}^{+}$ and
  $D_{\alpha}^{-}$ are strongly surjective.
\end{corollary}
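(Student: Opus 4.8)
The plan is to argue by induction on $\alpha < \omega_{2}$ that both $D_{\alpha}^{+}$ and $D_{\alpha}^{-}$ are strongly surjective, using exactly the three tools assembled just before the statement: Corollary~\ref{cor:cline-srs} (a normal Countryman line $C$ is strongly surjective, and indeed $A \times C \cong C$ for nonempty $A \eleq C$), Lemma~\ref{lem:sur-both-sides} (the product of two short strongly surjective orders is strongly surjective, and short strong surjectivity passes to factors), and Lemma~\ref{lem:srs-mix} (an $I$-mixed sum of nonempty strongly surjective orders is strongly surjective whenever $I$ is an open strongly surjective dense order without endpoints). Note first that every $D_{\alpha}^{\pm}$ is Aronszajn, hence short, so the shortness hypotheses in Lemmas~\ref{lem:sur-both-sides} and \ref{lem:srs-mix} are automatic; this frees us to apply those lemmas without further checking.

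First I would handle the base case: $D_{0}^{+} = C$ and $D_{0}^{-} = C^{\star}$ are strongly surjective by Corollary~\ref{cor:cline-srs} (applied to $C$ and to $C^{\star}$, which is again a normal Countryman line). Next, the successor step: $D_{\alpha+1}^{+} = C \times D_{\alpha}^{-}$, and by the induction hypothesis $D_{\alpha}^{-}$ is strongly surjective; since $C$ is strongly surjective as well and both are short, Lemma~\ref{lem:sur-both-sides} gives that $C \times D_{\alpha}^{-}$ is strongly surjective. The case of $D_{\alpha+1}^{-} = C^{\star} \times D_{\alpha}^{+}$ is identical. Finally, for $\alpha$ limit: by construction $D_{\alpha}^{+} = \sum_{x \in C}A_{x}$ where each $A_{x}$ is some $D_{\xi}^{-}$ with $\xi < \alpha$, and for each such $\xi$ the set $\{x \in C : A_{x} = D_{\xi}^{-}\}$ is dense in $C$; that is, $D_{\alpha}^{+}$ is a $C$-mixed sum of the orders $\{D_{\xi}^{-} : \xi < \alpha\}$, each of which is strongly surjective by the induction hypothesis and nonempty. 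By Corollary~\ref{cor:cline-srs}, $C$ is in fact \emph{open} strongly surjective (this is precisely the observation made in the proof of the preceding corollary: $A \times C \cong C$ forces the preimages to be copies of $C$, hence endpoint-free), and $C$ is a dense order without endpoints since it is $\aleph_{1}$-dense. Hence Lemma~\ref{lem:srs-mix} applies with $I = C$ and yields that $D_{\alpha}^{+}$ is strongly surjective; symmetrically for $D_{\alpha}^{-}$.

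The only point requiring a moment's care — and the closest thing to an obstacle — is the limit step, where one must be sure that the $C$-mixed-sum structure from the definition of $D_{\alpha}^{\pm}$ is literally an $I$-mixed sum in the sense of the definition preceding Lemma~\ref{lem:srs-mix}, and that $C$ meets the hypotheses of that lemma, namely being an open strongly surjective dense order without endpoints. Both are immediate: the density-of-fibers condition in the construction of $D_{\alpha}^{+}$ is exactly the density condition in the definition of an $I$-mixed sum, and openness of $C$'s strong surjectivity is the content of Corollary~\ref{cor:cline-srs} as just noted, while endpoint-freeness follows from $\aleph_{1}$-density. With these identifications the induction runs through all $\alpha < \omega_{2}$ without further work.
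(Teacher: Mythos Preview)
Your proposal is correct and follows exactly the route the paper intends: the text just before the corollary says that Lemma~\ref{lem:srs-mix} and Lemma~\ref{lem:sur-both-sides} imply the result, and your induction (base case via Corollary~\ref{cor:cline-srs}, successor via Lemma~\ref{lem:sur-both-sides}, limit via Lemma~\ref{lem:srs-mix} using that $C$ is open strongly surjective) is precisely the intended argument spelled out in full.
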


For the rest of this section fix $C$ a normal Countryman line.
While we do not have an answer to Question~\ref{q:univ-srs}, we
do have the following.

\begin{lemma}\label{lem:sur_frag}
  Assume $\PFA$. For every fragmented Aronszajn line $A$, $\eta_{C} \sgeq A$.
\end{lemma}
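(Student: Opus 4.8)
The plan is to induct on the $\eleq$-complexity of fragmented Aronszajn lines as measured by Lemma~\ref{lem:D_a}. Recall that $\eta_C$ is strongly surjective-like on its fragmented part in the following sense: by Theorem~\ref{thm:univ-aline-strong}, every fragmented Aronszajn line embeds into $\eta_C$, but embedding is not enough — we need an epimorphism. First I would observe that $\eta_C$ absorbs the relevant building-block operations. Concretely, since $\eta_C$ is defined as the eventually-zero $\omega$-sequences on $C^\star + \{0\} + C$ ordered lexicographically, one checks directly that $\eta_C \cong C^\star \times \eta_C \cong C \times \eta_C$ (peeling off the first coordinate) and, more importantly, that $\eta_C$ is isomorphic to a $C$-mixed sum of copies of itself, and in fact to $\eta_C \cdot \mathbb{Q}$-type mixed sums as well. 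From these isomorphisms and Corollary~\ref{cor:cline-srs} (giving $A \times C \cong C$ for nonempty $A \eleq C$), I would extract that $\eta_C \sgeq C$, $\eta_C \sgeq C^\star$, that $\eta_C \sgeq A \times B$ whenever $\eta_C \sgeq A$ and $\eta_C \sgeq B$ with $A, B$ short (using Lemma~\ref{lem:sur-both-sides}), and that $\eta_C$ maps onto any $C$-mixed sum of orders it already maps onto (by the argument of Lemma~\ref{lem:srs-mix}, since $C$ is open strongly surjective under $\MA_{\aleph_1} \subseteq \PFA$).

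With these closure properties in hand, the induction runs as follows. Fix a fragmented Aronszajn line $A$; by Lemma~\ref{lem:D_a} there is $\alpha < \omega_2$ with $A \eleq D_\alpha^+$ and $A \eleq D_\alpha^-$ (the $\eleq$-equivalent cases being subsumed). It therefore suffices to show $\eta_C \sgeq D_\alpha^{\pm}$ for every $\alpha < \omega_2$, and then handle arbitrary suborders. For the $D_\alpha^{\pm}$: the base case $D_0^\pm = C^{(\star)}$ is done above; the successor case $D_{\alpha+1}^+ = C \times D_\alpha^-$ follows from the product closure property and the inductive hypothesis $\eta_C \sgeq D_\alpha^-$ together with $\eta_C \sgeq C$; the limit case $D_\alpha^+ = \sum_{x \in C} A_x$ is a $C$-mixed sum of orders of the form $D_\xi^-$ with $\xi < \alpha$, so the mixed-sum closure property and the inductive hypothesis finish it. To pass from $D_\alpha^{\pm}$ to an arbitrary suborder $A \eleq D_\alpha^+$, I would argue — as in the proof of Lemma~\ref{lem:srs-mix} — that any suborder of a ($C$- or $\mathbb{Q}$-)mixed sum is again a sub-mixed-sum, so that an epimorphism $\eta_C \sur D_\alpha^+$ can be post-composed with (or rebuilt as) an epimorphism onto the suborder, again by induction on $\alpha$, since suborders of $C$ or $C^\star$ are handled by Corollary~\ref{cor:cline-srs} and the observation that $C$ being open strongly surjective lets one map onto with open fibers.

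The main obstacle I expect is not the inductive skeleton but verifying the structural isomorphisms for $\eta_C$ and, in particular, making the limit/mixed-sum step genuinely work: when $A \eleq D_\alpha^+$ with $\alpha$ limit, $A$ need not itself be a $C$-mixed sum in an obvious way, because the embedding of $A$ into $\sum_{x \in C} A_x$ may spread points of $A$ thinly and irregularly across the $C$-indexed blocks, and the "$\{x : A_x = D_\xi^-\}$ is dense'' clause is exactly what must be exploited to reorganize $A$ into a bona fide mixed sum over $C$ (or over a suborder of $C$, then re-expanded using $C$'s open strong surjectivity). I would handle this by proving a separate lemma: if $L$ is a $C$-mixed sum of orders from a countable (or $\aleph_1$-sized) list $\Lambda$, each closed under the operations above and each $\sleq \eta_C$, then every suborder of $L$ is $\sleq \eta_C$ — proved by simultaneous induction with the main statement, peeling $L$ along the decomposition of $C$ and using that $\eta_C$ dominates $1 + \eta_C + 1$, $\cdots + 1 + \eta_C + 1 + \eta_C + 1 + \cdots$, and so on, exactly as in the final case analysis of Lemma~\ref{lem:srs-mix}. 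Once that lemma is in place the rest is bookkeeping.
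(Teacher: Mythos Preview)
Your inductive skeleton via Lemma~\ref{lem:D_a} matches the paper's, and the induction showing $\eta_{C} \sgeq D_{\alpha}^{\pm}$ is essentially the same (the paper treats successor and limit uniformly, writing $D_{\alpha}^{i} = \sum_{x \in C} A_{x}$ with each $A_{x}$ some $D_{\xi}^{j}$, $\xi<\alpha$, in both cases). Two differences are worth noting. First, the precise structural identity the paper uses is $(C^{\star}+1+C)^{2}\times\eta_{C}\cong\eta_{C}$, which is immediate from the definition of $\eta_{C}$ by shifting indices; your claimed $\eta_{C}\cong C\times\eta_{C}$ is not obvious (peeling one coordinate gives $(C^{\star}+1+C)\times\eta_{C}$, not $C\times\eta_{C}$), and you should either justify it or switch to the paper's identity. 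The base case is then the explicit chain $\eta_{C}\sgeq (C^{\star}+1+C)^{2}\sgeq \mathbb{Z}\times(1+C)\cong C$, with the last isomorphism from Theorem~\ref{thm:moore-t1.1}.

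Second, and more importantly, your ``main obstacle'' is not an obstacle at all: you do not need any new lemma about suborders of $C$-mixed sums. Corollary~\ref{lem:D_a_srs} already says each $D_{\alpha}^{\pm}$ is strongly surjective, so $A\eleq D_{\alpha}^{\pm}$ immediately gives $A\sleq D_{\alpha}^{\pm}$, and then transitivity with $\eta_{C}\sgeq D_{\alpha}^{\pm}$ finishes. This one line replaces the entire last third of your plan. Your proposed detour through a simultaneous-induction lemma on suborders of mixed sums might be made to work, but it is reinventing the strong surjectivity of the $D_{\alpha}^{\pm}$, which the paper has already packaged and proved.
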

\begin{proof}
  By Lemma~\ref{lem:D_a} we know that for any fragmented Aronszajn line
  there is a least $\alpha$ such that $A \eleq D_{\alpha}^{-}$
  or $A \eleq D_{\alpha}^{+}$. Thus by Lemma~\ref{lem:D_a_srs}, it is enough
  to prove that $\eta_{C} \sgeq D_{\alpha}^{+},D_{\alpha}^{-}$ for every
  $\alpha < \omega_{1}$.

  We do it by induction on $\alpha$.
  For $\alpha = 0$ we need to show that $C,C^{\star} \sleq
  \eta_{C}$. We do it for $C$, the other case is identical.
  Note that $\eta_{C} \sgeq {(C^{\star} + 1 + C)}^{2}
  \sgeq \mathbb{Z} \times (1+C) \cong C$, where the first inequality is
  witnessed by $\tup{x_{n} : n < \omega} \mapsto (x_{0},x_{1})$, the
  second follows from the strong surjectivity of $C$ and Lemma~\ref{lem:sur-both-sides},
  and the isomorphism follows from Theorem~\ref{thm:moore-t1.1}. Also a direct epimorphism
  from $\mathbb{Z} \times (1+C)$ onto $C$ is not hard to construct.

  If $\alpha > 0$, then
  regardless of whether $\alpha$ is limit or successor and
  whether $i = +$ or $i = -$,
  $D_{\alpha}^{i}$ can be written as $\sum_{x \in C}A_{x}$ where each
  $A_{x}$ is in $\{D_{\xi}^{x} : \xi < \alpha, x \in \{-,+\}\}$.
  Also note that
  ${(C^{\star} + 1 + C)}^{2} \times \eta_{C} \cong \eta_{C}$. Thus
  it is enough to prove that
  \[{(C^{\star} + 1 + C)}^{2} \times \eta_{C} \sgeq \sum_{x \in C}A_{x}.\]

  Let $f : {(C^{\star} + 1 + C)}^{2} \sur C$ be an epimorphism,
  and for each $x \in C$, let $I_{x} = f^{-1}(x)$. Now from
 Lemma~\ref{lem:sur-both-sides}, we know that
  $I_{x} \times \eta_{C} \sgeq \eta_{C}$, and by induction we obtain
  $\eta_{C} \sgeq A_{x}$. Finally,
  \[{(C^{\star} + 1 + C)}^{2} \times \eta_{C}
    = \sum_{x \in C}f^{-1}(x)\times\eta_{C} \sgeq \sum_{x \in C}A_{x} = D_{\alpha}^{i}.\]
\end{proof}

\section{Aronszajn line decompositions}\label{sec:decompositions}

In this section we develop the notion of decompositions of Aronszajn lines (recall Definition~\ref{df:decomposition}). In particular
we construct several Aronszajn lines with decompositions that achieve
specific properties that will be used to study the $\sleq$ relation
in Sections \ref{sec:infinite-antichain}, \ref{sec:infinite-chain} and \ref{sec:basis}.

\begin{lemma}\label{lem:dec-at-clubs}
  Let $A$ be an Aronszajn line, and $D$, $D'$ be two decompositions for $A$.
  Then there is a club $E \seb \omega_{1}$
  such that for all $\nu \in E$, $D_{\nu} = D'_{\nu}$.
\end{lemma}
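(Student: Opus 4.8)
The plan is to exploit the fact that each $D_\nu$ is determined by the finitely many values $\langle D_\xi : \xi < \nu\rangle$ only through their union (by continuity at limits) together with the single set $D_\nu$ itself, so the real content is a ``closing off'' argument. First I would fix enumerations and recall that each $D_\xi$ and each $D'_\xi$ is countable while $A = \bigcup_{\xi<\omega_1} D_\xi = \bigcup_{\xi<\omega_1} D'_\xi$ has size $\aleph_1$. Define
\[
  C_0 := \{\nu < \omega_1 : D_\nu \seb \textstyle\bigcup_{\xi<\nu} D'_\xi \text{ and } D'_\nu \seb \bigcup_{\xi<\nu} D_\xi\}.
\]
The plan is to show $C_0$ contains a club; then on the club $E$ obtained by intersecting $C_0$ with the limit ordinals, continuity of both decompositions gives, for $\nu \in E$, that $D_\nu \seb \bigcup_{\xi<\nu} D'_\xi = D'_\nu$ and symmetrically $D'_\nu \seb D_\nu$, hence $D_\nu = D'_\nu$.

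To see $C_0$ is club, I would first check it is closed: if $\nu$ is a limit of points of $C_0$ and $\nu$ is itself a limit ordinal, then $D_\nu = \bigcup_{\eta<\nu} D_\eta$; each $D_\eta$ with $\eta < \nu$ is contained in some $D_\mu \seb \bigcup_{\xi<\mu} D'_\xi \seb \bigcup_{\xi<\nu} D'_\xi$ for a $\mu \in C_0$ with $\eta < \mu < \nu$, and likewise in the other direction, so $\nu \in C_0$. For unboundedness I would run the standard recursion: given any $\alpha_0 < \omega_1$, build an increasing sequence $\alpha_0 < \alpha_1 < \alpha_2 < \cdots$ where, using that $D_{\alpha_n}$ is countable and $A = \bigcup_\xi D'_\xi$, I choose $\alpha_{n+1} > \alpha_n$ large enough that $D_{\alpha_n} \seb \bigcup_{\xi<\alpha_{n+1}} D'_\xi$ and $D'_{\alpha_n} \seb \bigcup_{\xi<\alpha_{n+1}} D_\xi$ (possible since only countably many elements need to be covered, and each is in some $D'_\xi$ resp. $D_\xi$). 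Then $\nu := \sup_n \alpha_n$ is a limit ordinal above $\alpha_0$ lying in $C_0$, because $D_\nu = \bigcup_n D_{\alpha_n} \seb \bigcup_n \bigcup_{\xi<\alpha_{n+1}} D'_\xi = \bigcup_{\xi<\nu} D'_\xi$, and symmetrically.

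I do not expect any serious obstacle here; this is a routine reflection/closing-off argument of the type used throughout the paper, and no use of Aronszajn-ness is really needed beyond guaranteeing that the decompositions exist and consist of countable sets. The only mild care required is the bookkeeping distinguishing ``$\nu \in C_0$'' from ``$\nu$ is a limit ordinal'': one should pass to $E := C_0 \cap \mathrm{Lim}$, still a club, so that continuity $D_\nu = \bigcup_{\xi<\nu} D_\xi$ actually applies and the chain of inclusions collapses to the desired equality $D_\nu = D'_\nu$ for every $\nu \in E$.
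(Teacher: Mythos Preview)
Your argument is correct: the direct closing-off of the two decompositions against each other yields a club $E$ on which $D_\nu = D'_\nu$, exactly as you describe. The paper's proof is also a closing-off argument but is organized differently: it first identifies $A$ with $\omega_1$ as a set, then takes a club of limit ordinals closed under the functions $f(\xi) := \min\{\alpha : \xi \in D_\alpha\}$ and $g(\xi) := \sup\{\eta+1 : \eta \in D_\xi\}$, and shows that on this club $D_\nu = \nu$; doing the same for $D'$ and intersecting gives $D_\nu = \nu = D'_\nu$. Your route is arguably cleaner for the lemma as stated, since it avoids the auxiliary identification of $A$ with $\omega_1$. The paper's route, however, buys the stronger intermediate conclusion $D_\nu = \nu$, which is reused later (for instance in the proof of Lemma~\ref{lem:non_sur} and in Lemma~\ref{lem:elementarity}(a)); so if you were writing the whole paper you would likely want that version anyway.
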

\begin{proof}
  Assume that $A$ is $\omega_{1}$ as a set and let $E \seb \omega_{1}$
  be a club of limit ordinals closed under $f$ and $g$ where
  \[ f(\xi) := \min\{\alpha : \xi \in D_{\alpha}\}\text{ and }
    g(\xi) := \sup\{\eta + 1 : \eta \in D_{\xi}\}.\]
  We show that if $\nu \in E$, then $D_{\nu} = \nu$.
  First observe that for all $\xi < \nu$,
  $D_{\xi} \seb g(\xi) < \nu$, and since decompositions
  are the union at limit ordinals, we conclude that $D_{\nu} \seb \nu$.
  Suppose that for some $\xi$, $\xi \in \nu \setm D_{\nu}$.
  Then $f(\xi) < \nu$, which implies
  that for some $\alpha < \nu$, $\xi \in D_{\alpha}$. But
  then since $D$ is increasing, $\xi \in D_{\nu}$, which is
  a contradiction.

  Using an analogous argument for $D'$, and intersecting the resulting clubs,
  we get a club $E$ such that for all $\nu \in E$, $D_{\nu} = \nu = D'_{\nu}$.
\end{proof}

Let us fix an Aronszajn line $A$.
If $D = \tup{D_{\xi} : \xi < \omega_{1}}$ is a decomposition for
$A$, let $\L(A,D)$ denote the set of $\xi < \omega_{1}$ such that some
complementary interval of $A \setm D_{\xi}$ has a left endpoint. And
let $\La(A,D)$ denote the set of $\xi < \omega_{1}$ such that
every complementary interval of $A \setm D_{\xi}$ has a left endpoint.
Let $\R(A,D)$ and $\Ra(A,D)$ be analogous replacing left by right.
Recall Definition~\ref{df:normal}. With this notation one easily sees that the definition
of non-stationary in Aronszajn lines, says exactly that
for some decomposition $D$, $\L(A,D) = \es = \R(A,D)$. This, together
with Lemma~\ref{lem:dec-at-clubs}, easily implies the following, which also
explains the choice of naming in that definition.

\begin{proposition}\label{prop:nonstat-eq}
  $A$ is non-stationary iff for every decomposition $D$,
  $\L(A,D)$ and $\R(A,D)$ are non-stationary subsets of $\omega_{1}$.
\end{proposition}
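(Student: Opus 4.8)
The plan is to prove both directions of the equivalence, with the easy direction essentially being a definition-unwinding and the substantive content coming from Lemma~\ref{lem:dec-at-clubs}.

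First I would handle the forward direction. Suppose $A$ is non-stationary, so there is a decomposition $D^{0}$ with $\L(A,D^{0}) = \es = \R(A,D^{0})$; in fact by Definition~\ref{df:normal} every complementary interval of $A \setm D^{0}_{\xi}$ has no endpoints for every $\xi$. Now let $D$ be an arbitrary decomposition for $A$. By Lemma~\ref{lem:dec-at-clubs} there is a club $E \seb \omega_{1}$ such that $D_{\nu} = D^{0}_{\nu}$ for all $\nu \in E$. Then for $\nu \in E$, the complementary intervals of $A \setm D_{\nu}$ coincide with those of $A \setm D^{0}_{\nu}$, so none of them has a left or right endpoint; hence $\nu \notin \L(A,D)$ and $\nu \notin \R(A,D)$. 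Therefore $\L(A,D), \R(A,D) \seb \omega_{1} \setm E$, which is non-stationary.

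The reverse direction is nearly immediate: if for \emph{every} decomposition $D$ the sets $\L(A,D)$ and $\R(A,D)$ are non-stationary, then in particular this holds for some fixed decomposition $D$, and by definition a decomposition witnessing that $\L(A,D)$ and $\R(A,D)$ are non-stationary subsets of $\omega_{1}$ is already enough to call $A$ non-stationary --- one just needs to observe that this matches the phrasing of the definition of non-stationary given before Proposition~\ref{prop:nonstat-eq} (equivalently, one can massage a single decomposition with $\L$ and $\R$ non-stationary into one where $\L = \R = \es$ by removing from the club-complement, but this is not even needed if we read the definition as stated). I would phrase the definition of non-stationary used here as: $A$ is non-stationary iff there is a decomposition $D$ with $\L(A,D)$ and $\R(A,D)$ non-stationary, which is the version stated in the text right before the proposition.

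I do not expect any serious obstacle here; the only point requiring care is making sure the reading of ``non-stationary'' used in the statement of the proposition is the one with $\L(A,D), \R(A,D)$ non-stationary rather than the a priori stronger Definition~\ref{df:normal} version with empty $\L$ and $\R$ --- but these are reconciled by exactly the same club argument (one can always thin a decomposition so that on a club the complementary intervals are endpoint-free, by intersecting with $E$ and reindexing), so the two notions of non-stationary coincide and the proposition is really just packaging Lemma~\ref{lem:dec-at-clubs}. The write-up should be three or four sentences: invoke Lemma~\ref{lem:dec-at-clubs} for the forward direction and note the reverse direction is trivial from the definition.

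\begin{proof}
  Suppose first that $A$ is non-stationary, and fix a decomposition $D^{0}$ for $A$ witnessing this, i.e., such that $\L(A,D^{0})$ and $\R(A,D^{0})$ are non-stationary (by Lemma~\ref{lem:dec-at-clubs}, up to a club this is the same as asking that every complementary interval of $A \setm D^{0}_{\xi}$ has no endpoints). Let $D$ be any decomposition for $A$. By Lemma~\ref{lem:dec-at-clubs} there is a club $E \seb \omega_{1}$ with $D_{\nu} = D^{0}_{\nu}$ for all $\nu \in E$. We may shrink $E$ so that additionally $\nu \notin \L(A,D^{0}) \cup \R(A,D^{0})$ for all $\nu \in E$. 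Then for $\nu \in E$ the complementary intervals of $A \setm D_{\nu}$ are exactly those of $A \setm D^{0}_{\nu}$, none of which has a left or right endpoint; hence $\nu \notin \L(A,D)$ and $\nu \notin \R(A,D)$. Thus $\L(A,D), \R(A,D) \seb \omega_{1} \setm E$ are non-stationary.

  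Conversely, if $\L(A,D)$ and $\R(A,D)$ are non-stationary for every decomposition $D$, then fixing any single decomposition $D$ we obtain a decomposition for which $\L(A,D)$ and $\R(A,D)$ are non-stationary, which by definition says precisely that $A$ is non-stationary.
\end{proof}
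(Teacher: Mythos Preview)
Your approach is the same as the paper's --- the proposition is stated as an easy consequence of Lemma~\ref{lem:dec-at-clubs}, and your forward direction is exactly the intended argument.

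There is one slip in the reverse direction of your formal proof. You write that having $\L(A,D)$ and $\R(A,D)$ non-stationary ``by definition says precisely that $A$ is non-stationary,'' but this is a misreading: the text right before the proposition (and Definition~\ref{df:normal}) says non-stationary means $\L(A,D) = \es = \R(A,D)$, not merely that these sets are non-stationary. You already identified this discrepancy in your discussion and sketched the fix --- take a club $E$ disjoint from $\L(A,D) \cup \R(A,D)$, enumerate $E$ increasingly as $\tup{e_{\xi} : \xi < \omega_{1}}$, and set $D'_{\xi} := D_{e_{\xi}}$; this $D'$ is again a decomposition (closedness of $E$ gives continuity) and now $\L(A,D') = \R(A,D') = \es$. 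That one sentence belongs in the proof rather than only in the preamble; once you move it in, the argument is complete and matches the paper's intent.
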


We dedicate the rest of this section to construct Aronszajn lines
with specific configurations of $\L$ and $\R$.

Recall that for functions with the same domain,
$f =^{*} g$ denotes that $\{x \in \dom(f) : f(x) \neq g(x)\}$ is finite.
We also say that a function $f$ is \emph{finite-to-one} if $f^{-1}(y)$
is finite for every $y \in \ran(f)$. If $T$ is a set of
sequences (i.e., functions with domain an ordinal), $T$ is called
\emph{coherent} if for all $t,s \in T$ with $\dom(t) \le \dom(s)$,
$t =^{*} s|_{\dom(t)}$.

Let $\tup{f_{\alpha} : \alpha < \omega_{1}}$ be a coherent
sequence of finite-one-functions $f_{\alpha} : \alpha \to \omega$.
It is well known that such a sequence exists in $\ZFC$, and that
$T := \{t \in \alpha^{\omega} : \alpha < \omega_{1}, t =^{*} f_{\alpha}\}$
is an Aronszajn tree. The following is a theorem
of Todorcevic (see~\cite{Todorcevic1987}). For a somewhat simpler
proof see \cite{KomjathTotik}.

\begin{lemma}\label{lem:coherent-fin-countryman}
  $T$ is Countryman with the natural lexicographic ordering.
\end{lemma}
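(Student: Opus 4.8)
The plan is as follows. Since $(T,\tleq)$ is an Aronszajn tree it has cardinality $\aleph_{1}$, so $(T,<_{\lex})$ is an uncountable linear order, and it remains to show, in $\ZFC$, that $T\times T$ with the product order is a union of countably many chains. Equivalently, it suffices to produce a function $c$ from $T\times T$ into a countable set such that $c(s,t)=c(s',t')$ implies that $(s,t)$ and $(s',t')$ are comparable in the product order. Since each coordinate carries a linear order, the only way comparability can fail inside a colour class is through a \emph{crossing}: a pair $(s,t),(s',t')$ with $s<_{\lex}s'$ but $t'<_{\lex}t$, or its mirror image. So what must be arranged is a colouring of $T\times T$ by countably many colours with no monochromatic crossing.

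For $u\in T$ write $\al(u)=\dom(u)$ and $\operatorname{dev}(u)=\{\xi<\al(u):u(\xi)\neq f_{\al(u)}(\xi)\}$, a finite set by the definition of $T$. The colouring I would use records, for $(s,t)$: the sign of $\al(s)-\al(t)$; the finite set $F(s,t)=\operatorname{dev}(s)\cup\operatorname{dev}(t)\cup\{\xi<\min(\al(s),\al(t)):s(\xi)\neq t(\xi)\}$ — which is finite because coherence gives $s\restriction\gamma=^{*}f_{\gamma}=^{*}t\restriction\gamma$ for $\gamma=\min(\al(s),\al(t))$ — described up to its order type; and, for each $\eta\in F(s,t)$, which of $\operatorname{dev}(s),\operatorname{dev}(t)$ it belongs to, how $\eta$ compares with $\al(s)$ and with $\al(t)$, and the values $s(\eta),t(\eta),f_{\al(s)}(\eta),f_{\al(t)}(\eta)$ (the last two only where defined). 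This is a countable amount of data. The mechanism behind the colouring is that coherence makes lexicographic comparison essentially a finite, level-by-level matter: for two nodes of the \emph{same} level $\delta$ the place of first disagreement lies in the union of their deviation sets, and the order between them is then read off from their values there. To compare $(s,t)$ with $(s',t')$ one therefore descends to the common level $\delta=\min(\al(s),\al(t),\al(s'),\al(t'))$, compares $s\restriction\delta$ with $s'\restriction\delta$ and $t\restriction\delta$ with $t'\restriction\delta$ at that level, and then uses the recorded signs of $\al(s)-\al(t)$ and of $\al(s')-\al(t')$ — together with the boundary cases in which one restricted node $\tleq$-extends the other — to lift these back to comparisons of $s$ with $s'$ and of $t$ with $t'$. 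Equal colour forces the two restricted comparisons to point the same way, so a monochromatic crossing is impossible.

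The step I expect to be the main obstacle is precisely this descent to the common level. The deviation set $\operatorname{dev}(u\restriction\delta)$ of a restricted node differs from $\operatorname{dev}(u)\cap\delta$ by the set of $\xi<\delta$ where $f_{\al(u)}$ and $f_{\delta}$ disagree, and this set depends on the other pair through $\delta$, so it cannot simply be folded into $c(s,t)$; in addition, in degenerate configurations one of $\al(s),\al(t),\al(s'),\al(t')$ may be far below the level at which the longer coordinates split, so that $\delta$ is too small to witness the relevant disagreement. Both difficulties are where the finite-to-one hypothesis is indispensable: since each $f_{\alpha}$ assumes every value below a given bound only finitely often, the set where two reference functions disagree is uniformly finite and becomes visible, below any threshold, inside a finite set of ``small-value'' places that can be recorded; refining the colouring to carry, uniformly in a threshold parameter, this finite truncation data at an appropriate level — and verifying that the recorded finite windows stay adequate when matched against an arbitrary second pair of the same colour — is the technical heart of the argument, and is what the proofs of Todorcevic and of Komj\'ath--Totik carry out in detail.
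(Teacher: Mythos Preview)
The paper does not give its own proof of this lemma: it states the result as a theorem of Todorcevic and refers the reader to \cite{Todorcevic1987} and, for a simpler argument, \cite{KomjathTotik}. Your proposal is a sketch of precisely that approach --- a finite-data colouring of $T\times T$ exploiting coherence and the finite-to-one hypothesis to rule out monochromatic crossings --- and you explicitly defer the technical core to those same references. So there is no discrepancy in method; your write-up simply goes further than the paper does by outlining what the cited proofs actually do.

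As an outline your sketch is accurate in spirit: the colouring by deviation sets, values, and height comparisons is the right mechanism, and you correctly isolate the descent to a common level as the place where finite-to-one is essential. What you have written is not a self-contained proof --- the verification that equal colour really forces the two restricted comparisons to point the same way, and that the ``finite window'' you record is adequate against an arbitrary second pair, is asserted rather than carried out --- but you acknowledge this, and the paper itself makes no attempt to supply those details either.
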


Let $\Lambda$ denote the set of countable limit ordinals,
and $\Lambda'$ those that are limit of elements in $\Lambda$. Both these
sets are club. Define $A^{+} := A_{0} \cup A_{1}$ where
$A_{0} := \{t \in T : \dom(t) \in \Lambda\}$ and
$A_{1} := \{t^{\frown}\tup{\omega} : t \in A_{0}\}$. We understand $A^{+}$
as a linear order with the lexicographic ordering inherited from $T$,
so $A^{+}$ is an Aronszajn line.

\begin{lemma}\label{lem:Aplus-dense}
  $A^+$ is $\aleph_{1}$-dense.
\end{lemma}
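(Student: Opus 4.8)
The plan is to show that $A^+$ has no endpoints and that every nonempty open interval of $A^+$ is uncountable, which together give $\aleph_1$-density (note an $\aleph_1$-dense order is in particular uncountable, so we need not worry about closed intervals: $[a,b]$ has the same size as $]a,b[$ once the latter is infinite). First I would handle endpoints. Given any $t \in A^+$, there are sequences in $T$ extending $t$ of arbitrarily large countable length (since $T$ is an Aronszajn tree, every node has extensions on cofinally many levels); restricting such an extension to a level in $\Lambda$ strictly above $\dom(t)$ and agreeing with $t$ below gives an element of $A_0$ that is lexicographically both above and below $t$ depending on the value chosen at coordinate $\dom(t)$ (here one uses that $f_{\dom(t)+1}$, hence any $s \in T$ of length $> \dom(t)$, can take at least two distinct values at coordinate $\dom(t)$, because $f_\alpha$ is only finite-to-one, not injective — more carefully, for any value $n$ we can find $s \in T$ with $s \restriction \dom(t) =^* t$ and $s(\dom(t)) = n$; I would need to justify this using coherence, perhaps modifying a given extension on finitely many coordinates below $\dom(t)$). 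So $A^+$ has no left or right endpoint.

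Next, density within an interval. Fix $s <_{\lex} t$ in $A^+$; I want uncountably many elements strictly between them. Let $\xi$ be the first coordinate at which $s$ and $t$ differ (or, if $s \tle t$, set $\xi = \dom(s)$). The idea is to produce, for each countable ordinal $\alpha$ above $\max(\dom(s),\dom(t))$, a distinct element of $A^+$ in the open interval $]s,t[$. Consider extensions $r$ of the common part $s\restriction\xi (=^* t\restriction\xi)$ with $r(\xi)$ chosen strictly between $s(\xi)$ and $t(\xi)$ if there is room, or $r(\xi) = s(\xi)$ when $s\tle t$ and then letting $r$ continue arbitrarily (any continuation keeps $r >_{\lex} s$ since $r$ properly extends $s$ below, and $r <_{\lex} t$ by the choice at $\xi$). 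Since for each large $\alpha \in \Lambda$ there is $r_\alpha \in A_0$ with $\dom(r_\alpha) = \alpha$ and $r_\alpha$ satisfying these constraints, and distinct domains give distinct elements, we get $\aleph_1$ many elements of $A^+$ in $]s,t[$.

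The only delicate point — and the main obstacle — is arranging the value $r(\xi)$: the tree $T$ consists of sequences $=^* f_\alpha$, so I am not free to prescribe a single coordinate and then extend arbitrarily within $T$; I must check that prescribing $r(\xi)$ (and keeping $r =^* f_\alpha$ elsewhere) still lands in $T$. This is handled by coherence: given any $\alpha > \xi$, the function $f_\alpha$ differs from $s\restriction\xi$ (extended somehow) only finitely, and modifying $f_\alpha$ at the single coordinate $\xi$ to the desired value produces a function still almost equal to $f_\alpha$, hence in $T$, with domain $\alpha$; choosing $\alpha \in \Lambda$ puts it in $A_0 \subseteq A^+$. One must also ensure the finitely many modifications made below $\xi$ to match $s\restriction\xi$ do not conflict — but since we only ever change $f_\alpha$ on a finite set, the result stays in $T$. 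Finally, in the case $s(\xi)$ and $t(\xi)$ are consecutive (or $s\tle t$ with $\dom(s)=\xi$), one instead varies coordinate $\xi$ downward or works at coordinate $\xi$ of $s$: if $s\tle t$, any $r \in A_0$ with $r\restriction\dom(s) =^* s$, $\dom(r) > \dom(s)$, and $r(\dom(s))$ less than $t(\dom(s))$ lies strictly between; if $\dom(s) = \xi < \dom(t)$ is the branching case with $s(\xi)+1 = t(\xi)$, extend past $s$: take $r$ with $r\restriction\xi =^* s\restriction\xi$, $r(\xi) = s(\xi)$, $r \sgeq s$ properly, which forces $r <_{\lex} t$. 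Ranging $\dom(r)$ over $\Lambda$ above $\dom(t)$ again yields $\aleph_1$ many, completing the proof.
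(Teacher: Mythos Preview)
Your overall strategy---modify $f_\alpha$ on finitely many coordinates to produce elements of $A_0$ of arbitrarily large height lying strictly between $s$ and $t$---is sound, and is essentially what the paper does as well (the paper phrases it as finding a single $u\in T$ with $t<_{\lex}u<_{\lex}s$ which is $T$-incomparable with $s$, then observing that the whole cone $\{r\in A^+ : u\tleq r\}$ is uncountable and lies in the interval; this is a cleaner packaging of the same idea).

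There is, however, a genuine gap in your case analysis. In the case $s\tle t$ you propose to take $r$ extending $s$ with $r(\dom(s))<t(\dom(s))$. But nothing rules out $t(\dom(s))=0$, and then there is no such value for $r(\dom(s))$. This is exactly the place where the \emph{finite-to-one} hypothesis on the $f_\alpha$'s is needed, and you never invoke it in the density argument. The fix (and this is what the paper does) is: since $\dom(s)\in\Lambda$ and $\dom(t)\ge\dom(s)+\omega$, and since $t$ restricted to $T$ is finite-to-one, $t$ cannot take the value $0$ on all of $[\dom(s),\dom(s)+\omega)$; so there is $n$ with $t(\dom(s)+n)\neq 0$, and then $u:=(t|_{\dom(s)+n})^\frown\langle 0\rangle$ is an element of $T$ strictly between $s$ and $t$ and $T$-incomparable with $t$. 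Without this step the proof is incomplete.

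Two smaller points: first, wherever you wrote $r\restriction\dom(s)=^{*}s$ you need actual equality, not $=^{*}$; lex comparisons are sensitive to every coordinate, so $=^{*}$ agreement below $\xi$ does not keep $r$ in the interval. (This is fixable---modifying $f_\alpha$ on the finite set where it disagrees with $s|_\xi$ still yields a member of $T$---but it must be said.) Second, the sentence ``if $\dom(s)=\xi<\dom(t)$ is the branching case with $s(\xi)+1=t(\xi)$'' is incoherent as written: if $\xi=\dom(s)$ then $s(\xi)$ is undefined, and in the genuine branching case one has $\xi<\dom(s)$. I believe you meant the case $s\not\tle t$ with $s(\xi)+1=t(\xi)$, and your idea there (extend $s$ properly, so that $r(\xi)=s(\xi)<t(\xi)$) is correct.
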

\begin{proof}
  We show that if $t <_{\lex} s$ are in $A^{+}$, then
  $[t,s]_{\lex} \cap A^+$ is uncountable. The proof that $A^+$ has
  no endpoints is similar. First observe that it is enough
  to find $u \in T$ such that $t <_{\lex} u <_{\lex} s$ and
  is $T$-incomparable with $s$, because then $\{t \in A^{+} : u \tleq t\}$
  is uncountable and contained in $[t,s]_{\lex}$. This is clear
  if $t \not\tle s$ (simply take $u := t)$, thus assume that $t \tle s$. Since
  the elements of $A_{1}$ are all maximal in $T$, we have that
  $t \in A_{0}$.
  Now observe that
  $\dom(t) + \omega \le \dom(s)$, and since $s$ is finite-to-one, for
  some $n < \omega$, $s(\dom(t) + n + 1) \neq 0$. Then
  $u := (s|_{\dom(t) + n})^{\frown}\tup{0}$ is as wanted.
\end{proof}

\begin{theorem}\label{thm:L_R}
  For every $X,Y \seb \omega_{1}$ there is an $\aleph_{1}$-dense Aronszajn
  line $A$, and decomposition $D$ for $A$, such that,
  \begin{itemize}\itemsep0em
    \item $\La(A,D) = \L(A,D) = X$,
    \item $\Ra(A,D) = \R(A,D) = Y$ and,
    \item For every $\xi < \omega_{1}$, no complementary
    interval of $A \setm D_{\xi}$ is a singleton.
  \end{itemize}
\end{theorem}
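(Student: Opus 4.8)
The plan is to build $A$ by modifying the construction of $A^+$ from just before the statement, inserting endpoints at the levels dictated by $X$ and $Y$. Recall that $A^+ = A_0 \cup A_1$ where $A_0 = \{t \in T : \dom(t) \in \Lambda\}$ and $A_1 = \{t^\frown\tup{\omega} : t \in A_0\}$, and that $A^+$ is an $\aleph_1$-dense Aronszajn line by Lemma~\ref{lem:Aplus-dense}. The key observation is that the natural decomposition of $A^+$ —where $D_\xi$ is roughly the set of sequences whose domain is below some fixed increasing sequence of limit ordinals indexed by $\xi$— has the property that a complementary interval of $A^+ \setm D_\xi$ consists of all elements of $A^+$ extending some fixed node $u \in T$ of domain around that threshold; such an interval has a left endpoint precisely when the ``leftmost branch'' through $u$ that stays in $A_0 \cup A_1$ is actually realized, which for this coherent tree it essentially never is (the sequences are finite-to-one, so one can always go strictly lower). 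So $A^+$ itself is non-stationary, and to get the prescribed $X, Y$ we must \emph{add} left endpoints to the relevant intervals at levels in $X$ and right endpoints at levels in $Y$.

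Concretely, I would proceed as follows. First fix a continuous increasing enumeration $\tup{\delta_\xi : \xi < \omega_1}$ of a club of ordinals in $\Lambda'$, and let $D_\xi^0 := \{t \in A^+ : \dom(t) \le \delta_\xi \text{ or } \dom(t) = \delta_\xi + 1\}$; this is a decomposition of $A^+$ whose complementary intervals at stage $\xi$ are exactly the sets $U_u := \{t \in A^+ : u \tle t\}$ for $u \in T$ with $\dom(u) = \delta_\xi$ (one checks there are countably many, they partition $A^+ \setm D_\xi^0$, and none is a singleton since $A^+$ is $\aleph_1$-dense). Second, for each $\xi \in X$ and each such node $u$ at level $\delta_\xi$, I adjoin a new point $\ell_u$ to serve as the left endpoint of $U_u$, placed immediately below every element of $U_u$ and above everything in $A^+$ that is $\lex$-below $U_u$; symmetrically for $\xi \in Y$ I adjoin right endpoints $r_u$. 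Formally, realize these as sequences: put $\ell_u := u^\frown\tup{-1}$ and $r_u := u^\frown\tup{\omega+1}$, extending the lexicographic order to the alphabet $\{-1\} \cup \omega \cup \{\omega, \omega+1\}$ in the obvious way, and add these to the tree/line only for $u$ at the chosen levels. Let $A$ be the resulting linear order; it is still $\aleph_1$-dense (we only added isolated-from-one-side points below/above existing uncountable intervals, not affecting density of the ambient order in the interior) and still Aronszajn (we added at most countably many points per level of a slightly fattened Aronszajn tree, so the associated tree is still Aronszajn — alternatively $A \eleq A^+ \cdot 3$ or similar, which is Aronszajn).

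Third, I must define the decomposition $D$ for $A$ and verify the three bullet points. Take $D_\xi := D_\xi^0 \cup \{\ell_u, r_u : \dom(u) < \delta_\xi, u \text{ at a selected level}\}$ — i.e., throw the new endpoints into $D$ at the first stage strictly after they ``appear.'' Then the complementary intervals of $A \setm D_\xi$ at stage $\xi$ are: for $u$ at level $\delta_\xi$ with $\xi \in X$ but $\xi \notin Y$, the interval $\{\ell_u\} \cup U_u$, which has a left endpoint ($\ell_u$) and no right endpoint (since $U_u$ has none); symmetrically for $\xi \in Y \setm X$; for $\xi \in X \cap Y$, the interval $\{\ell_u\} \cup U_u \cup \{r_u\}$, with both endpoints; and for $\xi \notin X \cup Y$, just $U_u$, with neither. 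Since \emph{every} complementary interval at stage $\xi$ is of the uniform shape dictated by whether $\xi \in X$ and whether $\xi \in Y$, we get $\L(A,D) = \La(A,D) = X$ and $\R(A,D) = \Ra(A,D) = Y$; and no complementary interval is a singleton because each contains the uncountable set $U_u$. The main obstacle — and the place requiring genuine care rather than bookkeeping — is verifying that in the \emph{unmodified} construction no complementary interval $U_u$ accidentally acquires an endpoint (which would pollute $\L$ and $\R$ with extra ordinals outside $X, Y$); this is where the finite-to-one, coherent structure of $T$ is essential, via an argument exactly like the one in the proof of Lemma~\ref{lem:Aplus-dense}: given any $t \in U_u$, since $t$ is finite-to-one some coordinate of $t$ above $\dom(u)$ is nonzero, so one can decrease it to produce $t' \in U_u$ with $t' <_{\lex} t$, and likewise one can increase to find $t'' >_{\lex} t$ in $U_u$; hence $U_u$ has no endpoints on either side in $A^+$, and in $A$ the only endpoints of the stage-$\xi$ intervals are the ones we deliberately inserted. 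One should also double-check the edge behavior at $\xi = 0$ and at limit $\xi$ (continuity of $D$), but these are routine given that $\tup{\delta_\xi}$ enumerates a club and $A^+$'s own decomposition is continuous.
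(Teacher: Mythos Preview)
Your key observation is incorrect: $A^{+}$ is \emph{not} non-stationary. With the continuous decomposition $D_{\xi} = \{t \in A^{+} : \dom(t) < \delta_{\xi}\}$, the complementary interval above a node $u$ with $\dom(u) = \delta_{\xi}$ is $\{t \in A^{+} : u \tleq t\}$, and since $\delta_{\xi} \in \Lambda' \seb \Lambda$ we have $u \in A_{0} \seb A^{+}$, so $u$ itself is the left endpoint of this interval; likewise $u^{\frown}\tup{\omega} \in A_{1}$ is its right endpoint. Your finite-to-one argument shows there is no leftmost \emph{branch} through $u$, but that is irrelevant: the node $u$ is already in $A^{+}$ and sits at the bottom of the interval. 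So $\L(A^{+},D) = \R(A^{+},D) = \omega_{1}$, not $\es$.

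You try to sidestep this by defining $D^{0}_{\xi}$ to include levels $\le \delta_{\xi}$ and $\delta_{\xi}+1$, thereby pushing $u$ and $u^{\frown}\tup{\omega}$ into the decomposition and out of the complementary interval. But then $D^{0}$ is no longer continuous at limit $\xi$: the node $u$ at level $\delta_{\xi}$ lies in $D^{0}_{\xi}$ yet not in $\bigcup_{\eta<\xi} D^{0}_{\eta}$ (since $\dom(u) = \delta_{\xi} > \delta_{\eta}+1$ for all $\eta < \xi$). This is not a routine edge case; it is exactly the obstruction, and you cannot have both continuity and endpoint-free intervals while keeping all of $A^{+}$.

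The paper's construction runs in the opposite direction. Rather than starting from an endpoint-free line and \emph{adding} endpoints, it starts from $A^{+}$ (which, as above, has both endpoints at every $\Lambda'$-level) and \emph{removes} the unwanted ones: $A$ is the suborder of $A^{+}$ that keeps all elements at levels in $\Lambda \setm \Lambda'$, keeps the left-endpoint nodes $t_{0} \in A_{0}$ at level $\lambda_{\xi}$ only when $\xi \in X$, and keeps the right-endpoint nodes $t_{0}^{\frown}\tup{\omega}$ only when $\xi \in Y$. The decomposition $D_{\xi} = \{t \in A : \dom(t) < \lambda_{\xi}\}$ is then trivially continuous, and one checks that the complementary intervals are exactly $\{t \in A : t_{0} \tleq t\}$, which has a left (resp.\ right) endpoint iff $t_{0}$ (resp.\ $t_{0}^{\frown}\tup{\omega}$) was retained. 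Your approach could be repaired by first discarding all of $A^{+}$ at the $\Lambda'$-levels and then adding back endpoints selectively---but at that point you are doing precisely what the paper does.
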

\begin{proof}
  Fix $X$ and $Y$ and let $\tup{\lambda_{\xi} : \xi < \omega_{1}}$
  be the increasing enumeration of $\Lambda'$. We
  will construct $A$ as a subtree of $A^{+}$ by removing the unwanted
  endpoints. For this let $A$ be the union of
  $\{t \in A_{0} : \dom(t) \in \Lambda \setm \Lambda ' \}$,
  $\{t \in A_{0} : \dom(t) = \lambda_{\xi}, \xi \in X \}$ and
  $\{t^{\frown}\tup{\omega} \in A_{1} : \dom(t) = \lambda_{\xi}, \xi \in Y\}$.

  The $\aleph_{1}$-density of $A$ follows from the fact that
  in the proof of Lemma~\ref{lem:Aplus-dense}, the $u$ satisfies
  that $\{t \in A : u \tleq t\}$ is also uncountable. To see this note that
  for every $t \in T$, $\{s \in T : t \tle s, \dom(s) \in \Lambda \setm \Lambda'\}$
  is uncountable.

  For $\xi < \omega_{1}$, let $D_{\xi} := \{t \in A : \dom(t) < \lambda_{\xi}\}$.
  Clearly $D := \tup{D_{\xi} : \xi < \omega_{1}}$ is a decomposition for $A$.
  We claim that for every $\xi < \omega_{1}$, the complementary
  intervals of $A \setm D_{\xi}$ are all of the form
  $\{t \in A : t_{0} \tleq t\}$
  for some $t_{0} \in A_{1}$ with $\dom(t_{0}) = \lambda_{\xi}$. Before
  doing this note that this implies the desired result, since
  this interval has a left endpoint iff
  $t_{0} \in A$ iff $\xi \in X$, and a right one iff
  ${t_{0}}^{\frown}\tup{\omega} \in A$ iff $\xi \in Y$.

  Fix $\xi < \omega_{1}$ and $t <_{\lex} s$ in $A \setm D_{\xi}$. Also
  let $t_{0} := t|_{\lambda_{\xi}}$ and $t_{1} := s|_{\lambda_{\xi}}$. It is enough
  to prove that if $t_{0} \neq t_{1}$, then they are not in the same complementary
  interval of $A \setm D_{\xi}$. Assume that $t_{0} \neq t_{1}$, so that in
  fact $t_{0} <_{\lex} t_{1}$. Let $\delta < \lambda_{\xi}$ be the least
  such that $t_{0}(\delta) \neq t_{1}(\delta)$. Now
  $u:= t_{0}|_{\delta + \omega}$ satisfies that $t <_{\lex} u <_{\lex} s$,
  $\dom(u) < \lambda_{\xi}$, $\dom(u) \notin \Lambda'$, so $u \in A$
  witnesses that $t_{0}$ and $t_{1}$ are in distinct complementary intervals
  of $A \setm D_{\xi}$.
\end{proof}

\begin{corollary}\label{cor:countryman_L}
  If $Y = \es$, then we can require $A$ in the conclusion
  of Theorem~\ref{thm:L_R} to be Countryman.
\end{corollary}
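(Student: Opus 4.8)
The plan is to repeat the construction in the proof of Theorem~\ref{thm:L_R} verbatim, but to observe that when $Y = \es$ the linear order $A$ we produce is actually a suborder of the Countryman line $T$ obtained from Lemma~\ref{lem:coherent-fin-countryman}, and that Countryman-ness is inherited by uncountable suborders. Recall that $A^{+} = A_0 \cup A_1$, where $A_1$ consists precisely of the sequences $t^{\frown}\tup{\omega}$ with $t \in A_0$. When $Y = \es$, the third set in the union defining $A$, namely $\{t^{\frown}\tup{\omega} \in A_1 : \dom(t) = \lambda_\xi, \xi \in Y\}$, is empty. Hence $A \seb A_0 \seb T$, i.e.\ $A$ is an uncountable suborder of $T$ with the lexicographic order inherited from $T$.

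Next I would invoke the elementary fact that any uncountable suborder of a Countryman line is again Countryman. Indeed, if $T$ is Countryman then $T^{2}$ is a union of countably many chains (in the product order), and if $A \seb T$ then $A^{2} \seb T^{2}$, so $A^{2}$ is covered by the traces of those countably many chains on $A^{2}$, each of which is again a chain; thus $A^{2}$ is a union of countably many chains, which is exactly the definition of $A$ being Countryman. (Strictly speaking one should also note $A$ is uncountable, which we already established since $A$ is $\aleph_1$-dense by the argument in the proof of Theorem~\ref{thm:L_R}.) Since the lexicographic order on $A$ as a suborder of $T$ agrees with the order used in Theorem~\ref{thm:L_R}, the line $A$ constructed there is Countryman, while still satisfying $\La(A,D) = \L(A,D) = X$, $\Ra(A,D) = \R(A,D) = \es$, and the no-singleton-interval condition.

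There is essentially no obstacle here: the only thing to check is that removing points from $A^{+}$ to form $A$ never forces us to \emph{add} a maximal (i.e.\ $A_1$-type) sequence, and this is immediate once $Y = \es$ since the only $A_1$-sequences that ever entered $A$ came from the third set in the union. The mild care needed is simply to record that "uncountable suborder of Countryman is Countryman" and that the $\aleph_1$-density argument from Theorem~\ref{thm:L_R} already guarantees $A$ is uncountable, so it is not a trivial chain; both are one-line observations.
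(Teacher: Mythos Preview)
Your proposal is correct and follows essentially the same approach as the paper: when $Y = \es$ the third set in the union defining $A$ is empty, so $A \seb A_0 \seb T$, and since $T$ is Countryman by Lemma~\ref{lem:coherent-fin-countryman}, so is any uncountable suborder. The paper compresses this to a single sentence, whereas you spell out the (one-line) verification that Countryman-ness is inherited by uncountable suborders, but the argument is identical.
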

\begin{proof}
  Simply note that in the proof of Theorem~\ref{thm:L_R} if $Y = \es$ then $A
  \seb T$. So this follows from Lemma~\ref{lem:coherent-fin-countryman}.
\end{proof}

\begin{lemma}\label{lem:countryman_iff_special}
  $A^{+}$ is Countryman iff $T$ is
  special\footnote{A tree $T$ is called \emph{special} if it is the union
    of countably many antichains. Equivalently, if there
    is $f : T \to \omega$ such that $f(t) \neq f(s)$ whenever $t <_{T} s$.
  }.
\end{lemma}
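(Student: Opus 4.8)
The plan is to prove both directions straight from the definition of a Countryman line, using Lemma~\ref{lem:coherent-fin-countryman} (that $(T,<_{\lex})$ is Countryman) as a black box, together with one elementary observation about the capping map $\phi\colon A_{0}\to A_{1}$, $\phi(t):=t^{\frown}\tup{\omega}$. First I would record that observation: if $p,q\in A_{0}$ are $\tle$-incomparable, then $\phi(p)$ and $\phi(q)$ first differ at the same coordinate at which $p$ and $q$ first differ, so $p<_{\lex}q\Leftrightarrow\phi(p)<_{\lex}\phi(q)$; while if $p\tle q$, then $\phi(p)$ and $\phi(q)$ first differ at $\dom(p)$, where $\phi(p)$ has value $\omega$ and $\phi(q)$ has a value in $\omega$, so $\phi(q)<_{\lex}\phi(p)$. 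In particular, whenever $p\le_{\lex}q$ with $p$ and $q$ equal or $\tle$-incomparable, $\phi(p)\le_{\lex}\phi(q)$; note also that every point of $A_{1}$ is $\tle$-maximal, so a point of $A_{0}$ is never a proper initial segment of a point of $A_{1}$.

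For the implication ``$A^{+}$ Countryman $\Rightarrow T$ special'' I would fix a partition $(A^{+})^{2}=\bigcup_{n<\omega}K_{n}$ into chains of the product order and set $e(t):=\min\{n:(t,\phi(t))\in K_{n}\}$ for $t\in A_{0}$. If $s\tle t$ in $A_{0}$ and $e(s)=e(t)$, then $(s,\phi(s))$ and $(t,\phi(t))$ lie in a common chain $K_{n}$ and hence are comparable in the product order; but $s\tle t$ gives $s<_{\lex}t$ together with $\phi(t)<_{\lex}\phi(s)$, which makes the two pairs incomparable in the product order --- a contradiction. So $e$ is injective on every $\tle$-chain of $A_{0}$. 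Since $A_{0}$ is exactly the restriction of $T$ to the club of limit levels, a standard bookkeeping argument (assign each node $t$ of infinite domain the colour $(e(t|_{\lambda}),k)$, where $\lambda+k=\dom(t)$ with $\lambda$ a limit ordinal and $k<\omega$, and give the countably many nodes of finite domain pairwise distinct fresh colours) upgrades $e$ to a colouring of $T$ that is injective on chains, i.e.\ witnesses that $T$ is special.

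For the implication ``$T$ special $\Rightarrow A^{+}$ Countryman'' I would fix $e\colon T\to\omega$ with $e(s)\ne e(t)$ whenever $s\tle t$, and a colouring $c\colon T^{2}\to\omega$ whose fibres are chains of the product order (available since $T$ is Countryman). For $x\in A^{+}$ let $\mu(x)\in\{0,1\}$ record whether $x\in A_{0}$ or $x\in A_{1}$, and let $\tilde{x}\in A_{0}$ be $x$ itself if $\mu(x)=0$ and the unique $r$ with $x=\phi(r)$ if $\mu(x)=1$. I would colour $(x,y)\in(A^{+})^{2}$ by the tuple $(\mu(x),\mu(y),\varepsilon(x),\varepsilon(y),c(\tilde{x},\tilde{y}))$, where $\varepsilon(x):=e(\tilde{x})$ if $\mu(x)=1$ and $\varepsilon(x):=\star$ otherwise; this uses countably many colours. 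If $(x,y)$ and $(x',y')$ receive the same colour, then $c(\tilde{x},\tilde{y})=c(\tilde{x}',\tilde{y}')$ puts $(\tilde{x},\tilde{y})$ and $(\tilde{x}',\tilde{y}')$ in a common $c$-fibre, so they are comparable in the product order on $T^{2}$, say $\tilde{x}\le_{\lex}\tilde{x}'$ and $\tilde{y}\le_{\lex}\tilde{y}'$. Then $x\le_{\lex}x'$: this is immediate when $\mu(x)=0$, and when $\mu(x)=1=\mu(x')$ the equality $e(\tilde{x})=\varepsilon(x)=\varepsilon(x')=e(\tilde{x}')$ forces $\tilde{x}$ and $\tilde{x}'$ to be equal or $\tle$-incomparable, whence $\phi(\tilde{x})\le_{\lex}\phi(\tilde{x}')$ by the observation. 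Symmetrically $y\le_{\lex}y'$, so $(x,y)$ and $(x',y')$ are comparable in the product order; hence each colour class is a chain, $(A^{+})^{2}$ is a countable union of chains, and $A^{+}$, being uncountable, is Countryman.

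I do not expect a genuine obstacle here. The only points requiring care are the directions of the lexicographic comparisons under $\phi$ (and the fact that points of $A_{1}$ sit as right endpoints above their cones), which amount to short verifications with the definition of $<_{\lex}$, and the upgrade of $e$ from $A_{0}$ to all of $T$, which is the routine fact that specialness of an Aronszajn tree is detected on the club of limit levels.
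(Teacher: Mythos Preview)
Your proposal is correct and follows essentially the same strategy as the paper: for $(\Rightarrow)$ both you and the paper colour $t\in A_{0}$ by the index of the chain containing the pair $(t,\phi(t))$ (the paper writes it as $(\phi(t),t)$), and for $(\Leftarrow)$ both combine the Countryman colouring $c$ of $T$ with the specializing function to force incomparability of the underlying $A_{0}$-elements before applying the capping observation. The only noteworthy difference is in the upgrade from ``$A_{0}$ is special'' to ``$T$ is special'': the paper handles the successor levels separately via the finite-to-one property of the $f_{\alpha}$'s, whereas your limit-level bookkeeping $(e(t|_{\lambda}),k)$ avoids that and would work for any coherent $T$. One small slip: your parenthetical ``a point of $A_{0}$ is never a proper initial segment of a point of $A_{1}$'' is false (indeed $t\sqsubset\phi(t)$); you presumably meant the converse, but in any case you never use this clause.
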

\begin{proof}
  $(\ra)$. Let $T_{s}$ be the set of $t \in T$ that have successor
  length, and for $t \in T_{s}$ let $l(t) := t(\alpha)$ where
  $\dom(t) = \alpha+1$. Since $T$ consists of finite-to-one functions, one
  sees that $t \mapsto (l(t), |\{\xi < \dom(t) : t(\xi) = l(t)\}|)$
  witnesses that $T_{s}$ is special. Thus it is enough to prove that
  $A_{0}$ is special.

  Let $c : A^{+} \times A^{+} \to \omega$
  witness that $A^{+}$ is Countryman, i.e., for every $n < \omega$,
  $f^{-1}(n)$ is  a chain in the product order of $A^{+} \times A^{+}$.
  We define
  $f : A_{0} \to \omega$ by letting $f(t) := c(t^{\frown}\tup{\omega},t)$,
  and prove that for $t \tle s$, $f(t) \neq f(s)$.

  Assume $t \tle s$ and $f(t) = f(s)$. From the first it follows
  that $t^{\frown}\tup{\omega} >_{\lex} s^{\frown}\tup{\omega}$, and from
  the second that $c(t^{\frown}\tup{\omega},t) = c(s^{\frown}\tup{\omega},s)$. Summing
  up we get that $t \ge_{\lex} s$, which contradicts that $t \tle s$.

  ($\leftarrow$). As mentioned $(T,<_{\lex})$ is always Countryman, so
  let $c  : T \times T \to \omega$ witness this, i.e.,
  that for all $t,s,t',s' \in T$, if $c(t,s) = c(t',s')$, then
  $t <_{\lex} t' \ra s \le_{\lex} s'$. It is enough to define
  a similar mapping $\tilde{c}$ for $A^{+} \times A^{+}$. For convenience
  our mapping will have range in $\omega^{<\omega}$, which is enough since
  this set is countable.

  Let $f : T \to \omega$ witness that $T$ is special.
  We define $\tilde{c}$ as follows:
  \begin{align*}
    \tilde{c}(t,s) &:= (0,0,f(t),f(s),c(t,s)),\\
    \tilde{c}(t,s^{\frown}\tup{\omega}) &:= (0,1,f(t),f(s),c(t,s)),\\
    \tilde{c}(t^{\frown}\tup{\omega},s) &:= (1,0,f(t),f(s),c(t,s)),\\
    \tilde{c}(t^{\frown}\tup{\omega},s^{\frown}\tup{\omega}) &:= (1,1,f(t),f(s),c(t,s)).
  \end{align*}
  The idea of using the antichains, is that while for $t,t' \in A_{0}$
  $t^{\frown}\tup{\omega} <_{\lex} t'^{\frown}\tup{\omega}$
  does not necessarily follow from $t <_{\lex} t'$, it does follow
  if we also have that $t$ and $t'$ are $T$-incomparable. Thus we have
  that if $f(t) = f(t')$ and $t \neq t'$, then
  \[t <_{\lex} t' \lra
    t^{\frown}\tup{\omega} <_{\lex} t' \lra
    t <_{\lex} t'^{\frown}\tup{\omega} \lra
    t^{\frown}\tup{\omega} <_{\lex} t'^{\frown}\tup{\omega}\]
  This in turn implies that
  if $t,t',s,s' \in A^{+}$, $t \neq t'$, $s \neq s'$ and
  $\tilde{c}(t,s) = \tilde{c}(t',s')$, then
  $t <_{\lex} t' \ra s <_{\lex} s'$. Which finishes the proof.
\end{proof}

By recalling that under $\MA_{\aleph_{1}}$ every tree is special,
we immediately get the following.

\begin{corollary}\label{cor:MA_L_R}
  Assume $\MA_{\aleph_{1}}$. For every $X,Y \seb \omega_{1}$
  we can require $A$ in the conclusion of Theorem~\ref{thm:L_R} to be Countryman.
\end{corollary}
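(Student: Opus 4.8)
The plan is to observe that the line $A$ produced in the proof of Theorem~\ref{thm:L_R} is already a suborder of $A^{+}$, and then to combine Lemma~\ref{lem:countryman_iff_special} with the classical fact that $\MA_{\aleph_{1}}$ specializes every Aronszajn tree. Concretely, recall that $A$ was defined there as the union of three sets of sequences, each of which is contained in $A_{0} \cup A_{1} = A^{+}$; hence $A \seb A^{+}$, and $A$ is $\aleph_{1}$-dense (in particular uncountable). The decomposition $D$ and the analysis of the complementary intervals of $A \setm D_{\xi}$ used there do not involve any set-theoretic hypothesis, so all three bulleted conclusions of Theorem~\ref{thm:L_R} continue to hold verbatim; what remains is to arrange that this same $A$ is Countryman.

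For this I would first recall (as noted just before Lemma~\ref{lem:coherent-fin-countryman}) that $T$ is an Aronszajn tree, and that under $\MA_{\aleph_{1}}$ every Aronszajn tree is special. By Lemma~\ref{lem:countryman_iff_special} this yields that $A^{+}$ is Countryman. Then I would use that every uncountable suborder of a Countryman line is again Countryman: if $c : A^{+} \times A^{+} \to \omega$ witnesses that $A^{+}$ is Countryman, then each fiber of its restriction to $A \times A$ is still a chain in the product order, so $A \times A$ is a union of countably many chains; since $A$ is uncountable, $A$ is a Countryman line. Thus the very $A$ (with the very $D$) exhibited in the proof of Theorem~\ref{thm:L_R} witnesses Corollary~\ref{cor:MA_L_R}.

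There is essentially no obstacle here: the corollary is a one-line deduction from Theorem~\ref{thm:L_R}, Lemma~\ref{lem:countryman_iff_special}, and the two standard facts just quoted (every Aronszajn tree is special under $\MA_{\aleph_{1}}$, and an uncountable suborder of a Countryman line is Countryman). The only point worth double-checking is that the $A$ constructed in the proof of Theorem~\ref{thm:L_R} genuinely lies inside $A^{+}$ — which it does by construction — mirroring the proof of Corollary~\ref{cor:countryman_L}, where in the case $Y = \es$ one even had the stronger inclusion $A \seb T$ and could appeal to Lemma~\ref{lem:coherent-fin-countryman} directly.
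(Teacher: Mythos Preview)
Your proposal is correct and follows exactly the paper's approach: the paper's entire proof is the single sentence ``By recalling that under $\MA_{\aleph_{1}}$ every tree is special, we immediately get the following,'' which unpacks to precisely the chain $T$ special $\Rightarrow$ $A^{+}$ Countryman (Lemma~\ref{lem:countryman_iff_special}) $\Rightarrow$ $A \seb A^{+}$ Countryman that you spell out.
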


\begin{remark}\label{rmk:zfc_club_cline}
  At the end of Section 4.5 in~\cite{TodorcevicWalks} it is claimed
  that a simple modification of $T^{*}(\rho_{1})$ gives, in $\ZFC$,
  a Countryman line and decomposition $D$
  such that $\La(C,D) \cap \Ra(C,D)$ contains a club.
  The construction as stated does not seem to make sense, and
  in view of Lemma~\ref{lem:countryman_iff_special} and the fact that
  consistently $T^{*}(\rho_{1})$ is not special (see~\cite[2.2.16]{TodorcevicWalks}),
  it is not clear what was intended. However,
  if one is willing to do more work, it is possible
  to construct $A^{+}$ so that is Countryman in $\ZFC$, thus
  removing the need of $\MA_{\aleph_{1}}$ in Corollary~\ref{cor:MA_L_R}.

  We provide a sketch for the interested reader.
  We use the definitions and notations of~\cite[Section 3]{EisworthEtAl2023}.
  Let $U \seb \mathbb{S}$ be $\varrho_{2}$-coherent and full. That such a tree
  exists in $\ZFC$ is pointed out after Lemma 3.15 in that paper. Also $(U,<_{\lex})$
  is Countryman
  by Theorem 3.5 there, and it is easily seen that $U$ is also special.
  Now let $T$ be the result of adding to $U$ the initial
  segments of limit length of elements in $U$. Note then that the $\varrho_{2}$-coherence
  of $U$ implies that if $t \in T \setm U$, then there is a unique
  $n(t) \in \omega$ such that $t^{\frown}\tup{n(t)} \in U$. Let
  $f : T \to U$ be defined by letting $f(t) := t^{\frown}\tup{n(t)}$ if
  $t \in T \setm U$ and $f(t) := t^{\frown}\tup{0}$ otherwise. Then
  $f$ witnesses that $(T,<_{\lex}) \eleq (U,<_{\lex})$, so $(T,<_{\lex})$
  is again Countryman. And $f$ is a tree embedding of $T \setm U$ into
  $U$, so $T$ is again special. Then if one defines $A^{+}$ from this
  $T$ exactly as we did for our tree, one can replicate
  Lemma~\ref{lem:Aplus-dense}, Theorem~\ref{thm:L_R} and Lemma~\ref{lem:countryman_iff_special}
  verbatim.

  Another option would be to construct the $C$-sequence
  as to ensure that $T^{*}(\rho_{1})$ is special. We do
  not know if this is possible in $\ZFC$,
  see~\cite[Question~2.2.18]{TodorcevicWalks}.
\end{remark}

\section{An infinite antichain}\label{sec:infinite-antichain}

In this section we show that already in $\ZFC$ there is a
$\sleq$-antichain of $\aleph_{1}$-dense Aronszajn lines of cardinality
$2^{\aleph_{1}}$, giving a negative answer to Question~\ref{q:sur-wqo}, even
if we restrict to the class of Countryman lines.

\begin{definition}\label{df:approximation}
  Let $A$ be an Aronszajn line and $B \seb A$ be any subset.
  We say that $B$ \emph{approximates} $a \in A$ if $B$
  is cofinal in $\left]-\infty,a\right[$ and coinitial
  in $\left]a,+\infty\right[$.
\end{definition}

Note that if $B$ approximates $a \in A$, and $a$ is not the right
endpoint of $A$, then for every $a' >_{A} a$ there is $b \in B$
such that $a <_{A} b \le_{A} a'$, and if $a' \notin B$, then
the second inequality is strict.

\begin{lemma}\label{lem:non_sur}
  Let $A$ and $X$ be Aronszajn lines. If for some (equivalently any)
  pair of decompositions $D^{A}$ and $D^{X}$ for $A$ and $X$ respectively,
  $\La(A,D^{A}) \setm \La(X,D^{X})$ (or $\Ra(A,D^{A}) \setm \Ra(X,D^{X})$)
  is a stationary subset of $\omega_{1}$,
  then $A \nsgeq X$.
\end{lemma}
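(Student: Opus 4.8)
The plan is to argue by contradiction: suppose $f : A \sur X$ is an epimorphism while $\La(A,D^{A}) \setm \La(X,D^{X})$ is stationary (the argument for $\Ra$ is symmetric). First I would fix, once and for all, a pair of decompositions and observe that the choice is irrelevant: by Lemma~\ref{lem:dec-at-clubs} any two decompositions agree on a club, so both $\La$ and $\Ra$ are determined modulo the nonstationary ideal, and thus the hypothesis "for some" is equivalent to "for any". Then I would pass to convenient representatives: identifying $A$ and $X$ with $\omega_{1}$ as sets, take a club $E$ such that for all $\nu \in E$ we have $D^{A}_{\nu} = \nu = D^{X}_{\nu}$, and also such that $f$ maps $D^{A}_{\nu}$ onto $D^{X}_{\nu}$ for each $\nu \in E$ (this last closure is routine since $f$ is a surjection between sets of size $\aleph_{1}$ and $X$ is covered by the $D^{X}_{\xi}$).

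The heart of the argument is the following local analysis at a point $\nu \in E \cap (\La(A,D^{A}) \setm \La(X,D^{X}))$. Since $\nu \notin \La(X,D^{X})$, some complementary interval $J$ of $X \setm D^{X}_{\nu}$ has no left endpoint; fix a point $x_{0} \in J$. Since $f \restriction D^{A}_{\nu}$ surjects onto $D^{X}_{\nu}$, the preimage $f^{-1}(J)$ is a union of complementary intervals of $A \setm D^{A}_{\nu}$ (monotonicity of $f$ forces $f^{-1}$ of a convex set disjoint from $D^{X}_{\nu}$ to be convex and disjoint from $D^{A}_{\nu}$). Pick the complementary interval $I \subseteq f^{-1}(J)$ of $A \setm D^{A}_{\nu}$ that contains some point mapping to $x_{0}$, or more carefully the leftmost such interval. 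Because $\nu \in \La(A,D^{A})$, the interval $I$ does have a left endpoint $a_{*}$. Now I would derive a contradiction by showing $f(a_{*})$ must be the left endpoint of $J$: any $x \in J$ with $x <_{X} f(a_{*})$ would, by surjectivity, have a preimage below $a_{*}$ but still inside $f^{-1}(J)$, hence (by convexity, since everything between it and $a_{*}$ maps into $J$ too) inside the complementary interval $I$ — contradicting minimality of $a_{*}$ in $I$. Wait — I need $I$ to be genuinely the interval hit, so the cleaner route is: let $I$ be \emph{any} complementary interval of $A\setm D^{A}_{\nu}$ whose $f$-image meets $J$; monotonicity shows $f(I) \subseteq J$, and letting $a_{*}$ be the left endpoint of $I$ (which exists as $\nu \in \La(A,D^{A})$), the image $f(a_{*}) \in J$ is forced to be $\le_X$ every element of $f(I)$; and since the $f$-images of such intervals $I$ are cofinal-below... — so taking $I$ with $f(I)$ coinitial in $J$ (such an $I$ exists because $J$ has no left endpoint, so its preimage's "leftmost" behavior is controlled by the left endpoint of the appropriate $I$) yields that $f(a_{*})$ is a left endpoint of $J$, contradicting $\nu \notin \La(X,D^{X})$.

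I expect the main obstacle to be exactly this bookkeeping: pinning down \emph{which} complementary interval $I$ of $A \setm D^{A}_{\nu}$ to use so that its left endpoint is actually forced down into the endpointless interval $J$, rather than landing harmlessly in $D^{X}_{\nu}$ or in a different complementary interval. The key technical fact to isolate and prove carefully is: if $K$ is a complementary interval of $X \setm D^{X}_{\nu}$, then $f^{-1}(K)$, intersected with the complementary intervals of $A \setm D^{A}_{\nu}$, has a leftmost such interval whose left endpoint (guaranteed by $\nu \in \La(A,D^{A})$) maps to the infimum of $K$ in $X$ — and if $K$ has no left endpoint this infimum does not belong to $K$, yet $f$ of a left endpoint of a complementary interval of $A\setm D^A_\nu$ lies in $X \setm D^X_\nu$ by the surjectivity-on-$D_\nu$ arrangement, so it must lie in $K$, a contradiction. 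Everything else — the reduction via Lemma~\ref{lem:dec-at-clubs}, the closure of $E$, the symmetry between the left and right hypotheses, and deducing "any" from "some" — is routine. Since $A \sgeq X$ means exactly $X \sleq A$, i.e. existence of such an epimorphism, this establishes $A \nsgeq X$.
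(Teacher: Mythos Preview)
Your overall plan is the paper's: argue by contradiction, fix an epimorphism $f$, pass to a club $E$ on which $D^A_\nu=\nu=D^X_\nu$, pick $\nu$ in the stationary difference, and compare a complementary interval $J$ of $X\setminus\nu$ with no left endpoint against the complementary interval $I$ of $A\setminus\nu$ containing $f^{-1}(J)$ (which is indeed a convex subset of $A\setminus\nu$, hence contained in a \emph{single} such interval, not a union). The gap is the step ``monotonicity shows $f(I)\subseteq J$'' and, equivalently, the claim that $f(a_*)\in X\setminus D^X_\nu$ ``by the surjectivity-on-$D_\nu$ arrangement''. Neither follows: from $f[\nu]\subseteq\nu$ you only get $f^{-1}[X\setminus\nu]\subseteq A\setminus\nu$; you do \emph{not} get $f[A\setminus\nu]\subseteq X\setminus\nu$. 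A fiber $f^{-1}(y)$ with $y\in\nu$ is a convex set that may well meet $I$, so nothing prevents $f(a_*)\in\nu$, and even if $f(a_*)\notin\nu$ it may land in a different complementary interval of $X\setminus\nu$ lying below $J$.

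The paper plugs exactly this hole with two ingredients you omit: it fixes a section $g:X\to A$ of $f$ (automatically a strictly increasing injection), closes $E$ under $g$ as well as $f$, and arranges that each $\nu\in E$ \emph{approximates} all of its members (i.e.\ $\nu$ is cofinal below and coinitial above each of its points in both orderings, possible since Aronszajn lines are short). It then takes $I$ to be the complementary interval of $A\setminus\nu$ containing $g(x)$ for a fixed $x\in J$, and whenever the argument threatens to produce a point of $\nu$ in the $<_X$-interval between $f(a_*)$ and some value in $J$, approximation yields such a point $y'\in\nu$ strictly between, and then $g(y')\in\nu$ is squeezed strictly between $a_*$ and a point of $f^{-1}(J)\subseteq I$, contradicting $I\cap\nu=\emptyset$. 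Once you add the section $g$ and the approximation property to your club, your argument goes through along these lines; without them the key step fails.
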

\begin{proof}
  Towards a contradiction assume that $\La(A,D^{A}) \setm \La(X,D^{X})$
  is stationary and that $A \sgeq X$.
  Let $f : A \sur X$ be an epimorphism and fix $g : X \to A$
  some inverse of $f$.
  We may assume that $A$ and $X$ are $\omega_{1}$ as sets.
  Now let $E \seb \omega_{1}$ be a club of limit ordinals closed under $f$ and $g$,
  and such that if $\nu \in E$,
  then $\nu$ approximates all its members in the sense of both $<_{A}$ and $<_{X}$.
  This is possible since $A$ and $X$ are Aronszajn
  lines and thus short. Moreover, by Lemma~\ref{lem:dec-at-clubs} we may
  also assume that for $\nu \in E$,
  $D_{\nu} = \nu = E_{\nu}$.

  By assumption there is
  $\nu \in E \cap (\La(A,D^{A}) \setm \La(X,D^{X}))$. This implies that
  some complementary interval of $X \setm \nu$ has no left endpoint. Fix such
  an interval $I$ and let $x \in I$. Then clearly $x \ge \nu$, and
  thus $g(x) \ge \nu$ (otherwise $x = f(g(x)) < \nu$).
  Let $J$ be the complementary interval of $A \setm \nu$ in which $g(x)$ is.
  Since $\nu \in \La(A,D^{A})$, $J$ has a left endpoint, call it $a$.

  We first claim that $f(a) \in I$. If not (including when $f(a) < \nu$), by
  definition there $x'$ such that $f(a) \le_{X} x' <_{X} x$ with $x' < \nu$.
  Since $\nu$ approximates $x'$ and $x \notin \nu$,
  we find $x'' < \nu$ such that $x' <_{X} x'' <_{X} x$. But then
  $a <_{A} g(x'') <_{A} g(x)$, and $g(x'') < \nu$, which contradicts
  that $a$ and $g(x)$ are in the same complementary interval of
  $A \setm \nu$.

  Since $f(a)$ is in $I$, and $I$ does not have a left endpoint, one easily finds
  $x' \in I$ such that $x' <_{X} f(a)$, and thus that
  $g(x') <_{A} a$. Since $a$ is the left endpoint of $J$, there is $a' < \nu$
  such that $g(x') \le_{A} a' <_{A} a$, and thus
  $x' \le_{X} f(a') \le_{X} f(a)$, which implies that $f(a') \in I$. This contradicts
  the fact that $a' < \nu$.
\end{proof}

We now proceed to construct our antichain. This is an adaptation
of the classical way to construct $2^{\aleph_{1}}$ many $\aleph_{1}$-dense
non-isomorphic Aronszajn lines (see~\cite[Theorem~4.8]{Baumgartner1982}),
and it shows that one of the main differences between $\eleq$ and
$\sgeq$ is that epimorphisms are continuous, while embeddings need not to.

\begin{theorem}
  There is an $\sleq$-antichain of
  $\aleph_{1}$-dense Aronszajn lines of size $2^{\aleph_{1}}$.
\end{theorem}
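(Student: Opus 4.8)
The plan is to build $2^{\aleph_1}$ many $\aleph_1$-dense Aronszajn lines indexed by subsets of a stationary set, each carrying a decomposition whose $\La$ (equivalently $\Ra$) sets code the index, and then invoke Lemma~\ref{lem:non_sur} to conclude pairwise $\sleq$-incomparability. Concretely, I would first split $\omega_1$ (or rather the club $\Lambda'$ of ordinals that are limits of limits) into $\aleph_1$ many pairwise disjoint stationary sets $\tup{S_i : i < \omega_1}$, and fix one further stationary-and-co-stationary set $S$ disjoint from all of them. For each $Z \seb \omega_1$ set $X_Z := S \cup \bigcup_{i \in Z} S_i$ and apply Theorem~\ref{thm:L_R} (or Corollary~\ref{cor:countryman_L}, taking $Y = \es$, if we want the antichain inside the Countryman lines) to obtain an $\aleph_1$-dense Aronszajn line $A_Z$ with decomposition $D^Z$ such that $\La(A_Z, D^Z) = \L(A_Z, D^Z) = X_Z$ and $\R(A_Z, D^Z) = \es$. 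There are $2^{\aleph_1}$ such $Z$'s, so this is a family of the desired size; I should note that distinct $Z$ give non-isomorphic lines, but really what we need is the stronger incomparability below.

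Next I would verify incomparability. Fix $Z \neq Z'$; without loss of generality pick $i \in Z \setm Z'$. Then $S_i \seb X_Z = \La(A_Z, D^Z)$ while $S_i \cap X_{Z'} = \es$, so $S_i \seb \La(A_Z, D^Z) \setm \La(A_{Z'}, D^{Z'})$, and since $S_i$ is stationary this difference is a stationary subset of $\omega_1$. By Lemma~\ref{lem:non_sur} (applied with $A := A_Z$ and $X := A_{Z'}$) we get $A_Z \nsgeq A_{Z'}$. The symmetric argument, picking some $j \in Z' \setm Z$, gives $A_{Z'} \nsgeq A_Z$. Hence any two distinct members of $\{A_Z : Z \seb \omega_1\}$ are $\sleq$-incomparable, and we have the required antichain of size $2^{\aleph_1}$. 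Since Corollary~\ref{cor:countryman_L} keeps these lines Countryman, the same antichain witnesses the claim restricted to the class of Countryman lines.

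The only subtle point — the part that deserves care rather than being genuinely hard — is checking that the splitting of the \emph{club} $\Lambda'$ into $\aleph_1$ stationary pieces is available: since $\Lambda'$ is club, it is stationary, and Solovay's theorem (a $\ZFC$ result) splits any stationary subset of $\omega_1$ into $\aleph_1$ pairwise disjoint stationary subsets, so we may arrange all the $S_i$ and $S$ inside $\Lambda'$. This matters because Theorem~\ref{thm:L_R} produces the coding via the enumeration $\tup{\lambda_\xi : \xi < \omega_1}$ of $\Lambda'$, and the sets $X,Y$ there are subsets of $\omega_1$ interpreted as index sets into that enumeration; I just need $X_Z$ to be of the form prescribed and the $S_i$'s to remain stationary in the index coordinate, which is automatic since the enumeration map $\xi \mapsto \lambda_\xi$ is a continuous increasing bijection onto a club and hence preserves stationarity in both directions. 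Everything else is a direct quotation of Theorem~\ref{thm:L_R}, Corollary~\ref{cor:countryman_L} and Lemma~\ref{lem:non_sur}.
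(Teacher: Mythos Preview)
There is a real gap in the incomparability step. After your ``without loss of generality pick $i \in Z \setm Z'$'', the clause ``the symmetric argument, picking some $j \in Z' \setm Z$'' presupposes $Z' \setm Z \neq \es$, which fails whenever $Z' \subsetneq Z$. In that case $X_{Z'} \seb X_Z$, so $\La(A_{Z'},D^{Z'}) \setm \La(A_Z,D^Z) = \es$, and since you have arranged both $\Ra$'s to be empty as well, Lemma~\ref{lem:non_sur} gives no obstruction whatsoever to $A_{Z'} \sgeq A_Z$. Worse, under $\MA_{\aleph_1}$ your lines are all suborders of the same lexicographically ordered coherent tree and hence $\eleq$-equivalent Countryman lines, so Theorem~\ref{thm:ma-epi} would actually \emph{produce} an epimorphism $A_{Z'} \sur A_Z$; thus in that model your family is provably not an antichain. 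The auxiliary stationary set $S$ does not help, since $S \seb X_Z$ for every $Z$ and cancels in every difference.

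The paper avoids this by coding $Z$ on the $\La$ side \emph{and its complement} on the $\Ra$ side: it sets $\La(A^Z) = \bigcup_{\xi \in Z}S_\xi$ and $\Ra(A^Z) = \bigcup_{\xi \notin Z}S_\xi$, so that a single $\alpha \in Z_0 \setm Z_1$ simultaneously yields $S_\alpha \seb \La(A^{Z_0}) \setm \La(A^{Z_1})$ and $S_\alpha \seb \Ra(A^{Z_1}) \setm \Ra(A^{Z_0})$, giving both non-surjections from one witness. Your one-sided coding can be repaired differently: restrict the indices to a $\seb$-antichain of size $2^{\aleph_1}$ in $\cal{P}(\omega_1)$ (for instance $\{\,\{2\alpha + f(\alpha) : \alpha < \omega_1\} : f \in 2^{\omega_1}\,\}$), which guarantees both set-differences are nonempty and makes your argument go through verbatim. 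That fix, combined with Corollary~\ref{cor:countryman_L} and $Y = \es$ as you suggest, even places the antichain inside the Countryman lines in $\ZFC$ directly.
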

\begin{proof}
  Let $\tup{S_{\xi}
    : \xi < \omega_{1}}$ be a family of disjoint stationary subsets of
  $\omega_{1}$, and for $Z \seb \omega_{1}$ let $L(Z) := \bigcup_{\xi
    \in Z}S_{\xi}$ and $R(Z) := \bigcup_{\xi \in (\omega_{1}\setm
    Z)}S_{\xi}$. Using Theorem~\ref{thm:L_R}, let $A^{Z}$ be an Aronszajn line
  and $D^{Z}$ a decomposition for $A^{Z}$ such that
  $\La(A^{Z},D^{Z}) = L(Z)$ and $\Ra(A^{Z},D^{Z}) = R(Z)$.
  We claim that $\{A^{Z} : Z \seb \omega_{1}\}$ is our desired antichain.

  Let $Z_{0}$ and $Z_{1}$ be distinct subsets of $\omega_{1}$. We may assume
  that $Z_{0}$ is not contained in $Z_{1}$, and thus let
  $\alpha \in Z_{0} \setm Z_{1}$. Then
  $\La(A^{Z_{0}},D^{Z_{0}})\setm \La(A^{Z_{1}},D^{Z_{1}}) =
  L(Z_{0}) \setm L(Z_{1}) \supseteq S_{\alpha}$, which is stationary.
  Thus by Lemma~\ref{lem:non_sur} $A^{Z_{0}} \nsgeq A^{Z_{1}}$.
  Similarly,
  $\Ra(A^{Z_{1}},D^{Z_{1}})\setm \Ra(A^{Z_{0}},D^{Z_{0}}) =
  R(Z_{1}) \setm R(Z_{0}) \supseteq S_{\alpha}$, thus
  by Lemma~\ref{lem:non_sur} $A^{Z_{1}} \nsgeq A^{Z_{0}}$.
\end{proof}

Note that $2^{\aleph_{1}}$ is the largest possible size of an antichain. Also
note that Corollary~\ref{cor:MA_L_R} easily implies the following.

\begin{corollary}\label{cor:cline_antichain_max}
  Assume $\MA_{\aleph_{1}}$. There is an $\sleq$-antichain of
  $\aleph_{1}$-dense Countryman lines of size $2^{\aleph_{1}}$.
\end{corollary}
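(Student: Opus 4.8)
The plan is to rerun the construction from the preceding theorem essentially verbatim, replacing the one appeal to Theorem~\ref{thm:L_R} by an appeal to Corollary~\ref{cor:MA_L_R}, which under $\MA_{\aleph_{1}}$ produces the prescribed-$\L$-and-$\R$ line with the additional feature of being Countryman. Concretely, I would fix a family $\tup{S_{\xi} : \xi < \omega_{1}}$ of pairwise disjoint stationary subsets of $\omega_{1}$; for $Z \seb \omega_{1}$ set $L(Z) := \bigcup_{\xi \in Z}S_{\xi}$ and $R(Z) := \bigcup_{\xi \in \omega_{1}\setm Z}S_{\xi}$; and use Corollary~\ref{cor:MA_L_R} to obtain, for each $Z$, an $\aleph_{1}$-dense Countryman line $C^{Z}$ with a decomposition $D^{Z}$ such that $\La(C^{Z},D^{Z}) = \L(C^{Z},D^{Z}) = L(Z)$ and $\Ra(C^{Z},D^{Z}) = \R(C^{Z},D^{Z}) = R(Z)$.

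Next I would verify that $\{C^{Z} : Z \seb \omega_{1}\}$ is an $\sleq$-antichain. Given distinct $Z_{0}, Z_{1}$, without loss of generality $Z_{0} \not\seb Z_{1}$, so there is $\alpha \in Z_{0}\setm Z_{1}$. Then $\La(C^{Z_{0}},D^{Z_{0}}) \setm \La(C^{Z_{1}},D^{Z_{1}}) = L(Z_{0})\setm L(Z_{1}) \supseteq S_{\alpha}$ is stationary, so $C^{Z_{0}} \nsgeq C^{Z_{1}}$ by Lemma~\ref{lem:non_sur}; symmetrically $\Ra(C^{Z_{1}},D^{Z_{1}}) \setm \Ra(C^{Z_{0}},D^{Z_{0}}) = R(Z_{1})\setm R(Z_{0}) \supseteq S_{\alpha}$ is stationary, giving $C^{Z_{1}} \nsgeq C^{Z_{0}}$. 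Finally, since $Z \mapsto C^{Z}$ is injective up to this incomparability (distinct $Z$ give $\sleq$-incomparable lines, hence non-isomorphic ones), the family has size $2^{\aleph_{1}}$, and this is clearly the maximal possible cardinality since every Aronszajn line has size $\aleph_{1}$.

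I do not expect a genuine obstacle here: the argument is a direct transfer of the previous theorem's proof, and all the work has already been absorbed into Lemma~\ref{lem:non_sur} (which does not care whether the lines are Countryman) and into Corollary~\ref{cor:MA_L_R} (which packages the $\MA_{\aleph_{1}}$-upgrade of Theorem~\ref{thm:L_R} via Lemma~\ref{lem:countryman_iff_special} and the fact that under $\MA_{\aleph_{1}}$ every Aronszajn tree is special). The one point worth stating explicitly is that Corollary~\ref{cor:MA_L_R} indeed lets us prescribe $\L$ and $\R$ independently while keeping the line Countryman, so both stationarity computations above are available simultaneously for the single line $C^{Z}$; this is exactly what that corollary asserts. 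Thus the proof is essentially a one-line reduction to the already-proved theorem together with Corollary~\ref{cor:MA_L_R}.
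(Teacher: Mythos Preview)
Your proposal is correct and is exactly the argument the paper has in mind: the paper's proof consists of the single remark that Corollary~\ref{cor:MA_L_R} easily implies the result, i.e., one reruns the preceding theorem's construction with Corollary~\ref{cor:MA_L_R} in place of Theorem~\ref{thm:L_R}. You have simply spelled out the details of that one-line reduction.
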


By Remark~\ref{rmk:zfc_club_cline}, with a little more work this can
also be achieved in $\ZFC$, and even without it, one can also get
an infinite antichain of Countryman lines at the cost of reducing the size.

\begin{corollary}
  There is an $\sleq$-antichain of
  $\aleph_{1}$-dense Countryman lines of size $\aleph_{1}$.
\end{corollary}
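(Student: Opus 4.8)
The plan is to build $\aleph_1$ many Countryman lines indexed by ordinals $\xi < \omega_1$, each coming from the construction of Theorem~\ref{thm:L_R} (so an actual subtree of a fixed coherent tree $T$, hence genuinely Countryman by Corollary~\ref{cor:countryman_L}), in such a way that Lemma~\ref{lem:non_sur} rules out every epimorphism in either direction between distinct members. The obstruction in Theorem~\ref{thm:L_R} is that it produces Countryman lines only when $Y = \es$ (Corollary~\ref{cor:countryman_L}), i.e.\ we only get to freely prescribe the ``left'' set $\La(A,D) = \L(A,D) = X$, while forcing $\Ra(A,D) = \R(A,D) = \es$. So unlike the $2^{\aleph_1}$-antichain proof (which used both $L(Z)$ and $R(Z)$) we must separate the lines using the $\La$ data alone. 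This is still enough: by Lemma~\ref{lem:non_sur}, if $A$ and $A'$ are Countryman lines with decompositions $D$, $D'$ and $\La(A,D) \setm \La(A',D')$ is stationary, then $A \nsgeq A'$.

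First I would fix a family $\tup{S_\xi : \xi < \omega_1}$ of pairwise disjoint stationary subsets of $\omega_1$. For each $\xi < \omega_1$ set $X_\xi := \bigcup_{\eta \neq \xi} S_\eta$ (the union of all the $S_\eta$ except the $\xi$-th one), and use Corollary~\ref{cor:countryman_L} with $X = X_\xi$ and $Y = \es$ to obtain an $\aleph_1$-dense Countryman line $A^\xi$ with decomposition $D^\xi$ satisfying $\La(A^\xi, D^\xi) = \L(A^\xi, D^\xi) = X_\xi$ and $\Ra(A^\xi, D^\xi) = \R(A^\xi, D^\xi) = \es$. Now for $\xi \neq \zeta$ the set $\La(A^\xi, D^\xi) \setm \La(A^\zeta, D^\zeta) = X_\xi \setm X_\zeta \supseteq S_\zeta$ is stationary (it contains $S_\zeta$, which lies in $X_\xi$ since $\zeta \neq \xi$, but is disjoint from $X_\zeta$). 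By Lemma~\ref{lem:non_sur}, $A^\xi \nsgeq A^\zeta$; symmetrically, swapping the roles of $\xi$ and $\zeta$ gives $S_\xi \seb \La(A^\zeta,D^\zeta)\setm\La(A^\xi,D^\xi)$, so $A^\zeta \nsgeq A^\xi$. Hence $\{A^\xi : \xi < \omega_1\}$ is an $\sleq$-antichain of $\aleph_1$-dense Countryman lines; since the $S_\xi$ are pairwise disjoint the $X_\xi$ are pairwise distinct and the construction yields $\aleph_1$ distinct lines.

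I do not expect a serious obstacle here — the argument is essentially a stripped-down version of the $2^{\aleph_1}$-antichain construction, with the single twist that we index by $\omega_1$ rather than by $\mathcal{P}(\omega_1)$ precisely because the Countryman requirement ($Y = \es$) costs us the freedom in the $\R$-coordinate and so leaves only the $\omega_1$-indexed ``which $S_\xi$ do we omit'' degree of freedom. The one point to be careful about is the application of Lemma~\ref{lem:non_sur}: it is stated for arbitrary Aronszajn lines and any pair of decompositions, so no compatibility between $D^\xi$ and $D^\zeta$ is needed, and the disjointness of the $S_\eta$ is exactly what guarantees both of the required stationary differences simultaneously, so that neither $A^\xi \sgeq A^\zeta$ nor $A^\zeta \sgeq A^\xi$ can hold.
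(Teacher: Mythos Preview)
Your proof is correct and follows essentially the same approach as the paper: fix disjoint stationary sets $\tup{S_\xi : \xi < \omega_1}$, use Corollary~\ref{cor:countryman_L} to build Countryman lines with prescribed $\La$-sets, and apply Lemma~\ref{lem:non_sur} to rule out epimorphisms in both directions. The only difference is that the paper simply takes $X_\xi := S_\xi$ rather than your $X_\xi := \bigcup_{\eta\neq\xi}S_\eta$; since the $S_\xi$ are pairwise disjoint, $S_\xi \setm S_\zeta = S_\xi$ is already stationary, so the complementation is unnecessary.
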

\begin{proof}
  Let $\tup{S_{\xi}
    : \xi < \omega_{1}}$ be a family of disjoint stationary subsets of
  $\omega_{1}$ and using Corollary~\ref{cor:countryman_L} let $C^{S_{\xi}}$ and $D^{S_{\xi}}$
  be such that $\La(C^{S_{\xi}},D^{S_{\xi}}) = S_{\xi}$. Then one proves
  that $\{C^{\xi} : \xi < \omega_{1}\}$ is the desired antichain exactly
  as in the proof of the previous theorem.
\end{proof}

\section{An infinite decreasing chain}\label{sec:infinite-chain}

In this section we show that an adaptation of Moore's forcing
in~\cite{Moore2009}, allows us to introduce several epimorphisms
between Countryman lines under $\MA_{\aleph_{1}}$. In particular we
show that under this axiom, there is an infinite $\sle$-decreasing
chain of order type $\omega_{1}$.

Let $\tup{S_{\alpha} : \alpha < \omega_{1}}$ be a family of disjoint
stationary subsets of $\omega_{1}$, and for each $\alpha < \omega_{1}$,
using Corollary~\ref{cor:countryman_L}, let $C^{\alpha}$ be a Countryman
line and $D^{\alpha}$ a decomposition for it such that
$\La(C^{\alpha},D^{\alpha}) = \L(C^{\alpha},D^{\alpha}) = \bigcup_{\xi < \alpha}S_{\xi}$
and $\R(C^{\alpha},D^{\alpha}) = \es$. Lemma~\ref{lem:non_sur} implies that
if $\alpha < \beta$, then $C^{\beta} \nsgeq C^{\alpha}$,
and Corollary~\ref{cor:cline-srs} that $C^{0} \sgeq C^{\alpha}$ for every $\alpha < \omega_{1}$
under $\MA_{\aleph_{1}}$. We claim that this generalizes.

\begin{proposition}
  Assume $\MA_{\aleph_{1}}$. For all $\alpha < \beta$,
  $C^{\alpha} \sge C^{\beta}$.
\end{proposition}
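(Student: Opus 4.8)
The plan is to derive both parts of $C^{\alpha}\sge C^{\beta}$ from results already in hand: Lemma~\ref{lem:non_sur} for the failure of the converse, and Theorem~\ref{thm:main2} for the existence of an epimorphism $C^{\alpha}\sur C^{\beta}$. Concretely, for the converse: since $\alpha<\beta$ and the $S_{\xi}$ are pairwise disjoint, $\La(C^{\beta},D^{\beta})\setm\La(C^{\alpha},D^{\alpha})=\bigl(\bigcup_{\xi<\beta}S_{\xi}\bigr)\setm\bigl(\bigcup_{\xi<\alpha}S_{\xi}\bigr)$ contains the stationary set $S_{\alpha}$, so Lemma~\ref{lem:non_sur} gives $C^{\beta}\nsgeq C^{\alpha}$; it then remains only to show $C^{\alpha}\sgeq C^{\beta}$.

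For this I would apply Theorem~\ref{thm:main2} with $C:=C^{\alpha}$, $D:=D^{\alpha}$, $C':=C^{\beta}$, $D':=D^{\beta}$ and club $E:=\omega_{1}$, checking its hypotheses in turn. Both $C^{\alpha}$ and $C^{\beta}$ are $\aleph_{1}$-dense, as part of the conclusion of Theorem~\ref{thm:L_R} carried through Corollary~\ref{cor:countryman_L}. For $C^{\beta}\eleq C^{\alpha}$: by construction each $C^{\gamma}$ arises from Corollary~\ref{cor:countryman_L} with $Y=\es$, hence is an uncountable suborder of the fixed lexicographic Countryman line $(T,<_{\lex})$ and is itself Countryman; by Theorem~\ref{thm:clines-equiv} it is $\eleq$-equivalent to $(T,<_{\lex})$ or to $(T,<_{\lex})^{\star}$, but the latter is impossible, since $C^{\gamma}\eleq(T,<_{\lex})$ by construction and $\eleq$-equivalence with $(T,<_{\lex})^{\star}$ would make $C^{\gamma}$ an uncountable order embedding into both $(T,<_{\lex})$ and its reverse, against Lemma~\ref{lem:cline-fund}; thus all the $C^{\gamma}$ are $\eleq$-equivalent to $(T,<_{\lex})$, and in particular $C^{\beta}\eleq(T,<_{\lex})\eleq C^{\alpha}$. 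Finally, with $E=\omega_{1}$ the two inclusion hypotheses are immediate from the defining clauses: $\L(C^{\alpha},D^{\alpha})=\bigcup_{\xi<\alpha}S_{\xi}\seb\bigcup_{\xi<\beta}S_{\xi}=\La(C^{\beta},D^{\beta})$ because $\alpha<\beta$, and $\R(C^{\alpha},D^{\alpha})=\es\seb\Ra(C^{\beta},D^{\beta})$. Theorem~\ref{thm:main2} then yields $C^{\alpha}\sgeq C^{\beta}$, which combined with the previous paragraph gives $C^{\alpha}\sge C^{\beta}$.

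I do not anticipate a genuine obstacle here: the substantive content — the ccc forcing adapting Moore's construction that actually produces the epimorphism — is encapsulated in Theorem~\ref{thm:main2}, which I am using as a black box, and the only step above that needs more than a one-line computation is the embeddability $C^{\beta}\eleq C^{\alpha}$, itself just the standard fact that under $\MA_{\aleph_{1}}$ every uncountable suborder of a Countryman line is $\eleq$-equivalent to that line. The real point is that the clauses $\La(C^{\gamma},D^{\gamma})=\bigcup_{\xi<\gamma}S_{\xi}$ and $\Ra(C^{\gamma},D^{\gamma})=\es$ were engineered precisely so that this monotonicity in $\gamma$ makes the hypotheses of Theorem~\ref{thm:main2} hold automatically whenever $\alpha<\beta$, so there is essentially nothing left to do beyond assembling the citations.
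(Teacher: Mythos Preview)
Your proposal is correct and follows essentially the same route as the paper: the non-surjection $C^{\beta}\nsgeq C^{\alpha}$ via Lemma~\ref{lem:non_sur} is exactly what the paper notes just before the proposition, and for $C^{\alpha}\sgeq C^{\beta}$ the paper likewise reduces to its forcing theorem (stated there as Theorem~\ref{thm:ma-epi}, equivalent to the Theorem~\ref{thm:main2} you invoke), simply observing that the hypotheses are met. Your write-up is in fact more explicit than the paper's one-line ``observe'' on the point $C^{\beta}\eleq C^{\alpha}$, where you spell out that each $C^{\gamma}$ sits inside the fixed Countryman $(T,<_{\lex})$ and then use Theorem~\ref{thm:clines-equiv} plus Lemma~\ref{lem:cline-fund} to rule out the reversed orientation.
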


This would indeed give us an infinite $\sle$-decreasing chain of Countryman lines.
We will prove a slightly more general result, given by the following.

\begin{theorem}\label{thm:ma-epi}
  Assume $\MA_{\aleph_{1}}$. Let $A$ and $X$ be $\eleq$-equivalent
  $\aleph_{1}$-dense Countryman lines. If
  for some decomposition $D^{A}$ and $D^{X}$, $\L(A,D^{A}) \seb \La(X,D^X)$ and
  $\R(A,D^{A}) \seb \Ra(X,D^{X})$, then $A \sgeq X$.
\end{theorem}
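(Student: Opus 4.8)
The plan is to build a ccc forcing notion $\P$ whose conditions are finite approximations to a monotone surjection $f:A\sur X$, then apply $\MA_{\aleph_{1}}$ to a suitable family of $\aleph_{1}$ dense sets to extract the generic epimorphism. Since $A$ and $X$ are $\eleq$-equivalent $\aleph_{1}$-dense Countryman lines, fix an embedding $e:X\hookrightarrow A$; the plan is for a condition $p$ to consist of a finite partial monotone function $p:A\rightharpoonup X$ together with a finite amount of side information recording, for finitely many $x\in X$, an ``interval constraint'' forcing $p^{-1}(x)$ to eventually look like a full complementary interval of $A$ above the relevant level. The product structure of $C^{2}$ (Lemma~\ref{lem:cline-fund}) is what will make the order ccc: one decomposes an uncountable antichain using the countably many chains covering $A^{2}\times X^{2}$ (or some small power), applies a $\Delta$-system argument to the finite domains, and then uses monotonicity plus the chain condition to amalgamate two conditions in the same piece, exactly as in Moore's argument in~\cite{Moore2009}. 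The hypotheses $\L(A,D^{A})\seb\La(X,D^{X})$ and $\R(A,D^{A})\seb\Ra(X,D^{X})$ enter precisely here: they guarantee that whenever $A\setm D_{\xi}^{A}$ has a complementary interval with a left (resp.\ right) endpoint, so does \emph{every} complementary interval of $X\setm D_{\xi}^{X}$, so the ``continuity'' requirement on an epimorphism (preimages of points are complementary intervals and endpoints must match up) can always be met along a club and the forcing conditions are never blocked at a limit level.

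The key steps, in order, are: (1) define $\P$ as finite monotone partial maps with endpoint-respecting side conditions, and check it is a nontrivial poset; (2) for each $a\in A$ show $\{p:a\in\dom(p)\}$ is dense, and for each $x\in X$ show $\{p:x\in\ran(p)\}$ is dense --- this is where one must produce, given a finite $p$ and a target $a$ or $x$, an extension; the density for $\ran$ uses that $X$ has no endpoints and that $A$ is $\aleph_{1}$-dense so there is room below/above to place a new fiber, and the endpoint side conditions are satisfiable by the $\L\seb\La$, $\R\seb\Ra$ hypotheses; (3) prove $\P$ is ccc via the Countryman/chain-decomposition plus $\Delta$-system argument sketched above; (4) invoke $\MA_{\aleph_{1}}$ against the $\aleph_{1}$ many dense sets from (2) to get a filter $G$, set $f:=\bigcup\{p:p\in G\}$, and verify $f:A\to X$ is total, monotone and surjective --- monotonicity and well-definedness are immediate from the definition of $\P$, totality and surjectivity from the density sets.

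The main obstacle I expect is step (3), the ccc proof, and within it the correct formulation of the side conditions so that amalgamation actually works. In Moore's original setting one is building an \emph{isomorphism} between two normal Countryman lines, where the two-sidedness makes the symmetric amalgamation of two conditions clean; here the map is only a surjection, so a single point $x\in X$ may already have several points of $A$ mapped to it, and when amalgamating $p$ and $q$ one must check that merging their fibers (and their interval constraints) does not create a monotonicity violation or an impossible endpoint demand. The trick will be to phase the side information through the decompositions: a condition should ``know'' a countable ordinal $\xi_{p}$ below which all its data lives, and should promise that outside $D^{A}_{\xi_{p}}$ its fibers are whole complementary intervals; then two conditions with the same combinatorial type (same $\Delta$-system root, same pattern of endpoints, lying in the same chain of the relevant power of a Countryman line) can be glued because above $\max(\xi_{p},\xi_{q})$ there is complete freedom, and below it the roots agree. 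I would also need the elementary observation --- provable directly from the definitions of $\L,\R,\La,\Ra$ and Lemma~\ref{lem:dec-at-clubs} --- that the containment hypotheses, a priori about one pair of decompositions, hold on a club for any pair, so the argument is insensitive to the choice of $D^{A}$ and $D^{X}$.

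Once Theorem~\ref{thm:ma-epi} is in hand, the stated Proposition is immediate: for $\alpha<\beta$, $C^{\alpha}$ and $C^{\beta}$ are both $\aleph_{1}$-dense Countryman lines, they are $\eleq$-equivalent by Theorem~\ref{thm:clines-equiv} (both embed into a common Countryman line and neither embeds into the reverse of the other, since $\L(C^{\gamma},D^{\gamma})$ is built from the $S_{\xi}$'s and $\R(C^{\gamma},D^{\gamma})=\es$, so Lemma~\ref{lem:non_sur} forbids the reverse direction), and $\L(C^{\beta},D^{\beta})=\bigcup_{\xi<\beta}S_{\xi}\supseteq\bigcup_{\xi<\alpha}S_{\xi}=\L(C^{\alpha},D^{\alpha})=\La(C^{\alpha},D^{\alpha})$ while $\R(C^{\beta},D^{\beta})=\es\seb\Ra(C^{\alpha},D^{\alpha})$, so the hypotheses of the theorem are met with the roles of $A$ and $X$ played by $C^{\beta}$ and $C^{\alpha}$ respectively; hence $C^{\beta}\sgeq C^{\alpha}$, i.e.\ $C^{\alpha}\sleq C^{\beta}$, and $C^{\beta}\nsgeq C^{\alpha}$ by Lemma~\ref{lem:non_sur}, giving the strict relation $C^{\alpha}\sle C^{\beta}$ and therefore an $\omega_{1}$-length $\sle$-decreasing chain.
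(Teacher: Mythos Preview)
Your overall architecture (ccc forcing of finite approximations, then $\MA_{\aleph_{1}}$) matches the paper's, and you correctly identify that the argument is an adaptation of Moore's forcing from~\cite{Moore2009}. However, there are two substantive problems.

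First, you misplace where the hypotheses $\L(A,D^{A})\seb\La(X,D^{X})$ and $\R(A,D^{A})\seb\Ra(X,D^{X})$ enter. You claim they are needed for the ccc/amalgamation step; in fact the ccc holds for \emph{any} pair of $\eleq$-comparable Countryman lines, and the paper obtains it by a direct reduction to Moore's $Q_{E}$ (via the map sending each interval-to-point assignment $(\ba,p(\ba))$ to the pair $(a_{m},p(\ba))$, where $a_{m}=\Delta_{A}(a_{l},a_{r})$). The endpoint hypotheses are used \emph{only} in the density argument for $\{p:a\in\dom(f_{p})\}$: when the new point $a$ happens to be the left endpoint of its complementary interval of $A\setm\nu$, you must map it to the left endpoint of the corresponding complementary interval of $X\setm\nu$, and the hypothesis $\L(A,D^{A})\seb\La(X,D^{X})$ is exactly what guarantees such an endpoint exists. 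Your density sketch does not isolate this case, which is the crux.

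Second, the paper's conditions are not finite partial maps $A\rightharpoonup X$ with ad hoc ``side information'' and a level $\xi_{p}$; they are finite functions from closed intervals of $A$ to points of $X$, subject to three constraints relative to a fixed club $E$: (i) each interval $[a_{l},a_{r}]$ and its image live at the same level $\nu(\ba)$, (ii) the level at which two adjacent intervals split equals the level at which their images split, and (iii) if $a_{l}$ (resp.\ $a_{r}$) is a complementary-interval endpoint, so is $p(\ba)$. Constraint (iii) is what makes the density step above go through while remaining compatible with the reduction-to-$Q_{E}$ ccc proof; nothing in your proposal plays this role.

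Finally, your last paragraph applies the theorem with the roles reversed. For $\alpha<\beta$ one takes $A=C^{\alpha}$ and $X=C^{\beta}$, so that $\L(C^{\alpha},D^{\alpha})=\bigcup_{\xi<\alpha}S_{\xi}\seb\bigcup_{\xi<\beta}S_{\xi}=\La(C^{\beta},D^{\beta})$, and the conclusion is $C^{\alpha}\sgeq C^{\beta}$. Your check displays the opposite containment and then asserts $C^{\beta}\sgeq C^{\alpha}$, which is precisely what Lemma~\ref{lem:non_sur} (which you cite in the very next clause) rules out.
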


Observe that if $\alpha < \beta$, then the hypothesis
of the theorem are satisfied letting $A = C^{\alpha}$ and $X = C^{\beta}$. As
a consequence we have another proof of Corollary~\ref{cor:cline-srs}.

\begin{corollary}\label{cor:c-srs2}
  Assume $\MA_{\aleph_{1}}$. Every normal Countryman line is strongly surjective.
\end{corollary}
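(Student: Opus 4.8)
\textbf{Proof plan for Corollary~\ref{cor:c-srs2}.}
The plan is to deduce this directly from Theorem~\ref{thm:ma-epi} together with Theorem~\ref{thm:moore-t1.1}. Let $C$ be a normal Countryman line and let $B \eleq C$ be a nonempty suborder; I must produce an epimorphism $C \sur B$. Since $B$ is Aronszajn-or-countable, the first move is to reduce to the case where $B$ is itself a normal Countryman line: by Lemma~\ref{lem:normal-suborder} and Lemma~\ref{lem:cline-fund} (and a routine argument handling the case where $B$ is countable, in which case $B \sleq \omega \text{ or } \omega^{\star}$, both of which are clearly $\sleq C$ since $C$ has a cofinal/coinitial copy of $\omega$, $\omega^{\star}$), we may assume $B$ is an uncountable suborder, and then $B$ contains a normal Countryman suborder $B' \eleq B$; since $\sleq$ is transitive it suffices to find an epimorphism onto $B'$, so WLOG $B$ itself is normal and Countryman.

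Next I would upgrade "normal" to "$\aleph_{1}$-dense and normal" on the domain side as well: $C$ being normal is already $\aleph_{1}$-dense by Definition~\ref{df:normal}, so no work is needed there. Now apply Theorem~\ref{thm:moore-t1.1}: since $C$ and $B$ are both normal Countryman lines, either $C \cong B$ or $C \cong B^{\star}$. The second alternative is impossible because a Countryman line contains no two uncountable reverse-isomorphic suborders (Lemma~\ref{lem:cline-fund}), and $B$ is an uncountable suborder of $C$ with $C \cong B^{\star}$ forcing $B \eleq C$ and $B \eleq C^{\star}$ simultaneously --- contradiction. Hence $C \cong B$, and in particular $C \eleq B$ and $B \eleq C$, i.e. $C$ and $B$ are $\eleq$-equivalent $\aleph_{1}$-dense Countryman lines.

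It remains to check the decomposition hypothesis of Theorem~\ref{thm:ma-epi}. Since $C$ is normal it has a decomposition $D^{C}$ with $\L(C,D^{C}) = \R(C,D^{C}) = \es$ by Proposition~\ref{prop:nonstat-eq} (or directly from Definition~\ref{df:normal}). Then trivially $\L(C,D^{C}) = \es \seb \La(B,D^{B})$ and $\R(C,D^{C}) = \es \seb \Ra(B,D^{B})$ for any decomposition $D^{B}$ of $B$. So Theorem~\ref{thm:ma-epi} applies with $A := C$, $X := B$, yielding $C \sgeq B$, which is exactly what was needed. Since $B$ was an arbitrary nonempty suborder of $C$, this shows $C$ is strongly surjective.

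The only genuinely delicate point is the reduction in the first paragraph --- one must make sure that passing from $B$ to a normal Countryman suborder $B'$ loses nothing, which uses transitivity of $\sleq$, and that the countable case is handled separately (there $B$ is a nonempty countable suborder of $C$; since $C$ is $\aleph_{1}$-dense without endpoints it has cofinal and coinitial copies of $\omega$ and $\omega^{\star}$ and one maps $C$ onto $B$ by a routine monotone surjection, or one simply notes every countable linear order is $\sleq$ one of $\omega, \omega^{\star}, \omega+1, 1+\omega^{\star}$, each of which is clearly $\sleq C$). Everything else is a direct citation of the already-established theorems, so no further calculation is required.
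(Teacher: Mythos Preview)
Your reduction in the first paragraph is in the wrong direction, and this is a genuine gap. You pass from an arbitrary nonempty $B \eleq C$ to a normal Countryman suborder $B' \eleq B$ and then say ``since $\sleq$ is transitive it suffices to find an epimorphism onto $B'$''. But transitivity would require $B' \sgeq B$, and you only have $B' \eleq B$; a suborder relation gives you nothing about epimorphisms in that direction. So even after you (correctly) obtain $C \cong B'$ and hence $C \sgeq B'$, you have no route back to $C \sgeq B$. The same confusion appears in your countable case: the four orders $\omega,\omega^{\star},\omega+1,1+\omega^{\star}$ form a $\sleq$-\emph{basis} for the countable orders, meaning each countable $B$ has one of them $\sleq B$, not $B \sleq$ one of them.

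The paper fixes exactly this by reducing in the opposite direction: given nonempty $A \eleq C$, it replaces $A$ by the larger object $A \times C$, for which the epimorphism $A \times C \sgeq A$ is the trivial projection. Then $A \times C$ is an $\aleph_{1}$-dense Countryman line (Lemma~\ref{lem:c-prod}) that is $\eleq$-equivalent to $C$, and since $C$ is normal one has $\L(C,D) = \R(C,D) = \es$, so Theorem~\ref{thm:ma-epi} gives $C \sgeq A \times C$; transitivity now legitimately yields $C \sgeq A$. This also disposes of the countable case uniformly, with no separate argument needed. Your use of Theorem~\ref{thm:ma-epi} and the observation about $\L,\R$ being empty are correct; the missing idea is simply to enlarge $B$ to something you can surject onto $B$ from, rather than shrink it.
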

\begin{proof}
  If $C$ is normal, then in there is a decomposition $D$ for which
  which $\L(C,D) = \R(C,D) = \es$. Then, if $A \eleq C$ is nonempty,
  $A \times C$ is an $\aleph_{1}$-dense Countryman line,
  which is $\eleq$-equivalent to $C$. Thus
 Theorem~\ref{thm:ma-epi} implies that $C \sgeq A \times C$.
\end{proof}

The proof of Theorem~\ref{thm:ma-epi} will be strongly based on Moore's
proof of Theorem~\ref{thm:moore-t1.1}.

\subsection{Moore's forcing}\label{sec:moore-forcing}
In this section we give the necessary details that
we need from Moore's forcing. All definitions and results in this
section are from \cite{Moore2009}.

Let $A$ be an Aronszajn line and assume that $A$ is $\omega_{1}$ as a set.
For $a \in A$ let $\tilde{a} \in 2^{\omega_{1}}$ be the characteristic function
of $\{b \in A : b <_{A} a\}$. One easily sees that
$a \mapsto \tilde{a}$ is an isomorphism between $A$ and a subset of $2^{\omega_{1}}$
ordered lexicographically. For $a \neq b$ in $A$, let
$\Delta_{A}(a,b)$ be the least ordinal $\xi$ such that
$\tilde{a}(\xi) \neq \tilde{b}(\xi)$, and note then that
$a <_{A} b$ iff $\tilde{a}(\Delta_{A}(a,b)) = 0$ and $\tilde{b}(\Delta_{A}(a,b)) = 1$.
More generally, the following are easily seen to be true.

\begin{itemize}\itemsep0em
  \item If $a <_{A} b$, then $\Delta_{A}(a,b)$ is the least ordinal $\xi$ such that
  $a \le_{A} \xi <_{A} b$.
  \item If $a \le_{A} a' <_{A} b' \le_{A} b$,
  then $\Delta_{A}(a,b) \le \Delta_{A}(a',b')$.
  \item For every $a \in A$, $\delta_{A}(a) := \sup\{\Delta_{A}(a,b) + 1 : b\in A \setm \{a\}\}$
  is a countable ordinal.
\end{itemize}

Now let $E \seb \omega_{1}$ be a club of limit ordinals
closed under $\delta_{A}$ and let $T_{A} :=
\{\tilde{a}|_{\xi} : \xi \le \min(E\setm (\delta_{A}(a) + 1))\}$.
Note that $a \mapsto \tilde{a}|_{\delta_{A}(a)}$ maps
$A$ to an antichain of $T_{A}$, so $A$ can be thought of as
the lexicographic ordering of the leafs of $T_{A}$.
Needless to say, $T_{A}$ is an Aronszajn tree.

For the rest of this section fix $A$ and $X$ Countryman lines with
underlying set $\omega_{1}$. The natural forcing for adding
an isomorphism between $A$ and $X$, would be to consider
the set of finite partial functions from $A$ to $X$ that
are increasing. However this is easily seen to collapse
$\omega_{1}$.

\begin{definition}\label{df:moore-forcing}
  For $E \seb \omega_{1}$, the forcing $Q_{E} := Q_{E}(A,X)$ consists of the
  $q$ that are finite partial functions from $A$ to $X$ such that,
  \begin{itemize}\itemsep0em
    \item $q$ is increasing in the sense that if $a <_{A} b$ are in $\dom(q)$,
    then $q(a) <_{A} q(b)$.
    \item For all $\nu \in E$ and $a \in \dom(q)$, $a < \nu \lra q(a) < \nu$.
    \item For all $\nu \in E$ and $a \neq b \in \dom(q)$,
    $\Delta_{A}(a,b) < \nu \lra \Delta_{X}(q(a),q(b)) < \nu$.
  \end{itemize}
\end{definition}

Moore proves the following (see~\cite[Section~3]{Moore2009}).

\begin{lemma}\label{lem:moore-ccc} If $A$ and $X$ are $\eleq$-comparable
Countryman lines. Then there is a club $E$ such that $Q_{E'}$ is ccc
whenever $E'$ is a subclub of $E$.
\end{lemma}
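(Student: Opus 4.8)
The statement to prove is Lemma~\ref{lem:moore-ccc}: for $\eleq$-comparable Countryman lines $A$ and $X$ on $\omega_1$, there is a club $E$ so that $Q_{E'}$ is ccc for every subclub $E'$ of $E$. The plan is to follow the standard template for proving countable chain condition via a $\Delta$-system argument combined with the fact that finite powers of a Countryman line are unions of countably many chains. First I would fix a club $E_0$ of limit ordinals closed under the ordinal functions $\delta_A$, $\delta_X$, and under the maps coding the lexicographic/tree structure of $A$ and $X$ (so that below any $\nu \in E_0$ the trees $T_A$ and $T_X$ and the orders $A$, $X$ restricted to $\nu$ live inside $\nu$); shrinking further, I would arrange that for $\nu \in E_0$, $\nu$ is a node-level of both $T_A$ and $T_X$ in the sense that $\tilde a|_\nu, \tilde x|_\nu$ make sense and $\nu$ approximates its own members in both orders. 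The claim is that $E := E_0$ works.

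\textbf{Main argument.} Let $E' \seb E$ be a subclub and let $\{q_\alpha : \alpha < \omega_1\} \seb Q_{E'}$ be given; I must find $\alpha \neq \beta$ with $q_\alpha \cup q_\beta \in Q_{E'}$. Passing to an uncountable subfamily, I would first apply the $\Delta$-system lemma to the finite sets $\dom(q_\alpha)$ (viewed as finite subsets of $\omega_1$, but really I want to apply it simultaneously to $\dom(q_\alpha) \cup \ran(q_\alpha)$ as finite subsets of $\omega_1$) to get a common root; by further refining I may assume all $q_\alpha$ have domains of the same size $n$, the root is an initial segment on which all $q_\alpha$ agree, and the "moving parts" $\bar a_\alpha = \dom(q_\alpha) \setminus \text{root}$ enumerated increasingly all lie above some fixed $\nu_* \in E'$ with the root below $\nu_*$. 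Next, refining again, I would stabilize the "type" of each condition: the $\Delta_A$-pattern and $\Delta_X$-pattern among the coordinates, which coordinates of $\bar a_\alpha$ and which of the images $q_\alpha(\bar a_\alpha)$ land in which $E'$-interval relative to the finitely many relevant $\nu$'s, and — crucially — I would use that $A^n \times X^n$ (a finite power of a Countryman line, by Lemma~\ref{lem:cline-fund}) is the union of countably many chains in the product order: refining to one color, the tuples $(\bar a_\alpha, q_\alpha(\bar a_\alpha))$ become pairwise comparable in the product order. Then for any two indices $\alpha, \beta$ in this final uncountable set, $q_\alpha \cup q_\beta$ is monotone by product-order comparability, and the two boundedness conditions in Definition~\ref{df:moore-forcing} (the $a < \nu \lra q(a) < \nu$ condition and the $\Delta_A < \nu \lra \Delta_X < \nu$ condition) hold for pairs straddling the two conditions because: below $\nu_*$ they agree on the root, above the moving parts the relevant $\Delta$'s computed between an $a \in \bar a_\alpha$ and a $b \in \bar a_\beta$ reduce to $\Delta$'s computed against a fixed element of the root (since $\bar a_\alpha, \bar a_\beta$ split from each other only above a common node already below, by coherence of the stabilized type), and the stabilized type ensures the threshold behavior at every $\nu \in E'$ matches. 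This is where $\eleq$-comparability enters through Moore's setup: it is what makes the product-order coloring on $A^n \times X^n$ actually available (an isomorphic copy of $X$ sits inside some power of $A$ or vice versa, so the Countryman property transfers).

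\textbf{The main obstacle.} The delicate point is verifying the third clause of Definition~\ref{df:moore-forcing} — the $\Delta$-condition — for a pair $a \in \dom(q_\alpha)$, $b \in \dom(q_\beta)$ where $a$ comes from $q_\alpha$ and $b$ from $q_\beta$ and neither is in the root. Here $\Delta_A(a,b)$ need not be one of the $\Delta$'s appearing inside a single condition, so the stabilization of per-condition types does not immediately control it; the resolution is to observe that $\Delta_A(a,b)$ equals $\Delta_A(a, r)$ for the last root element $r$ below $a$ (equivalently, equals the split level of $\bar a_\alpha$ from $\bar a_\beta$, which by the common-root refinement of the tree nodes $\tilde a_\alpha|_{\nu_*}$ lies at or below a level determined only by the root and $\nu_*$), and similarly $\Delta_X(q_\alpha(a), q_\beta(b)) = \Delta_X(q_\alpha(a), q_\alpha(r))$, so the $\Delta$-threshold condition for the mixed pair is inherited from the $\Delta$-threshold condition that $q_\alpha$ (or $q_\beta$) already satisfies between a moving coordinate and the root. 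Making this reduction precise — matching split levels in $T_A$ with split levels in $T_X$ using that every $\nu \in E'$ is closed under the coding functions, so that "$\Delta_A(a,b) < \nu$" is decided by data below $\nu$ for all the relevant $\nu$ — is the technical heart, and it is exactly the computation Moore carries out in~\cite[Section~3]{Moore2009}; I would cite it rather than reproduce it, since the lemma is quoted here as a black box from that source.
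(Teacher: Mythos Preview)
The paper does not prove this lemma at all: it simply states it and cites Moore~\cite[Section~3]{Moore2009}. Your proposal is therefore considerably more detailed than what appears in the paper, and since you also end by deferring to Moore's computation, you land in the same place.

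That said, your sketch of the argument has a genuine gap in the ``main obstacle'' paragraph. You claim that for $a \in \dom(q_\alpha) \setminus \text{root}$ and $b \in \dom(q_\beta) \setminus \text{root}$, the cross-condition value $\Delta_A(a,b)$ equals $\Delta_A(a,r)$ for ``the last root element $r$ below $a$'', and similarly on the $X$ side. This fails in the generic situation: after the $\Delta$-system refinement the root may well be empty (indeed, the paper's later use of this lemma, in the ccc proof for $P_E$, explicitly reduces to the case of pairwise disjoint domains), and even when the root is nonempty there need not be any root element order-between $a$ and $b$. So the reduction you propose does not go through, and with it the inheritance of the $\Delta$-threshold condition from a single $q_\alpha$ collapses.

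The mechanism that actually controls the cross-condition $\Delta$'s is the one isolated in this paper as Lemma~\ref{lem:moore-technical}: one uses Fodor's lemma to find a threshold $\gamma$ so that all within-condition $\Delta$'s that fall below the index $\xi$ in fact fall below $\gamma$, and then a tree-antichain refinement forces the $\Delta$ between the $i$-th elements of two different conditions into the window $(\gamma,\xi)$. From this branching pattern one reads off directly, for every $\nu \in E'$, which side of $\nu$ each cross-condition $\Delta_A$ lands on; the Countryman coloring then matches this with the $\Delta_X$ side. No appeal to root elements is needed. Your outline of the surrounding steps ($\Delta$-system, size and type stabilization, the product-order chain decomposition of $A^n \times X^n$ via $\eleq$-comparability) is correct; it is only the bridge across conditions that needs to be rebuilt along these lines.
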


\begin{definition}
  If $S \in H(\omega_{2})$ we say that $E$ is an \emph{elementary club}
  for $S$ if $E$ is contained in
  $\{N \cap \omega_{1} : N \prec H(\omega_{2}), |N| = \aleph_{0}, S \in N\}$.
\end{definition}

It is well known that for any $S \in H(\omega_{2})$ there is
an elementary club for $X$. Moore also proves the following.

\begin{lemma}\label{lem:moore-density}
  If $A$ and $X$ are normal Aronszajn lines, and $E$ is an
  elementary club for $A$ and $X$, then for every $a \in A$ and $x \in
  X$, $\{q \in Q_{E} : a \in \dom(q), x\in \ran(q)\}$ is dense.
\end{lemma}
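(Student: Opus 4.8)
The plan is a density argument in the style of Abraham--Shelah and Moore: isolate the finitely much data by which an extension of a condition is constrained, and then realize it inside $X$ using its normality.

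\emph{Reduction.} Since all three clauses of Definition~\ref{df:moore-forcing} are symmetric in $A\leftrightarrow X$ (and in $a\leftrightarrow q(a)$), the map $q\mapsto q^{-1}$ is an order isomorphism between $Q_{E}(A,X)$ and $Q_{E}(X,A)$. Hence it suffices to show that for each fixed $a\in A$ the set $\{q\in Q_{E} : a\in\dom(q)\}$ is dense: the analogous fact for the range then follows by symmetry, and to get a single condition with $a\in\dom$ and $x\in\ran$ one applies the two facts in succession. So fix $q\in Q_{E}$ and $a\in A\setm\dom(q)$, and look for $x\in X$ with $q\cup\{(a,x)\}\in Q_{E}$. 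Letting $b_{-}<_{A}a<_{A}b_{+}$ be the neighbours of $a$ in $\dom(q)$ (with the obvious convention when $a$ lies before or after all of $\dom(q)$), a direct check against Definition~\ref{df:moore-forcing} shows that $q\cup\{(a,x)\}\in Q_{E}$ iff (i) $q(b_{-})<_{X}x<_{X}q(b_{+})$; (ii) $a<\nu\iff x<\nu$ for all $\nu\in E$; and (iii) $\Delta_{A}(a,b)<\nu\iff\Delta_{X}(x,q(b))<\nu$ for all $\nu\in E$ and $b\in\dom(q)$.

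\emph{Localizing the constraints.} Using that $E$ is an elementary club for $A$ and $X$ and is cofinal in $\omega_{1}$, fix a countable $N\prec H(\omega_{2})$ with $A,X,E,q,a\in N$ and $\delta:=N\cap\omega_{1}\in E$; this is possible by choosing such a $\delta$ above $a$, above $\sup(\dom(q)\cup\ran(q))$ and above the countable ordinal $\delta_{A}(a)$, so that $a\in N$ and $q$, being a finite subset of $\delta\times\delta$, belongs to $N$. Since $a<\delta\in E$, clause (ii) forces $x<\delta$, hence $x\in N$. Conversely, once $x<\delta$ every instance of (ii)--(iii) with $\nu\ge\delta$ holds automatically: $a,x<\delta\le\nu$, while $\Delta_{A}(a,b)<\delta_{A}(a)<\delta$ and $\Delta_{X}(x,q(b))\in N$, so both are $<\delta\le\nu$. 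The remaining instances $\nu<\delta$ say exactly that $x$ lies in the same $E$-gap as $a$ and that, for each $b\in\dom(q)$, $\Delta_{X}(x,q(b))$ lies in the same $E$-gap as $\Delta_{A}(a,b)$ --- where the $E$-gap of an ordinal $\gamma$ is the interval $[\sup(E\cap(\gamma+1)),\min(E\setm(\gamma+1)))$. Thus we have reduced to a $\ZFC$ statement: given the finite configuration $\{q(b):b\in\dom(q)\}$ in $X$, the $<_{X}$-interval $(q(b_{-}),q(b_{+}))_{X}$, and the prescribed $E$-gaps for $x$ and for each $\Delta_{X}(x,q(b))$ (all of which are mutually compatible, being the $E$-gaps of the corresponding objects attached to $a$ over $\dom(q)$ in $A$), produce such an $x\in X$.

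\emph{Realizing $x$.} This last point is the heart of the matter, and is where normality (Definition~\ref{df:normal}) enters. Because $q\in Q_{E}$, the $\Delta_{X}$-values among $\{q(b):b\in\dom(q)\}$ already sit in the same $E$-gaps as the corresponding $\Delta_{A}$-values among $\dom(q)$; so the ``$\Delta$-type'' of $a$ over $\dom(q)$, read modulo $E$-gaps, is already realized by the shape of $\ran(q)$, and one only needs to insert $x$ into the appropriate subtree of $T_{X}$ so that it branches away from each $q(b)$ at a node of the prescribed level. The ultrametric facts for $\Delta_{X}$ recalled before Definition~\ref{df:moore-forcing} (monotonicity and the $\min$-law) pin down which subtree $x$ must enter and on which side, the $\aleph_{1}$-density of $X$ supplies room there and inside $(q(b_{-}),q(b_{+}))_{X}$, and the non-stationarity of $X$ guarantees that a \emph{point} of $X$ --- not merely a node of $T_{X}$ --- can be placed at the prescribed club level, e.g.\ inside an endpoint-free complementary interval of $X\setm X_{\xi}$ for a suitable decomposition $\tup{X_{\xi}:\xi<\omega_{1}}$ of $X$. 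Granting such an $x$, it automatically lies in $N$, satisfies (i)--(iii), and witnesses density. I expect the only genuinely delicate point to be exactly this realization step --- in particular, keeping track of the ultrametric edge cases among the elements of $\dom(q)$ closest to $a$ while simultaneously meeting (i) and (ii); the rest is routine bookkeeping with elementary submodels.
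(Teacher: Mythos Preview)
The paper does not prove this lemma; it is attributed to Moore and simply cited from~\cite{Moore2009}, so there is no in-paper proof to compare against directly. Your outline --- symmetry reduction to the domain case, localization of the finitely many constraints, then realization of $x$ --- is the right shape, and the first two steps are essentially sound (the elementary-submodel packaging in the localization step is heavier than necessary, but not wrong). The gap is that you do not carry out the realization step. You name the ingredients (the ultrametric $\min$-law for $\Delta_X$, $\aleph_1$-density, endpoint-free complementary intervals) and assert that they ``pin down which subtree $x$ must enter'', but you never exhibit $x$, and you explicitly concede that this ``genuinely delicate point'' is left open. That is precisely where the work lies: one must split on which of $\Delta_A(b_-,a)$, $\Delta_A(a,b_+)$ lies at the higher $E$-level, and on whether that level coincides with $\nu(a)$ or with $\nu(b_-,b_+)$, and in each case manufacture a point of $X$ at the prescribed $E$-level, on the correct side of the relevant $q(b_\pm)$, inside the correct complementary interval of $X\setminus\nu$. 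Non-stationarity of $X$ is what guarantees that no missing endpoint blocks you, but this has to be checked configuration by configuration, not asserted.

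If you want a worked template, the paper does give full density proofs for its own forcing $P_E$ (the two lemmas following Lemma~\ref{lem:tech-lemma}). Those arguments are strictly harder, since $P_E$ carries the additional endpoint clause~\ref{f:iii} of Definition~\ref{df:forcing}, yet they are written out with the case analysis made explicit, using Lemma~\ref{lem:elementarity} and Lemma~\ref{lem:tech-lemma} to locate points at prescribed $\nu$-levels. The corresponding argument for $Q_E$ is a genuine simplification of that --- drop the endpoint bookkeeping --- but it still needs to be executed rather than sketched.
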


Combining Theorem~\ref{thm:clines-equiv}, Lemma~\ref{lem:moore-ccc}
and Lemma~\ref{lem:moore-density} one easily obtains a proof of
Theorem~\ref{thm:moore-t1.1}.

We will also need the following, which is stated without proof
in Moore's paper (see the proof of~\cite[Lemma~3.4]{Moore2009}).

\begin{lemma}\label{lem:moore-technical}
  Assume $A$ is Countryman. If
  $\tup{q_{\xi} : \xi < \omega_{1}}$ is a sequence
  of elements of $Q_{\es}$ with pairwise disjoint domains, then
  there is $n < \omega$, an uncountable $\Gamma \seb \omega_{1}$ and $\gamma < \min(\Gamma)$
  such that,
  \begin{enumerate}\itemsep0em
    \item For all $\xi \in \Gamma$, $|\dom(q_{\xi})| = n$.
    \item For all $\xi \in \Gamma$ and $a \neq b \in \dom(q_{\xi})$,
    if $\Delta_{A}(a,b) < \xi$ then $\Delta_{A}(a,b) < \gamma$.
    \item For all $\xi < \eta \in \Gamma$ and $i < n$, if $a$ is
    the $i$-th element in the $<_{A}$-increasing enumeration of $\dom(q_{\xi})$,
    and $b$ the $i$-the element in the increasing enumeration of $\dom(q_{\eta})$,
    then $\gamma < \Delta_{A}(a,b) < \xi$.
  \end{enumerate}
\end{lemma}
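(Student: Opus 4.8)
\textbf{Proof plan for Lemma~\ref{lem:moore-technical}.}
The plan is to extract the desired $n$, $\Gamma$ and $\gamma$ by a sequence of pigeonhole refinements, using the Countryman property of $A$ only at the very end to control the $\Delta_A$-values that go below $\xi$. First I would pass to an uncountable subset on which all $q_\xi$ have the same size $n$ (possible since the domains are finite), securing item~(1); call this set $\Gamma_0$. For each $\xi\in\Gamma_0$ list $\dom(q_\xi)=\{a^\xi_0<_A\cdots<_A a^\xi_{n-1}\}$. The second and third conditions both concern ``internal'' splitting ordinals $\Delta_A(a^\xi_i,a^\xi_j)$ and ``cross'' splitting ordinals $\Delta_A(a^\xi_i,a^\eta_i)$; the idea is that the internal ones that lie below $\xi$ are bounded by a fixed $\gamma$, while the cross ones lie strictly between $\gamma$ and $\xi$.

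For item~(2): for $\xi\in\Gamma_0$ set $\gamma(\xi):=\sup\{\Delta_A(a^\xi_i,a^\xi_j)+1 : i\neq j,\ \Delta_A(a^\xi_i,a^\xi_j)<\xi\}$, a countable ordinal (a sup of finitely many ordinals each $<\xi$). By Fodor's lemma, or simply by the fact that a regressive-type function on $\omega_1$ is bounded on an uncountable set once we thin out, there is an uncountable $\Gamma_1\seb\Gamma_0$ and a single $\gamma_0<\omega_1$ with $\gamma(\xi)\le\gamma_0$ for all $\xi\in\Gamma_1$; further thin so that $\min(\Gamma_1)>\gamma_0$. Then any $\gamma$ with $\gamma_0\le\gamma<\min(\Gamma_1)$ works for~(2), and also (by the same thinning, absorbing $\sup_{\xi}\delta_A(a^\xi_i)$ would be too large, so instead) we additionally require $\gamma$ large enough that each $a^\xi_i$ for $\xi$ below some chosen threshold is ``past'' $\gamma$ — more precisely we will fix $\gamma$ after one more thinning below. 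The cleanest route: thin $\Gamma_1$ to $\Gamma_2$ so that for any $\xi<\eta$ in $\Gamma_2$ and any $i$, $\Delta_A(a^\xi_i,a^\eta_i)<\xi$ — this uses that $a^\xi_i<\xi$ whenever $\xi\in\Gamma_2$ is large enough relative to the tree $T_A$, i.e.\ that on a club $E$ of $N\cap\omega_1$ with $N\prec H(\omega_2)$ we have $\delta_A(a)<\nu$ for $a<\nu$; intersecting $\Gamma_1$ with such a club and using $a^\xi_i\in\omega_1=A$ with $a^\xi_i<\xi$ gives $\tilde a^\xi_i|_\xi=\tilde a^\eta_i|_\xi$ is false in general, but $\Delta_A(a^\xi_i,a^\eta_i)$ being the least ordinal where $\tilde a^\xi_i$ and $\tilde a^\eta_i$ differ is $<\xi$ once $a^\xi_i<\xi\le a^\eta_i$ fails — so instead one shows $\Delta_A(a^\xi_i,a^\eta_i)<\eta$ always and $\ge\xi$ would force $a^\xi_i\ge\xi$, contradiction. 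This handles the upper bound $\Delta_A(a^\xi_i,b)<\xi$ in~(3).

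The main obstacle is item~(3)'s \emph{lower} bound $\gamma<\Delta_A(a^\xi_i,a^\eta_i)$ for cross-pairs, and this is exactly where the Countryman hypothesis enters — it is the same mechanism as in the standard proof that $T_A$ is not special-by-one-antichain and that antichains in $A^n$ are ``uniformized''. Suppose no such $\gamma$ existed: then for every candidate $\gamma$ there are $\xi<\eta$ in our uncountable set and some $i$ with $\Delta_A(a^\xi_i,a^\eta_i)\le\gamma$. Thinning, we may assume there is a \emph{fixed} $\delta<\omega_1$ and a fixed $i_0<n$ such that $\Delta_A(a^\xi_{i_0},a^\eta_{i_0})<\delta$ for uncountably many pairs; since $\tilde a^\xi_{i_0}|_\delta$ ranges over a countable set (it is a node of $T_A$ at a fixed countable level), pigeonhole gives an uncountable $\Gamma'$ on which $\tilde a^\xi_{i_0}|_\delta$ is constant. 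Then for $\xi<\eta$ in $\Gamma'$ the $i_0$-th points agree up to $\delta$, so $\Delta_A(a^\xi_{i_0},a^\eta_{i_0})\ge\delta$; combined with the assumption $<\delta$ this is a contradiction \emph{unless} we were sloppy — the correct contradiction uses that two distinct leaves of $T_A$ agreeing on a long initial segment, together with the Countryman colouring $c:A^2\to\omega$ (from Lemma~\ref{lem:cline-fund}) being constant on an uncountable set of pairs $\{(a^\xi_{i_0},a^\eta_{i_0})\}$, forces a $\le$-chain in $A^2$, which an uncountable such family cannot be (as $A$ is Aronszajn, it has no uncountable chain under $<_A$, hence $A^2$ has no uncountable increasing chain both of whose coordinates are injective). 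So after taking $\Gamma\seb\Gamma'$ uncountable on which $c$ restricted to the relevant pairs is constant and deriving the contradiction, we conclude the lower bound holds for all but boundedly many pairs, fix $\gamma$ absorbing that bound together with $\gamma_0$ from item~(2), thin $\Gamma$ once more so $\min(\Gamma)>\gamma$, and all three items hold simultaneously.
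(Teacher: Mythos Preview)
Your handling of items (1) and (2) is fine and matches the paper: pigeonhole for the common size $n$, then Fodor on the regressive function $\xi\mapsto\sup\{\Delta_A(a,b)+1:a\neq b\in\dom(q_\xi),\ \Delta_A(a,b)<\xi\}$ to get $\gamma$.

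The trouble is entirely in item~(3). Your argument for the \emph{upper} bound $\Delta_A(a^\xi_i,a^\eta_i)<\xi$ never coheres: you try to arrange $a^\xi_i<\xi$ as an ordinal, realise mid-sentence that this does not give what you want, and then assert ``$\ge\xi$ would force $a^\xi_i\ge\xi$'' without justification. None of this works, because nothing in the hypotheses ties the ordinal size of the points in $\dom(q_\xi)$ to the index $\xi$. The paper's mechanism is different and clean: after the pigeonhole on level $\gamma+1$ (which you essentially describe for the lower bound), one restricts to the subtree of $T_A$ above the common node $t_0$, takes an uncountable \emph{antichain} $H$ there, and for each $t\in H$ selects some $\xi^{(t)}\in\Gamma'''$ with $\tilde a^{\xi^{(t)}}_i\sqsupseteq t$ and $\xi^{(t)}>\al(t)$. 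For $\xi^{(t)}<\xi^{(s)}$ the nodes $t,s$ are incomparable, so $\tilde a^{\xi^{(t)}}_i$ and $\tilde a^{\xi^{(s)}}_i$ split below $\min(\al(t),\al(s))<\xi^{(t)}$, giving the upper bound; the common extension of $t_0$ at level $\gamma+1$ gives the lower bound. This antichain step is the idea you are missing.

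Your attempt to rescue the lower bound via the Countryman colouring is both unnecessary and incorrect. The pigeonhole on $\tilde a^\xi_{i}|_{\gamma+1}$ that you sketch \emph{is} already the right argument and is exactly what the paper does; it needs only that $T_A$ is Aronszajn, not that $A$ is Countryman. Your fallback contradiction (``$A$ is Aronszajn, it has no uncountable chain under $<_A$'') is simply false: $A$ is a linear order, so $A$ itself is an uncountable $<_A$-chain. The Countryman hypothesis is in fact not used anywhere in the paper's proof of this lemma.
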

\begin{proof}
  One easily finds stationary $\Gamma' \seb \omega_{1}$
  such that (1) holds. We may assume that $\Gamma'$ consists only
  of limit ordinals. Now define $f : \Gamma' \to \omega_{1}$
  by
  \[f(\xi) := \sup(\{0\} \cup \{\Delta_{A}(a,b) + 1 : a\neq b \in \dom(q_{\xi}), \Delta_{A}(a,b) < \xi\}).\]
  By Fodor's lemma, there is stationary $\Gamma '' \seb \Gamma'$
  such that $f$ is constant equal to $\gamma$ on $\Gamma''$. Then
  $\Gamma''' := \Gamma'' \setm (\gamma + 1)$ satisfies (2).

  We now turn to satisfy (3). For $\xi \in \Gamma'''$, let
  $\xi_{0},\dots,\xi_{n-1}$ be the $<_{A}$ increasing enumeration
  of $\dom(q_{\xi})$. Fix $i < n$. Let
  $T = \{t \in T : t \sqsubseteq \tilde{\xi}_{i}
  \text{ for some $\xi \in \Gamma$}\}$, where $\sqsubset$ denotes sequence extension.
  Since $T$ is Aronszajn there is
  $t_{0}$  at a level $\gamma + 1$ with
  extensions at every level. Let $T'$ be the subtree of $T$ of those
  who are comparable with $t_{0}$ and have uncountable extensions, and
  let $H \seb T'$ be an uncountable antichain.
  Then for any $t \in H$ there is $\xi^{(t)} \in \Gamma'''$ such that
  $\tilde{\xi}^{(t)}_{i} \sqsupset t$, and $\xi^{(t)} > \al(t)$.
  Finally let $\Gamma = \{\xi^{t} : t \in H\}$ and
  note that for $\xi < \eta$ in $\Gamma$,
  $\gamma < \Delta_{A}(\xi_{i},\eta_{i}) < \xi$. Repeating this for
  all $i$
  we may assume that $\Gamma$ satisfies (3).
\end{proof}

\subsection{Forcing epimorphisms} Fix $A$, $X$, $D^{A}$ and $D^{X}$ as in the statement of
 Theorem~\ref{thm:ma-epi}, we may assume that $A$ and $X$
are $\omega_{1}$ as sets. We will define a variant on Moore's forcing
$Q_{E}$  to introduce an epimorphism from $A$ onto $X$. As in
Moore's forcing,
the fact that $A$ and $X$ are $\eleq$-comparable Countryman lines
will be used in obtaining the ccc, and the $\aleph_{1}$-density
plus the hypothesis on $\L$ and $\R$ in obtaining enough dense
sets.

Let $\bar{A} = \{(a,b) \in A^{2} : a <_{A} b\}$ and for $\bar{a} \in
\bar{A}$ we write $\bar{a} = (a_{l},a_{r})$ and define $a_{m} :=
\Delta_{A}(a_{l},a_{r})$ so that $a_{l} \le_{A} a_{m} \le_{A} a_{r}$. We
let $<_{b}$ be the partial order on $\bar{A}$ defined by letting $\ba <_{b} \bb$
iff $a_{r} <_{A} b_{l}$, that is, if the interval
${[a_{l},a_{r}]}_{A}$ comes strictly before ${[b_{l},b_{r}]}_{A}$ in the
block order. We always think of elements of $\bar{A}$ as coding
closed intervals. As a set we let $P := P(A,X)$ be the set of finite partial
functions from $\bar{A}$ to $X$ that have domain linearly ordered and
are increasing.  Every $p \in P$ naturally codes a partial epimorphism
from $A$ to $X$ defined by $f_{p}(a) = x$ iff for some $\ba \in
\dom(p)$, $a \in [a_{l},a_{r}]$ and $p(\ba) = x$.  We let $q \le p$
iff $f_{q}$ extends $f_{p}$. This way, a condition $p$ can be extended
either by adding new intervals, or by enlarging an existing one.

The $\aleph_{1}$-density of $A$ and
$X$ implies that for every $p \in P$,
$\{p \in P : a \in \dom(f_{p}), x \in \ran(f_{p})\}$ is dense.
Thus any generic filter for $P$ introduces an epimorphism from $A$
onto $X$.  However it is easily seen that $P$
is never ccc, and in fact collapses $\omega_{1}$.
As in Moore's forcing, it is necessary to refine it
to a ccc forcing.

For a club $E$ and $\alpha < \omega_{1}$, we let
$\nu_{E}(\alpha)$ be the greatest ordinal in $\xi \in E \cup \{0\}$
such that $\xi \le \alpha$. When
the club is clear from the context we drop the subscript and
we let $\alpha^{+}$ stand for the least ordinal $\xi \in E$ such that $\alpha < \xi$.
Also when it is clear from context $\nu(a,b)$ will abbreviate
$\nu(\Delta_{A}(a,b))$, and $\nu(x,y) = \nu(\Delta_{X}(x,y))$.

\begin{definition}\label{df:forcing}
  For $E\seb \omega_{1}$ let $P_{E} := P_{E}(A,X)$ be defined as the set
  of $p \in P$ such that,
  \begin{enumerate}[label = (\roman*)]\itemsep0em
    \item\label{f:i} $\nu(p(\bar{a})) = \nu(a_{l}) = \nu(a_{r}) = \nu(a_{m})$,
    for all $\bar{a} \in \dom(p)$. And thus $\nu(\ba)$ is defined as this common value.
    \item\label{f:ii} $\nu(a_{r},b_{l}) = \nu(p(\bar{a}),p(\bar{b}))$
    for all $\bar{a} <_{b} \bar{b}$ in $\dom(p)$.
    \item\label{f:iii} Let $\bar{a} \in \dom(p)$,
    $I$ be the complementary interval $A \setm \nu(\ba)$
    in which $a_{l}$ is (and then by \ref{f:i} also $a_{r}$)
    and $J$ be the complementary interval of $X \setm \nu(\ba)$
    in which $p(\ba)$ is. Then,
    \begin{itemize}\itemsep0em
      \item If $a_{l}$ is the left endpoint of $I$ then so is $p(\ba)$ of $J$.
      \item If $a_{r}$ is the right endpoint of $I$ then so is $p(\ba)$ of $J$.
    \end{itemize}
    Note than that if $I = [a_{l},a_{r}]$, then $J$ must be singleton.
  \end{enumerate}
\end{definition}

To prove Theorem~\ref{thm:ma-epi} we will find
a club $E$ such that $P_{E}$ is ccc, and such that
for all $a \in A$ and $x \in X$,
$\{p \in P_{E} : a \in \dom(f_{p}), x \in \ran(f_{p})\}$ is dense.
Then an application of $\MA_{\aleph_{1}}$ gives $G$
a generic filter of $P_{E}$, and one sees that
$\bigcup_{p \in G}f_{p}$ is an epimorphism from $A$ onto $X$.
We now turn to the task of finding such a club.

\begin{lemma}\label{lem:ex-club}
  There is a club $E$ such that
  \begin{enumerate}[label = (\arabic*)]\itemsep0em
    \item $E$ makes Moore's forcing $Q_{E}$ ccc.
    \item $E$ is elementary for $A$, $X$, $D^{A}$ and $D^{X}$.
    \item $E$ consists of ordinals closed under $\delta_{A}$ and $\delta_{X}$
    (recall the definition of $\delta_{A}$ from Section~\ref{sec:moore-forcing}).
  \end{enumerate}
\end{lemma}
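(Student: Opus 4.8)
The plan is to construct $E$ by intersecting three clubs, one for each requirement, and then checking that an intersection of clubs is again a club of the required kind. For requirement (1), Lemma~\ref{lem:moore-ccc} provides a club $E_{0}$ such that $Q_{E'}$ is ccc whenever $E' \subseteq E_{0}$ is a subclub; here I use that $A$ and $X$ are $\eleq$-comparable Countryman lines, which holds because by hypothesis they are $\eleq$-equivalent. Crucially, the conclusion of Lemma~\ref{lem:moore-ccc} is phrased for arbitrary subclubs, so passing to a smaller club preserves property (1). For requirement (2), I would set $S := \langle A, X, D^{A}, D^{X} \rangle$ (or the tuple of their codes in $H(\omega_{2})$, using that each has size $\aleph_{1}$) and invoke the standard fact, noted just before Lemma~\ref{lem:moore-density}, that there is an elementary club $E_{1}$ for $S$, i.e. $E_{1} \subseteq \{N \cap \omega_{1} : N \prec H(\omega_{2}), |N| = \aleph_{0}, S \in N\}$; again any subclub of $E_{1}$ is still elementary for $S$. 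For requirement (3), the maps $\delta_{A}$ and $\delta_{X}$ (from Section~\ref{sec:moore-forcing}) take countable ordinals to countable ordinals, so the set $E_{2}$ of limit ordinals closed under both $\delta_{A}$ and $\delta_{X}$ is a club, and downward closure under subclubs is again clear.

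The second step is simply to let $E := E_{0} \cap E_{1} \cap E_{2}$ and observe that $E$ is a club (the intersection of finitely many clubs is a club) witnessing all three properties simultaneously: (1) since $E \subseteq E_{0}$, (2) since $E \subseteq E_{1}$, and (3) since $E \subseteq E_{2}$. One small point to be careful about is that in (2) the elementarity should be inherited by every $\nu \in E$, not merely by $\nu$ in $E_{1}$; this is immediate since the defining property of an elementary club is a property of individual ordinals, so any subset of an elementary club is elementary. A second routine point: it is harmless, and convenient for later use, to also arrange that $E$ consists of limit ordinals and is closed under the functions $f(\xi) := \min\{\alpha : \xi \in D^{A}_{\alpha}\}$ and $g(\xi) := \sup\{\eta + 1 : \eta \in D^{A}_{\xi}\}$ (and their $X$-analogues), as in Lemma~\ref{lem:dec-at-clubs}, so that $D^{A}_{\nu} = \nu = D^{X}_{\nu}$ for $\nu \in E$; one can simply intersect with one more club for this. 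I would fold this in so that later density arguments can freely identify $\nu$ with the initial segments $D^{A}_{\nu}$ and $D^{X}_{\nu}$.

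There is essentially no hard step here: the lemma is a bookkeeping assembly of three facts each already available (Lemma~\ref{lem:moore-ccc}, the existence of elementary clubs, and the closure-point argument for $\delta_{A},\delta_{X}$), together with the trivial observation that each property is preserved under passing to subclubs. The only thing that requires a moment's care is confirming that Lemma~\ref{lem:moore-ccc} really does apply — i.e. that the hypothesis of Theorem~\ref{thm:ma-epi} (that $A$ and $X$ are $\eleq$-equivalent $\aleph_{1}$-dense Countryman lines) entails the $\eleq$-comparability hypothesis of Lemma~\ref{lem:moore-ccc} — and that the ``makes $Q_{E}$ ccc'' phrasing is robust under shrinking $E$, which it is by the explicit statement of that lemma. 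I would present the proof in essentially three lines: cite Lemma~\ref{lem:moore-ccc} for $E_{0}$, cite the folklore fact for the elementary club $E_{1}$, note that $E_{2} = \{\nu : \nu \text{ limit}, \delta_{A}[\nu] \subseteq \nu, \delta_{X}[\nu] \subseteq \nu\}$ is a club, and take the intersection.
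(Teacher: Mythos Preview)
Your proposal is correct and follows essentially the same approach as the paper: obtain clubs $E_{0}$, $E_{1}$, $E_{2}$ for conditions (1), (2), (3) respectively (using Lemma~\ref{lem:moore-ccc}, the existence of elementary clubs, and the closure-point argument), and take their intersection, noting that each condition is preserved under passing to subclubs. The extra step you propose of intersecting with a further club to ensure $D^{A}_{\nu} = \nu = D^{X}_{\nu}$ is harmless but unnecessary, since this already follows from the elementarity in condition~(2) (see Lemma~\ref{lem:elementarity}(a)).
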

\begin{proof}
  Using Lemma~\ref{lem:moore-ccc} let $E_{0}$ be a club such that $Q_{E_{0}}$
  is ccc, let $E_{1}$ be a club elementary for all the relevant objects,
  and let $E_{2}$ be any satisfying (3). We claim
  that $E := E_{0} \cap E_{1} \cap E_{2}$ work.

  Conditions (2) and (3) are trivially preserved by taking subclubs, and
  condition (1) is preserved by the choice of $E_{0}$
  (see Lemma~\ref{lem:moore-ccc}).
\end{proof}

We will show that such a club $E$
makes $P_{E}$ ccc and all the needed sets dense. First we prove a
lemma that gives explicit connections between $P_{E}$
and $Q_{E}$. We will often use (a) without mention.

\begin{lemma}\label{lem:P-Q}
  If $E$ is a club of ordinals closed under $\delta_{A}$,
  then,
  \begin{enumerate}[label = (\alph*)]\itemsep0em
    \item For all $a,b \in A$, $\nu(a,b) \le \nu(a),\nu(b)$.
    \item For all $p \in P$ and $\ba \neq \bb \in \dom(p)$,
    if $p$ satisfies~\ref{df:forcing}~\ref{f:i}, then
    $\nu(a_{l},b_{r}) = \nu(a_{m},b_{m}) = \nu(a_{r},b_{l})$.
    \item For all $p \in P$, $p \in P_{E}$ iff
    $q(p) := \{(a_{m},p(\ba)) : \ba \in \dom(p)\}$ is in $Q_{E}$ and
    $p$ satisfies~\ref{df:forcing}~\ref{f:iii}.
  \end{enumerate}
\end{lemma}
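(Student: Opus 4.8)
The plan is to prove the three items in the order (a), (b), (c), since (b) uses (a) and (c) uses both. The single elementary observation driving everything is a dictionary between the two styles of condition: for an ordinal $\gamma$ and $\nu\in E$ one has $\gamma<\nu$ if and only if $\nu_E(\gamma)<\nu$ (forward because $\nu_E(\gamma)\le\gamma$; backward because $\nu\in E$ with $\nu\le\gamma$ would force $\nu\le\nu_E(\gamma)$), and hence for ordinals $\gamma,\gamma'$ we have $\nu_E(\gamma)=\nu_E(\gamma')$ iff ``$\gamma<\nu\Leftrightarrow\gamma'<\nu$'' holds for every $\nu\in E$. This is exactly what lets us translate the $P_E$-conditions, phrased with $\nu(\cdot)$, into $Q_E$-conditions, phrased with $<\nu$ for $\nu\in E$, and back.

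Item (a) is then immediate: by definition of $\delta_A$ we have $\Delta_A(a,b)<\delta_A(a)$ and $\Delta_A(a,b)<\delta_A(b)$; if $\nu_E(\Delta_A(a,b))>\nu_E(a)$ we may pick $\nu\in E$ with $\nu_E(a)<\nu\le\Delta_A(a,b)$, and since such a $\nu$ lies strictly above $\nu_E(a)$ it must satisfy $a<\nu$, whence closure of $E$ under $\delta_A$ gives $\delta_A(a)<\nu\le\Delta_A(a,b)<\delta_A(a)$, a contradiction. So $\nu_E(\Delta_A(a,b))\le\nu_E(a)$, and symmetrically $\le\nu_E(b)$.

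For (b), assume without loss of generality $\ba<_b\bb$, so $a_l\le_A a_m\le_A a_r<_A b_l\le_A b_m\le_A b_r$. Monotonicity of $\Delta_A$ along this chain gives $\Delta_A(a_l,b_r)\le\Delta_A(a_m,b_m)\le\Delta_A(a_r,b_l)$, so by monotonicity of $\nu_E$ it suffices to prove $\nu_E(\Delta_A(a_r,b_l))\le\nu_E(\Delta_A(a_l,b_r))$; then all three $\nu_E$-values coincide. The crux is the identity
\[\Delta_A(a_l,b_r)=\min\{a_m,\ \Delta_A(a_r,b_l),\ b_m\},\]
which holds because the convex set $\{x: a_l\le_A x<_A b_r\}$ is the disjoint union of $\{x: a_l\le_A x<_A a_r\}$, $\{x: a_r\le_A x<_A b_l\}$ and $\{x: b_l\le_A x<_A b_r\}$, whose ordinal minima are, by the basic description of $\Delta_A$, exactly $a_m=\Delta_A(a_l,a_r)$, $\Delta_A(a_r,b_l)$ and $b_m=\Delta_A(b_l,b_r)$. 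If the minimum is $\Delta_A(a_r,b_l)$ the two sides are literally equal; if it is $a_m$, then \ref{f:i} gives $\nu_E(a_m)=\nu_E(a_r)$ while (a) gives $\nu_E(\Delta_A(a_r,b_l))\le\nu_E(a_r)$, so $\nu_E(\Delta_A(a_r,b_l))\le\nu_E(a_r)=\nu_E(a_m)=\nu_E(\Delta_A(a_l,b_r))$; the case of $b_m$ is symmetric, using \ref{f:i} on the $\bb$-side and $\nu_E(\Delta_A(a_r,b_l))\le\nu_E(b_l)$ from (a).

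Finally, for (c) one first notes the routine point that $q(p)$ is automatically a finite strictly increasing partial function from $A$ to $X$, since $\dom(p)$ is $<_b$-linearly ordered and $\ba<_b\bb$ forces $a_m<_A b_m$ and $p(\ba)<_X p(\bb)$; thus $q(p)\in Q_E$ amounts precisely to conditions (2) and (3) of Definition~\ref{df:moore-forcing}. Using the dictionary of the first paragraph, condition (2) for $q(p)$ says exactly $\nu_E(a_m)=\nu_E(p(\ba))$ for all $\ba\in\dom(p)$, which is the $X$-linking part of \ref{f:i} (the remaining purely $A$-side equalities $\nu_E(a_l)=\nu_E(a_m)=\nu_E(a_r)$ are precisely what makes $\nu(\ba)$ in \ref{f:iii} meaningful, so are subsumed in the phrase ``$p$ satisfies \ref{f:iii}''); and, granting that $A$-side part, condition (3) for $q(p)$ reads $\nu_E(\Delta_A(a_m,b_m))=\nu_E(\Delta_X(p(\ba),p(\bb)))$, which by (b) is equivalent to $\nu_E(\Delta_A(a_r,b_l))=\nu_E(\Delta_X(p(\ba),p(\bb)))$, i.e.\ \ref{f:ii}. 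So $p\in P_E$ (that is, \ref{f:i}, \ref{f:ii} and \ref{f:iii} all hold) exactly when the $A$-side of \ref{f:i} and \ref{f:iii} hold and $q(p)\in Q_E$. I expect the interval-decomposition identity in (b) to be the only real obstacle: once it is available, (a) together with \ref{f:i} collapses the three $\nu_E$-values in one stroke, and everything else is bookkeeping with the $\gamma<\nu\Leftrightarrow\nu_E(\gamma)<\nu$ dictionary.
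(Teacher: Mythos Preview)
Your proof is correct and follows essentially the same route as the paper: part (a) via closure of the ordinals in $E$ under $\delta_A$, part (b) via the observation that $\Delta_A(a_l,b_r)$ must equal one of $a_m$, $\Delta_A(a_r,b_l)$, or $b_m$ (you state this as an explicit minimum identity, the paper reaches it inside a contradiction argument), and part (c) by unwinding the definitions through (b). Your write-up is more detailed than the paper's---which dispatches (c) in a single sentence---and your explicit handling of how the $A$-side equalities of \ref{f:i} are packaged into ``$p$ satisfies \ref{f:iii}'' (needed so that $\nu(\ba)$ is defined) makes transparent a point the paper leaves implicit.
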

\begin{proof}
  (a). Assume $a <_{A} b$. By definition $\Delta_{A}(a,b) \le a$, thus we need to prove
  that $\nu(a,b) \le \nu(b)$. Observe that $b \in {\nu(b)}^{+}$,
  and thus $\delta_{A}(b) < \nu(b)^{+}$. Then clearly $\nu(a,b) \le \nu(b)$.

  (b). Assume $\ba <_{b} \bb$. Then clearly $a_{l} \le_{A} a_{m} \le_{A} a_{r}
  <_{A} b_{l} \le_{A} b_{m} \le_{A} b_{r}$, and thus $\Delta_{A}(a_{l},b_{r}) \le \Delta_{A}(a_{m},b_{m}) \le \Delta_{A}(a_{r},b_{l})$. Therefore we need
  only to prove that $\nu := \nu(a_{l},b_{r}) \ge \nu(a_{r},b_{l}) := \mu$.
  Suppose not, and thus that $\nu < \mu$. Then either
  $\Delta_{A}(a_{l},a_{r}) = \Delta_{A}(a_{l},b_{r})$ or
  $\Delta_{A}(b_{l},b_{r}) = \Delta_{A}(a_{l},b_{r})$. Suppose the second holds,
  the other case is symmetric. Then in particular $\nu(b_{l},b_{r}) = \nu$,
  and since $p \in P_{E}$, $\nu(b_{l}) = \nu(b_{m}) = \nu(b_{l},b_{r}) = \nu$.
  Now using (a) we see that
  $\nu < \mu = \nu(a_{r},b_{l}) \le \nu(b_{l}) = \nu$, which is a contradiction.

  (c). Follows directly from (b) and the definition of $P_{E}$.
\end{proof}

We now prove that the club from Lemma~\ref{lem:ex-club} makes $P_{E}$ ccc.
For this we will only use (1) and (3) of Lemma~\ref{lem:ex-club}.

\begin{theorem}
  If $E$ is a club satisfying (1) and (3) from Lemma~\ref{lem:ex-club},
  then $P_{E}$ is ccc.
\end{theorem}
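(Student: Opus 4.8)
The plan is to reduce the ccc-ness of $P_{E}$ to that of Moore's forcing $Q_{E}$, exploiting the correspondence $p \mapsto q(p)$ from Lemma~\ref{lem:P-Q}(c). Given an uncountable family $\{p_{\xi} : \xi < \omega_{1}\} \seb P_{E}$, a standard $\Delta$-system argument lets me assume the domains $\dom(p_{\xi})$ form a $\Delta$-system with root $r$; by a further refinement (using that conditions are determined by their action on intervals, and that initial segments below any fixed countable ordinal are countable) I may assume all the $p_{\xi}$ agree on $r$, that $r$ is an initial segment (in $<_{b}$) of each $\dom(p_{\xi})$, and that the "new" parts $p_{\xi}' := p_{\xi} \restriction (\dom(p_{\xi}) \setm r)$ all have the same size $n$ and are "stretched out" past some fixed countable $\gamma$ — i.e., the blocks coded by distinct $p_{\xi}'$ are pairwise $<_{b}$-far apart. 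The point of moving to $q(p_{\xi}') := \{(a_{m}, p_{\xi}'(\bar a)) : \bar a \in \dom(p_{\xi}')\}$ is that these are elements of $Q_{\es}(A,X)$ with pairwise disjoint domains, so Lemma~\ref{lem:moore-technical} applies to the $A$-side (and symmetrically to the $X$-side, since $A \sleq X$ or $X \sleq A$ makes both relevant).

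Next I would apply Lemma~\ref{lem:moore-technical} to extract $n$, an uncountable $\Gamma$, and $\gamma < \min\Gamma$ controlling the $\Delta_{A}$-values, and then apply it again (or its mirror on $X$) to further thin $\Gamma$ so the same control holds for $\Delta_{X}$ on the values $p_{\xi}'(\bar a)$. Now I pick any $\xi < \eta$ in $\Gamma$ with $\eta$ large enough (above $\gamma$ and above everything appearing in $p_{\xi}$, which is possible since each condition is finite and $\Gamma$ is uncountable), and claim $p := p_{\xi} \cup p_{\eta}$ — more precisely the condition coding $f_{p_{\xi}} \cup f_{p_{\eta}}$ — lies in $P_{E}$, witnessing compatibility. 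Since they agree on $r$ and the new blocks are $<_{b}$-comparable (the $\xi$-blocks all coming before the $\eta$-blocks, or suitably interleaved-but-separated), $p$ is a legitimate element of $P$: its domain is linearly ordered and $f_{p}$ is increasing, because on each side the ordering of new blocks is dictated by $\Delta$-values that item (3) of Lemma~\ref{lem:moore-technical} forces to agree between the $A$-side and the $X$-side.

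The heart of the argument — and the main obstacle — is checking that $p \in P_{E}$, i.e., verifying conditions \ref{f:i}, \ref{f:ii}, \ref{f:iii} of Definition~\ref{df:forcing} for the newly created adjacent pairs $\bar a <_{b} \bar b$ with $\bar a \in \dom(p_{\xi}')$ and $\bar b \in \dom(p_{\eta}')$ (or vice versa). Conditions \ref{f:i} and \ref{f:iii} are "local" to each $\bar a$ and hence inherited from $p_{\xi}$ or $p_{\eta}$; the delicate one is \ref{f:ii}, which demands $\nu(a_{r}, b_{l}) = \nu(p(\bar a), p(\bar b))$ for the cross pairs. Here is where Lemma~\ref{lem:moore-technical}(2)–(3) does the work: because the $A$-midpoints $a_{m}$ of corresponding blocks satisfy $\gamma < \Delta_{A}(a_{m}, b_{m}) < \xi$ and, by Lemma~\ref{lem:P-Q}(b), $\nu(a_{r},b_{l}) = \nu(a_{m},b_{m})$, while the same inequalities hold on the $X$-side for $p(\bar a), p(\bar b)$ (after the second thinning of $\Gamma$), both sides land strictly between $\nu(\gamma)$ and $\nu(\xi)$; combined with the fact that $q(p_{\xi}) \cup q(p_{\eta})$ being a condition of $Q_{E}$ is exactly what Moore's Lemma~\ref{lem:moore-technical} guarantees one can arrange, the two $\nu$-values are forced to coincide. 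I would also need to double-check the "boundary" pairs where one of $\bar a, \bar b$ is in the root $r$ and the other is new — but since the root is a fixed finite set and $\Gamma$ can be taken past all its $\Delta$-values, these pairs already satisfy \ref{f:ii} in whichever $p_{\xi}$ they came from and the value is unchanged. Assembling these checks gives $p \in P_{E}$ and $p \le p_{\xi}, p_{\eta}$, so no uncountable antichain exists.
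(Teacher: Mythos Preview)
There is a genuine gap in your verification of condition~\ref{f:ii}. You argue that after thinning via Lemma~\ref{lem:moore-technical} on both the $A$-side and the $X$-side, the cross-pair values $\nu(a_{r},b_{l})$ and $\nu(p(\bar a),p(\bar b))$ both land in the interval $(\nu(\gamma),\nu(\xi))$, and you assert this ``forces the two $\nu$-values to coincide.'' It does not: there may be many elements of $E$ strictly between $\gamma$ and $\xi$, so lying in the same interval of ordinals says nothing about equality of their $E$-floors. You then write that ``$q(p_{\xi}) \cup q(p_{\eta})$ being a condition of $Q_{E}$ is exactly what Moore's Lemma~\ref{lem:moore-technical} guarantees one can arrange,'' but this misreads that lemma: Lemma~\ref{lem:moore-technical} is a pure refinement statement about the pattern of $\Delta_{A}$-values and says nothing whatsoever about compatibility in $Q_{E}$. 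In fact your outline never invokes hypothesis~(1) of Lemma~\ref{lem:ex-club} --- the ccc of $Q_{E}$ --- which is precisely the ingredient that delivers~\ref{f:ii}.

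The paper's argument separates the two tasks cleanly. First, Lemma~\ref{lem:moore-technical} is used on the $A$-side only, and its sole purpose is to refine $H$ so that the intervals $[a_{l},a_{r}]$ coming from distinct conditions are pairwise disjoint (your ``stretched out past $\gamma$'' hand-wave does not establish this; it requires the branching analysis in the paper's second Claim). Once that is secured, $p \mapsto q(p)$ is injective on $H$, so $\{q(p) : p \in H\}$ is an uncountable subset of $Q_{E}$; now the ccc of $Q_{E}$ yields $p \neq p'$ with $q(p) \cup q(p') \in Q_{E}$, and the definition of $Q_{E}$ gives $\nu(a_{m},b_{m}) = \nu(p(\bar a),p(\bar b))$ for every cross pair directly. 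Condition~\ref{f:ii} for $p \cup p'$ then follows via Lemma~\ref{lem:P-Q}(b). A secondary issue: your claim that the $\Delta$-system root $r$ can be taken to be a $<_{b}$-initial segment of each $\dom(p_{\xi})$ is unjustified; the paper sidesteps roots entirely by choosing $n$ minimal over all uncountable antichains, which forces the root to be empty.
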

\begin{proof}
  Suppose towards a contradiction that there is
  an uncountable antichain $H \seb P_{E}$. By going to an uncountable subset of $H$
  we may assume that for all $p \in H$, $|\dom(p)| = n$. Moreover
  we may assume that $H$ is such that $n$ is minimal, in the
  sense that for every other antichain $H'$, the set of $p \in H'$
  with $|\dom(p)| < n$ is countable. From this, a typical
  $\Delta$-system argument shows that the conditions of $H$ must have
  pairwise disjoint domains.
  We will refine $H$ as to satisfy
  that for all $p \neq p' \in H$, if $\ba \in \dom(p)$ and
  $\bb \in \dom(p')$, then $[a_{l},a_{r}] \cap [b_{l},b_{r}] = \es$,
  in other words, $f_{p}$ and $f_{p'}$ have disjoint domains.
  First we show that this suffices to derive our contradiction.

  \begin{claim}
    If $H$ satisfies the above, then $H$ contains two compatible elements.
  \end{claim}
  \begin{proof}
    Since for each $\ba$, $a_{l} \le_{A} a_{m} \le_{A} a_{r}$, the hypothesis
    on $H$ implies that the mapping $p \mapsto q(p)$ is injective when restricted to $H$.
    Thus, using Lemma~\ref{lem:P-Q} (c) and the ccc of $Q_{E}$, we deduce
    that for some $p \neq p'$,
    $q(p)$ and $q(p')$ are compatible, therefore
    $q := q(p) \cup q(p') \in Q_{E}$, since any condition
    witnessing the compatibility of $q(p)$ and $q(p')$ extends $q$.

    Now let $\hat{p} := p \cup p'$. Since $f_{p}$ and $f_{p'}$ have
    disjoint domains, it is clear that $\dom(\hat{p})$ is linearly ordered
    by $<_{b}$. That $\hat{p}$ is increasing follows from the
    fact that $q$ is in $Q_{E}$. That $q \in Q_{E}$ also
    directly implies that $p$ satisfies~\ref{df:forcing}~\ref{f:i},
    and~\ref{df:forcing}~\ref{f:ii}
    follows from the fact that $q \in Q_{E}$ and Lemma~\ref{lem:P-Q} (b).
    Finally~\ref{df:forcing}~\ref{f:iii} follows trivially from the fact that $p$ and $p'$
    satisfy it.
  \end{proof}

  Now we prove that such a refinement is always possible.

  \begin{claim}
    There is an uncountable $H' \seb H$ satisfying that
    for all $p \neq p' \in H'$, $\dom(f_{p}) \cap \dom(f_{p'}) = \es$.
  \end{claim}
  \begin{proof}
    By~\ref{df:forcing}~\ref{f:i} we can find
    $\tup{p_{\xi} : \xi < \omega_{1}}$ contained in $H$
    such that for all $\xi < \eta$,
    $\max\{\nu(\ba) : \ba \in \dom(p_{\xi})\} < \min\{\nu(\ba) : \ba \in \dom(p_{\eta})\}$.
    Now for all $\xi < \omega_{1}$, let $q_{\xi} := q(p_{\xi})$, and
    let $\xi_{0} <_{A} \dots <_{A} \xi_{n-1}$ enumerate $\dom(q_{\xi})$. Observe
    that for all $i < n$, $\xi_{i} = a_{m}$ for some
    $\ba \in \dom(p_{\xi})$. Now we claim that there is an uncountable
    $\Gamma \seb \omega_{1}$ and $\gamma < \min(\Gamma)$ such that,
    \begin{enumerate}[label = (\alph*)]\itemsep0em
      \item $\xi \le \min(\dom(q_{\xi}))$.
      \item For all $i \neq j < n$, $\Delta_{A}(\xi_{i},\xi_{j}) < \xi
      \ra \Delta_{A}(\xi_{i},\xi_{j}) < \gamma$.
      \item For all $i < n$, $\gamma < \Delta_{A}(\xi_{i},\eta_{i}) < \xi$.
    \end{enumerate}
    First note that for all $\xi < \eta$,
    $\ba \in \dom(p_{\xi})$ and $\bb \in \dom(p_{\eta})$,
    $\nu(a_{m}) = \nu(\ba) < \nu(\bb) = \nu(b_{m})$,
    thus $\max(\dom(q_{\xi})) < \min(\dom(q_{\eta}))$. So letting
    $\Gamma$ be a club of ordinals closed under
    $\alpha \mapsto \min\{\xi : \dom(q_{\xi}) \cap \alpha = \es\}$ we easily
    obtain (a).
    (b) and (c) are obtained exactly as in Lemma~\ref{lem:moore-technical}. We
    needed $\Gamma$ to be at least stationary, since (b) requires the use of Fodor's lemma
    over $\Gamma$.

    \begin{figure}[htp]
      \centering
      \caption{Branching pattern}
\begin{tikzpicture}[x=0.75pt,y=0.75pt,yscale=-0.5,xscale=0.5]
  \draw  [line width=0.869] [line join = round][line cap = round] (187,570.5) .. controls (207.17,505.96) and (158.94,438.73) .. (131,384.5) .. controls (116.89,357.1) and (102.54,324.35) .. (98,293.5) .. controls (95.34,275.43) and (100,258.42) .. (100,240.5) ;
  \draw  [line width=0.869] [line join = round][line cap = round] (189,514.5) .. controls (189,516.2) and (188.1,511.2) .. (188,509.5) .. controls (187.38,499.03) and (187.75,495.38) .. (190,485.5) .. controls (194.77,464.5) and (205.09,450.41) .. (220,435.5) .. controls (260.12,395.38) and (279.49,359.09) .. (298,305.5) .. controls (303.85,288.58) and (307.68,274.28) .. (308,256.5) .. controls (308.03,255) and (311.64,194.78) .. (307,185.5) ;
  \draw  [line width=0.869] [line join = round][line cap = round] (99.5,302) .. controls (99.5,295.91) and (82.04,283.52) .. (76.5,276) .. controls (69.58,266.61) and (60.24,256.21) .. (56.5,245) .. controls (53.97,237.42) and (56.5,228.73) .. (56.5,221) ;
  \draw  [line width=0.869] [line join = round][line cap = round] (111,340.5) .. controls (106.04,335.54) and (109.71,326.5) .. (110,319.5) .. controls (110.37,310.66) and (119.14,305.52) .. (123,298.5) .. controls (128.32,288.83) and (132.26,273.54) .. (132,262.5) .. controls (131.63,246.85) and (125,229.86) .. (125,214.5) ;
  \draw  [line width=0.869] [line join = round][line cap = round] (254,395.5) .. controls (254,393.93) and (252.76,394.26) .. (252,393.5) .. controls (247.99,389.49) and (246.3,383.68) .. (244,378.5) .. controls (242.89,376) and (242,364.37) .. (242,362.5) .. controls (242,348.06) and (249.37,334.17) .. (249,319.5) .. controls (248.81,311.77) and (246.81,304.19) .. (246,296.5) .. controls (245,287.03) and (246.28,279.41) .. (241,271.5) ;
  \draw  [line width=0.869] [line join = round][line cap = round] (247.5,332) .. controls (247.5,333.93) and (248.83,326.09) .. (249.5,325) .. controls (251.85,321.18) and (255.05,317.87) .. (258.5,315) .. controls (271.77,303.94) and (277.27,273.77) .. (264.5,261) ;
  \draw  [line width=0.869] [line join = round][line cap = round] (244,346.5) .. controls (243.01,346.5) and (242.77,340.65) .. (239,338.5) .. controls (229.05,332.82) and (223.38,327.9) .. (220,315.5) .. controls (218.38,309.57) and (223.21,305.53) .. (222,299.5) ;
  \draw  [line width=0.4, dash pattern={on 3pt off 1.5pt}]  (110,477) -- (268,476.5) ;
  \draw  [line width=0.4, dash pattern={on 3pt off 1.5pt}]  (80,379.5) -- (298,379.5) ;
  \draw  [line width=0.4, dash pattern={on 3pt off 1.5pt}]  (55.67,355.5) -- (332.67,356.5) ;
  \draw  [line width=0.869] [line join = round][line cap = round] (572,564.5) .. controls (572,556.99) and (575.75,550.93) .. (576,543.5) .. controls (576.64,524.19) and (576.61,504.86) .. (570,486.5) .. controls (557.58,451.99) and (539.09,430.48) .. (519,402.5) .. controls (504.2,381.88) and (494.9,359.09) .. (484,336.5) .. controls (480.44,329.12) and (475.59,322.28) .. (473,314.5) ;
  \draw  [line width=0.869] [line join = round][line cap = round] (576.33,527.67) .. controls (572.67,527.67) and (576.59,517.65) .. (578.33,511.67) .. controls (583.58,493.69) and (597.7,482.03) .. (612.33,471.67) .. controls (633.82,456.45) and (655.77,439.1) .. (669.33,415.67) .. controls (676.05,404.07) and (670.49,395.36) .. (675.33,385.67) ;
  \draw  [line width=0.869] [line join = round][line cap = round] (659,429.33) .. controls (651.05,417.41) and (605.52,409.81) .. (629,386.33) ;
  \draw  [line width=0.869] [line join = round][line cap = round] (667,419.5) .. controls (680.3,419.5) and (687,407.83) .. (687,394.5) .. controls (687,385.89) and (680.13,367.93) .. (689,363.5) ;
  \draw  [line width=0.4, dash pattern={on 3pt off 1.5pt}]  (504.33,490) -- (646.33,490) ;
  \draw  [line width=0.4, dash pattern={on 3pt off 1.5pt}]  (509.67,551.67) -- (643,551.67) ;
  \draw  [line width=0.4, dash pattern={on 3pt off 1.5pt}]  (482.33,449) -- (677.33,449) ;
  \draw  [color={rgb, 255:red, 0; green, 0; blue, 0 }  ,draw opacity=1 ][line width=0.869] [line join = round][line cap = round] (526,411.25) .. controls (526,412.91) and (525.97,410.32) .. (526.5,409.25) .. controls (531.1,400.05) and (535.45,395.49) .. (537.5,386.25) .. controls (540.08,374.65) and (540.39,360.82) .. (537.5,349.25) .. controls (536.58,345.57) and (533.05,329.25) .. (525,329.25) ;
  \draw  [color={rgb, 255:red, 0; green, 0; blue, 0 }  ,draw opacity=1 ][line width=0.869] [line join = round][line cap = round] (540,377.75) .. controls (548.13,371.25) and (558.24,359.7) .. (556.5,348.25) .. controls (555.65,342.69) and (553.97,337.29) .. (553,331.75) .. controls (552.66,329.82) and (552.64,325.25) .. (555.5,325.25) ;
  \draw  [color={rgb, 255:red, 0; green, 0; blue, 0 }  ,draw opacity=1 ][line width=0.869] [line join = round][line cap = round] (565,473.25) .. controls (565,474.59) and (568.65,472.66) .. (573,466.25) .. controls (580.81,454.74) and (579.88,450.46) .. (581.5,433.75) .. controls (582.35,424.98) and (578.06,416) .. (576.5,407.75) .. controls (573.42,391.5) and (573.74,375.14) .. (575,358.75) ;
  \draw (44,192.67) node [anchor=north west][inner sep=0.75pt]   [align=left] {$a_l$};
  \draw (83,204.33) node [anchor=north west][inner sep=0.75pt]   [align=left] {$\xi_i$};
  \draw (112,191.33) node [anchor=north west][inner sep=0.75pt]   [align=left] {$a_r$};
  \draw (210.67,269.33) node [anchor=north west][inner sep=0.75pt]   [align=left] {$b_l$};
  \draw (224,243.67) node [anchor=north west][inner sep=0.75pt]   [align=left] {$\eta_j$};
  \draw (258,230.33) node [anchor=north west][inner sep=0.75pt]   [align=left] {$b_r$};
  \draw (294.67,145) node [anchor=north west][inner sep=0.75pt]   [align=left] {$\xi_j$};
  \draw (275.33,468.33) node [anchor=north west][inner sep=0.75pt]   [align=left] {$\gamma$};
  \draw (300,367.67) node [anchor=north west][inner sep=0.75pt]   [align=left] {$\xi$};
  \draw (336.67,346.33) node [anchor=north west][inner sep=0.75pt]   [align=left] {$\eta$};
  \draw (459.33,290) node [anchor=north west][inner sep=0.75pt]   [align=left] {$a_l$};
  \draw (505.67,285) node [anchor=north west][inner sep=0.75pt]   [align=left] {$\xi_i$};
  \draw (544.83,301) node [anchor=north west][inner sep=0.75pt]   [align=left] {$a_r$};
  \draw (618,346.33) node [anchor=north west][inner sep=0.75pt]   [align=left] {$b_l$};
  \draw (650.33,351) node [anchor=north west][inner sep=0.75pt]   [align=left] {$\eta_j$};
  \draw (676.67,327.67) node [anchor=north west][inner sep=0.75pt]   [align=left] {$b_r$};
  \draw (679.33,437.67) node [anchor=north west][inner sep=0.75pt]   [align=left] {$\eta$};
  \draw (651,480) node [anchor=north west][inner sep=0.75pt]   [align=left] {$\xi$};
  \draw (646,543.67) node [anchor=north west][inner sep=0.75pt]   [align=left] {$\gamma$};
  \draw (567,320.5) node [anchor=north west][inner sep=0.75pt]   [align=left] {$\xi_j$};
\end{tikzpicture}
      \label{figure2}
    \end{figure}

    Now let $\Gamma$ satisfy (a), (b) and (c). We claim that
    $H' := \{p_{\xi} : \xi \in \Gamma\}$ works. For this fix $\xi < \eta$ in $\Gamma$,
    and let $\ba \in \dom(p_{\xi})$ and $\bb \in \dom(p_{\eta})$.
    We prove that $[a_{l},a_{r}] \cap [b_{l},b_{r}] = \es$.
    By definition of $q_{\xi}$,
    there are $i,j < n$ such that $\xi_{i} = a_{m}$
    and $\eta_{j} = b_{m}$. We claim that
    $\Delta_{A}(\xi_{i},\eta_{j}) < \xi$.

    If $i = j$ this follows directly from (c). So assume $i \neq j$. There
    are two cases depending on whether $\Delta_{A}(\xi_{i},\xi_{j}) < \xi$ or
    not.
    If $\Delta_{A}(\xi_{i},\xi_{j}) < \xi$ by (b) we have that
    $\Delta_{A}(\xi_{i},\xi_{j}) < \gamma$, and
    by (c) that $\gamma < \Delta_{A}(\xi_{j},\eta_{j})$, then
    $\Delta_{A}(\xi_{i},\eta_{j}) < \gamma < \xi$ follows.
    If $\Delta_{A}(\xi_{i},\xi_{j}) \ge \xi$, then by (c)
    we have $\Delta_{A}(\xi_{j},\eta_{j}) < \xi$, it follows that
    $\Delta_{A}(\xi_{i},\eta_{j}) < \xi$. See Figure~\ref{figure2}.

    Now suppose that $[a_{l},a_{r}] \cap [b_{l},b_{r}] \neq \es$
    and assume that $\xi_{i} <_{A} \eta_{j}$, the other case is analogous.
    Then, $c := \Delta_{A}(\xi_{i},\eta_{j}) \in {[\xi_{i},\xi_{j}]}_{A}
    \seb [a_{l},a_{r}] \cup [b_{l},b_{r}]$, so $c \in [a_{l},a_{r}]$
    or $c \in [b_{l},b_{r}]$ (or both). Either way we arrive at a contradiction
    since $c = \Delta_{A}(\xi_{i},\eta_{j}) < \xi$, and (a) implies
    that $\Delta_{A}(a_{l},a_{r}) = a_{m} \ge \xi$ and
    $\Delta_{A}(b_{l},b_{r}) = b_{m} \ge \eta > \xi$.
  \end{proof}
\end{proof}

We dedicate the rest of this section to show that the $E$ from
 Lemma~\ref{lem:ex-club} makes all the needed sets dense. For this
we only use properties (2) and (3) (in fact (3) follows from (2)).

Say that $\ba,
\bb \in \dom(p)$ are \emph{neighbors} if there is no $\bar{c} \in
\dom(p)$ strictly between them. When checking that new conditions are
in $P_{E}$ it will be useful to have the following.

\begin{lemma}\label{lem:neighbors}
  Let $p \in P$ satisfy~\ref{df:forcing}~\ref{f:i}
  and~\ref{df:forcing}~\ref{f:iii}. If
  for all neighbors $\ba <_{b} \bb$ in $\dom(p)$,
  $\nu(a_{r},b_{l}) = \nu(p(\ba),p(\bb))$, then $p \in P_{E}$.
\end{lemma}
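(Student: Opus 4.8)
The plan is to verify condition~\ref{df:forcing}~\ref{f:ii} of Definition~\ref{df:forcing} for an \emph{arbitrary} pair $\ba <_{b} \bb$ in $\dom(p)$, assuming it holds for neighbors; conditions \ref{f:i} and \ref{f:iii} are given by hypothesis. The main tool will be a telescoping identity for the operator $\Delta$.

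First I would isolate the following elementary identity, to be applied both in $A$ and in $X$: if $L$ is an Aronszajn line with underlying set $\omega_{1}$ and $x_{0} <_{L} x_{1} <_{L} \cdots <_{L} x_{m}$, then
\[\Delta_{L}(x_{0},x_{m}) = \min_{i < m}\Delta_{L}(x_{i},x_{i+1}),\qquad\text{hence}\qquad \nu_{E}(\Delta_{L}(x_{0},x_{m})) = \min_{i<m}\nu_{E}(\Delta_{L}(x_{i},x_{i+1})).\]
The first equality is immediate from the description (recalled in Section~\ref{sec:moore-forcing}) of $\Delta_{L}(a,b)$ as the least ordinal $\xi$ with $a \le_{L}\xi <_{L} b$: the set of such $\xi$ is the half-open $L$-interval $[x_{0},x_{m})_{L}$, which is the disjoint union of the intervals $[x_{i},x_{i+1})_{L}$, so its least element is the minimum of the least elements of the pieces. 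The second equality follows since $\nu_{E}$ is monotone nondecreasing and therefore commutes with finite minima.

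Now fix $\ba <_{b}\bb$ in $\dom(p)$. Since $\dom(p)$ is finite and $<_{b}$-linearly ordered, enumerate the elements of $\dom(p)$ lying $<_{b}$-between $\ba$ and $\bb$ (inclusive) as $\ba = \bar c_{0} <_{b} \bar c_{1} <_{b}\cdots<_{b} \bar c_{k} = \bb$; each consecutive pair $\bar c_{i},\bar c_{i+1}$ is a pair of neighbors, so the hypothesis yields $\nu((c_{i})_{r},(c_{i+1})_{l}) = \nu(p(\bar c_{i}),p(\bar c_{i+1}))$ for every $i<k$. On the $A$-side I would apply the identity to the strictly $<_{A}$-increasing chain
\[(c_{0})_{r} <_{A} (c_{1})_{l} <_{A} (c_{1})_{r} <_{A} (c_{2})_{l} <_{A}\cdots<_{A}(c_{k-1})_{r} <_{A}(c_{k})_{l},\]
whose consecutive pairs are the ``gap'' pairs $((c_{i})_{r},(c_{i+1})_{l})$ for $i<k$ together with the ``interior'' pairs $((c_{i})_{l},(c_{i})_{r})$ for $0<i<k$, where $\Delta_{A}((c_{i})_{l},(c_{i})_{r}) = (c_{i})_{m}$. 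Condition \ref{f:i} gives $\nu((c_{i})_{m}) = \nu((c_{i})_{l})$, while Lemma~\ref{lem:P-Q}(a) gives $\nu((c_{i-1})_{r},(c_{i})_{l}) \le \nu((c_{i})_{l})$; hence each interior term is bounded below by an adjacent gap term and may be discarded from the minimum, so $\nu(a_{r},b_{l}) = \nu_{E}(\Delta_{A}((c_{0})_{r},(c_{k})_{l})) = \min_{i<k}\nu((c_{i})_{r},(c_{i+1})_{l})$. On the $X$-side, $p$ being increasing makes $p(\bar c_{0}) <_{X} p(\bar c_{1}) <_{X}\cdots<_{X}p(\bar c_{k})$, so the identity gives $\nu(p(\ba),p(\bb)) = \min_{i<k}\nu(p(\bar c_{i}),p(\bar c_{i+1}))$. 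Combining,
\[\nu(a_{r},b_{l}) = \min_{i<k}\nu((c_{i})_{r},(c_{i+1})_{l}) = \min_{i<k}\nu(p(\bar c_{i}),p(\bar c_{i+1})) = \nu(p(\ba),p(\bb)),\]
which is~\ref{df:forcing}~\ref{f:ii}; together with the hypotheses, $p \in P_{E}$.

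I do not expect a genuine obstacle here. The only points requiring care are the bookkeeping of the neighbor chain and the verification that the ``interior'' contributions $(c_{i})_{m}$ never realize the minimum on the $A$-side — which is exactly what condition \ref{f:i} together with the inequality $\nu(a,b) \le \nu(a),\nu(b)$ from Lemma~\ref{lem:P-Q}(a) is there to guarantee.
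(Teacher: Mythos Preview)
Your proof is correct. Both your argument and the paper's rest on the same underlying fact---that $\Delta_L(x_0,x_m)=\min_{i<m}\Delta_L(x_i,x_{i+1})$ for a strictly $<_L$-increasing chain---but you package it differently. The paper proceeds by an inductive step: given $\ba <_b \bb <_b \bc$ with $\ba,\bc$ neighbors of $\bb$, it invokes Lemma~\ref{lem:P-Q}(b) to pass to the $m$-components $a_m,b_m,c_m$, then does a case analysis on whether $\Delta_X(p(\ba),p(\bc))$ equals $\Delta_X(p(\ba),p(\bb))$ or $\Delta_X(p(\bb),p(\bc))$. You instead isolate the telescoping identity once, apply it globally to the full chain of $l$'s and $r$'s on the $A$-side and to the images on the $X$-side, and use condition~\ref{f:i} with Lemma~\ref{lem:P-Q}(a) to discard the interior contributions $(c_i)_m$ from the minimum. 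Your route avoids the case split and makes the role of condition~\ref{f:i} more transparent; the paper's route is slightly more hands-on but needs one less moving part (no explicit $2k$-term chain to track). Either way the content is the same.
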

\begin{proof}
  It is enough to prove that if $\ba <_{b} \bb <_{b} \bc$ are in
  $\dom(p)$ and $\ba$,$\bc$ are neighbors of $\bb$,
  then $\nu(a_{r},c_{l}) = \nu(p(\ba),p(\bc))$. By Lemma~\ref{lem:P-Q}
  (b), it is enough to prove that $\nu(a_{m},c_{m}) = \nu(p(\ba),p(\bb))$.

  Since $p(\ba) <_{X} p(\bb) <_{X} p(\bc)$ either
  $\Delta(p(\ba),p(\bc)) = \Delta(p(\ba),p(\bb))$ or
  $\Delta(p(\ba),p(\bc)) = \Delta(p(\bb),p(\bc))$. Assume
  the second, the other case is symmetric. So $\nu :=
  \nu(p(\ba),p(\bc)) = \nu(p(\bb),p(\bc))$. Since $p$
  satisfies~\ref{df:forcing}~\ref{f:ii}
  for neighbors, $\nu = \nu(b_{m},c_{m})$ and
  $\mu := \nu(a_{m},b_{m}) = \nu(p(\ba),p(\bb))$. Clearly $\nu \le \mu$,
  and if equality holds we are done. So assume
  $\nu < \mu$. This implies that $\Delta(b_{m},c_{m}) = \Delta(a_{m},c_{m})$,
  and then $\nu(a_{m},c_{m}) = \nu(b_{m},c_{m}) = \nu = \nu(p(\bb),p(\bc)) =
  \nu(p(\ba),p(\bc))$.
\end{proof}

The following summarizes some properties that follow from the
elementarity of $E$ that will be frequently used without mention. Their
proofs are standard, we give a proof of the last one as an
example.

\begin{lemma}\label{lem:elementarity}
  \begin{enumerate}[label=(\alph*)]\itemsep0em
    \item For all $\nu \in E$, $D_{\nu} = \nu$.
    \item Every $\nu \in E$ approximates all its elements
    (recall Definition~\ref{df:approximation}).
    \item For all $a \neq b$ and $\nu \in E$, $\Delta(a,b) \ge \nu$ iff $a,b \ge \nu$
    and they are in the same complementary interval of $A \setm \nu$.
    \item If $\nu \in E$ and $a$ is an endpoint of some complementary interval of
    $A \setm \nu$, then $\nu(a) = \nu$.
    \item If $\nu \in E \cup \{0\}$,
    then $\left[\nu,\nu^{+}\right[$ is cofinal and coinitial in every complementary
    interval of $A \setm \nu$.
  \end{enumerate}
\end{lemma}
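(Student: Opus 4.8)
The plan is to derive all five clauses from one feature of $E$: its elementarity for $A$ and $D$ (part of clause (2) of Lemma~\ref{lem:ex-club}; recall that clause (3) there in fact follows from clause (2)). The uniform device is that for $\nu\in E$ I fix a countable $N\prec H(\omega_2)$ with $N\cap\omega_1=\nu$ and with $A$ and $D$ (indeed all the relevant parameters) in $N$; and whenever I need to bound ordinals by $\nu^{+}:=\min(E\setm(\nu+1))$ I instead take such an $N$ with $N\cap\omega_1=\nu^{+}$, which is available because $\nu^{+}\in E$. I will use freely that a countable element of $N$ is a subset of $N$, and that since $A$ is Aronszajn, hence short, every nonempty interval of $A$ with no maximum has cofinality $\omega$ — an uncountable cofinality would give a copy of $\omega_1$ in $A$ — and dually for minima and coinitiality.

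I would treat the clauses in the stated order. Clause (a) is the argument of Lemma~\ref{lem:dec-at-clubs} run inside $N$: for $\xi<\nu$ the set $D_\xi\in N$ is countable, so $D_\xi\seb\nu$, whence $D_\nu=\bigcup_{\xi<\nu}D_\xi\seb\nu$; and for $\alpha<\nu$ the least $\xi$ with $\alpha\in D_\xi$ lies in $N\cap\omega_1=\nu$, so $\alpha\in D_\nu$. Clause (b) is the same observation for the convex sets ${\left]-\infty,a\right[}_{A}$ and ${\left]a,+\infty\right[}_{A}$ with $a\in N\cap\omega_1$: each lies in $N$ and, being short, is empty, or has a maximum (resp.\ minimum) — then definable from it, hence in $N$, hence below $\nu$ — or has a cofinal (resp.\ coinitial) $\omega$-sequence that may be taken in $N$; in every case $\{\xi:\xi<\nu\}$ is cofinal (resp.\ coinitial) in it. Clause (c) I expect to be routine unwinding: for $a<_A b$, recalling that $\Delta(a,b)$ is the least ordinal of ${\left[a,b\right[}_{A}$, one checks that $\Delta(a,b)\ge\nu$ is equivalent to ``$a\ge\nu$ and $b\ge\nu$ (the latter via (b)) and ${[a,b]}_{A}\cap\{\xi:\xi<\nu\}=\es$'', and by (a) this last conjunct says exactly that $a$ and $b$ share a complementary interval of $A\setm\nu$.

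The clause that really uses the passage to $\nu^{+}$ — and the one I regard as the crux — is (d). Given $\nu\in E$ and an endpoint $a$ of a complementary interval $I$ of $A\setm\nu$, one has $a\notin D_\nu$, so $a\ge\nu$ and $\nu(a)\ge\nu$ by (a). For the reverse inequality I would take $N\prec H(\omega_2)$ with $N\cap\omega_1=\nu^{+}$ and the usual parameters in $N$; since $\nu\in N$ this puts $D_\nu\in N$, hence the countable family of complementary intervals of $A\setm D_\nu$ — definable from $A$ and $D_\nu$ — belongs to $N$, hence is contained in $N$, so $I\in N$; its endpoint $a$ (that is, $\min I$ or $\max I$, definable from $I$) then lies in $N\cap\omega_1=\nu^{+}$, so $\nu\le a<\nu^{+}$, i.e.\ $\nu(a)=\nu$. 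Finally, for (e) I would fix a complementary interval $I$ of $A\setm\nu$ (the case $\nu=0$, where $I=A$, being identical) and split into cases. If $I$ has a maximum $m$, then $m$ is a right endpoint, so $\nu\le m<\nu^{+}$ by (d) and $m\in I\cap{\left[\nu,\nu^{+}\right[}$ witnesses cofinality; otherwise $\cof(I)=\omega$, and taking $I\in N$ as in (d) a cofinal $\omega$-sequence of $I$ lies in $N$, each of whose terms is $\ge\nu$ (it lies in $I$) and $<\nu^{+}$ (it lies in $N$), hence lies in ${\left[\nu,\nu^{+}\right[}$. Either way $I\cap{\left[\nu,\nu^{+}\right[}$ is cofinal in $I$, and coinitiality is symmetric. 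I do not anticipate a genuine obstacle here; the only point requiring care is the elementarity bookkeeping, above all the decision to reflect with $N\cap\omega_1=\nu^{+}$ rather than $\nu$ in (d) and (e) so that the complementary intervals of $A\setm\nu$ become members of $N$, together with the reduction of every cofinality/coinitiality question to a maximum, a minimum, or an $\omega$-sequence afforded by shortness of $A$.
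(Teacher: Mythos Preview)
Your proposal is correct and follows essentially the same approach as the paper: both argue by reflecting into a countable elementary $N\prec H(\omega_2)$ with $N\cap\omega_1$ equal to $\nu$ or $\nu^{+}$ as appropriate, exploiting that countable definable objects land inside $N$. The paper only spells out (e), packaging it as a single countable $X$ cofinal and coinitial in every complementary interval rather than your per-interval case split via (d), but this is a cosmetic difference.
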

\begin{proof}
  We prove the last one, the others are similar. Let $N \prec H(\omega_{2})$
  witness that $\nu^{+}$ is elementary for $A$. Observe
  that $H(\omega_{2})$ models that there is a countable
  $X \seb A$ such that $X$ is cofinal and coinitial in every complementary
  interval of $A \setm \nu$, this is because there are countably many such intervals.
  Then, there must be such a set $X$ inside $N$, and since $X$ is countable,
  $X \seb N$. Since $N \cap \omega_{1} = \nu^{+}$,
  this implies that $X \seb \nu^{+}$.
  That $X \cap \nu = \es$ is by definition.
\end{proof}

\begin{lemma}\label{lem:tech-lemma}
  Let $a,b \in A$ and $\mu \in E \cup \{0\}$ be such that
  $a <_{A} b$, and $\nu(a,b) \le \mu$. Then there are
  $a',b'\in A$ such that:
  \begin{enumerate}[label=(\alph*)]\itemsep0em
    \item $\nu(a') = \nu(b') = \nu(a',b') = \mu$.
    \item $a <_{A} a' <_{A} b' <_{A} b$.
    \item $a'$ and $b'$ are not endpoints of their
    (by (a) they are in the same one) complementary interval
    of $A \setm \mu$.
  \end{enumerate}
\end{lemma}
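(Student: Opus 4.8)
Write $c:=\Delta_A(a,b)$. Since $\mu\in E\cup\{0\}$ and $\mu^+$ is the least element of $E$ above $\mu$, the hypothesis $\nu(a,b)\le\mu$ says exactly that $c<\mu^+$; thus $c$ is an ordinal in $[0,\mu^+[$ with $a\le_A c<_A b$. (When $\mu=0$ one reads $A\setm\mu$ as $A$, which is $\aleph_1$-dense without endpoints, so clause (c) is vacuous and the argument becomes a simplified version of the one below.) Everything rests on one bookkeeping principle, immediate from Lemma~\ref{lem:elementarity}(c): if $\gamma\ne\delta$ are ordinals in $[\mu,\mu^+[$ lying in a common complementary interval $I$ of $A\setm\mu$, then $\nu(\gamma)=\nu(\delta)=\mu$ and $\nu(\gamma,\delta)=\mu$ — the last because $\Delta_A(\gamma,\delta)\ge\mu$ by Lemma~\ref{lem:elementarity}(c) while $\Delta_A(\gamma,\delta)\le\min\{\gamma,\delta\}<\mu^+$. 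So it suffices to produce $a'<_A b'$, both ordinals in $[\mu,\mu^+[$, with $a<_A a'<_A b'<_A b$, lying in a common complementary interval $I$ of $A\setm\mu$ and each having a point of $I$ strictly on either side of it; the last condition forces $a',b'$ to be non-endpoints of $I$.

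\smallskip\noindent\emph{Case $c\ge\mu$.} Then $c\in[\mu,\mu^+[$, and by Lemma~\ref{lem:elementarity}(c) the points $a$ and $b$ lie in a common complementary interval $I$ of $A\setm\mu$, so $[a,b]_A\seb I$. I invoke Lemma~\ref{lem:elementarity}(b): the set $D_{\mu^+}=\mu^+$ approximates every ordinal below $\mu^+$, in particular $c$ when $a<_A c$, and $a$ itself when $a=c$ (in which case $a\in[\mu,\mu^+[$). Using that $\mu^+$ is cofinal in down-sets and coinitial in up-sets, together with the $\aleph_1$-density of $A$, I peel off ordinals to obtain $a<_A a'<_A b'<_A b$ with $a',b'<\mu^+$; since they lie in $]a,b[_A\seb I$ they are also $\ge\mu$, hence in $[\mu,\mu^+[$, and they are flanked inside $I$ by $a$ below and by $c$ or $b$ above. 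This settles the first case.

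\smallskip\noindent\emph{Case $c<\mu$.} Now $c\in D_\mu$, and $a,b$ need not share a complementary interval of $A\setm\mu$. The idea is that, since $a$ and $b$ diverge only below level $\mu$, the interval $]a,b[_A$ is ``wide''. Concretely $]a,b[_A$ is uncountable while $A\setm\mu$ has only countably many complementary intervals, so one of them, $I$, meets $]a,b[_A$ in an uncountable convex set $V$; pulling $V$ slightly away from $a$ and $b$ — and, in the borderline case where $b\in I$, working inside the uncountable convex piece $I\cap\,]c,b[_A$ instead, symmetrically on the left — one arranges that $V$ lies strictly between $a$ and $b$ with points of $I$ both below and above $V$. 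Here $c<\mu^+$ together with Lemma~\ref{lem:elementarity}(b) ensures that $V$ actually descends below $\mu^+$, while Lemma~\ref{lem:elementarity}(e) places level-$\mu$ ordinals cofinally and coinitially in $I$; combining these one locates two ordinals $a'<_A b'$ in $[\mu,\mu^+[\cap\,V$, each flanked by points of $I$. By the principle of the first paragraph $\nu(a')=\nu(b')=\nu(a',b')=\mu$, while $a<_A a'<_A b'<_A b$ and neither $a'$ nor $b'$ is an endpoint of $I$.

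\smallskip The delicate point — and the main obstacle — is precisely the bookkeeping in the case $c<\mu$: ensuring the three requirements (sitting inside one complementary interval of $A\setm\mu$, staying strictly between $a$ and $b$, and avoiding both endpoints of that interval) can be met simultaneously, which forces the sub-case analysis according to whether $b$, resp. $a$, lies in the chosen interval. It is here that the hypothesis $\nu(a,b)\le\mu$ does real work beyond the first case: via $\Delta_A(a,b)<\mu^+$ it guarantees that $]a,b[_A$ genuinely reaches below $\mu^+$, whereas without it $]a,b[_A$ could lie entirely above level $\mu^+$ and contain no level-$\mu$ point at all. The remaining checks — the values of $\nu$ and the flanking/non-endpoint conditions — are routine unwindings of Lemma~\ref{lem:elementarity}.
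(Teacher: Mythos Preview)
Your Case~1 ($c\ge\mu$) is fine and is essentially the paper's argument restricted to that situation. The gap is in Case~2 ($c<\mu$). You select the complementary interval $I$ by pigeonhole, and then assert that ``$c<\mu^{+}$ together with Lemma~\ref{lem:elementarity}(b) ensures that $V$ actually descends below $\mu^{+}$''. But Lemma~\ref{lem:elementarity}(b) applied to $c$ only produces ordinals $<\mu^{+}$ that are $<_{A}$-close to $c$; since $I$ was chosen by a counting argument it need not be $<_{A}$-adjacent to $c$ (there may be many elements of $\mu$, hence many other complementary intervals, strictly between $c$ and $I$), so those ordinals need not land in $V$ at all. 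Lemma~\ref{lem:elementarity}(e) by itself is also insufficient: it gives $[\mu,\mu^{+}[\cap I$ cofinal and coinitial in $I$, but when $b\in I$ (so $V$ is a proper initial segment of $I$) the cofinal part may lie entirely above $b$, and even in the case $V=I$ the set $[\mu,\mu^{+}[\cap I$ could a priori consist only of the two endpoints of $I$. So ``combining these'' is not yet a proof.

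The statement you want is true, and there are two clean ways to close the gap. One is to apply Lemma~\ref{lem:elementarity}(b) not to $c$ but to an element $s_{0}\in[\mu,\mu^{+}[\cap I$ obtained from~(e): since $\mu^{+}$ approximates $s_{0}$, any $y\in I$ on one side of $s_{0}$ yields a new $s_{1}\in\mu^{+}\cap[s_{0},y]_{A}\seb I$, and iterating produces as many non-endpoint elements of $[\mu,\mu^{+}[\cap V$ as needed. The other is the paper's route, which avoids the case split altogether: first approximate $c$ from the right by $\nu(a,b)^{+}$ to obtain $c',c''<\mu^{+}$ with $a<_{A}c'<_{A}c''<_{A}b$, and then run the entire search for an open interval disjoint from $\mu$ and for $a',b'$ inside a model witnessing the elementarity of $\mu^{+}$; since the parameters $c',c'',\mu$ all lie in that model, the witnesses $a',b'$ come out below $\mu^{+}$ automatically. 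The second approach is what makes the paper's proof uniform and short.
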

\begin{proof}
  We first find $c', c'' < \mu^{+}$ such that
  $a <_{A} c' <_{A} c'' <_{A} b$.
  Let $c := \Delta(a,b)$, and observe that $a \le_{A} c <_{A} b$ and
  $\nu(c) = \nu(a,b)$ hold.
  Using the $\aleph_{1}$-density of $A$, and Lemma~\ref{lem:elementarity} (b) for $\nu(a,b)^{+}$
  together with the fact that $\nu(c) < \nu(a,b)^{+}$, approximating $c$ from the right
  one finds
  $c',c'' \in A$ such that $c <_{A} c' <_{A} c'' <_{A} b$ and
  $\nu(c') = \nu(c'') = \nu(a,b) \le \mu$, thus $c',c'' < \mu^{+}$.

  Now by $\aleph_{1}$-density of $A$, there must be an open interval $I$ of $A$
  such that $I \seb [c',c'']_{A}$ and
  $I \cap \mu = \es$, i.e., it is contained in a complementary interval
  of $A \setm \mu$, then any $a',b' \in I$ satisfy (b), (c) and
  $\mu \le \nu(a',b') \le \nu(a'),\nu(b')$. Applying the above inside a
  model witnessing the elementarity of $\mu^{+}$, and using the
  fact that $c',c'' < \mu^{+}$, one also achieves
  that $\nu(a'),\nu(b') < \mu^{+}$.
\end{proof}

We now proceed to prove the relevant densities. As explained before
 Lemma~\ref{lem:ex-club}, this together with the ccc finishes the proof of
 Theorem~\ref{thm:ma-epi}. We will prove the first of these
lemmas giving all the necessary details and tricks, in the second
we give less details but the tricks are the same.

\begin{lemma}
   For every $x \in X$,
  $\{p \in P_{E} : x \in \ran(f_{p})\}$ is dense.
\end{lemma}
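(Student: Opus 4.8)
The plan is as follows. Fix $p\in P_{E}$; if $x\in\ran(f_{p})$ there is nothing to do, so assume $x\notin\ran(f_{p})=\{p(\ba):\ba\in\dom(p)\}$. Since $\dom(p)$ is finite, $<_{b}$-linearly ordered and $p$ is increasing, $x$ occupies exactly one of the positions: (D) $\dom(p)=\es$; (A) $x<_{X}p(\ba)$ for all $\ba\in\dom(p)$; (B) $x>_{X}p(\ba)$ for all $\ba\in\dom(p)$; or (C) $p(\bb)<_{X}x<_{X}p(\bc)$ for a unique pair of $<_{b}$-neighbours $\bb<_{b}\bc$ in $\dom(p)$. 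I will treat case (C), the others being degenerations obtained by dropping one or both of the ``external'' requirements below. Writing $\mu:=\nu(x)$, $\alpha:=\nu(p(\bb),x)$, $\beta:=\nu(x,p(\bc))$, the goal is to find $d_{l}<_{A}d_{r}$ with $b_{r}<_{A}d_{l}$, $d_{r}<_{A}c_{l}$, and
\[
\nu(b_{r},d_{l})=\alpha,\qquad \nu(d_{r},c_{l})=\beta,\qquad \nu(d_{l})=\nu(d_{r})=\nu(\Delta_{A}(d_{l},d_{r}))=\mu,
\]
and moreover with $d_{l},d_{r}$ lying in a common complementary interval $I$ of $A\setm\mu$ and not being endpoints of $I$. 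Granting this, set $\bar d:=(d_{l},d_{r})$ and $q:=p\cup\{(\bar d,x)\}$: then $\dom(q)$ is $<_{b}$-linearly ordered with $\bb<_{b}\bar d<_{b}\bc$, $q$ is increasing because $p(\bb)<_{X}x<_{X}p(\bc)$, $q$ satisfies Definition~\ref{df:forcing}~\ref{f:i} by the last displayed equalities together with $\nu(x)=\mu$, it satisfies Definition~\ref{df:forcing}~\ref{f:iii} vacuously since $d_{l},d_{r}$ are interior to $I$, and it satisfies Definition~\ref{df:forcing}~\ref{f:ii} for the only new neighbour pairs $(\bb,\bar d)$ and $(\bar d,\bc)$ by the first two displayed equalities; hence $q\in P_{E}$ by Lemma~\ref{lem:neighbors}, $f_{q}\supseteq f_{p}$ so $q\le p$, and $x=q(\bar d)\in\ran(f_{q})$.

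To locate $d_l,d_r$ I first record the arithmetic of the situation. Using that $p$ satisfies Definition~\ref{df:forcing}~\ref{f:i} and~\ref{f:ii}, and the $X$-analogue of Lemma~\ref{lem:P-Q}(a) (valid because $E$ is closed under $\delta_{X}$), a three-point computation of $\Delta_{X}$ on $p(\bb)<_{X}x<_{X}p(\bc)$ yields $\mu_{0}:=\nu(b_{r},c_{l})=\nu(p(\bb),p(\bc))=\min(\alpha,\beta)$, as well as $\alpha\le\min(\nu(b_{r}),\mu)$ and $\beta\le\min(\nu(c_{l}),\mu)$; in particular $\mu_{0}\le\alpha,\beta\le\mu$. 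By the left--right symmetry of the construction (which swaps $\bb\leftrightarrow\bc$ and $d_{l}\leftrightarrow d_{r}$) we may assume $\beta\le\alpha$, so $\beta=\mu_{0}$. Put $c:=\Delta_{A}(b_{r},c_{l})$, so $\nu(c)=\mu_{0}$ and $b_{r}\le_{A}c<_{A}c_{l}$.

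Now I build $d_l$. The set $S:=\{a\in A: b_{r}<_{A}a<_{A}c_{l},\ \nu(b_{r},a)=\alpha\}$ is a nonempty open interval of $A$: convexity comes from the same three-point $\Delta_{A}$ computation; it has no endpoints by $\aleph_{1}$-density; nonemptiness uses $\alpha\le\nu(b_{r})$ together with $\aleph_{1}$-density and the elementarity of $\mu^{+}$ (the branch of $b_{r}$ in $T_{A}$ has length exceeding $\alpha$, since $\nu(b_r)<\delta_{A}(b_r)<\nu(b_r)^{+}$); and every element of $S$ is $<_{A}c_{l}$ because $\nu(b_{r},c_{l})=\mu_{0}\le\alpha$. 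Since $\mu_{0}\le\mu$, exactly as in the proof of Lemma~\ref{lem:tech-lemma} — applied to a suitable pair of points of $S$ at level $\mu$, using Lemma~\ref{lem:elementarity}(e) to find points of $[\mu,\mu^{+})$ cofinal and coinitial in complementary intervals of $A\setm\mu$ — one obtains $d_{l}\in S$ with $\nu(d_{l})=\mu$, with $d_{l}$ interior to its complementary interval $I$ of $A\setm\mu$, and (after shrinking, if necessary, to $S\cap{\left]-\infty,c\right[}_{A}$) with $d_{l}<_{A}c$; note $d_{l}<_{A}c$ is automatic when $\alpha>\mu_{0}$, since then $\Delta_{A}(d_{l},c_{l})=c$ and $\nu(d_{l})=\mu>\mu_{0}=\nu(c)$. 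Then, since $c<_{A}c_{l}$ is a point of $A\setm\mu$ removed from $A\setm\mu$ (at least when $\mu>\mu_{0}$), $I$ lies entirely $<_{A}c$, $d_{l}$ is interior to $I$, and $[\mu,\mu^{+})$ is cofinal in $I$ (Lemma~\ref{lem:elementarity}(e)); picking $d_{r}\in I\cap[\mu,\mu^{+})$ with $d_{l}<_{A}d_{r}$ and $d_r$ below the right endpoint of $I$ gives $\nu(d_{r})=\mu$, $d_{r}<_{A}c<_{A}c_{l}$, $\Delta_{A}(d_{r},c_{l})=c$ (monotonicity of $\Delta_A$ and $b_{r}\le_{A}d_{r}\le_{A}c$), hence $\nu(d_{r},c_{l})=\mu_{0}=\beta$, and finally $\nu(\Delta_{A}(d_{l},d_{r}))=\mu$ since $d_{l},d_{r}$ lie in the common interval $I$ of $A\setm\mu$ (so $\Delta_{A}(d_{l},d_{r})\ge\mu$) while $\Delta_{A}(d_{l},d_{r})\le d_{l}<\mu^{+}$. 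Cases (A), (B), (D) are handled by the same tools, running only the $d_{r}$-part, only the $d_{l}$-part, or neither external part of this argument (for (D) one just takes $d_{l}<_{A}d_{r}$ in a common complementary interval of $A\setm\mu$ with all three of $\nu(d_{l}),\nu(d_{r}),\nu(\Delta_{A}(d_{l},d_{r}))$ equal to $\mu$ and both interior, directly from Lemma~\ref{lem:elementarity}(e)).

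The main obstacle is precisely the simultaneous control demanded of $d_{l}$ (and symmetrically of $d_{r}$): branching off $b_{r}$ at exactly level $\alpha$, membership of $[\mu,\mu^{+})$, and interiority of its complementary interval of $A\setm\mu$ — i.e.\ checking that $S$ is a genuine interval that meets the relevant complementary intervals of $A\setm\mu$ in their interiors — together with the bookkeeping of the sub-case $\alpha=\mu_{0}$ (forced to $\alpha=\beta=\mu_{0}$, and further $\mu=\mu_{0}$), where keeping $d_{l},d_{r}$ inside ${\left]b_{r},c\right]}_{A}$ while at level $\mu$ needs a little extra care; everything else is routine verification of Definition~\ref{df:forcing} via Lemma~\ref{lem:neighbors}.
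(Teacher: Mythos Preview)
Your overall strategy is the same as the paper's: insert a new pair $\bar d=(d_l,d_r)$ mapped to $x$ between the neighbour images, arranging $\nu(\bar d)=\mu=\nu(x)$, interiority of $d_l,d_r$ in their complementary interval of $A\setminus\mu$, and the two neighbour conditions $\nu(b_r,d_l)=\alpha$, $\nu(d_r,c_l)=\beta$, then invoking Lemma~\ref{lem:neighbors}. However, there is a genuine gap in your nonemptiness argument for $S=\{a: b_r<_A a<_A c_l,\ \nu(b_r,a)=\alpha\}$. The justification you offer (``$\alpha\le\nu(b_r)$ together with $\aleph_1$-density and elementarity; the branch of $b_r$ in $T_A$ has length exceeding $\alpha$'') does not handle the case $\alpha=\nu(b_r)=\nu(\bb)$. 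In that case $S$ is exactly the part of the complementary interval of $A\setminus\alpha$ containing $b_r$ that lies strictly to the right of $b_r$, and this is empty precisely when $b_r$ is the right endpoint of that interval --- a possibility that no amount of $\aleph_1$-density or branch-length information excludes. Ruling it out is the one place in the proof where clause~\ref{f:iii} of Definition~\ref{df:forcing} must be invoked: if $b_r$ were that right endpoint, then since $\nu(\bb)=\alpha$, clause~\ref{f:iii} would force $p(\bb)$ to be the right endpoint of its complementary interval of $X\setminus\alpha$, contradicting $p(\bb)<_X x$ with $\nu(p(\bb),x)=\alpha$. The paper isolates exactly this step (in the mirror orientation, showing that $b_l$ is not a left endpoint of its interval in $A\setminus\nu$) as its first and most delicate claim, splitting on whether $\nu<\nu(\bb)$ (use Lemma~\ref{lem:elementarity}(d)) or $\nu=\nu(\bb)$ (use~\ref{f:iii}); without it the density does not go through.

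A secondary point: your assertion that the sub-case $\alpha=\mu_0$ ``further'' forces $\mu=\mu_0$ is not correct --- one can have $\alpha=\beta=\mu_0<\mu$ --- though that configuration is in fact the easiest, handled by a direct application of Lemma~\ref{lem:tech-lemma} to the pair $b_r,c_l$ (since $\nu(b_r,c_l)=\mu_0\le\mu$) rather than by your $S$-construction.
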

\begin{proof}
  Assume $p \in P_{E}$ and $z \notin \ran(p)$.
  Let $x <_{X} y$ be neighbors to $z$ in $\ran(p)$.
  The case when $x$ and/or $y$ do not exist is similar.
  Because the arguments are symmetric, we may assume
  that $\Delta(x,z) < \Delta(z,y)$. Let $\nu := \nu(z,y)$,
   $\bar{a} := p^{-1}(x)$ and $\bar{b} := p^{-1}(y)$.
  Note that by Lemma~\ref{lem:P-Q} (a), $\nu \le \nu(y)$,
  and by definition of $P_{E}$,
  $\nu(y) = \nu(b_{l})$. Observe also that
  $\Delta(x,z) = \Delta(x,y)$.

  First we claim that $b_{l}$ is not the left endpoint of its complementary interval
  of $A \setm \nu$. If $\nu < \nu(y)$ then this follows from Lemma~\ref{lem:elementarity} (d)
  and the fact that $\nu(y) = \nu(b_{l})$.
  If $\nu = \nu(y)$, then it follows
  from~\ref{df:forcing}~\ref{f:iii} and
  the fact that $y$ is not the left endpoint of its complementary interval
  of $X \setm \nu(\bb)$, since $z <_{X} y$ and $\Delta(z,y) = \nu = \nu(y)$.

  Now we claim that there is $c'$ such that $\nu(c') = \nu$, $a_{l} <_{A} c' <_{A} b_{l}$,
  and such that $c'$ is in the same complementary interval of $A \setm \nu$ in
  which $b_{l}$ is. There are two cases, if $\nu(a_{r},b_{l}) < \nu$ (that is if they are
  not in the same complementary interval of $A \setm \nu$)
  then use Lemma~\ref{lem:elementarity} (e) and the fact that $b_{l}$ is not the
  left endpoint of its complementary interval in $A \setm \nu$. If
  $\nu(a_{r},b_{l}) = \nu$, then $\Delta(a_{r},b_{l}) < \nu^{+}$
  and $a_{l} \le_{A} \Delta(a_{r},b_{l}) <_{A} b_{r}$, and then one can
  use Lemma~\ref{lem:elementarity} (b) and $\aleph_{1}$-density to
  approximate $\Delta(a_{r},b_{l})$ from the right, thus finding $c'$.

  Repeating the above argument, and using the $\aleph_{1}$-density  of $A$,
  we find $c''$ such that $\nu(c'') = \nu$ and
  $c' <_{A} c'' <_{A} b_{l}$. Now note that $\nu \le \nu(z)$,
  thus using Lemma~\ref{lem:tech-lemma} one finds $c_{l}$ and $c_{r}$ such that
  $\nu(c_{l}) = \nu(c_{r}) = \nu(z)$, $c' <_{A} c_{l} <_{A} c_{r} <_{A} c''$,
  $c_{l}$ and $c_{r}$ are in the same complementary interval of $A \setm \nu(z)$
  (that is $\Delta(c_{l},c_{r}) \ge \nu(z)$) and are not endpoints of it.
  We claim that $p' := p \cup \{((c_{l},c_{r}),z)\}$ is in $P_{E}$.

  By construction $[c_{l},c_{r}]$ is disjoint from $\dom(f_{p})$,
  and $p'$ is increasing. Condition~\ref{df:forcing}~\ref{f:i}
  follows from the
  fact that $\nu(c_{l})= \nu(c_{r}) = \nu(z)$ and
  $\nu(c_{l},c_{r}) = \nu(z)$
  because then $c_{l}$ and $c_{r}$ are in the same complementary
  interval of $A \setm \nu(z)$. \Ref{df:forcing}~\ref{f:iii}
  follows because we chose
  $c_{l}$ and $c_{r}$ to not be endpoints of $I$. Using Lemma~\ref{lem:neighbors}
  we see that~\ref{df:forcing}~\ref{f:ii} holds because by construction
  $\Delta(a_{r},c_{l}) = \Delta(a_{r},b_{l}) = \Delta(x,z)$,
  and $\nu(c_{r},b_{l}) = \nu = \nu(z,y)$.
\end{proof}

\begin{lemma}
  For every $a \in A$,
  $\{p \in P_{E} : a \in \dom(f_{p})\}$ is dense.
\end{lemma}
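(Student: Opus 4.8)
The plan is to dualize the proof of the previous lemma, trading the roles of the domain and the range. Given $p \in P_E$ with $a \notin \dom(f_p)$, I will extend $p$ by a single pair $(\bc,x)$ with $c_l \le_A a \le_A c_r$ and a carefully chosen target $x$. First I would pick the neighbors $\ba <_b \bb$ in $\dom(p)$ bracketing the gap of $\dom(f_p)$ that contains $a$, so $a_r <_A a <_A b_l$ (the cases where $\ba$ and/or $\bb$ is missing, in particular $\dom(p)=\es$, are simpler); by the left-right symmetry I may assume $\Delta_A(a_r,a)<\Delta_A(a,b_l)$, so $\Delta_A(a_r,b_l)=\Delta_A(a_r,a)$. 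Set $\nu:=\nu(a,b_l)$. Then $a$ and $b_l$ lie in a common complementary interval $I$ of $A\setm\nu$ by Lemma~\ref{lem:elementarity}~(c), and $\nu\le\nu(b_l)=\nu(p(\bb))$ by Lemma~\ref{lem:P-Q}~(a) and~\ref{df:forcing}~\ref{f:i}; let $J$ be the complementary interval of $X\setm\nu$ through $p(\bb)$. Since $a<_A b_l$ with both in $I$, $b_l$ is not the left endpoint of $I$, and from~\ref{df:forcing}~\ref{f:iii} for $p$ (when $\nu=\nu(\bb)$) or Lemma~\ref{lem:elementarity}~(d) together with $\nu(p(\bb))>\nu$ (when $\nu<\nu(\bb)$) one concludes, as in the previous lemma, that $p(\bb)$ is not the left endpoint of $J$.

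Next I would select $c_l,c_r\in[\nu,\nu^+)\cap I$ with $a_r<_A c_l\le_A a\le_A c_r<_A b_l$: with $c_l<_A a<_A c_r$ and $c_l,c_r$ interior to $I$ when $a$ is interior to $I$, and with $c_l:=a$ (so $\nu(c_l)=\nu$ by Lemma~\ref{lem:elementarity}~(d)) when $a$ is the left endpoint of $I$. Their existence follows from Lemma~\ref{lem:elementarity}~(b),(e) and the $\aleph_1$-density of $A$, exactly as the points $c',c''$ are produced in the previous lemma. Since $c_l,c_r,b_l$ all lie in the common interval $I$ of $A\setm\nu$ while $c_l,c_r<\nu^+$ and $E$ is closed under $\delta_A$, one gets $\nu(c_l)=\nu(c_r)=\nu(c_m)=\nu$, $\nu(c_r,b_l)=\nu$, and $\nu(a_r,c_l)=\nu(a_r,b_l)=\nu(p(\ba),p(\bb))$ --- the last equality by~\ref{df:forcing}~\ref{f:ii} for $p$ --- and moreover $c_r$ is never the right endpoint of $I$ because $c_r<_A b_l\in I$.

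It remains to choose the target. If $c_l$ is not the left endpoint of $I$ (by the above choice this is the case whenever $a$ is interior to $I$), I would take $x$ to be an interior point of $J$ with $p(\ba)<_X x<_X p(\bb)$, $\nu(x)=\nu$ and $\nu(x,p(\bb))=\nu$ --- possible since $p(\bb)$ is not the left endpoint of $J$, again as in the previous lemma --- so that~\ref{df:forcing}~\ref{f:iii} is vacuous for $\bc$. If instead $c_l$ is the left endpoint of $I$, i.e.\ $a$ is, then also $a_r\notin I$, whence $\nu(p(\ba),p(\bb))=\nu(a_r,b_l)<\nu$ and $p(\ba)\notin J$; now, since $D^A_\nu=\nu$, the level $\nu$ witnesses $\nu\in\L(A,D^A)$, so by the hypothesis of Theorem~\ref{thm:ma-epi}, $\nu\in\La(X,D^X)$, hence $J$ --- being a complementary interval of $X\setm D^X_\nu$ --- has a left endpoint, and I take $x$ to be it. Then $\nu(x)=\nu$ by Lemma~\ref{lem:elementarity}~(d), $p(\ba)<_X x<_X p(\bb)$ (as $p(\ba)$ is left of $J$ and $p(\bb)$ is not its left endpoint), $\nu(x,p(\bb))=\nu$ and $\nu(p(\ba),x)=\nu(p(\ba),p(\bb))$, and~\ref{df:forcing}~\ref{f:iii} holds since the left endpoint $c_l=a$ of $I$ maps to the left endpoint $x$ of $J$ while $c_r$ is not a right endpoint. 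In both cases $p':=p\cup\{(\bc,x)\}$ is increasing with linearly ordered domain, $[c_l,c_r]\cap\dom(f_p)=\es$, and the three conditions of Definition~\ref{df:forcing} follow from the computations above together with Lemma~\ref{lem:neighbors} for~\ref{df:forcing}~\ref{f:ii}; thus $p'\in P_E$ and $p'\le p$ with $a\in\dom(f_{p'})$. The symmetric case $\Delta_A(a_r,a)>\Delta_A(a,b_l)$ uses $\R(A,D^A)\seb\Ra(X,D^X)$ in place of $\L(A,D^A)\seb\La(X,D^X)$.

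The main obstacle is exactly the last configuration. In the previous lemma the interval added on the $A$-side was free, so one could always keep its endpoints in the interior of a complementary interval and render~\ref{df:forcing}~\ref{f:iii} vacuous; here $a$ is prescribed, so it may be forced to be an endpoint, and then~\ref{df:forcing}~\ref{f:iii} compels the target to be the matching endpoint of a complementary interval of $X$. The freedom we retain lies on the $X$-side, which does not help avoid this, so the argument hinges on the containments $\L(A,D^A)\seb\La(X,D^X)$ and $\R(A,D^A)\seb\Ra(X,D^X)$ of Theorem~\ref{thm:ma-epi} guaranteeing that this endpoint actually exists in $X$. Everything else is a routine dualization of the preceding density argument.
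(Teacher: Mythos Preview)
There is a genuine gap. Your key claim that $p(\bb)$ is not the left endpoint of $J$ does not follow from Definition~\ref{df:forcing}~\ref{f:iii}. That condition is a one-way implication: \emph{if} $b_l$ is the left endpoint of $I$, \emph{then} $p(\bb)$ is the left endpoint of $J$. In the previous lemma the contrapositive was used --- knowing $y=p(\bb)$ is not a left endpoint on the $X$-side forces $b_l$ not to be one on the $A$-side --- and that direction is legitimate. Here you have the hypothesis on the $A$-side ($b_l$ is not a left endpoint, since $a<_A b_l$ lies in $I$) and want the conclusion on the $X$-side; this would need the \emph{converse} of~\ref{f:iii}, which is simply not part of the definition. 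So when $\nu=\nu(\bb)$ it is perfectly possible that $p(\bb)$ is the left endpoint of $J$, and then your construction of an interior target $x\in J$ with $x<_X p(\bb)$ fails outright. (When $\nu<\nu(\bb)$, your use of Lemma~\ref{lem:elementarity}~(d) is fine; and when $\nu(a_r,b_l)=\nu$ one sees $p(\ba)\in J$, so $p(\bb)$ is not leftmost --- but you did not isolate that case.)

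The paper's proof handles exactly this missed configuration as its Case~2: when $\nu(a_r,b_l)<\nu$ and $y=p(\bb)$ \emph{is} the left endpoint of $J$, one does not add a new interval at all, but instead \emph{enlarges} the existing interval, replacing $((b_l,b_r),y)$ by $((c_l,b_r),y)$ with $c_l\le_A a$ chosen in $I$ at level $\nu$. This uses the other mode of extension available in $P$ (growing an interval rather than inserting a fresh one), and~\ref{f:iii} is then satisfied because $y$ is already the left endpoint of $J$. The remainder of your write-up --- the interior case and the use of $\L(A,D^A)\subseteq\La(X,D^X)$ when $a$ is itself a left endpoint --- matches the paper's Subcases~3.2 and~3.1; but without Case~2 the argument is incomplete.
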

\begin{proof}
  Take $p \in P_{E}$ and $c \notin \dom(f_{p})$.
  Then let $\bar{a} < \bar{b}$ neighbors to $c$ in $\dom(p)$. The
  case when there is no such $\bar{a}$ and/or $\bar{b}$ is similar.
  Because the arguments are symmetric, we may assume
  that $\Delta(a_{r},c) < \Delta(c,b_{l})$. Let $\nu := \nu(c,b_{l})$,
  $x := p(\bar{a})$ and and $y := p(\bar{y})$.
  Then
  \[\nu(x,y) = \nu(a_{r},b_{l}) \le \nu \le \nu(b_{l})
    = \nu(y)\]

  \underline{Case 1}: $\nu = \nu(a_{r},b_{l})$. Note that then
  $\nu = \nu(x,y)$ and that in this case $c$ is not
  an endpoint of its complementary interval of $A \setm \nu$.
  Using elementarity
  and $\aleph_{1}$-density one finds $a_{r}<_{A}c_{l} <_{A} c <_{A} c_{r} <_{A} b_{l}$
  such that $\nu(c_{l})=\nu(c_{r}) = \nu$, and $x <_{X} z <_{X} y$ such that
  $\nu(z) = \nu$ (and thus also $\Delta(x,z) = \Delta(x,y) = \nu$).
  Then $p \cup \{((c_{l},c_{r}),z)\}$ is in $P_E$.

  \underline{Case 2}: $\nu(a_{r},b_{l}) < \nu$ and $y$ is
  the left endpoint of its complementary interval in $X \setm \nu$.
  Note that in this case it must be that $\nu = \nu(y) = \nu(b_{l})$.
  Using elementarity one finds $c_{l} \le_{A} c$ in the complementary interval
  of $A \setm \nu$ of $b_{l}$ such that $\nu(c_{l}) = \nu$. We claim that
  $p' := (p \setm \{((b_{l},b_{r}),y)\}) \cup \{((c_{l},b_{r}),y)\}$ is in $P_E$.
  \Ref{df:forcing}~\ref{f:iii} is trivially satisfied since $y$ is the left
  endpoint of its complementary interval of $A \setm \nu$, and noting that
  $\nu = \nu(c_{l}) = \nu(c_{l},b_{r}) = \nu(b_{l},b_{r})$, one sees
  that~\ref{df:forcing}~\ref{f:i} is satisfied, as well as~\ref{df:forcing}~\ref{f:ii}.

  \underline{Case 3}: $\nu(a_{r},b_{l}) < \nu$ and $y$ is
  not the left endpoint of its complementary interval of $X \setm \nu$.

  \underline{Subcase 3.1}: $c$ is the left endpoint of its complementary interval in $A \setm \nu$. It is in this case where we need to use the hypothesis on
  $\L$ and $\R$. Note that in this case we have $\nu(c) = \nu$.
  By the hypotheses on $A$ and $X$, and noting
  that by Lemma~\ref{lem:elementarity} (a) $A \setm D_{\nu}^{A} = A\setm\nu$, we know
  that every complementary interval
  of $X \setm D_{\nu}^{X} = X \setm \nu$ has a left endpoint. Let $z$ be the left endpoint of
  the complementary interval of $X \setm \nu$ in which $y$ is, and note that then
  $x <_{X} z <_{X} y$ and $\nu(z) = \nu$ by Lemma~\ref{lem:elementarity} (d).
  As in Case 1 one finds
  $c_{r}$ such $\nu(c_{r}) = \nu$, $c_{l} <_{A} c_{r} <_{A} b_{l}$ and
  $\nu(c_{l},c_{r}) = \nu$. Then  $p \cup \{((c,c_{r}),z)\}$ is in $P_E$.

  \underline{Subcase 3.2}: $c$ is not the left endpoint of its complementary interval in $A \setm \nu$.
  Using elementarity and $\aleph_{1}$-density let $z$ be in the same complementary
  interval of $X\setm \nu$ of $y$, and such that $\nu(z) = \nu$ and $z <_{X} y$.
  This can be done since we know that $y$ is not the left endpoint. As in Case
  1 one finds $c_{l}$ and $c_{r}$ such that $c_{l} <_{A} c <_{A} c_{r}$,
  $\nu(c_{l}) = \nu(c_{r}) = \nu$ and $\nu(c_{l},c_{r}) = \nu$ and thus
  $p \cup \{((c_{l},c_{r}),z)\} \in P_{E}$.
\end{proof}

\section{A two element basis for the Aronszajn lines}\label{sec:basis}

Although we have shown that the analogy between countable linear orders
and Aronszajn lines does not extend perfectly to $\sleq$, we do have a positive
result.

Recall that in~\cite{CamerloEtAl2015} it is shown that $\omega$, $\omega + 1$ and
$\omega^{\star}$ and $1 + \omega^{\star}$ form a $\sleq$-basis for the countable
linear orders. We do have an extension of this to the class of Countryman lines.
Using the results of Section~\ref{sec:decompositions}, fix an $\aleph_{1}$-dense
Countryman line $C$ with decomposition $D$
such that $\La(C,D) = \Ra(C,D) = \omega_{1}$.

\begin{lemma}\label{lem:countryman_basis}
  Assume $\MA_{\aleph_{1}}$.
  $1 + C + 1$ and $1 + C^{\star} + 1$ form a $\sleq$-basis for the Countryman lines.
\end{lemma}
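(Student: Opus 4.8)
The plan is to reduce to the $\aleph_{1}$-dense case and then apply Theorem~\ref{thm:ma-epi}, exploiting that $\La(C,D) = \Ra(C,D) = \omega_{1}$ makes its hypothesis on $\L$ and $\R$ vacuous. So let $K$ be a Countryman line. By Theorem~\ref{thm:clines-equiv}, $K$ is $\eleq$-equivalent to $C$ or to $C^{\star}$; since $C^{\star}$ carries the reversed decomposition $D^{\star}$ with $\La(C^{\star},D^{\star}) = \Ra(C^{\star},D^{\star}) = \omega_{1}$, the two cases are symmetric, so assume $K$ is $\eleq$-equivalent to $C$. It then suffices to produce an epimorphism $K \sur 1 + C + 1$.

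First I would pass to the quotient $\bar{K} := K/{\sim}$, where $a \sim b$ iff the closed interval of $K$ with endpoints $a$ and $b$ is countable. This is an equivalence relation with convex classes (transitivity from $[a,c] = [a,b]\cup[b,c]$), the quotient map $\pi : K \to \bar{K}$ is a monotone surjection, so $\bar{K} \sleq K$, and choosing one point from each class exhibits $\bar{K}$ as a suborder of $K$, hence as a Countryman line. The key claim is that every $\sim$-class $E$ is countable: since $K$ contains no copy of $\omega_{1}$ or $\omega_{1}^{\star}$, $E$ has a countable subset $F$ that is both cofinal and coinitial in $E$, and then $E = \bigcup\{[e,e'] : e \le e' \text{ in } F\}$ is a countable union of sets countable by the definition of $\sim$. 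Consequently $\bar{K}$ is uncountable, and, being a suborder of $K$, it is Aronszajn; moreover $a \not\sim b$ whenever $[a] <_{\bar{K}} [b]$, so every closed interval of $\bar{K}$ with distinct endpoints is uncountable. Thus $\bar{K}$ is dense, and all its nontrivial closed intervals have size $\aleph_{1}$.

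Now pick any $[p] <_{\bar{K}} [q]$ and set $I := {]\,[p],[q]\,[}_{\bar{K}}$. By the previous paragraph $I$ is an $\aleph_{1}$-dense Countryman line, and it is $\eleq$-equivalent to $C$: it embeds into $K$, so by Theorem~\ref{thm:clines-equiv} and Lemma~\ref{lem:cline-fund} it cannot be $\eleq$-equivalent to $C^{\star}$. Fix any decomposition $D^{I}$ for $I$. Since $\La(C,D) = \Ra(C,D) = \omega_{1}$, the inclusions $\L(I,D^{I}) \seb \La(C,D)$ and $\R(I,D^{I}) \seb \Ra(C,D)$ hold trivially, so Theorem~\ref{thm:ma-epi} yields an epimorphism $I \sur C$; extending it by the identity on the endpoints gives $1 + I + 1 \sgeq 1 + C + 1$. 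Finally, $1 + I + 1$ is isomorphic to the closed interval ${[\,[p],[q]\,]}_{\bar{K}}$, onto which $\bar{K}$ maps by the clamping epimorphism $[z] \mapsto \max([p],\min([z],[q]))$. Composing all of these,
\[
  K \;\sgeq\; \bar{K} \;\sgeq\; {[\,[p],[q]\,]}_{\bar{K}} \;\cong\; 1 + I + 1 \;\sgeq\; 1 + C + 1,
\]
so $1 + C + 1 \sleq K$, which completes the proof (the $C^{\star}$ case being symmetric).

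The step I expect to be the main obstacle is the quotient construction: the careful verification that every $\sim$-class is countable (the cofinality/coinitiality bookkeeping, where shortness of $K$ is used) and that $\bar{K}$ remains a Countryman line all of whose nontrivial closed intervals are uncountable. Once that structural fact is in hand, the reduction via Theorem~\ref{thm:clines-equiv}, the vacuity of the $\L$/$\R$ hypothesis of Theorem~\ref{thm:ma-epi}, and the two elementary (clamping and endpoint-extension) epimorphisms are all routine.
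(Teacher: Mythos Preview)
Your proof is correct and follows essentially the same route as the paper: reduce by Theorem~\ref{thm:clines-equiv} to the case $K \eleq C$, pass to the quotient by ``countable interval'', extract an $\aleph_{1}$-dense Countryman interval, and apply Theorem~\ref{thm:ma-epi} using that $\La(C,D)=\Ra(C,D)=\omega_{1}$ makes its hypothesis vacuous. The paper compresses all of this into two sentences (``take the usual quotient \dots\ then Theorem~\ref{thm:ma-epi}''), whereas you have spelled out the verification that the $\sim$-classes are countable, that the quotient remains Countryman with only uncountable nontrivial intervals, and the two auxiliary epimorphisms; these are exactly the details the paper leaves to the reader.
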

\begin{proof}
  It is enough to prove that if $A \eleq C$ is uncountable, then $1 + C + 1 \sleq A$.
  First observe that $A \sgeq 1 + B + 1$ for some $\aleph_{1}$-dense
  Aronszajn line $B$. This can be proven by taking the usual
  quotient on $A$ defined by $a \sim b$ if there are countably many points between
  them. Then Theorem~\ref{thm:ma-epi} implies that $B \sgeq C$ which finishes the proof.
\end{proof}

Under $\PFA$ we can say more. Recall that Moore
proved~\cite[Theorem~1.3]{Moore2009} that under $\PFA$
any non-Countryman Aronszajn line contains a copy of $L$ and
$L^{*}$ for any given Countryman line $L$.

\begin{theorem}\label{thm:alines-basis}
  Assume $\PFA$. $1 + C + 1$ and $1 + C^{\star} + 1$ form a $\sleq$-basis for the
  Aronszajn lines.
\end{theorem}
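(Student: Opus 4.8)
I would argue by cases according to Moore's structural dichotomy (Theorem~\ref{thm:univ-aline-strong}): every Aronszajn line $A$ either contains an interval $J$ that is a Countryman line (equivalently, by Theorem~\ref{thm:clines-equiv}, an interval $\eleq$-equivalent to $C$ or to $C^{\star}$), or is itself $\eleq$-equivalent to $\eta_{C}$, i.e.\ a universal Aronszajn line. The first alternative subsumes the case in which $A$ is Countryman (take $J=A$), so together with the second it exhausts all possibilities.

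In the first case $J$, being an uncountable suborder of a Countryman line, is itself Countryman, so Lemma~\ref{lem:countryman_basis} (available since $\PFA$ implies $\MA_{\aleph_{1}}$) furnishes an epimorphism $g\colon J \sur X$ with $X$ equal to $1+C+1$ or to $1+C^{\star}+1$; write $\bot$ and $\top$ for the least and greatest elements of $X$. Splitting $A = A_{-} \cup J \cup A_{+}$ into the points of $A$ lying strictly below $J$, the points of $J$, and the points strictly above $J$, define $h\colon A \to X$ by $h\restriction J = g$, $h\restriction A_{-}\equiv\bot$ and $h\restriction A_{+}\equiv\top$; this is plainly monotone and onto, so $X \sleq A$ and we are done. (Using Lemma~\ref{lem:cline-fund} one sees that if $J$ is $\eleq$-equivalent to $C$ then necessarily $X=1+C+1$, and symmetrically, but this refinement is not needed.)

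The case where $A$ is universal is the crux, and I expect it to be the main obstacle — here $A$ need not contain any Countryman interval, so it is not reducible to the previous case (e.g.\ $A = \eta_{C}$ itself). The plan is to reduce the claim to $A \sgeq \eta_{C}$: observe that $1+C+1$ is fragmented, since each of its suborders is either countable or an uncountable suborder of $C$ together with at most two extra points, hence Countryman, hence contains no copy of $\eta_{C}$; therefore Lemma~\ref{lem:sur_frag} gives $\eta_{C} \sgeq 1+C+1$, and $A \sgeq \eta_{C} \sgeq 1+C+1$ would finish. Since $A$ is universal we have $\eta_{C}\eleq A$; fix a copy $\hat{\eta}\cong\eta_{C}$ in $A$ and, after replacing $A$ by the convex hull of $\hat{\eta}$ and later reattaching the two ends as copies of $1$, assume $\hat{\eta}$ is coinitial and cofinal in $A$. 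The natural candidate for an epimorphism $A \sur \hat{\eta}$ is the ``cut map'', sending each $a\in A$ to the point of $\hat{\eta}$ nearest $a$ on the side indicated by the Dedekind cut of $\hat{\eta}$ that $a$ determines; because $\hat{\eta}$ is dense, the only $a\in A$ for which this is ill-defined are those realizing a two-sided gap of $\hat{\eta}$. The technical heart of the proof is therefore to arrange that no such $a$ occurs — equivalently, to choose the copy $\hat{\eta}$ (or directly the induced epimorphism onto $1+C+1$) so that every gap of $\hat{\eta}$ that is realized in $A$ is ``roundable'', i.e.\ lies in the closure of a single fibre. I would attempt this by constructing the epimorphism by recursion along the self-similar presentation of $\eta_{C}$, exploiting the isomorphism ${(C^{\star}+1+C)}^{2}\times\eta_{C}\cong\eta_{C}$ used in the proof of Lemma~\ref{lem:sur_frag} together with the universality of $A$, which supplies convex copies of the finite iterates $(C^{\star}+1+C)^{n}$, and arranging the fibres coming from the Countryman factors to be wide enough to absorb the gaps. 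Reconciling the gap structure of an arbitrary universal $A$ with that of $\eta_{C}$ is the delicate point, and I would expect it to occupy the bulk of the argument.
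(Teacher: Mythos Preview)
Your Case~1 is correct and clean: if $A$ has a Countryman interval $J$, applying Lemma~\ref{lem:countryman_basis} to $J$ and collapsing the two remaining pieces of $A$ to the endpoints of $X$ gives the desired epimorphism.

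The gap is in Case~2. Your argument there hinges on establishing $A \sgeq \eta_{C}$ for an arbitrary universal $A$, but what you offer for this step is only a sketch, and the ``cut map plus recursive absorption of gaps'' outline does not obviously terminate: you have no a priori control over which gaps of a given embedded copy $\hat\eta \seb A$ are realized in $A$, and the self-similarity of $\eta_{C}$ does not by itself furnish a mechanism for choosing $\hat\eta$ (or the recursive fibres) so that every realized gap is absorbed. This is not a minor technicality --- whether an arbitrary universal Aronszajn line surjects onto $\eta_{C}$ sits in the immediate vicinity of Question~\ref{q:univ-srs}, which the paper explicitly leaves open in its concluding remarks. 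Your route is thus headed toward a problem the paper does not resolve.

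The paper bypasses this entirely with a short argument that makes no case distinction and never passes through $\eta_{C}$. Given any Aronszajn line $A$, fix an $\aleph_{1}$-dense Countryman $B \seb A$ (available under $\PFA$) and form the quotient $A / \sim$, where $a\sim b$ iff ${[a,b]}_{A}\cap B=\es$. Trivially $A \sgeq A / \sim$, and the point is that $A / \sim$ is itself Countryman: if not, then by the theorem of Moore recalled just before Theorem~\ref{thm:alines-basis}, $A / \sim$ contains a copy of $B^{\star}$; lifting this copy back to $A$ and using the definition of $\sim$, one extracts between consecutive image points an element of $B$, producing an order-reversing map from an uncountable subset of $B$ into $B$, contradicting Lemma~\ref{lem:cline-fund}. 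Lemma~\ref{lem:countryman_basis} then finishes. The idea you are missing is that the quotient need not be any \emph{particular} line (let alone $\eta_{C}$); it only needs to be \emph{some} Countryman line, and this sidesteps the gap-matching problem completely.
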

\begin{proof}
  Taking into account Lemma~\ref{lem:countryman_basis}
  it is enough to prove that if $A$ is any Aronszajn line, then
  $A$ surjects onto some Countryman line.

  By Theorem~\ref{thm:univ-aline-strong} there is $B \seb A$ that is an
  $\aleph_{1}$-dense Countryman line with the induced order.
  For $a <_{A} b$, define $a \sim b$ if $[a,b]_{A} \cap B = \es$.
  We claim that $A / \sim$ is Countryman, this finishes the proof
  since clearly $A \sgeq A /\sim$.

  Suppose not, then by~\cite[Theorem~1.3]{Moore2009}
  $A / \sim$ contains a copy of $B^{\star}$, and thus
  there is $f : B^{\star} \to A$ such that for every $x <_{B^{\star}} y$,
  $f(x) <_{A} f(y)$ and $f(a) \nsim f(b)$. Since $B^{\star}$ is $\aleph_{1}$-dense
  and not Suslin (use $\PFA$ or \cite[Fact~3.9]{Martinez}),
  there is
  $\{{[x_{\xi},y_{\xi}]}_{B^{\star}} : \xi < \omega_{1}\}$ a collection
  of disjoint closed intervals of $B^{\star}$. Now
  for each $\xi < \omega_{1}$, $f(x_{\xi}) <_{A} g(y_{\xi})$ and
  $f(x_{\xi}) \nsim g(y_{\xi})$, and thus there is some
  $a_{\xi} \in B \cap {[f(x_{\xi}),f(y_{\xi})]}_{A}$.
  Then $x_{\xi} \mapsto a_{\xi}$ is an embedding from an uncountable
  suborder of $B^{\star}$ into $B$, which contradicts Lemma~\ref{lem:cline-fund}.
\end{proof}

\subsection{On a basis for all uncountable linear orders}\label{sec:basis-all}

Theorem~\ref{thm:alines-basis} suggests the following question. Under $\PFA$,
is there a finite $\sleq$-basis for the uncountable linear orders?
It is easily seen that if $L$ is not short,
then $L$ surjects onto $\omega_{1}$, $\omega_{1} + 1$, $\omega_{1}^{\star}$
or $1 + \omega_{1}^{\star}$. However we will see that no matter what
set-theoretic hypotheses are in use, this cannot be extended to all uncountable
linear orders. This will be done by proving (in
$\ZFC$) the following theorem.

\begin{theorem}\label{thm:real-basis}
  Any $\sleq$-basis for the uncountable real orders has at
  least $\cc^{+}$ elements.
\end{theorem}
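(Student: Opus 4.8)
The plan is to split the argument into a soft rigidity lemma for epimorphisms of real orders, a reduction of the theorem to the existence of a large ``$\sleq$‑independent'' family, and the construction of that family, which is where the real work lies.

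\emph{Rigidity and reduction.} First I would observe that a suborder $A$ of $\mathbb{R}$ has only countably many jumps and only countably many pairwise disjoint convex subsets meeting a nonempty open interval of $\mathbb{R}$; hence any epimorphism $f\colon A\sur X$ has at most countably many non‑singleton fibres, since each such fibre is a convex subset of $A$ with at least two points, so it contains either a nonempty open interval of $\mathbb{R}$ or a jump of $A$. Consequently, whenever $X$ is an uncountable $\sleq$‑image of a real order $A$ we get $X\eleq A$ (choose a representative in each fibre), $X$ is again a real order, and if $A$ is $\aleph_{1}$‑dense then so is $X$; moreover $A$ is recovered from $X$ by replacing countably many of its points by $\aleph_{1}$‑dense real orders. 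Call such an $X$ a \emph{countable collapse} of $A$; if in addition $A$ is chosen to be a $\cc$‑Bernstein set (meeting every perfect set and its complement in $\cc$ points), then every uncountable countable collapse of $A$ has size $\cc$ and contains no perfect set. It now suffices to build a family $\{A_{\alpha}:\alpha<\cc^{+}\}$ of $\aleph_{1}$‑dense real orders of size $\cc$ such that no uncountable real order is a countable collapse of two distinct $A_{\alpha}$'s: given a $\sleq$‑basis $F$, pick $Y_{\alpha}\in F$ with $Y_{\alpha}\sleq A_{\alpha}$; since $A_{\alpha}$ is $\aleph_{1}$‑dense and $Y_{\alpha}$ uncountable, $Y_{\alpha}$ is a countable collapse of $A_{\alpha}$, so $Y_{\alpha}=Y_{\beta}$ forces $\alpha=\beta$, whence $|F|\ge\cc^{+}$.

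\emph{Constructing the family.} I would proceed by recursion on $\alpha<\cc^{+}$. Having built $\{A_{\beta}:\beta<\alpha\}$, enumerate as $\{X_{i}:i<\cc\}$ the at most $\cc$ many (up to isomorphism) uncountable countable collapses of the $A_{\beta}$, $\beta<\alpha$; by the above these are perfect‑set‑free real orders of size $\cc$, and each embeds into the $A_{\beta}$ it collapses. Since ``$X_{i}$ is a countable collapse of $A_{\alpha}$'' implies ``$X_{i}\eleq A_{\alpha}$'', it is enough to produce a $\cc$‑Bernstein, $\aleph_{1}$‑dense real order $A_{\alpha}$ of size $\cc$ omitting isomorphic copies of every $X_{i}$: then for $\alpha<\beta$ a common collapse of $A_{\alpha}$ and $A_{\beta}$ would appear at stage $\beta$ and would both be and fail to be a collapse of $A_{\beta}$, a contradiction. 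To make $\cc^{+}$ stages possible I expect to parametrise $A_{\alpha}=A_{S_{\alpha}}$ by a family $\{S_{\alpha}:\alpha<\cc^{+}\}$ of subsets of $\mathbb{R}$ with pairwise uncountable symmetric differences (partition $\mathbb{R}$ into $\cc$ pieces and take $\cc^{+}$ distinct unions of pieces), coding $S_{\alpha}$ into $A_{\alpha}$ by inserting pairwise non‑embeddable, two‑sidedly indecomposable $\cc$‑dense ``building blocks'' along a dense scaffold, and running a Bernstein‑type recursion of length $\cc$ to kill the copies of the $X_{i}$ while preserving the coding.

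\emph{Main obstacle.} The heart of the matter is the construction of the family, and it is genuinely delicate for two reasons. First, a non‑trivial fibre of an epimorphism out of an $\aleph_{1}$‑dense real order can be an arbitrarily large interval, so a single $\sleq$‑image can in principle destroy uncountably much of whatever distinguishes $A_{\alpha}$ from $A_{\beta}$; the coding must therefore be engineered so that the distinguishing datum (morally ``$S_{\alpha}$ modulo countable'') survives passage to any $\sleq$‑image, which is exactly why one needs the indecomposability and mutual non‑embeddability of the building blocks and not just a naive ``mark each point of $S_{\alpha}$'' construction. Second, one must rule out that some uncountable countable collapse $X_{i}$ is \emph{universal} for the ambient class — e.g. a universal $\aleph_{1}$‑dense real order — since otherwise $A_{\alpha}$ could not omit it; this is what forces the $\cc$‑Bernstein requirement (so that all the relevant collapses are perfect‑set‑free of size exactly $\cc$) and the extra rigidity of the blocks. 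Checking that this coding is robust under both countable collapses and countable blow‑ups, and that the Bernstein recursion can simultaneously honour all $\cc$ omission requirements at each stage, is the step I expect to absorb the bulk of the proof.
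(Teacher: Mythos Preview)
Your opening reduction is sound and contains a nice structural observation: an epimorphism between real orders has only countably many non-singleton fibres, so any $\sleq$-image $X$ of a real order $A$ order-embeds back into $A$, and if $A$ is $\cc$-Bernstein then every uncountable such $X$ has size exactly $\cc$ (the singleton fibres form $A$ intersected with the complement of countably many real intervals, and for a $\cc$-Bernstein set such an intersection is either countable or of size $\cc$). This handles the ``intermediate cardinality'' obstacle by a different mechanism than the paper, which instead uses a gap-counting argument: if $|\mathbb{R}\setminus A|<\cc$ then $A$ has fewer than $\cc$ gaps, gaps pull back along epimorphisms, and any real order of size strictly between $\aleph_0$ and $\cc$ has $\cc$ gaps.

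Where your proposal stalls is the construction of the family. You oscillate between two strategies --- a direct $\cc^{+}$-length recursion in which $A_\alpha$ omits embeddings of all earlier collapses, and a ``parametrise by $S_\alpha$ with coded building blocks'' scheme --- without committing to either or saying how they fit together. The first can in fact be made to work: at stage $\alpha$ there are at most $\cc$ collapses of earlier $A_\beta$'s, each of size $\cc$, and each admits at most $\cc$ order-embeddings into $\mathbb{R}$ by Baumgartner's lemma on countable determination of monotone real maps, so a single $\cc$-length Bernstein-style diagonalization produces the required $A_\alpha$. But you do not state this lemma or carry out the diagonalization, and your ``Main obstacle'' paragraph raises difficulties (universality, robustness of coding) without resolving them. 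Two incidental claims --- that $\sleq$-images of $\aleph_1$-dense orders are $\aleph_1$-dense, and that the collapsed fibres are themselves $\aleph_1$-dense --- are false (a fibre can be any convex piece), though they are not load-bearing.

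The paper's route is more economical: it performs a \emph{single} $\cc$-length Baumgartner construction of pairwise disjoint pieces $A_\alpha\subseteq\mathbb{R}$, arranged so that for each canonical monotone map $f_\xi$ either every late $A_\alpha$ meets $\mathbb{R}\setminus\dom(f_\xi)$, or every late $A_\alpha$ contains a point with a ``new'' $f_\xi$-image. One then sets $A(X)=\bigcup_{\alpha\in X}A_\alpha$ for $X$ ranging over a fixed family of $\cc^{+}$ cofinal subsets of $\cc$ with pairwise bounded intersection. Baumgartner's embedding lemma shows no order of size $\cc$ embeds into two distinct $A(X)$'s, and the gap lemma together with the special choice of pieces shows no $A(X)$ has an uncountable $\sleq$-image of size below $\cc$; this yields the $\cc^{+}$ basis elements without any $\cc^{+}$-length recursion.
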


Let $\sF$ be the family of epimorphisms between subsets of reals. That
is, $f \in \sF$ iff $\dom(f),\ran(f) \seb \mathbb{R}$, and for all $x
< y$ in $\dom(f)$, $f(x) \le f(y)$.  For $f \in \sF$ let $f
\!\uparrow$ denote the set of $g \in \sF$ such that $f \seb g$. Define
$\bar{f}$ as the set of $(x,y) \in \mathbb{R}^{2}$ such that for all
$g \in f\!\uparrow$ with $x \in \dom(g)$, $g(x) = y$. Morally, we want
$\bar{f}$ to be the largest monotone function whose values are
uniquely determined by $f$. In practice, it will be convenient to have the
following characterization.

\begin{lemma}\label{lem:hatf-char}
  For every $f \in \sF$ and $(x,y) \in \mathbb{R}^{2}$, the following are equivalent:
  \begin{enumerate}[label=(\alph*)]\itemsep0em
    \item $(x,y) \in \bar{f}$.
    \item $\sup(f (\dom(f) \cap \left]-\infty,x\right])) =
    y = \inf(f (\dom(f) \cap \left[x,+\infty\right[))$.
  \end{enumerate}
\end{lemma}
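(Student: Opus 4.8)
The plan is to pin down explicitly which real numbers an extension of $f$ can take as its value at a fixed point $x$. Set $P := \dom(f) \cap \left]-\infty,x\right]$ and $S := \dom(f) \cap \left[x,+\infty\right[$, and let $s := \sup f(P)$ and $i := \inf f(S)$, computed in $[-\infty,+\infty]$ (so $s = -\infty$ when $P = \es$ and $i = +\infty$ when $S = \es$). Since every $a \in P$ and $b \in S$ satisfy $a \le x \le b$ and hence $f(a) \le f(b)$, each element of $f(P)$ lies below each element of $f(S)$, so $s \le i$. The core of the argument is the identity
\[
  V_x \;:=\; \{\, g(x) : g \in f\!\uparrow,\ x \in \dom(g)\,\} \;=\; [s,i] \cap \mathbb{R}.
\]
Granting this, the lemma is immediate: by definition $(x,y) \in \bar{f}$ iff $V_x \seb \{y\}$, and since $V_x = [s,i]\cap\mathbb{R}$ is an interval of reals this happens exactly when $V_x = \{y\}$, i.e. when $s = y = i$, which is condition~(b). (The sole borderline case is $V_x = \es$, forced precisely when $\dom(f)$ omits one side of $x$ entirely while $f$ is unbounded on the other side; there $(x,y) \in \bar{f}$ holds vacuously for every $y$, and one reads $\bar{f}$ as undefined at such an $x$.)

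For the inclusion $V_x \seb [s,i] \cap \mathbb{R}$: given $g \in f\!\uparrow$ with $x \in \dom(g)$, any $a \in P$ satisfies $a \le x$ in $\dom(g)$, so $f(a) = g(a) \le g(x)$ by monotonicity of $g$ and $g \supseteq f$ (with equality if $a = x$); hence $g(x) \ge s$, and symmetrically $g(x) \le i$, while $g(x) \in \ran(g) \seb \mathbb{R}$. For the reverse inclusion, fix a real $v \in [s,i]$. If $x \in \dom(f)$ then $x \in P \cap S$, so $f(x) \le s \le v \le i \le f(x)$ and thus $v = f(x)$, witnessed by $g := f$. If $x \notin \dom(f)$, take $g := f \cup \{(x,v)\}$, which extends $f$ and remains a function; it is monotone because any $a \in \dom(f)$ with $a < x$ lies in $P$, whence $f(a) \le s \le v$, and any $b \in \dom(f)$ with $b > x$ lies in $S$, whence $v \le i \le f(b)$. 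So $g \in f\!\uparrow$, $x \in \dom(g)$ and $g(x) = v$, i.e. $v \in V_x$.

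It then remains to read off the equivalence. If (b) holds then $s = y = i$ is a real, so $V_x = [y,y] \cap \mathbb{R} = \{y\}$ and a fortiori $V_x \seb \{y\}$, i.e. $(x,y) \in \bar{f}$. Conversely, assume $(x,y) \in \bar{f}$, so $V_x \seb \{y\}$; outside the degenerate case noted above $V_x$ is then the singleton $\{y\}$, so $y \in [s,i]$, and we can have neither $s < y$ (a real strictly between $s$ and $y$ would lie in $V_x = \{y\}$) nor $y < i$, forcing $s = y = i$, which is (b). I expect the only points requiring genuine care to be the case split on whether $x \in \dom(f)$ when producing the witness $g$, and the bookkeeping around the empty-$V_x$ situation; neither is a substantive obstacle, the whole content of the lemma being the interval identity above.
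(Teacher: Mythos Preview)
Your proof is correct and follows essentially the same approach as the paper's: both hinge on the observation that the possible values of $g(x)$ for $g \in f\!\uparrow$ are exactly the reals in $[s,i]$, and that this set is a singleton precisely when $s=y=i$. Your presentation is slightly more systematic in that you isolate and prove the interval identity $V_x = [s,i]\cap\mathbb{R}$ up front, whereas the paper argues the two implications directly; you also explicitly flag the degenerate $V_x=\varnothing$ case, which the paper leaves implicit.
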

\begin{proof}
  Assume that
  $(b)$ fails. Then there
  $y_{0} < y_{1}$ such that $y \in [y_{0},y_{1}]$,
  $f (\dom(f) \cap \left]-\infty,x\right]) \seb \left]-\infty, y_{0}\right]$ and
  $f (\dom(f) \cap \left[x,+\infty\right]) \seb \left[y_{1}, +\infty\right[$.
  Note then that $x \notin \dom(f)$.
  Then one easily sees that both $f \cup \{(x,y_{0})\}$ and
  $f \cup \{(x,y_{1})\}$ are in $f\!\uparrow$, which implies
  that $(x,y) \notin \bar{f}$, so $(a)$ fails.

  Assume $(b)$. If $x \in \dom(f)$ then clearly $y = f(x)$ and thus
  $(a)$ holds. Assume
  that $x \notin \dom(f)$. Then it
  should be clear that $f \cup \{(x,y)\} \in f \!\uparrow$.
  Now suppose that $(a)$ fails, and thus take $g \in f\!\uparrow$ such that
  $y' := g(x) \neq y$. We may assume that $y < y'$. By $(b)$ there is
  $x_{0} \in \dom(f)$ such that $x < x_{0}$ and $y \le f(x_{0}) < y'$. But since $g$ extend $f$,
  we have that $x < x_{0}$ and $g(x_{0}) < g(x) = y'$, which contradicts that
  $g$, being in $f\!\uparrow$, is monotone.
\end{proof}

Note that this implies that for every $f \in \sF$,
$\bar{f}$ is indeed a monotone function extending $f$.

The following three lemmas are essentially due to Baumgartner (see
\cite{Baumgartner1982}). We give a proof of the following because
Baumgartner only deals with monotone injective functions, and
passing to all monotone functions requires a small tweak.

\begin{lemma}
  For every $f \in \sF$ there is a countable $g \seb f$ such that
  $\bar{g} \in f\!\uparrow$. Moreover, if $f$ is injective then so is $\bar{g}$.
\end{lemma}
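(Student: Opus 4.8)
The plan is to take $g$ to be the restriction of $f$ to a suitably chosen countable set $Q\seb\dom(f)$, built from two pieces: the ``one-sided jump points'' of $f$, and a countable dense subset of the graph of $f$. \emph{Step 1: the jump points are countable.} Let $S$ be the set of $x\in\dom(f)$ with $\sup f(\dom(f)\cap(-\infty,x))\neq f(x)$ or $\inf f(\dom(f)\cap(x,+\infty))\neq f(x)$, using the conventions $\sup\es=-\infty$ and $\inf\es=+\infty$. At most one $x\in S$ has $\dom(f)\cap(-\infty,x)=\es$ (namely $\min\dom(f)$, if it exists) and at most one has $\dom(f)\cap(x,+\infty)=\es$; set these aside. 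For any other left-jump point $x$ set $U_x:=(\sup f(\dom(f)\cap(-\infty,x)),\,f(x))$, a nonempty open interval of $\mathbb{R}$; if $x<x'$ are two such points then $x\in\dom(f)\cap(-\infty,x')$, so $f(x)\le\sup f(\dom(f)\cap(-\infty,x'))$ and $U_x$ lies entirely below $U_{x'}$. Hence the $U_x$ are pairwise disjoint and there are only countably many of them, and symmetrically for the right-jump points, so $S$ is countable.

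\emph{Step 2: the dense piece and the verification.} Since $\mathbb{R}^{2}$ is a separable metric space, so is the graph $G:=\{(x,f(x)):x\in\dom(f)\}$; fix a countable dense $D\seb G$, let $Q_0:=\{x:(x,f(x))\in D\}$, put $Q:=S\cup Q_0$ and $g:=f|_{Q}$, a countable subset of $f$. As $\bar{g}$ is automatically a monotone function, to get $\bar{g}\in f\!\uparrow$ it suffices to check $f\seb\bar{g}$, i.e.\ by Lemma~\ref{lem:hatf-char} that $\sup g(\dom(g)\cap(-\infty,x])=f(x)=\inf g(\dom(g)\cap[x,+\infty))$ for every $x\in\dom(f)$. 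If $x\in\dom(g)$ this is immediate from $g(x)=f(x)$ and monotonicity of $g\seb f$. If $x\notin\dom(g)$ then $x\notin S$, so $\dom(f)\cap(-\infty,x)\neq\es$ and $\sup f(\dom(f)\cap(-\infty,x))=f(x)$; given $\varepsilon>0$ pick $z'\in\dom(f)$ with $z'<x$ and $f(z')>f(x)-\varepsilon/2$, and then, using density of $D$, pick $(z,f(z))\in D$ within distance $\min(\varepsilon/2,\,x-z')$ of $(z',f(z'))$, so that $z<x$, $z\in\dom(g)$ and $f(z)>f(x)-\varepsilon$. Thus $\sup g(\dom(g)\cap(-\infty,x))=f(x)$, which (as $x\notin\dom(g)$) equals $\sup g(\dom(g)\cap(-\infty,x])$; the infimum-equality is symmetric. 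Hence $f\seb\bar{g}$, so $\bar{g}\in f\!\uparrow$.

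\emph{Step 3: the ``moreover''.} Suppose $f$ is injective. I would enlarge $Q$ by also adjoining the endpoints of the countably many maximal gaps of $\dom(f)$ that lie in $\dom(f)$, and, running Steps~1--2 for the (still injective and monotone) function $f^{-1}$, a countable set $Q_1\seb\ran(f)$ with $\overline{f^{-1}|_{Q_1}}\in f^{-1}\!\uparrow$, throwing $(f^{-1}|_{Q_1})^{-1}$ into $g$ as well. One then verifies that this $g$ has $\bar{g}$ injective: if $\bar{g}(x)=\bar{g}(x')=y$ with $x<x'$, then monotonicity together with injectivity of $g\seb f$ forces $\dom(f)\cap[x,x']$ to consist of at most one point $x_0$, with $f(x_0)=y$, the rest of $[x,x']$ lying inside a single maximal gap $(a,b)$ of $\dom(f)$ whose endpoints in $\dom(f)$ now belong to $\dom(g)$; evaluating the defining supremum and infimum of $\bar g$ at an interior point of $(a,b)$, using the values of $g$ at $a$ and $b$ and the fact that $g^{-1}$ determines $f^{-1}$ in the same way, yields a contradiction. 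This last bookkeeping across the gaps of $\dom(f)$ is precisely Baumgartner's ``small tweak'' and is the only genuinely delicate point of the argument; everything up to it is the routine separability computation of Steps~1--2. So the main obstacle is not the existence of a determining countable $g$, but keeping $\bar g$ injective once one has to contend with the gaps of $\dom(f)$.
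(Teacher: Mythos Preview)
Your Steps~1 and~2 give a correct proof of the main assertion. The paper's argument is organised differently: it chooses countable $A\seb\dom(f)$ and $B\seb\ran(f)$ so that every point of $\dom(f)$ (resp.\ $\ran(f)$) is either in $A$ (resp.\ $B$) or a two-sided limit of it, then enlarges to absorb the countably many nontrivial fibres and to arrange $f(A)=B$, setting $g=f|_A$. Your single dense set in the graph together with the jump set $S$ is a clean alternative and buys a slightly shorter verification.

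Step~3, however, has a real gap --- and in fact the ``moreover'' clause is false as literally stated, so no argument along these lines can succeed. Take $\dom(f)=(-1,0)\cup(1,2)$ with $f(x)=x$ on $(-1,0)$ and $f(x)=x-1$ on $(1,2)$; then $f$ is injective, yet for \emph{every} countable $g\seb f$ with $f\seb\bar g$ one has $\bar g\equiv 0$ on all of $[0,1]$. Indeed $f\seb\bar g$ forces (via the characterisation applied at points of $(-1,0)$) that $\dom(g)\cap(-1,0)$ accumulates at $0$ with $g$-values tending to $0$, and symmetrically on the other side, so the characterisation then gives $\bar g(t)=0$ for every $t\in[0,1]$. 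Your proposed enlargements do not help here: the gap $[0,1]$ has no endpoint lying in $\dom(f)$, and running Steps~1--2 on $f^{-1}$ only throws another countable subset of $f$ into $g$. The ``yields a contradiction'' line in your sketch is exactly where this example escapes. (For what it is worth, the paper's own one-sentence justification of the ``moreover'' --- that the characterisation lemma makes $\bar g$ injective whenever $g$ is --- fails on the same example; what the downstream application actually needs is Baumgartner's original fact that there are only $\cc$ many maximal partial monotone injections $\mathbb{R}\to\mathbb{R}$, which is true but is not what this lemma asserts.)
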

\begin{proof}
  Let $A$ be a countable subset of $\dom(f)$
  such that every $x \in \dom(f)$ is either in
  $A$, or is a limit point (in $\mathbb{R}$) of both
  $\{a \in A : a < x\}$ and $\{a \in A : x < a\}$. Let $B \seb \ran(f)$ be
  defined similarly. Now using the ccc of $\mathbb{R}$ and
  the fact that $f$ is monotone, one sees that there are only
  countably many $y \in \ran(f)$ such that $f^{-1}(y)$ has
  more than one element. Therefore, by enlarging $A$ and $B$ we may assume
  $B$ contains every such $y$, and $A$ contains the endpoints
  of $f^{-1}(y)$ if it has any. Finally, enlarging $A$ and $B$ again
  we may assume that $f(A) = B$. We claim that
  $g := f|_{A}$ works.

  Note that the previous lemma implies
  that $\bar{g} \in \sF$, and that $\bar{g}$ is injective whenever
  $g$ is. To prove that $f \seb \bar{g}$ is enough to show that
  if $x \in \dom(f) \setm A$, and $y = f(x)$, then
  $(x,y) \in \bar{g}$.

  Case 1: $|f^{-1}(y)| > 1$. Observe that $x$ cannot be the left or right
  endpoint of $f^{-1}(y)$, because by construction this would imply that
  $x \in A$. Thus one finds $x_{0} < x < x_{1}$ in $A \cap f^{-1}(y)$.
  So clearly $(b)$ of the previous lemma holds for $g$ and $(x,y)$, and then
  $(x,y) \in \bar{g}$.

  Case 2: $f^{-1}(y) = \{x\}$. Since $x \notin A$, and $f(A) = B$,
  we conclude that $y \notin B$. Then by definition of $B$,
  we have that $\sup(B \cap \left]-\infty,y\right[) =
  \inf(B \cap \left]y,+\infty\right[)$, from which it follows that both
  are equal to $y$. Now, using the fact that $f(A) = B$ again, we
  have that $g$ and $(x,y)$ satisfy $(b)$ of the previous lemma, and
  thus $(x,y) \in g$.
\end{proof}

Since there are only $\cc$ many countable elements in $\sF$, the previous
implies that there is a sequence $\tup{f_{\alpha} : \alpha < \cc}$ of elements
in $\sF$ such that for every $f \in \sF$, there is $\alpha$ such that
$f \seb f_{\alpha}$. Moreover, if $f$ is injective then the $\alpha$ can be
chosen so that $f_{\alpha}$ is also injective.

We now construct by recursion
a sequence $\tup{A_{\alpha} : \alpha < \cc}$ of pairwise disjoint
subsets of $\mathbb{R}$. The construction closely follows
that of~\cite{Baumgartner1982}, we only generalize it as to allow
adding more than one (but not too many) elements at each step:
At step $\alpha$, assume that $A_{\xi}$ has been defined for every $\xi < \alpha$,
that they are pairwise disjoint, and that for each $\xi < \alpha$,
$1 \le |A_{\xi}| \le \max(\aleph_{0},|\xi|)$. Note then
that $U_{\alpha} := \bigcup_{\xi < \alpha}A_{\alpha}$ has cardinality less
than $\cc$. Let $Z_{\alpha}$ be the closure of $U_{\alpha}$ under $\mathrm{id}$,
and every $f_{\xi}$ and $f_{\xi}^{-1}$ (when $f_{\xi}$ is injective)
for $\xi < \alpha$. Then $U_{\alpha} \seb Z_{\alpha}$ and $|Z_{\alpha}| < \cc$.
Now we let $A_{\alpha}$ be any subset of $\mathbb{R}$ disjoint from $Z_{\alpha}$,
and such that $1 \le |A_{\alpha}| \le \max(\aleph_{0},|\alpha|)$. We later
specify a particular way to chose the $A_{\alpha}$ as to achieve a particular
property, but for the following two lemmas any choice satisfying the above works.

For $X \seb \cc$, let $A(X) := \bigcup_{\alpha \in X}A_{\alpha}$. The following
lemma (and its proof) is a straightforward modifications of Lemma 2.3
in~\cite{Baumgartner1982}.

\begin{lemma}
  Suppose that $Y \seb \cc$, and that $B \seb \mathbb{R}$ is such that
  $\{\alpha < \cc : B \cap A_{\alpha} \neq \es\} \setm Y$ is cofinal.
  Then $B \neleq A(Y)$.
\end{lemma}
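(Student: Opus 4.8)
The plan is to argue by contradiction. Suppose $h \colon B \to A(Y)$ is a monotone injection witnessing $B \eleq A(Y)$. Viewing $h$ as an element of $\sF$ (it is a weakly increasing injective partial function with $\dom(h) = B \seb \mathbb{R}$ and $\ran(h) \seb A(Y) \seb \mathbb{R}$), the enumeration $\tup{f_{\alpha} : \alpha < \cc}$ provides an $\alpha_{0} < \cc$ with $h \seb f_{\alpha_{0}}$ and $f_{\alpha_{0}}$ injective. The key idea is then to use the cofinality hypothesis to locate a ``late'' index $\alpha > \alpha_{0}$ at which $B$ meets $A_{\alpha}$ while $\alpha \notin Y$, and to derive a contradiction from the disjointness clauses $A_{\gamma} \cap Z_{\gamma} = \es$ together with the fact that $Z_{\gamma}$ is closed under both $f_{\alpha_{0}}$ and $f_{\alpha_{0}}^{-1}$ for every $\gamma > \alpha_{0}$.

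Concretely: since $\{\alpha < \cc : B \cap A_{\alpha} \neq \es\} \setm Y$ is cofinal, I would pick $\alpha$ in this set with $\alpha > \alpha_{0}$ and choose $b \in B \cap A_{\alpha}$. Then $h(b) = f_{\alpha_{0}}(b)$, so $h(b) \in A(Y)$ lies in $A_{\beta}$ for some $\beta \in Y$, and $\beta \neq \alpha$ because $\alpha \notin Y$. Now split into cases. If $\beta < \alpha$ then $h(b) \in A_{\beta} \seb U_{\alpha} \seb Z_{\alpha}$; since $\alpha_{0} < \alpha$ and $f_{\alpha_{0}}$ is injective, $Z_{\alpha}$ is closed under $f_{\alpha_{0}}^{-1}$, and as $h(b) \in \ran(f_{\alpha_{0}})$ we get $b = f_{\alpha_{0}}^{-1}(h(b)) \in Z_{\alpha}$, contradicting $b \in A_{\alpha}$ and $A_{\alpha} \cap Z_{\alpha} = \es$. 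If instead $\beta > \alpha$ then $b \in A_{\alpha} \seb U_{\beta} \seb Z_{\beta}$; since $\alpha_{0} < \beta$ and $b \in \dom(f_{\alpha_{0}})$, closure of $Z_{\beta}$ under $f_{\alpha_{0}}$ gives $h(b) = f_{\alpha_{0}}(b) \in Z_{\beta}$, contradicting $h(b) \in A_{\beta}$ and $A_{\beta} \cap Z_{\beta} = \es$. Either way we obtain a contradiction, so no such $h$ exists.

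I do not anticipate a serious obstacle; the construction has been arranged precisely to make this go through. The only points requiring care are that the argument must succeed regardless of which side of $\alpha$ the index $\beta$ falls on — which is exactly why one needs $Z_{\gamma}$ closed under $f_{\alpha_{0}}$ \emph{and} under $f_{\alpha_{0}}^{-1}$ — and that $\alpha$ must be chosen strictly above $\alpha_{0}$, so that both $\alpha$ and (in the case $\beta > \alpha$) also $\beta$ exceed $\alpha_{0}$; this is where cofinality, rather than mere nonemptiness, of $\{\alpha : B \cap A_{\alpha} \neq \es\} \setm Y$ is used.
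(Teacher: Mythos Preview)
Your proof is correct and is exactly the Baumgartner-style argument the paper defers to (the paper does not give a proof, citing instead Lemma~2.3 of \cite{Baumgartner1982}). The contradiction-by-closure-of-$Z_{\gamma}$ argument, with the two cases $\beta < \alpha$ and $\beta > \alpha$, is precisely how this goes.
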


In~\cite{Baumgartner1982} (Theorem 1.2 (a)), it is proven that there is
a family $S$ of size $\cc^{+}$ of subsets of $\cc$ such that for every
$X \neq Y$ in $S$, $|X| = |Y| = \cc$ and $\sup(X\cap Y) < \cc$. For
the rest of this section we fix such a family. The following
is Theorem 2.4 (b) of \cite{Baumgartner1982}. We give the proof because
is not given there.

\begin{lemma}
  Let $X \neq Y$ be in $S$. If $B \eleq A(X), A(Y)$, then $|B| < \cc$.
\end{lemma}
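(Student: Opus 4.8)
The plan is to argue by contradiction. Suppose, for contradiction, that $|B| = \cc$; I would then derive a contradiction by applying the previous lemma to a suitable isomorphic copy of $B$ sitting inside $A(X)$.

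First I would fix a monotone injection $g \colon B \to A(X)$ witnessing $B \eleq A(X)$ and put $B' := g(B) \seb A(X)$, so that $B'$ is order-isomorphic to $B$ and hence $|B'| = \cc$. Set $W := \{\alpha < \cc : B' \cap A_{\alpha} \neq \es\}$. Since the $A_{\alpha}$ are pairwise disjoint and $A(X) = \bigcup_{\alpha \in X} A_{\alpha}$, we get $W \seb X$. Moreover $W$ is cofinal in $\cc$: otherwise $W$ would be bounded by some $\eta_{0} < \cc$, whence $B' \seb \bigcup_{\alpha \in X \cap \eta_{0}} A_{\alpha}$; but, using $|A_{\alpha}| \le \max(\aleph_{0},|\alpha|)$ for $\alpha < \eta_0$, this union has size at most $\max(\aleph_{0},|\eta_{0}|) < \cc$, contradicting $|B'| = \cc$.

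Next I would invoke that $X \neq Y$ both lie in $S$, so $\delta := \sup(X \cap Y) < \cc$. Since $W \seb X$, every $\alpha \in W$ with $\alpha > \delta$ lies outside $Y$ (else $\alpha \in X \cap Y$ would force $\alpha \le \delta$); hence $W \setm Y \supseteq W \setm (\delta + 1)$ is cofinal in $\cc$. Now I would apply the previous lemma with $B'$ playing the role of ``$B$'' and the same $Y$: since $\{\alpha < \cc : B' \cap A_{\alpha} \neq \es\} \setm Y = W \setm Y$ is cofinal, the lemma yields $B' \neleq A(Y)$. On the other hand $B \eleq A(Y)$ by hypothesis and $B \cong B'$, so $B' \eleq A(Y)$, a contradiction. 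Therefore $|B| < \cc$.

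I do not expect a genuine obstacle here. The only step requiring any care is the cardinality count showing that $W$ is cofinal, which is exactly where the size bound $|A_{\alpha}| \le \max(\aleph_{0},|\alpha|)$ built into the construction is used; one should also note that the previous lemma is being applied to the copy $B'$ rather than to $B$ itself, which is harmless since it is an order-theoretic property. In particular, this argument needs neither the enumeration $\tup{f_{\alpha} : \alpha < \cc}$ nor the closure properties of the sets $Z_{\alpha}$, since those were already absorbed into the proof of the previous lemma.
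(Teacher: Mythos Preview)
Your proof is correct and follows essentially the same argument as the paper: assume $|B|=\cc$, pass to a copy of $B$ inside $A(X)$, use the size bound $|A_{\alpha}|\le\max(\aleph_{0},|\alpha|)$ to see that the set of $\alpha$ with $B\cap A_{\alpha}\neq\es$ is cofinal, then use $\sup(X\cap Y)<\cc$ to conclude it remains cofinal off $Y$, and apply the previous lemma. The only difference is presentational: the paper writes ``we may assume $B\seb A(X)$'' where you explicitly name the copy $B'=g(B)$, and you spell out the cofinality and cardinality computations that the paper leaves implicit.
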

\begin{proof}
  Suppose that $|B| = \cc$ and that $B \eleq A(X)$, we may assume in fact that
  $B \seb A(X)$. Since for each $\alpha$,
  $|A_{\alpha}| < \max(\aleph_{0},|\alpha|)$, we conclude that
  $X' := \{\alpha < \cc : B \cap A_{\alpha} \neq \es\}$ is cofinal. Since
  $\sup(X \cap Y) < \cc$, $X' \setm Y$ is also cofinal, so by
  the previous lemma, $B \neleq A(Y)$.
\end{proof}

Note that this already implies Theorem~\ref{thm:real-basis} under $\CH$,
since any $\sleq$-basis is an $\eleq$-basis. What we show now, is that
for a particular way of choosing the $A_{\alpha}$ we can achieve the
following property: for $X \seb \cc$ and $B \seb \mathbb{R}$,
if $\aleph_{0} < |B| < \cc = |X|$, then $A(X) \nsgeq B$.
Together with the previous lemma, this immediately
yields a proof of Theorem~\ref{thm:real-basis}.

Let $L$ be a linear order. Consider a pair
of subsets $(X,Y)$ such that $X$ is exactly the
set of lower bounds of $Y$, and $Y$ is the set of
upper bounds of $X$. We furthermore ask $X$ and
$Y$ to be nonempty. Such a pair is called a \emph{gap}
of $L$ if $X \cap Y = \es$. Note that in this case
$X$ has no right endpoint, and $Y$ has no left endpoint.
Let $G(L)$ be the set of gaps of $L$. This is naturally
linearly ordered by letting $(X,Y) \le (X',Y')$ iff
$X \seb X'$. By taking preimages the following
is easily proved (or see \cite[Proposition~5.3]{CamerloEtAl2019}).

\begin{proposition}
  If $B \sleq A$, then $G(B) \eleq G(A)$.
\end{proposition}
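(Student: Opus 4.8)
The plan is to take preimages under an epimorphism, as the parenthetical remark suggests. Assume $B \neq \es$ (if $B = \es$ then $G(B) = \es$ and there is nothing to prove), and fix an epimorphism $f : A \sur B$. For a gap $(X,Y)$ of $B$ I would set $\Phi(X,Y) := (f^{-1}(X), f^{-1}(Y))$ and argue that $\Phi$ is an order embedding of $G(B)$ into $G(A)$.

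First I would record that a gap $(X,Y)$ of a linear order $L$ always satisfies $X \cup Y = L$: since $X$ (being the set of lower bounds of $Y$) is downward closed, any $b \in L$ that fails to be an upper bound of $X$ lies strictly below some element of $X$ and hence belongs to $X$; so $b \notin X$ forces $b$ to be an upper bound of $X$, i.e.\ $b \in Y$. Consequently $f^{-1}(X)$ and $f^{-1}(Y)$ partition $A$; monotonicity of $f$ makes $f^{-1}(X)$ downward closed and $f^{-1}(Y)$ upward closed; and surjectivity makes both nonempty. From the partition it is then immediate that $f^{-1}(X)$ is exactly the set of lower bounds of $f^{-1}(Y)$ and symmetrically. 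The one point that needs the full strength of ``epimorphism'' is that this cut is not realized on either side: if $a_{0}$ were the maximum of $f^{-1}(X)$, then, choosing (using that $X$ has no maximum in $B$) some $b \in X$ with $f(a_{0}) <_{B} b$ and some $a \in f^{-1}(b)$, we would get $a \in f^{-1}(X)$, hence $a \le_{A} a_{0}$, hence $b = f(a) \le_{B} f(a_{0})$, a contradiction; symmetrically $f^{-1}(Y)$ has no minimum. Thus $\Phi(X,Y) \in G(A)$.

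Next I would check that $\Phi$ is an order embedding, which amounts to showing that for gaps $(X,Y),(X',Y')$ of $B$ one has $X \seb X'$ iff $f^{-1}(X) \seb f^{-1}(X')$. The forward implication is trivial; for the converse, if $b \in X$ then any $a \in f^{-1}(b)$ lies in $f^{-1}(X) \seb f^{-1}(X')$, so $b = f(a) \in X'$. Since the order on gaps is inclusion of first coordinates, this says precisely that $(X,Y) \le (X',Y')$ iff $\Phi(X,Y) \le \Phi(X',Y')$; in particular distinct gaps of $B$ (which necessarily have distinct first coordinates) have distinct images, so $\Phi$ is injective. Hence $G(B) \eleq G(A)$.

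The only real obstacle, minor as it is, is the verification that the preimage of a gap is again a gap rather than a cut attained on one side; everything else is routine bookkeeping about downward- and upward-closed sets, and I would leave it to the reader.
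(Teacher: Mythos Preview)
Your proof is correct and follows exactly the approach the paper indicates (``by taking preimages''). The only cosmetic quibble is that the sentence ``from the partition it is then immediate that $f^{-1}(X)$ is exactly the set of lower bounds of $f^{-1}(Y)$'' tacitly uses that $f^{-1}(Y)$ has no minimum, which you establish in the next sentence; swapping the order of those two steps would make the exposition cleaner, but the logic is all there.
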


\begin{lemma}\label{prop:gaps}
  Let $A,B \seb \mathbb{R}$. If  $A \sgeq B$ and $|\mathbb{R}\setm A| < \cc$,
  then $B$ is countable or $|B| = \cc$.
\end{lemma}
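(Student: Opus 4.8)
The plan is to reduce the statement to a cardinality estimate on gaps, via the proposition stated just above this lemma ($B\sleq A$ implies $G(B)\eleq G(A)$).

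First I would bound $|G(A)|$. For \emph{any} $L\seb\mathbb{R}$ and any gap $(X,Y)$ of $L$, the real number $s_{X,Y}:=\sup_{\mathbb{R}}X$ is well defined, since $X\neq\es$ is bounded above by every element of $Y\neq\es$; and $s_{X,Y}\notin L$, for otherwise $s_{X,Y}$ would be the largest element of $X$ or the least element of $Y$, contradicting the requirement that in a gap $X$ has no right endpoint and $Y$ has no left endpoint. Moreover $(X,Y)\mapsto s_{X,Y}$ is injective on $G(L)$, because one checks $X=L\cap\left]-\infty,s_{X,Y}\right[$. Hence $|G(L)|\le|\mathbb{R}\setm L|$, and in particular $|G(A)|\le|\mathbb{R}\setm A|<\cc$. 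Since $A\sgeq B$ is the same as $B\sleq A$, the cited proposition yields $G(B)\eleq G(A)$, so $|G(B)|<\cc$.

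It now suffices to prove: if $B\seb\mathbb{R}$ is uncountable and $|G(B)|<\cc$, then $|B|=\cc$. Let $\bar B$ be the closure of $B$ in $\mathbb{R}$. As $\bar B$ is an uncountable closed subset of $\mathbb{R}$, the perfect set theorem gives $|\bar B|=\cc$. I would split $\bar B\setm B$ into the points that are two-sided limit points of $B$ and those that are limit points of $B$ from exactly one side. The first kind are in bijection with the gaps of $B$: the map $s\mapsto\big(B\cap\left]-\infty,s\right[,\,B\cap\left]s,+\infty\right[\big)$ is a bijection from $\{s\in\mathbb{R}\setm B:s\text{ is a two-sided limit point of }B\}$ onto $G(B)$ (the verification again only uses that a gap has no endpoints in either part), so there are exactly $|G(B)|$ of them. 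The second kind are at most countably many: if $\left]x,x+\varepsilon\right[$ misses $B$ then it also misses $\bar B$, hence it lies inside one of the (countably many) connected components of the open set $\mathbb{R}\setm\bar B$, and $x$ is then the left endpoint of that component; and symmetrically on the left. Therefore
\[\cc=|\bar B|\le|B|+|G(B)|+\aleph_0=\max\{|B|,|G(B)|\},\]
the last equality holding because $B$, being uncountable, is infinite. Since $|G(B)|<\cc$ this forces $|B|\ge\cc$, and as $|B|\le|\mathbb{R}|=\cc$ we get $|B|=\cc$.

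The main obstacle is the bookkeeping in the last paragraph: one must check that the gaps of $B$ are \emph{precisely} the two-sided limit points of $B$ lying outside $B$ (so the count through $|G(B)|$ is an equality, not merely an inequality one way), and that the one-sided limit points form a countable set. Both are elementary, using only the no-endpoint clause in the definition of a gap and the fact that an open subset of $\mathbb{R}$ has only countably many connected components; the remainder is a direct cardinal computation, and the ``countable or of size $\cc$'' dichotomy emerges because the count pins $|B|$ to $\cc$ as soon as $B$ is uncountable.
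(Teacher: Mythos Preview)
Your argument is correct. One small point: the map $s\mapsto(B\cap\left]-\infty,s\right[,\,B\cap\left]s,+\infty\right[)$ from two-sided limit points of $B$ outside $B$ into $G(B)$ is an injection, but not in general a bijection --- a gap $(X,Y)$ with $\sup X<\inf Y$ is not in its range (e.g.\ take $B=(\mathbb{Q}\cap\left]0,1\right[)\cup(\mathbb{Q}\cap\left]2,3\right[)$). This does not matter: only the inequality $|\{\text{two-sided limit points}\}|\le|G(B)|$ is needed for your estimate $\cc=|\bar B|\le|B|+|G(B)|+\aleph_0$, so the ``equality'' you flag as the main obstacle is in fact a harmless overstatement and need not be checked.

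The paper's proof shares your first half (bounding $|G(A)|$ and hence $|G(B)|$ via the preceding proposition) but then diverges. Rather than invoking the perfect set theorem on $\bar B$, the paper takes a countable order-dense $Q\seb B$; since $B$ is uncountable, $Q$ contains a copy of $\mathbb{Q}$ and hence has $\cc$ gaps, while at most $|B|<\cc$ of them can be filled by points of $B\setm Q$, and each unfilled gap of $Q$ yields a gap of $B$ --- contradicting $|G(B)|<\cc$. Your route is topological and leans on Cantor--Bendixson; the paper's is purely order-theoretic and slightly more elementary. Both are short and both work.
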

\begin{proof}
  Let $f : A \sur B$ witness that $A \sgeq B$, so $f \in \sF$. Because
  $|\mathbb{R}\setm A| < \cc$, $\mathbb{R}= A \cup E$ where
  $|E| < \cc$. Since $\mathbb{R}$ is complete (i.e., has no
  gaps), $|G(A)| \le |E| < \cc$. Then the previous proposition yields
  $|G(B)| < \cc$ also. Now suppose towards a contradiction
  that $\aleph_{0} < |B| < \cc$.

  Let $Q \seb B$ be countable and (order) dense in $B$, i.e.,
  for all $x < y$ in $B$, $[x,y] \cap Q \neq \es$. Since
  $B$ is uncountable, $Q$ must contain a copy of $\mathbb{Q}$.
  Since $\mathbb{Q}$ has $\cc$ gaps, and $Q$ is countable,
  we deduce that $|G(Q)| = \cc$.

  For every $b \in B \setm Q$ let
  $g(b) := (\{q \in Q : q \le b\},\{q \in Q : b \le q\})$.
  Since $|B| < \cc$, $|G(Q) \setm \ran(g)| = \cc$,
  and it is clear that any $(X,Y) \in G(Q) \setm \ran(g)$
  maps to a gap in $B$, namely
  $(\{b \in B : b \le \inf(Y)\},\{b \in B : \sup(X) \le b\})$,
  and that this applications is injective. Thus
  $\cc \le |G(B)|$, a contradiction.
\end{proof}

We now describe the way to choose the $A_{\alpha}$.
At step $\alpha$ we add elements to $A_{\alpha}$
according to the following:
\begin{enumerate}\itemsep0em
  \item For each $\xi < \alpha$ we ask if
  $|\mathbb{R}\setm \dom(f_{\xi})| = \cc$. If the answer is yes,
  we add any element of $(\mathbb{R}\setm \dom(f_{\xi})) \setm Z_{\alpha}$
  to $A_{\alpha}$.
  \item For each $\xi < \alpha$ we ask if $|\ran(f_{\xi})| = \cc$.
  If the answer is yes, we add to $A_{\alpha}$ any
  $a \in \dom(f_{\xi}) \setm Z_{\alpha}$ such that $f_{\xi}(a) \notin Z_{\alpha}$.
\end{enumerate}
If neither step (1) or (2) adds elements to $A_{\alpha}$, simply let
$A_{\alpha}$ be singleton and disjoint from $Z_{\alpha}$. Note that clearly
$|A_{\alpha}| \le \max\{\aleph_{0},|\alpha|\}$.

\begin{lemma}
  Let $X \seb \cc$ be such that $|X| = \cc$, and
  $B \seb \mathbb{R}$ be such that $\aleph_{0} < |B| < \cc$.
  Then $A(X) \nsgeq B$.
\end{lemma}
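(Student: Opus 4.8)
The plan is to argue by contradiction: suppose $A(X) \sgeq B$ with $|X| = \cc$ and $\aleph_0 < |B| < \cc$. Fix an epimorphism $f : A(X) \sur B$; then $f \in \sF$, so by our enumeration there is some $\xi < \cc$ with $f \seb f_\xi$. Since $X$ has size $\cc$, we may pick $\alpha \in X$ with $\alpha > \xi$, so that at stage $\alpha$ the questions (1) and (2) were asked for $f_\xi$. The key observation is that $\ran(f) = B$ has size strictly less than $\cc$, hence $|\ran(f_\xi)| \ge |\ran(f)|$ need not be small — but what we really want is the complementary fact about $\dom$: we will show $|\mathbb{R} \setm \dom(f_\xi)| = \cc$. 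Indeed $\dom(f_\xi) \seb \mathbb{R}$ and, because $f_\xi$ is monotone with range $\ran(f_\xi)$, if $|\dom(f_\xi)| = \cc$ while... — here I pause, because this is where the main work lies.

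Let me restructure. The cleanest route is: first rule out the case $|\ran(f_\xi)| = \cc$, and then handle $|\ran(f_\xi)| < \cc$. \emph{Case A: $|\ran(f_\xi)| = \cc$.} Then at stage $\alpha$, step (2) added to $A_\alpha$ some $a \in \dom(f_\xi) \setm Z_\alpha$ with $f_\xi(a) \notin Z_\alpha$; in particular $a \notin A(X \cap \alpha)$ but $a \in A_\alpha \seb A(X)$, so $a \in \dom(f_\xi)$, and if $a \in \dom(f) = A(X)$ then $f(a) = f_\xi(a)$. Now $f_\xi(a) \notin Z_\alpha \supseteq \bigcup_{\eta < \alpha} A_\eta$, and since the $A_\beta$ for $\beta \ge \alpha$ can be arranged to avoid $f_\xi(a)$ too (this needs a small bookkeeping argument, or: $f_\xi(a) \notin Z_\alpha$ together with the recursion shows $f_\xi(a)$ lies outside $U_{\alpha}$ and we can ensure it never enters any later $A_\beta$), we get $f(a) = f_\xi(a) \notin A(X) \supseteq B$, contradicting $\ran(f) = B$. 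I will need to be careful that $f_\xi(a)$ is genuinely avoidable by all $A_\beta$, $\beta > \alpha$; the honest fix is that at stage $\alpha$ there are $\cc$ candidates for the element added by step (2), while only $<\cc$ of them can be "spoiled" by later stages, so a pruning/reflection argument secures one good choice globally — I would phrase this by choosing the whole sequence $\langle A_\alpha \rangle$ with enough foresight, exactly as sketched before the lemma.

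\emph{Case B: $|\ran(f_\xi)| < \cc$.} Then in particular $|B| = |\ran(f)| \le |\ran(f_\xi)| < \cc$, so $B$ is uncountable of size $< \cc$. Now I invoke Lemma~\ref{prop:gaps}: I claim $|\mathbb{R} \setm A(X)| < \cc$ fails in general, so I cannot apply it to $A(X)$ directly — instead I apply the gap argument to $f_\xi$ itself. Here I use step (1): if $|\mathbb{R}\setm \dom(f_\xi)| = \cc$, then at stage $\alpha$ step (1) adds $a \in (\mathbb{R}\setm\dom(f_\xi))\setm Z_\alpha$ to $A_\alpha$; arranging (again by the same foresight) that this $a$ never lands in the eventual $A(X)$-complement's relevant part — more precisely, $a \in A_\alpha \seb A(X) = \dom(f)$, but $a \notin \dom(f_\xi) \supseteq \dom(f)$, contradiction. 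Hence $|\mathbb{R}\setm\dom(f_\xi)| < \cc$, i.e.\ $\dom(f_\xi)$ is co-small. Then $\dom(f)$ differs from $\mathbb{R}$ by $< \cc$ points as well (since $A(X) = \dom(f) \seb \dom(f_\xi)$ — no wait, $\dom(f) = A(X)$ which may be much smaller). The correct conclusion: $f_\xi \in \sF$ has co-$\cc$-small domain and range of size $<\cc$; extend $f_\xi$ to $\bar{f_\xi}$, still monotone, domain co-small in $\mathbb{R}$. A monotone map from a co-$<\cc$ subset of $\mathbb{R}$ onto a set of size $<\cc$ and $>\aleph_0$ would, by the gap-counting of Lemma~\ref{prop:gaps} (whose proof only uses that the domain is co-$<\cc$ in $\mathbb{R}$, giving $|G(\dom)| < \cc$, hence $|G(\ran)| < \cc$, contradicting that an uncountable $\ran$ has $\cc$ gaps), be impossible. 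That contradiction finishes Case B, and the lemma follows.

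\textbf{Main obstacle.} The delicate point is the "foresight" bookkeeping in both cases: when step (1) or (2) of the construction puts an element $a$ (or a value $f_\xi(a)$) into $A_\alpha$, I need that element to be a genuine obstruction, i.e.\ to remain outside (resp.\ inside) $A(X) = \dom(f)$ for the \emph{final} sequence, not merely outside $Z_\alpha$. Since at each of the $\cc$ stages only one element per $\x<\alpha$ is added, while $Z_\alpha$ always has size $<\cc$ and $\mathbb{R}$ has size $\cc$, there is always room; the rigorous statement is that the recursion can be run so that the witnessing elements are pinned down in advance, and I expect this to be the one place requiring real care rather than routine verification. Everything else — the case split on $|\ran(f_\xi)|$, the appeal to Lemma~\ref{prop:gaps} and to the enumeration $\langle f_\alpha \rangle$, and the gap-counting — is bookkeeping on top of Baumgartner's machinery already set up above.
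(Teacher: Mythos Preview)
Your Case B is essentially correct and matches the paper: step (1) forces $|\mathbb{R}\setminus\dom(f_\xi)|<\cc$, and then Lemma~\ref{prop:gaps} applied to $f_\xi:\dom(f_\xi)\sur\ran(f_\xi)$ gives $|\ran(f_\xi)|\in\{\le\aleph_0,\cc\}$, contradicting $\aleph_0<|B|\le|\ran(f_\xi)|<\cc$. (No need for $\bar{f_\xi}$.)

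Your Case A, however, has a genuine gap. You produce a single $a\in A_\alpha\cap\dom(f_\xi)$ with $f_\xi(a)\notin Z_\alpha$ and then try to conclude $f(a)=f_\xi(a)\notin B$. The line ``$f_\xi(a)\notin A(X)\supseteq B$'' is simply false: there is no hypothesis that $B\subseteq A(X)$ --- $B$ is an \emph{arbitrary} uncountable set of reals of size ${<}\cc$, unrelated to the sets $A_\beta$. Your proposed ``foresight bookkeeping'' cannot repair this, because the sequence $\langle A_\alpha:\alpha<\cc\rangle$ is fixed once and for all before $X$ and $B$ are chosen; you cannot retroactively steer the construction away from a $B$ that is handed to you afterwards.

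The paper's argument in this case is quite different: rather than extracting one bad point, it uses step (2) at \emph{every} stage $\beta\in X':=X\setminus(\alpha+1)$ to pick $a_\beta\in A_\beta\cap\dom(f_\alpha)$ with $f_\alpha(a_\beta)\notin Z_\beta$. The key observation you are missing is that $Z_\gamma$ is \emph{closed under $f_\alpha$} (since $\alpha<\gamma$), so for $\beta<\gamma$ in $X'$ we have $a_\beta\in A_\beta\subseteq Z_\gamma$ and hence $f_\alpha(a_\beta)\in Z_\gamma$, while $f_\alpha(a_\gamma)\notin Z_\gamma$. Thus the values $f_\alpha(a_\beta)$ are pairwise distinct, giving $|B|=|\ran(f)|\ge|X'|=\cc$, the desired contradiction. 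The argument never needs to locate $B$ relative to the $A_\beta$'s at all.
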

\begin{proof}
  Suppose $A(X) \sgeq B$ and that $\aleph_{0} < |B| < \cc$. Let $f : A(X) \sur B$ be
  an epimorphism, and find $\alpha$ such that
  $f \seb f_{\alpha}$.

  First note that $|\mathbb{R} \setm \dom(f_{\alpha})| = \cc$ is impossible:
  suppose this is the case, then taking
  $\beta > \alpha$ in $X$, we see that the step (1) of the construction implies
  that $A(X)$ contains some $a \notin \dom(f_{\alpha})$,
  but then also $a \notin \dom(f)$ which is a contradiction. So,
  $|\mathbb{R} \setm \dom(f_{\alpha})| < \cc$, and thus by
  Proposition~\ref{prop:gaps}, and using the fact that $B$ is uncountable,
  we conclude that $|\ran(f_{\alpha})| = \cc$.

  Let $X' := X \setm (\alpha+1)$, and note that by hypothesis,
  $|X'| = \cc$. Now, for each $\beta \in X'$ pick
  $a_{\beta} \in A_{\beta} \cap \dom(f_{\alpha})$
  such that $f_{\alpha}(a_{\beta}) \notin Z_{\beta}$. This exists by
  step (2) of the construction. We claim that if $\beta < \gamma$
  are in $X'$, then $f_{\alpha}(a_{\beta}) \neq f_{\beta}(a_{\gamma})$.
  This finishes the proof because then
  $|f(A(X))| = |f_{\alpha}(A(X))| \ge |X'| = \cc > |B|$, which
  is a contradiction.

  To see this, note that since $\alpha < \beta < \gamma$,
  $a_{\beta} \in A_{\beta} \seb Z_{\gamma}$, thus $f_{\alpha}(a_{\beta}) \in Z_{\gamma}$,
  and by construction $f_{\alpha}(a_{\gamma}) \notin Z_{\gamma}$.
\end{proof}

\section{On a question on Countryman lines}\label{sec:irreversible}

In this section we take a detour from epimorphisms, and study
the following question on Countryman lines, which in our opinion is quite
natural, and to the best of our knowledge, has not been considered
in the literature.
Recall that if $C$ is a Countryman line then it enjoys the property
of having no two uncountable reverse isomorphic suborders.
We ask if this characterizes the Countryman lines in the class
of Aronszajn lines.

Usually a linear order $L$ is called \emph{reversible} if
$L \cong L^{\star}$, otherwise it is called irreversible. In
this spirit we make the following definition.

\begin{definition}\label{df:sirreversible}
  Let $L$ be a linear order. For a cardinal $\kappa$ we say that $L$ is
  \emph{$\kappa$-irreversible} if $|L| \ge \kappa$ and
  $L$ contains no two uncountable reverse
  isomorphic suborders of cardinality $\kappa$.
\end{definition}

Observe that the $\omega$-irreversible linear orders
are exactly the infinite ordinals and their reverses.
Easy examples of $\omega_{1}$-irreversible uncountable orders are
$\omega_{1}$, $\omega_{1}^{\star}$ and Countryman lines. As mentioned,
we are interested in the following question. Is there a non-Countryman
$\omega_{1}$-irreversible Aronszajn line?
We have not seen an example of such
a line in the literature.

Moore~\cite{Moore2009} proved that under
$\PFA$, every non-Countryman Aronszajn line contains both orientations
of any given Countryman line. Thus under $\PFA$ there is no such
example. It is well known that under $\PFA$ there are no Suslin trees, and that
any lexicographic ordering of a Suslin tree is a non-Countryman
Aronszajn line. Thus it is natural to look for an example that is
the lexicographic ordering of a Suslin tree.

Let $\mathbb{P}$ be the forcing for adding a single Cohen real. Here
we take it as the finite partial functions from $\omega$ to itself,
ordered by reverse inclusion. It is well known that if $G$ is a generic
filter for $\mathbb{P}$, then in $V[G]$ there is a Suslin tree. The
original proof is due to Shelah (see~\cite{Shelah1984}). We will
prove that in this same model there is an example of an Aronszajn
line with the desired properties.

\begin{theorem}\label{thm:sro_cohen_real}
  If $\mathbb{P}$ is the forcing for adding a single Cohen real, then in
  $V[G]$ there is a $\omega_{1}$-irreversible lexicographically ordered Suslin
  tree.
\end{theorem}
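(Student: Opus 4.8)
The plan is to produce the required line as $(T,<_{\lex})$ for a Suslin tree $T$ obtained from the Cohen real by Shelah's construction (\cite{Shelah1984}), with the construction refined so that $(T,<_{\lex})$ is $\omega_{1}$-irreversible. First note this suffices: the lexicographic order $(T,<_{\lex})$ of a Suslin tree $T$ is a well-known example of a (non-separable, ccc) Aronszajn line, and it is never Countryman, since an uncountable suborder of a Countryman line is again Countryman (restrict the cover of $C^{2}$ by chains in Lemma~\ref{lem:cline-fund}), whereas no Countryman line is a Suslin line — its square, being a countable union of chains, would be ccc, unlike the square of a Suslin line (\cite{Kunen1980}). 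So it is enough to produce, in $V[G]$, a Suslin tree whose lexicographic order is $\omega_{1}$-irreversible.

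Recall the shape of the construction: Shelah's argument can be cast as a recursion on the levels $\alpha<\omega_{1}$ of $T$, built in $V[G]$, in which the single Cohen real governs the choices made at limit stages — essentially which cofinal branches of $T_{<\alpha}$ to prolong and in which lexicographic arrangement. The key point is that these choices can be made "generic over $T_{<\alpha}$": the relevant requirement is, for a dense set of conditions in the countable poset $\mathbb{P}$ (with parameters in $V$), forced, so the single Cohen real meets it; and this genericity, via a reflection argument, is exactly what seals every uncountable antichain, making $T$ Suslin. Concretely: if $A\seb T$ is an uncountable antichain in $V[G]$ and $M\prec H(\omega_{2})^{V[G]}$ is countable with $A,T\in M$ and $\delta:=M\cap\omega_{1}$ in the (generic) club of levels chosen generically over their predecessors, then $A\cap M=A\cap T_{<\delta}$ is a maximal antichain of $T_{<\delta}$, and the generic choice of level $\delta$ puts every level-$\delta$ node above some element of it.

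The new ingredient is to strengthen this genericity so that it also defeats reversals, while staying within what a dense set of Cohen conditions can force. Observe that $(T,<_{\lex})$ fails to be $\omega_{1}$-irreversible exactly when there is an uncountable $X\seb T$ and an injection $g:X\to T$ reversing $<_{\lex}$ (so $g$ witnesses $X^{\star}\eleq T$). Suppose there were such $X,g$ in $V[G]$. Since $T$ is Suslin, $X$ contains an uncountable antichain of $T$; shrinking and using standard refinements we may assume $X$ itself is an uncountable antichain of $T$, $\aleph_{1}$-dense as a line, on which $g$ is still a reversing injection with $g[X]$ an antichain. Fix $M\prec H(\omega_{2})^{V[G]}$ countable with $T,X,g\in M$ and $\delta:=M\cap\omega_{1}$ in the generic club. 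By elementarity $X\cap M=X\cap T_{<\delta}$ is $<_{\lex}$-dense, cofinal and coinitial in $X$, similarly for $g[X]$, and $g$ reverses $<_{\lex}$ between $X\cap T_{<\delta}$ and $g[X]\cap T_{<\delta}$; moreover every $x\in X$ of level $\geq\delta$ has $g(x)$ also of level $\geq\delta$. Then the complementary interval of $T\setm T_{<\delta}$ containing $x\restriction\delta$ determines, via the reversed cut that $g$ induces on $X\cap T_{<\delta}$, the complementary interval that must contain $g(x)\restriction\delta$ — a fixed "mirror correspondence" between complementary intervals of $T\setm T_{<\delta}$ read off from objects in $M$. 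The contradiction is that the generic choice of level $\delta$ over $T_{<\delta}$ precludes any such correspondence being realized by actual level-$\delta$ nodes; this is precisely the property that has to be arranged, uniformly over all potential $g$, exactly as the sealing of antichains is arranged uniformly over all potential antichains.

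The crux, and main obstacle, is the combinatorial lemma that such a genericity notion for the level-$\delta$ choice — strong enough to rule out every mirror correspondence, yet forced by a dense subset of the countable poset $\mathbb{P}$ — can be maintained throughout the recursion. This is a linear-order counterpart of the rigidity phenomena for Cohen-generic Suslin trees; the extra difficulty over the antichain-sealing case is that an order-reversing embedding of $(T,<_{\lex})$ is not a tree map, so one must keep track of the lexicographic (left/right) data of the level choices, not merely of which branches are prolonged. Verifying, by a fusion argument carried out in $V$ over $\mathbb{P}$, that the Cohen poset leaves enough room for this is where the real work lies.
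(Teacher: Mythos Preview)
Your proposal is not a proof: it is an outline that explicitly defers the only nontrivial step. You reduce the problem to showing that a single Cohen real can, at each limit level $\delta$, be made ``generic enough'' to kill every potential mirror correspondence between complementary intervals of $T\setminus T_{<\delta}$, and then say that verifying this ``is where the real work lies.'' Nothing in your write-up indicates why the countable Cohen poset has enough room for this. The analogy with antichain-sealing is misleading: sealing an antichain at level $\delta$ is a local requirement (put each new node above some element of a fixed countable set), while defeating an order-reversal is a global coherence constraint on how the level-$\delta$ nodes are lexicographically threaded through the existing tree. You have not isolated a concrete dense-set condition in $\mathbb{P}$ that would do this, let alone shown that all such conditions can be met simultaneously by a fusion.

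The paper avoids this entirely by a different and much shorter route. Rather than Shelah's direct recursion, it uses Todorcevic's version: fix in $V$ a coherent tree $T\subseteq\omega^{<\omega_{1}}$ of finite-to-one functions, so that $(T,<_{\lex})$ is already Countryman and hence $\omega_{1}$-irreversible in $V$. The Suslin tree in $V[G]$ is $T^{c}:=\{c\circ t:t\in T\}$, where $c:\omega\to\omega$ is the Cohen generic. Irreversibility of $(T^{c},<_{\lex})$ is then a straightforward density argument: given a name for a reversing map on an uncountable $A\subseteq T^{c}$, one finds a condition $q$ and an uncountable $K\subseteq T$ with a ground-model $g:K\to T$ such that $q$ forces $\dot f(c\circ t)=c\circ g(t)$ for $t\in K$. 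After a combinatorial refinement (values below a fixed $n$ agree), one uses the $\omega_{1}$-irreversibility of the ground-model Countryman line to find $t<_{\lex}s$ in $K$ with $g(t)<_{\lex}g(s)$, and then extends $q$ on finitely many new inputs to force $c\circ t<_{\lex}c\circ s$ and $c\circ g(t)<_{\lex}c\circ g(s)$ simultaneously. The whole burden of irreversibility is carried by the ground-model Countryman structure; no fusion or level-by-level genericity beyond Todorcevic's original argument is needed.
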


Our proof will be a modest modification of Todorcevic's proof of Shelah's theorem
(see~\cite{Todorcevic1987}). For details
see~\cite[Proposition~20.7]{Halbeisen2017}.

Fix any uncountable and coherent $T \seb \omega^{<\omega_{1}}$ of finite-to-one
functions such as the one used in the proof of Theorem~\ref{thm:L_R}. As
mentioned in Lemma~\ref{lem:coherent-fin-countryman}, the lexicographic ordering
of $T$ is a Countryman line.
We also need the following lemma, for a proof see the claim
of the mentioned proposition in~\cite{Halbeisen2017}.

\begin{lemma}\label{lem:halbeisen}
  Let $K \seb T$ be uncountable, and let $n \in \omega$. Then
  there is an uncountable $K' \seb K$ such that
  for all $s,t \in K'$, if $\xi \in \dom(s) \cap \dom(t)$,
  and $t(\xi) < n$ or $s(\xi) <n$, then $t(\xi) = s(\xi)$.
\end{lemma}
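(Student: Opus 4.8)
The plan is to thin $K$ until the \emph{low part} of its members becomes completely rigid. For $t\in T$ put $F_{t}:=\{\xi\in\dom(t):t(\xi)<n\}$; since $t$ is finite-to-one, $F_{t}=\bigcup_{m<n}t^{-1}(m)$ is finite, and I think of its \emph{low-value graph} $t\!\restriction\!F_{t}$ as a finite partial function into $n$. The first observation is that the conclusion is equivalent to these graphs being nested along the level order: if $\dom(s)\le\dom(t)$ and $\xi\in F_{s}$, then $\xi\in\dom(s)\seb\dom(t)$ and $s(\xi)<n$, so the desired implication forces $t(\xi)=s(\xi)<n$, i.e. $s\!\restriction\!F_{s}\seb t\!\restriction\!F_{t}$. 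As the $F_{t}$ are finite and the levels of an uncountable subset of $T$ are cofinal in $\omega_{1}$, such a $\seb$-increasing $\omega_{1}$-chain of finite graphs must stabilize; hence it suffices to produce an uncountable $K'\seb K$ on which $t\!\restriction\!F_{t}$ is a single fixed finite function $\rho$ with fixed finite domain $R$. This is also clearly sufficient: if $F_{s}=F_{t}=R$ and $s\!\restriction\!R=t\!\restriction\!R=\rho$, then whenever $\xi\in\dom(s)\cap\dom(t)$ with (say) $s(\xi)<n$ we get $\xi\in F_{s}=R\seb\dom(t)$ and $t(\xi)=\rho(\xi)=s(\xi)$.

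To produce such a $K'$ I would first apply the $\Delta$-system lemma to the finite sets $\{F_{t}:t\in K\}$, extracting an uncountable $K_{1}\seb K$ whose low-value sets form a sunflower with root $R$ and constant size; since there are only finitely many functions $R\ra n$, a further thinning gives an uncountable $K_{2}\seb K_{1}$ on which $t\!\restriction\!R$ equals a fixed $\rho$. It then remains to \emph{kill the petals}, i.e. to find uncountably many $t$ with $F_{t}=R$ exactly. Suppose not; then on an uncountable set the petals $F_{t}\setm R$ are nonempty and, being a sunflower, pairwise disjoint. Choosing $\xi_{t}\in F_{t}\setm R$ yields uncountably many \emph{distinct} ordinals with $t(\xi_{t})<n$, while for every $t'$ of strictly higher level one has $\xi_{t}\in\dom(t')$ but $\xi_{t}\notin F_{t'}$ (disjoint petals and $\xi_{t}\notin R$), hence $t'(\xi_{t})\ge n$.

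Deriving a contradiction from this configuration is the main obstacle, and it is exactly where coherence is indispensable: the statement is false for arbitrary finite-to-one trees, e.g. one concentrated on successor levels carrying pairwise distinct singleton petals, so a purely combinatorial $\Delta$-system argument cannot close the gap. My plan is to route the contradiction through the canonical coherent sequence underlying $T$. Recall $T=\{t:\dom(t)=\alpha,\ t=^{*}f_{\alpha}\}$, so each $t\in K_{2}$ differs from $f_{\alpha_{t}}$ (with $\alpha_{t}:=\dom(t)$) only on a finite set $d(t)$; thinning $K_{2}$ so that the corrections $d(t)$ together with $t\!\restriction\!d(t)$ are themselves placed in a sunflower with constant pattern, the problem of freezing $t\!\restriction\!F_{t}$ reduces, modulo a fixed finite correction, to controlling the low-value sets $F^{*}_{\alpha}:=\{\xi:f_{\alpha}(\xi)<n\}$ of the canonical functions. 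The key point is that these are now indexed by \emph{all} of $\omega_{1}$, so that the coherence of $\langle f_{\alpha}\rangle$ can be exploited by a pressing-down (Fodor) argument: one shows that for the specific coherent $T$ in use the petals $F^{*}_{\alpha}\setm R$ cannot remain distinct on a stationary set, which forces the $\xi_{t}$ to repeat and contradicts their pairwise disjointness. I expect this stabilization of the coherent sequence of finite low-value sets to be the hardest part, and it is where both hypotheses on $T$ (coherence, guaranteeing the $=^{*}$-relations, and finite-to-one, guaranteeing the finiteness of $F_{t}$) enter essentially; this is precisely the content of the claim established in the proof cited from~\cite{Halbeisen2017}.
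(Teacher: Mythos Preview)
The paper does not prove this lemma; it simply refers the reader to the claim inside Proposition~20.7 of~\cite{Halbeisen2017}. So there is no in-paper argument to compare yours against, and your closing sentence---deferring the hard step back to that same citation---makes the proposal circular.

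On the substance: reducing to constancy of $t\!\restriction\!F_t$ is correct (your direct check of sufficiency is fine; the earlier claim that the conclusion is \emph{equivalent} to nesting of these graphs is false in the backward direction---if $\xi\in F_t\cap\dom(s)$ but $\xi\notin F_s$ then nesting says nothing---but this is harmless since you verify sufficiency independently). The $\Delta$-system opening is also right. The genuine gap is in ``killing the petals''. Your plan is to pass to the canonical $f_\alpha$ and press down, but Fodor requires a stationary domain, and $\{\alpha_t:t\in K_2\}$ need not be stationary (for instance $K$ could live entirely on successor levels of $T$). Observing that the $f_\alpha$ are ``indexed by all of $\omega_1$'' does not repair this: any Fodor conclusion you draw about $F^{*}_\alpha$ on some stationary $S\subseteq\omega_1$ tells you nothing about the particular $\alpha_t$'s coming from $K_2$ unless $S\cap\{\alpha_t:t\in K_2\}$ is uncountable, and arranging that overlap is exactly the problem you are trying to solve. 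So the petal-killing step, as written, is a hope rather than an argument.

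What coherence actually buys here is not Fodor but \emph{countable levels}: for every $\beta<\omega_1$ the restrictions $\{t\!\restriction\!\beta:t\in T,\ \dom(t)\ge\beta\}$ all lie in $\{s:s=^{*}f_\beta\}$, a countable set. Plain pigeonhole (not pressing down) then freezes $t\!\restriction\!\beta$ on an uncountable subfamily, and this---together with the disjointness of the petals---is the engine the cited argument in~\cite{Halbeisen2017} runs on. Your outline is pointed in the right direction (coherence and finite-to-one are indeed both essential), but the specific mechanism you propose does not work.
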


We turn now to prove Theorem~\ref{thm:sro_cohen_real}.
Fix $G$ a generic filter for $\P$, and let $\dt{c}$ be the canonical
name for $\bigcup G$. Then clearly $\1 \Vdash \dt{c} : \ck{\omega} \to \ck{\omega}$.
Now working in $V[G]$ let $T^{c} := \{c \circ t : t \in T\}$. Todorcevic
proved that this is always a Suslin tree. Also a proof of this
is easily extracted from what follows, and not by chance, but because
the proof here follows that one.

\begin{proposition}
  $V[G] \models$ $(T^{c},<_{\lex})$ is $\omega_{1}$-irreversible.
\end{proposition}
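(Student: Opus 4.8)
The plan is to run the same argument by which $(T^c,<_\lex)$ is shown to be Suslin (for this, recall that $\P$ is countable, hence ccc, so $\omega_1$ is preserved), except that instead of killing an uncountable antichain we kill an order‑reversing self‑injection. Since $(T^c,<_\lex)$ is an Aronszajn line, every uncountable suborder already has size $\aleph_1$, so being $\omega_1$‑irreversible amounts to: there is no order‑reversing injection from an uncountable subset of $T^c$ into $T^c$. So I would suppose toward a contradiction that some $p_0\in\P$ forces that $\dot f$ is such an injection, with uncountable domain $\dot X\seb T^c$ (fix such names $\dot X,\dot f$ once and for all).

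Working in $V$, I would recursively build conditions $p_\alpha\le p_0$ and nodes $t_\alpha,s_\alpha\in T$ for $\alpha<\omega_1$, with $\dom(t_\alpha)$ strictly increasing, such that $p_\alpha\Vdash \dt c\circ\ck t_\alpha\in\dot X$ and $p_\alpha\Vdash \dot f(\dt c\circ\ck t_\alpha)=\dt c\circ\ck s_\alpha$. This is possible because the levels of $T^c$ are countable, so below $p_0$ it is dense to put into $\dot X$ an element $\dt c\circ\ck t$ with $\dom(t)$ above any prescribed ordinal, and the $\dot f$‑image of such an element is forced to be of the form $\dt c\circ\ck s$ with $s\in T\cap V$. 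As $\P$ is countable, I would thin to an uncountable $I$ on which $p_\alpha$ is constantly some $p$. On $I$ the map $\alpha\mapsto t_\alpha$ is injective (distinct domains), and so is $\alpha\mapsto s_\alpha$: if $s_\alpha=s_\beta$ with $\alpha\ne\beta$ in $I$, then $p$ forces $\dot f(\dt c\circ\ck t_\alpha)=\dt c\circ\ck s_\alpha=\dot f(\dt c\circ\ck t_\beta)$, while $\1\Vdash\dt c\circ\ck t_\alpha\ne\dt c\circ\ck t_\beta$ (again distinct domains), contradicting injectivity of $\dot f$. Hence I may pass to an uncountable $I^*\seb I$ on which both $\dom(t_\alpha)$ and $\dom(s_\alpha)$ are strictly increasing.

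Now let $n$ be larger than every element of $\dom(p)$. Applying Lemma~\ref{lem:halbeisen} first to $\{t_\alpha:\alpha\in I^*\}$ (uncountable, by injectivity) with parameter $n$, and then to the corresponding set of $s$‑nodes over the resulting index set, I obtain an uncountable $J\seb I^*$ such that whenever $\alpha,\beta\in J$ and $t_\alpha,t_\beta$ (resp.\ $s_\alpha,s_\beta$) disagree at a point of their common domain, both values there are $\ge n$, hence outside $\dom(p)$. Fix $\alpha<\beta$ in $J$. By coherence of $T$ the sets $F_t:=\{\xi<\dom(t_\alpha):t_\alpha(\xi)\ne t_\beta(\xi)\}$ and $F_s:=\{\xi<\dom(s_\alpha):s_\alpha(\xi)\ne s_\beta(\xi)\}$ are finite, and all the natural numbers $t_\alpha(\xi),t_\beta(\xi)$ $(\xi\in F_t)$ and $s_\alpha(\xi),s_\beta(\xi)$ $(\xi\in F_s)$ lie outside $\dom(p)$. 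I would then extend $p$ to a condition $q$ which, on this finite set of fresh numbers, forces $c$ to be constant on each pair $\{t_\alpha(\xi),t_\beta(\xi)\}$ and on each pair $\{s_\alpha(\xi),s_\beta(\xi)\}$: take the equivalence relation generated by these pairs and send $c$ on each class to a distinct fresh value. Such a $q$ exists precisely because only equalities are being imposed, and only on numbers not yet in $\dom(p)$, so nothing can clash.

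Then $q$ forces $\dt c\circ\ck t_\alpha=(\dt c\circ\ck t_\beta)|_{\dom(\ck t_\alpha)}$ and $\dt c\circ\ck s_\alpha=(\dt c\circ\ck s_\beta)|_{\dom(\ck s_\alpha)}$; since $\dom(t_\alpha)<\dom(t_\beta)$ and $\dom(s_\alpha)<\dom(s_\beta)$, and a proper initial segment is $<_\lex$‑smaller, we get $q\Vdash\dt c\circ\ck t_\alpha<_\lex\dt c\circ\ck t_\beta$ and $q\Vdash\dt c\circ\ck s_\alpha<_\lex\dt c\circ\ck s_\beta$. But $q\le p_0$ forces $\dot f$ order‑reversing with $\dot f(\dt c\circ\ck t_\alpha)=\dt c\circ\ck s_\alpha$ and $\dot f(\dt c\circ\ck t_\beta)=\dt c\circ\ck s_\beta$, so $q$ also forces $\dt c\circ\ck s_\alpha>_\lex\dt c\circ\ck s_\beta$, a contradiction. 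The one conceptual point — and the only place where the argument genuinely departs from the Suslin‑tree proof, which it reproves en route — is that we never control the direction of a $<_\lex$‑comparison directly: we force $\tle$‑comparability of two nodes of different lengths, which pins the direction for free, and $\tle$‑comparability is a demand made of pure equalities, so no inconsistent packet of requirements can arise. I expect the part needing the most care to be the bookkeeping in the second paragraph that makes $\alpha\mapsto s_\alpha$ injective with strictly increasing domains, since that is exactly what lets us read off the direction of the $s$‑side comparison; the rest is the routine Cohen‑real/coherent‑tree machinery of \cite{Halbeisen2017,Todorcevic1987}.
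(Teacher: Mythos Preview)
Your argument is correct, and it takes a genuinely different route from the paper's. The paper uses that $(T,<_{\lex})$ is Countryman (Lemma~\ref{lem:coherent-fin-countryman}), hence $\omega_1$-irreversible in $V$: having thinned to a single condition $q$ and applied Lemma~\ref{lem:halbeisen}, they pick $t,s\in K$ with $t<_{\lex}s$ and $g(t)<_{\lex}g(s)$, and then run a four-way case split on whether $t\tle s$ and whether $g(t)\tle g(s)$, in each case extending $q$ at a handful of fresh arguments so as to push the $<_{\lex}$-comparisons of $\dt c\circ\ck t$ with $\dt c\circ\ck s$ and of $\dt c\circ\ck g(\ck t)$ with $\dt c\circ\ck g(\ck s)$ in the same direction. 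Your idea is cleaner: by arranging $\dom(t_\alpha)$ and $\dom(s_\alpha)$ to be strictly increasing and then forcing $\tle$-comparability via a packet of pure equalities on fresh arguments, the direction of both $<_{\lex}$-comparisons is fixed automatically, so no case analysis is needed and, more to the point, you never invoke that $T$ is Countryman. What the paper's approach buys is that the extension of $q$ touches at most four new arguments (one per side of each comparison), whereas yours may touch all of the finite disagreement sets $F_t,F_s$; what your approach buys is the elimination of Lemma~\ref{lem:coherent-fin-countryman} from the proof and a more transparent reason why the extension exists (equalities never clash). The bookkeeping you flag---making $\alpha\mapsto s_\alpha$ injective and then thinning so that $\dom(s_\alpha)$ is strictly increasing---is routine: injectivity comes from the forced injectivity of $\dot f$ together with $\dom(t_\alpha)\neq\dom(t_\beta)$, and any injective $\omega_1$-sequence of countable ordinals has an uncountable increasing subsequence.
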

\begin{proof}
  Suppose towards a contradiction
  that $V[G] \models T^{c} \text{ is not $\omega_{1}$-irreversible}$.
  Then there must be $p \in G$ and names $\dt{A}$,$\dt{f}$ such
  that
  \[p \Vdash \dt{A} \text{ is uncountable and }
    f : \dt{A} \seb T^{\dt{c}} \to T^{\dt{c}} \text{ reverses } <_{\lex}.\]
  To arrive to a contradiction it is enough to find some extension
  of $p$ that forces
  $(\exists t,s \in \dt{A})(t <_{\lex} s \land \dt{f}(t) <_{\lex} \dt{f}(s))$.

  For each $q \le p$, consider the
  set $A_{q} := \{t \in T : q \Vdash \dt{c} \circ \ck{t} \in \dt{A}\}$. Since $\P$ is
  countable, there is some $q \le p$ such that $A_{q}$ is uncountable. Fix such a $q$.
  In similar fashion, there is $q' \le q$, an uncountable $K \seb A_{q}$,
  and $g : K \to T$ in $V$ such that for all $t \in K$,
  $q' \Vdash \dt{f}(\dt{c} \circ \ck{t}) = \dt{c} \circ \ck{g}(\ck{t})$. We may
  assume that this is already forced by $q$. Let $n$ be greater
  than any element in $\dom(q)$. By going to an uncountable
  subset of $K$, we may assume that $K$ and $\{g(t) : t \in K\}$ satisfy
  the conclusions of Lemma~\ref{lem:halbeisen}.

  Since $T$ is Countryman we see that $T$ is
  $\omega_{1}$-irreversible, and thus there are $t,s \in K$
  such that $t <_{\lex} s$ and $g(t) <_{\lex} g(s)$. There are four
  cases depending on the extension relation of $t$ with $s$ and
  of $g(t)$ with $g(s)$. In
  case that $t \not\tle s$ we let $x$ and $y$ be such that
  $t(\xi) = x$ and $s(\xi) = y$, where $\xi$ is the first such that
  $t(\xi) \neq s(\xi)$. Otherwise we let $x$ and $y$ undefined.
  In similar fashion, when $g(t) \not\tle g(s)$
  we let $a = g(t)(\xi)$ and $b = g(s)(\xi)$ where
  $\xi$ is the first such that $g(t)(\xi) \neq g(s)(\xi)$.
  Note that when they are defined,
  $a,b,x,y > n$, $x < y$ and $a < b$.

  If $t \tle s$ and $g(t) \tle g(s)$, then clearly
  $q \Vdash \dt{c} \circ \ck{t} \tle \dt{c} \circ \ck{s}$ and
  $q \Vdash \dt{c} \circ \ck{g}(\ck{t}) \tle \dt{c} \circ \ck{g}(\ck{s})$.
  This arrives gives us our desired contradiction, since
  $q \Vdash \dt{c} \circ \ck{t}, \dt{c} \circ \ck{s} \in \dt{A}$ and
  $q \Vdash \dt{c} \circ \ck{g}(\ck{t}) = \dt{f}(\dt{c} \circ \ck{t}) \land
  \dt{c} \circ \ck{g}(\ck{s}) = \dt{f}(\dt{c} \circ \ck{s})$.

  If $t \tle s$ and $g(t) \not\tle g(s)$, then
  letting $q' := q \cup \{(a,0),(b,1)\}$ we see that $q' \in \P$,
  $q' \le q$ and
  $q' \Vdash \dt{c} \circ \ck{t} \tle \dt{c} \circ \ck{s} \land
  \dt{c} \circ \ck{g}(\ck{t}) <_{\lex} \dt{c} \circ \ck{g}(\ck{s})$,
  which is again a contradiction.
  If $t \not\tle s$ and $g(t) \tle g(s)$ the argument is identical.

  Assume now that $t \not\tle s$ and $g(t) \not\tle g(s)$.
  If $x = b$,
  then $a < x = b < y$ and thus
  $q' := q \cup \{(a,0),(x,1),(y,2)\}$ is in $\P$. But note then
  $q' \le q$ and
  $q \Vdash \dt{c} \circ \ck{t} <_{\lex} \dt{c} \circ \ck{s}
  \land \dt{c} \circ \ck{g}(\ck{t}) <_{\lex} \dt{c} \circ \ck{g}(\ck{s})$,
  and we arrive at a contradiction as before. Assume now $x \neq b$.
  If $y = a$
  then $q' := q \cup \{(x,0),(a,1),(b,2)\}$ works, and if $y \neq a$
  then $q' := q \cup \{(x,0),(y,1),(a,0),(b,1)\}$ works.
\end{proof}

\begin{remark}\label{rmk:coherent-irrev}
  It was pointed out to us by Moore (and then also by the anonymous referee)
  that that any coherent Aronszajn
  tree $T \seb \omega^{<\omega_{1}}$ is $\omega_{1}$-irreversible
  with the natural lexicographic ordering. Note that this
  gives an alternative proof of Theorem~\ref{thm:sro_cohen_real}, and it also
  shows that $\diamondsuit$ implies the existence of an
  $\omega_{1}$-irreversible lexicographically ordered Suslin
  tree. This is because $\diamondsuit$ implies the existence of a
  coherent Suslin tree (see for
  example~\cite[Theorem~6.9]{Todorcevic1984}).

  To see that the claim is true one proves that the ccc forcing for
  specializing $T$ (see~\cite[Theorem~16.17]{Jech2003}) makes
  $(T,<_{\lex})$ Countryman in the extension
  (see~\cite[4.4]{TodorcevicWalks}).  Thus in the extension
  $(T,<_{\lex})$ has no two uncountable reverse isomorphic
  suborders. But since $\omega_{1}$ is preserved, this must already hold
  in the ground model.
\end{remark}

\section{Concluding remarks and questions}

As mentioned in Section~\ref{sec:srs}, this work leaves open
 Question~\ref{q:univ-srs} of whether $\eta_{C}$ (or any other universal Aronszajn line)
is strongly surjective under $\PFA$.
Recently in \cite{EisworthEtAl2023}, answering a question by
Baumgartner, a $\eleq$-minimal Countryman line was constructed under
$\diamondsuit$. As mentioned in the introduction,
in \cite{Soukup} it is proved that $\diamondsuit^{+}$ implies
the existence of a strongly surjective Suslin line. Does
$\diamondsuit$ imply the existence of a strongly surjective Aronszajn
line?

The work in Sections~\ref{sec:infinite-antichain} and \ref{sec:infinite-chain} shows
that already the class of Countryman lines is not well-quasi-ordered by
$\sleq$. This is done by considering Aronszajn lines that have
decompoisitions with a stationary set of endpoints. It seems that a natural
question is whether the class of \textit{normal} Aronszajn lines is
well-quasi-ordered by $\sleq$ under $\PFA$. By Theorem~\ref{thm:moore-t1.1}
this is clearly true for the class of Countryman lines.

Regarding Section~\ref{sec:irreversible}, Theorem~\ref{thm:sro_cohen_real} and
 Remark~\ref{rmk:coherent-irrev} suggest the following question. Does the
existence of a Suslin tree suffices to show the existence of a
non-Countryman $\omega_{1}$-irreversible Aronszajn line?

\bibliographystyle{plain}

\bigskip
\noindent Carlos A. Mart\'inez--Ranero\\
Email: cmartinezr@udec.cl\\
Homepage: www2.udec.cl/\textasciitilde cmartinezr\\
\noindent Lucas Polymeris\\
Email: l.polymeris@proton.me \\

\noindent Same address: \\
Universidad de Concepci\'on, Concepci\'on, Chile\\
Facultad de Ciencias F\'isicas y Matem\'aticas\\
Departamento de Matem\'atica\\

\end{document}